\documentclass[11pt]{amsart}

\usepackage[margin=1in]{geometry}
\usepackage{amsmath,amssymb,mathtools}
\usepackage{enumitem}
\usepackage{microtype}
\usepackage[colorlinks=true,allcolors=blue]{hyperref}
\usepackage[nameinlink,capitalize,noabbrev]{cleveref}

\newtheorem{theorem}{Theorem}[section]
\newtheorem{proposition}[theorem]{Proposition}
\newtheorem{lemma}[theorem]{Lemma}
\newtheorem{corollary}[theorem]{Corollary}
\theoremstyle{definition}
\newtheorem{definition}[theorem]{Definition}
\theoremstyle{remark}

\title[IPM Blowup with Space-Time Smooth Force]{Extending the C\'ordoba-Mart\'inez-Zoroa IPM Blow-Up to Uniformly Space-Time Smooth Forcing}
\hypersetup{pdftitle={Extending the C\'ordoba-Mart\'inez-Zoroa IPM Blow-Up to Uniformly Smooth Forcing}}
\author{Levent Alp\"oge}
\address{Anthropic, PBC,
  500 Howard St., San Francisco, CA 94105, USA}
  \email{alpoge@fas.harvard.edu}
  
\author{Tristan Buckmaster}
\address{Courant Institute School of Mathematics, Computing, and Data Science,
  New York University, 251 Mercer Street, New York, NY 10012, USA}
  \email{buckmaster@nyu.edu}
\author{Matei P. Coiculescu}
\address{Courant Institute School of Mathematics, Computing, and Data Science,
  New York University, 251 Mercer Street, New York, NY 10012, USA}
  \email{m.coiculescu@nyu.edu}
\date{}

\begin{document}

\begin{abstract}
By adapting the techniques used in the IPM blowup result of C\'ordoba-Mart\'inez-Zoroa with spatially smooth force, we prove finite-time blow-up for the IPM equation on
\(\mathbb T^2\) with a uniformly spacetime smooth force. In particular, we show there exist a smooth odd initial density,
a smooth odd force \(F\in C^\infty([0,1]\times\mathbb T^2)\), and a classical
solution \(\rho\) on \([0,1)\) whose density gradient and spatial velocity
gradient diverge in \(L^\infty\) as \(t\uparrow1\). Nevertheless,
\(\rho(t)\) converges in \(C^\eta\) for every \(0\leq\eta<1\).
\end{abstract}

\maketitle

\section{Introduction}
\label{sec:introduction}

The study of incompressible fluid flow, while an active research field for
over a hundred years, still presents difficult mathematical challenges due to
the nonlinear and nonlocal behavior of its various model equations. Questions
about the global-in-time well-posedness of equations such as the incompressible
Navier--Stokes and Euler equations remain unsolved, and even one-dimensional
model problems such as the surface growth equation or the
C\'ordoba--C\'ordoba--Fontelos equation are not completely understood.
Singularity formation was discovered by Elgindi for \(C^{1,\alpha}\) solutions
of the three-dimensional Euler equations
\cite{ElgindiEuler2021}, but smooth blow-up results remain elusive. On the
other hand, C\'ordoba and Mart\'inez-Zoroa proved a breakthrough result
\cite{CordobaMartinezZoroa2025}, establishing finite-time singularity
formation for the forced IPM equation from smooth initial data and with a
spatially smooth force.

The IPM equation transports a density \(\rho\) by the divergence-free Darcy
velocity that it generates:
\[
  \partial_t\rho+u_{\mathbb T}(\rho)\cdot\nabla\rho=F,
  \qquad \operatorname{div}u_{\mathbb T}(\rho)=0
  \quad\text{on }\mathbb T^2,
\]
where \(u_{\mathbb T}\) is the periodic zero-order Fourier multiplier determined
by Darcy's law; its precise normalization is given in the overview below.
C\'ordoba, Gancedo, and Orive established foundational local existence and
uniqueness results, global continuation criteria, and blow-up in a special
infinite-energy class \cite{CordobaGancedoOrive2007}. Kiselev and Yao
constructed smooth data for the unforced equation, including on the torus, for
which global smoothness entails unbounded growth of derivatives, and they
proved nonlinear instability of a class of stratified steady states
\cite{KiselevYao2023}. At the critical Sobolev threshold, Bianchini, C\'ordoba,
and Mart\'inez-Zoroa proved nonexistence and strong ill-posedness in \(H^2\),
even near the linearly stable profile \(-x_2\)
\cite{BianchiniCordobaMartinezZoroa2025}. A more comprehensive literature
review is provided in the next section.

The blow-up construction of C\'ordoba and Mart\'inez-Zoroa relies on a
multiscale inductive scheme in which each previous iteration of the solution
serves as a background flow that rapidly amplifies an oscillatory wave packet
added to the solution. By carefully setting up the induction, they show that
the multiscale solution leads to infinite gradient growth in finite time. The
theorem of C\'ordoba and Mart\'inez-Zoroa concerns the forced IPM equation on
\(\mathbb{R}^2\), with initial density in \(C_c^\infty(\mathbb{R}^2)\) and a
compactly supported force in \(L_t^\infty C_x^\infty\). They show that the
density is a classical solution before \(t=1\) and that the gradients of both
the density and the velocity field blow up in \(C^0\) as \(t\to1^-\). We also
mention that related blow-up mechanisms have been used by C\'ordoba,
Mart\'inez-Zoroa, and their coauthors for other equations, including the forced
hypodissipative Navier--Stokes and generalized SQG equations
\cite{CordobaMartinezZoroaZheng2024,CordobaDominguezLucasMartinezZoroa2026}.

The goal of our work is to extend the result of C\'ordoba and
Mart\'inez-Zoroa for the forced IPM equation in the following way. We prove
finite-time singularity formation for the forced IPM equation on the torus
\(\mathbb{T}^2\), with initial density in \(C^\infty(\mathbb{T}^2)\) and a
space--time smooth force in \(C^\infty([0,1]\times\mathbb{T}^2)\). We also
obtain blow-up of the \(C^0\) norms of the gradients of the density and the
velocity field as \(t\to1^-\). We have endeavored to make the proof presented
here as clear and readable as possible. Before giving a precise statement of
the main theorem and an overview of the proof strategy, we wish to address two
possible points of contention. First, we directly compare the present work
with that of C\'ordoba and Mart\'inez-Zoroa and identify what is new and which
ideas are attributable to their earlier work. Second, we address the use of
language models in the preparation of this work.

The idea of a background flow amplifying a high-frequency perturbation dates
back at least to the work of Bourgain and Li, who used it to prove norm
inflation for the Euler equations in endpoint critical Sobolev spaces
\cite{BourgainLi2015}. C\'ordoba and Mart\'inez-Zoroa successfully implemented
this concept for the forced IPM equation, obtaining spatially smooth forced
blow-up. Many techniques in the present work originate in their work, including
transporting the perturbation's phase directly by the previous frequency level
while maintaining its oscillatory structure and tracking the orientation of
the oscillatory wave packet relative to its ``parent'' field. Of special note
is the idea of tying the amplitude of the wave packet to the oscillation
parameter in a frequency-dependent way, together with a delicate induction on
the number of controlled derivatives. More details are provided below. New
features introduced here for forced IPM include Fourier space truncations that
avoid a loss of derivatives and a mixed induction in time and space that
ultimately shows that the force is smooth in space--time. In addition, we transport the oscillatory packets by the Fourier truncated previous scales, while the work of C\'ordoba and Mart\'inez-Zoroa transport the next scale by the jet truncated velocity. Another improvement
over the work of C\'ordoba and Mart\'inez-Zoroa is that our solution is given by
an exact series and requires no nonconstructive passage through a nonlinear
limit. Everything is made as explicit as possible without sacrificing
readability.

% levent: i tried rewriting to be more precise below, in particular people i think
% it is important to credit openai's models here as well!
% We now address the use of LLMs in this work. We used LLMs to identify the key
% elements of the previous proof of C\'ordoba and Mart\'inez-Zoroa and to
% reproduce their argument. We used LLMs to write the main body of the text,
% delegating delicate bookkeeping of inductive orders and constants to the LLM
% while implementing our ideas for adapting the forced IPM construction of
% C\'ordoba and Mart\'inez-Zoroa to the torus and to a space--time smooth force.
% We used LLMs to reduce the length of initial versions of this work. After
% substantial review by the authors of previous LLM-generated versions, we used
% LLMs to revise the paper to our specifications. For instance, the LLM
% originally generated a proof of an overly complicated theorem statement,
% whereas we simplified it to the proof of our main theorem (which parallels the
% C\'ordoba--Mart\'inez-Zoroa IPM result) stated in the next section. This
% introduction (apart from TeX compatibility with the main body, formatting, and
% the second-paragraph literature review) was written by the listed authors, but
% the remainder of the paper was written primarily by an LLM, under the strict
% direction of the authors.

We now address the use of language models in this work. We used Claude to identify the key elements of the previous proof of C\'ordoba and Mart\'inez-Zoroa and to reproduce their argument. We used Claude and Codex to write the main body of the text, delegating delicate bookkeeping of inductive orders and constants to the models while implementing our ideas for adapting the forced IPM construction of C\'ordoba and Mart\'inez-Zoroa to the torus and to a space--time smooth force. We used the models in conjunction with our own ideas to reduce the length of initial versions of this work. Claude originally generated an overly complicated version of the result on the whole space following a proof architecture closer in resemblance to  C\'ordoba and Mart\'inez-Zoroa's work \cite{CordobaMartinezZoroa2025}. In simplifying the proof, we incorporated many of our own ideas, some of which are inspired by analogous ideas from the convex integration literature. This introduction (apart from TeX compatibility with the main body, formatting, and the second-paragraph literature review) was written by the listed authors, but the remainder of the paper was written with the assistance of Claude and Codex, under the strict direction of the authors. Finally, we used Claude and Codex to formalize the argument in this paper in Lean. The relevant Lean files will shortly be released to a public repository at \texttt{https://github.com/tristanbuckmaster/fluid\_lean}. % this said LLM, and anyway we were considering editing it out, im down to modify this phrasing anyway (also Matei was doing the heroic writing by hand so if that works that's even better) We also used language

\subsection{Blow-up for two-dimensional inviscid Boussinesq and  three-dimensional incompressible
Euler equations with uniformly space-time smooth forcing}

In forthcoming work, the first and second authors prove finite-time
singularity formation from smooth, compactly supported initial data for the
forced two-dimensional inviscid Boussinesq equations on \(\mathbb{R}^2\) and
the forced three-dimensional incompressible Euler equations on
\(\mathbb{R}^3\), in the latter case within the class of axisymmetric
solutions with swirl. In both results, the external forcing has fixed compact
spatial support and remains \(C^\infty\) in space and time up to and including
the singular time, with every mixed derivative uniformly bounded.

The proofs of the Boussinesq and Euler results
are currently being prepared. Both proofs have been formally verified in Lean, the necessary files may be found in the public repository at \texttt{https://github.com/tristanbuckmaster/fluid\_lean}.
The complete human-readable proofs will be released shortly by the first and second authors. We ask the community for
patience while these materials are prepared with the care they require.

\section{Overview}
\label{sec:overview}

Let $
  \mathbb T^2=(\mathbb R/2\pi\mathbb Z)^2$ be the two-dimensional torus
and normalize Fourier coefficients by
\[
  \widehat f(k)=\frac1{(2\pi)^2}
  \int_{\mathbb T^2}f(x)e^{-ik\cdot x}\,dx,
  \qquad k\in\mathbb Z^2.
\]
The periodic incompressible porous media velocity is the zero-order
multiplier
\begin{equation}
  \widehat{u_{\mathbb T}(\rho)}(k)
  =2\pi
  \left(\frac{k_1k_2}{|k|^2},-\frac{k_1^2}{|k|^2}\right)
  \widehat\rho(k)
  \quad(k\ne0),
  \qquad
  \widehat{u_{\mathbb T}(\rho)}(0)=0.
  \label{eq:periodic-velocity-symbol}
\end{equation}
The vector on the right is orthogonal to \(k\), so
\(\operatorname{div}u_{\mathbb T}(\rho)=0\). We study
\begin{equation}
  \partial_t\rho+u_{\mathbb T}(\rho)\cdot\nabla\rho=F
  \qquad\text{on }[0,1)\times\mathbb T^2.
  \label{eq:ipm}
\end{equation}
The factor \(2\pi\) in \eqref{eq:periodic-velocity-symbol} fixes the
normalization used below. The goal of this work is to prove:

\begin{theorem}[Finite-time blow-up]
\label{thm:main}
There exist functions
\[
  \rho_{\mathrm{in}}\in C^\infty(\mathbb T^2),
  \qquad
  F\in C^\infty([0,1]\times\mathbb T^2),
\]
and a solution \(\rho\) of \eqref{eq:ipm} with the following properties.
The initial datum \(\rho_{\mathrm{in}}\) is odd and has zero spatial mean,
while \(F(t,\cdot)\) is odd and has zero spatial mean for every
\(t\in[0,1]\). Moreover,
\[
  \rho(0)=\rho_{\mathrm{in}},
  \qquad
  \rho\in C^\infty([0,T]\times\mathbb T^2)
  \quad\text{for every }T<1.
\]
There is one function
\[
  \rho_*\in\bigcap_{0\leq\eta<1}C^\eta(\mathbb T^2)
\]
such that, for every \(0\leq\eta<1\),
\begin{equation}
  \rho(t)\longrightarrow\rho_*
  \quad\text{in }C^\eta(\mathbb T^2)
  \quad\text{as }t\uparrow1.
  \label{eq:endpoint-holder-convergence}
\end{equation}
Nevertheless,
\begin{equation}
  \lim_{t\uparrow1}\|\nabla\rho(t)\|_{L^\infty}
  =\lim_{t\uparrow1}
    \|D_xu_{\mathbb T}(\rho(t))\|_{L^\infty}
  =\infty.
  \label{eq:main-blowup-limits}
\end{equation}
\end{theorem}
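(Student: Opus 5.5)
We will build \(\rho\) as an explicit series \(\rho=\sum_{n\ge0}\rho_n\) of odd, zero-mean wave packets. Each layer \(\rho_n\) is switched on at a time \(t_n\uparrow1\), with \(1-t_n\) decreasing geometrically or super-geometrically. The force \(F\) is then \emph{defined} as the residual \(\partial_t\rho+u_{\mathbb T}(\rho)\cdot\nabla\rho\). With this choice the whole theorem reduces to quantitative estimates on the layers.

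\textbf{Construction of the layers.} For \(n\ge1\) set
\[
\rho_n(t,x)=\chi_n(t)\,A_n\,a_n(t,x)\sin\bigl(\lambda_n\phi_n(t,x)\bigr),
\]
with the following ingredients. The frequencies \(\lambda_n\) grow super-exponentially. The amplitudes satisfy \(A_n\lambda_n^{\eta}\to0\) for every \(\eta<1\); a choice like \(A_n=\lambda_n^{-1+\epsilon_n}\) with \(\epsilon_n\downarrow0\) slowly works, since it gives \(A_n\lambda_n\to\infty\) while keeping every \(C^\eta\), \(\eta<1\), summable. The cutoff \(\chi_n\) is a smooth time cutoff that turns the layer on near \(t_n\). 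The phase \(\phi_n\) solves the transport equation
\[
\partial_t\phi_n+\bigl(P_{\le N_n}u_{\mathbb T}(\rho_{<n})\bigr)\cdot\nabla\phi_n=0,
\]
where \(P_{\le N_n}\) is a Fourier truncation of the previous-scale velocity at a threshold \(N_n\ll\lambda_n\). Its initial data is a linear phase chosen so that the packet sits near a hyperbolic stagnation point of the background flow, which oddness of \(\rho_{<n}\) places at the origin. The strain there stretches \(\nabla\phi_n\) exponentially during \([t_n,t_{n+1}]\), and hence \(\nabla\rho_n\), following the C\'ordoba--Mart\'inez-Zoroa mechanism. We choose the orientation of the packet relative to its parent so that the induced velocity \(u_{\mathbb T}(\rho_n)\) itself has a strain of size \(\gtrsim A_n\lambda_n\times(\text{growth factor})\) at the origin. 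Layer \(n+1\) then uses this strain. This is the inductive ``parent'' relation.

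\textbf{Verification.} We prove the estimates in the following order.
\begin{enumerate}
\item \emph{Symmetry.} Oddness and zero mean are preserved at every step. This holds because the symbol \eqref{eq:periodic-velocity-symbol} maps odd \(\rho\) to an even velocity, and odd-in-\(x\) transport keeps the phase symmetric once we symmetrize each packet.
\item \emph{Joint space--time bounds on the phase.} By induction on the total order of mixed derivatives we prove \(\|\partial_t^j\nabla^k\phi_n\|\le C_{n,j,k}\). Because the transporting field is Fourier-truncated, it is smooth with explicit bounds in terms of \(N_n\), so there is no loss of derivatives.
\item \emph{Velocity of the packet.} A stationary-phase (microlocal) expansion gives
\[
u_{\mathbb T}(\rho_n)=2\pi\,\frac{\xi_1\xi_2,\,-\xi_1^2}{|\xi|^2}\Big|_{\xi=\nabla\phi_n}\,\rho_n+O(\lambda_n^{-1}\cdot A_n\cdots).
\]
This yields both the strain lower bound and the upper bounds.
\item \emph{Uniform smoothness of \(F\).} The residual splits into three pieces. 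The first is the time derivative of the cutoff and amplitudes. The second is the commutator between the true velocity \(u_{\mathbb T}(\rho_{<n})\) and its truncation \(P_{\le N_n}u_{\mathbb T}(\rho_{<n})\) acting on \(\nabla\rho_n\). The third is the self-interaction and cross terms \(u_{\mathbb T}(\rho_m)\cdot\nabla\rho_n\) with \(m\ge n\). Each piece must be shown to be \(O(\lambda_n^{-M})\) in \(C^k_{t,x}\) for every \(k,M\), so that \(\sum F_n\) converges in \(C^\infty([0,1]\times\mathbb T^2)\). The cross terms with \(m>n\) are handled by the tiny amplitudes at high frequency. The self-interaction \(u_{\mathbb T}(\rho_n)\cdot\nabla\rho_n\) nearly vanishes: at leading order \(u_{\mathbb T}(\rho_n)\) is orthogonal to \(\nabla\phi_n\), since the multiplier vector is orthogonal to \(\xi\). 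The remaining pieces we would absorb by adding correctors to \(\rho_n\) and iterating the expansion to arbitrary order.
\end{enumerate}

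\textbf{Conclusion of the theorem.} Hölder convergence follows from summability of \(A_n\lambda_n^\eta\). Blow-up follows from \(\|\nabla\rho_n(t)\|_{L^\infty}\gtrsim A_n\lambda_n\to\infty\) at times \(t\to1\), which dominates the contributions of all other layers. The same bound transfers to \(D_xu_{\mathbb T}\) through the strain lower bound in step 3. To get a genuine limit rather than just a limsup, we overlap the time cutoffs so that some layer is always large.

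\textbf{Main obstacle.} The main obstacle is step 4, specifically the time derivatives. Every \(\partial_t\) falling on \(\rho_n\) produces a factor of roughly \(\lambda_n\|u\|\), and near \(t=1\) the layers live on shrinking time windows. Obtaining \(C^\infty\) in time uniformly up to \(t=1\) therefore forces a careful coupling of the parameters: \(\lambda_n\), \(N_n\), the window length \(t_{n+1}-t_n\), and the error order must be tuned so that every error is super-polynomially small compared with these losses. We would first try to show that the residual is localized at frequency \(\sim\lambda_n\) with amplitude \(\lambda_n^{-M}\) for arbitrary \(M\), and absorb the time derivatives using the mixed time--space induction.
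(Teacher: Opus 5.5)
There is a genuine gap in the amplification mechanism itself. To leading order, the velocity gradient that the earlier layers produce at the origin is $C_0B_q\cos\alpha_q\,\mathcal M(\alpha_q)$ with $\mathcal M(\alpha)=-e^\perp(\alpha)e(\alpha)^{\mathsf T}$. Since the multiplier vector is orthogonal to the frequency, $\mathcal M(\alpha)^2=0$. The origin is therefore a shear point, not a hyperbolic stagnation point, and transport distorts $\nabla\phi_n$ only polynomially over a terminal window. The paper proves $\|\Gamma_q\|_{\mathrm{op}}\le C\lambda_q^{\beta_q/4}$ and $L^{-C}\le|\nabla X_{q+1}|\le L^C$; the shear only rotates the child's phase direction.

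The exponential growth is in the \emph{amplitude}. It comes from exactly the term your Step~4 sends into the force: the high-frequency velocity acting on a lower layer's gradient, $u_{\mathbb T}(\rho_m)\cdot\nabla\rho_n$ with $m>n$. That term cannot be ``handled by tiny amplitudes.'' It oscillates at frequency $\lambda_m$ with amplitude of order $A_m|\nabla\rho_n|$, so its $C^k$ norm is of order $A_m\lambda_m^k|\nabla\rho_n|$. With $A_m=\lambda_m^{-1+\epsilon_m}$ this is unbounded already for $k=1$. The switch-on term $\chi_m'\rho_m$ fails the same way, since no exponential phase stretching is available to make the packet low-frequency at switch-on.

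In the paper this interaction belongs to the child's linearized operator $D+c_P$, with $c_P=m_0\cdot\nabla P$, and is inverted along characteristics by the weighted propagator $H_{\sigma,t}$. At the origin it gives $\dot a/a=C_0A_q\cos\alpha_{q+1}\sin(\alpha_q-\alpha_{q+1})+O(\lambda_q^{\beta_q/8})$: the child velocity, directed along $e^\perp(\alpha_{q+1})$, advects the large parent gradient $A_qe(\alpha_q)$. This rate is large and positive when the two phase gradients are perpendicular. The parent shear then turns the child toward parallel, which shuts the growth off (the Riccati comparison), for a total gain $\exp(k_{\mathrm g}\lambda_q^{\beta_q/8})$.

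That gain is what makes a smooth force possible at all. The child must be inserted at a seed that is exponentially small in the parent scale, $a(s_{q+1})\le\exp(-k_{\mathrm g}\lambda_q^{\beta_q/8})=\lambda_{q+1}^{-2/\beta_{q+1}}$, where the equality is the frequency law. The unforced instability then grows it to size $\lambda^{\beta-1}$, so the activation error is small in $C^k$. This seed is only polynomially small in $\lambda_{q+1}$, and each Duhamel correction gains only $\lambda^{-\beta}$. Consequently each force increment is controlled only for $m+k\le\lfloor 1/(4\beta_{q+1})\rfloor-10$, and your target of every residual piece being $O(\lambda_n^{-M})$ in every $C^k$ at every layer is not attainable with a finite parametrix.

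The paper obtains $F\in C^\infty([0,1]\times\mathbb T^2)$ instead by driving $\beta_q\downarrow0$ through the retain-or-halve rule. Scale separation keeps $\beta_q\log\lambda_q\to\infty$, so $\lambda_q^{\beta_q}$ and $\lambda_q^{7\beta_q/8}$ still diverge. Your $\epsilon_n\downarrow0$ plays the role of $\beta_q$, but without the coupling between seed, gain, and derivative range, neither the smoothness of $F$ nor the amplification follows. (Separately, an odd density produces an odd velocity, not an even one; that is what fixes the stagnation point at the origin and keeps $u_{\mathbb T}(\rho)\cdot\nabla\rho$ odd.)
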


C\'ordoba and Mart\'inez-Zoroa proved the corresponding finite-time
\(C^1\) blow-up, including blow-up of the velocity gradient, on
\(\mathbb R^2\) with compactly supported smooth initial density and a force
in \(L_t^\infty C_x^\infty\)
\cite[Theorem and Remark~2]{CordobaMartinezZoroa2025}. Their construction
introduces the multiscale induction, transported phase, a finite asymptotic
expansion of the velocity generated by a localized high-frequency
oscillation, and the phase-harmonic cancellation mechanism used here
\cite[Sections~1.2.1--1.2.4]{CordobaMartinezZoroa2025}. Joint smoothness in
time and space is anticipated there but is not part of that theorem
\cite[Remark~1]{CordobaMartinezZoroa2025}. The present argument implements
the mechanism on the torus, with nested Fourier cutoffs and an exact cutoff
commutator, and proves that the resulting force belongs to
\(C^\infty([0,1]\times\mathbb T^2)\).
\newpage
\subsection{Strategy}

\paragraph{\textbf{Packets and the parent--child step.}}

The elementary model is an oscillatory steady state. For
\(A\in\mathbb R\) and \(k\in\mathbb Z^2\setminus\{0\}\), the plane wave
\(\vartheta(x)=A\sin(k\cdot x)\) has velocity parallel to the multiplier vector in
\eqref{eq:periodic-velocity-symbol}. Since this vector is perpendicular to
\(k\), one has
\(u_{\mathbb T}(\vartheta)\cdot\nabla\vartheta=0\). Thus a single plane
wave is a stationary solution of the unforced equation on the torus. The following construction
localizes and transports this model. A localized oscillation is no longer an
exact unforced steady state, so at the initial level we take its nonlinear
residual as the force. At every later level the same principle is used: all
remainders are estimated, but none are omitted from the equation.

If \((\rho^{(q)},F^{(q)})\) are the density and force, respectively, at frequency level $q$, then the \emph{parent} at the next frequency level \(q+1\) is the complete preceding density after a
smoothing Fourier cutoff, \(P_{q+1}=S_{\mu_{q+1}}\rho^{(q)}\). The operator
\(S_\mu\) and its commutator are defined in
\eqref{eq:periodic-cutoff} and
\eqref{eq:periodic-cutoff-commutator-definition}; the child-dependent Fourier scale
\(\mu_{q+1}\) is specified below. The parent therefore
contains the contributions of all earlier levels, rather than
only the most recent oscillation. The new, much higher-frequency
perturbation \(\theta_{q+1}\) is the \emph{child}. If
\(\mathcal C_{\mu_{q+1}}(\rho^{(q)})\) denotes the exact commutator created
by the cutoff and \(E_{q+1}^{\mathrm{pert}}\) denotes the exact residual of
the child relative to \(P_{q+1}\), then the density and force at level $q+1$ are
\begin{equation}
  \begin{aligned}
    \rho^{(q+1)}&=P_{q+1}+\theta_{q+1},\\
    F^{(q+1)}&=S_{\mu_{q+1}}F^{(q)}
      +\mathcal C_{\mu_{q+1}}(\rho^{(q)})
      +E_{q+1}^{\mathrm{pert}}.
  \end{aligned}
  \label{eq:intro-stage-map}
\end{equation}
This is an identity, so every finite stage solves the forced IPM equation
exactly.

Let \(\lambda_{q+1}\) be the child frequency and let
\(0<\beta_{q+1}\leq1/8\) be its scale exponent. At leading order the child
has the same oscillatory form as the plane-wave model:
\begin{equation}
  \theta_{q+1}^{\mathrm{prin}}(x,t)
  =w_{q+1}(t)b_{q+1}(x,t)
    \sin\bigl(\lambda_{q+1}X_{q+1}(x,t)\bigr),
  \qquad
  b_{q+1}(0,t)
  =a_{q+1}(t)\lambda_{q+1}^{\beta_{q+1}-1}.
  \label{eq:intro-principal-perturbation}
\end{equation}
Here \(b_{q+1}\) is a compactly supported, slowly varying coefficient
propagated along the parent flow, \(X_{q+1}\) is the transported phase,
\(a_{q+1}\) controls the leading amplitude, and \(w_{q+1}\) switches the
packet on smoothly. More precisely, \(w_{q+1}=0\) up to the activation time
\(s_{q+1}\), vanishes there to infinite order, rises smoothly from zero to
one, and then remains one. Thus the packet is not switched off after its
activation interval. Its local frequency vector is
\(\lambda_{q+1}\nabla X_{q+1}\).

The principal packet alone does not have a sufficiently small residual. We
therefore construct the full child
$\theta_{q+1}=\Theta_M:=\zeta_1+\cdots+\zeta_M$, where
$M=\lceil\beta_{q+1}^{-2}\rceil$, as a finite nonlinear
\emph{parametrix}: an explicit approximate solution whose exact residual is
quantitatively controlled. The term $\zeta_1$ is the principal first
harmonic, and each later $\zeta_j$ corrects the residual left by the
preceding partial sum. For a pure plane wave, the translation-invariant velocity multiplier acts
exactly by multiplication by a fixed vector. For a localized packet with
transported phase, it admits a finite inverse-frequency expansion
$u^{[M]}$ about the local frequency vector. Its zeroth-order term is
obtained by evaluating the multiplier symbol at that vector, while the
higher-order terms contain derivatives of the envelope and of the non-affine
part of the phase. For an individual harmonic $z$, the exact remainder
$(u_{\mathbb T}-u^{[M]})[z]$ is retained intact and included in
$E_{q+1}^{\mathrm{pert}}$. This multiplier expansion is one ingredient of
the nonlinear parametrix.

The density ansatz has a finite phase-harmonic structure. For
\(1\leq j\leq M\),
\[
  \zeta_j
  =\sum_{\ell=1}^{2^j}
   \left(
     g^s_{\ell,j}\sin(\ell\lambda_{q+1}X_{q+1})
     +g^c_{\ell,j}\cos(\ell\lambda_{q+1}X_{q+1})
   \right),
\]
where \(g^s_{\ell,j}\) and \(g^c_{\ell,j}\) are slowly varying coefficient
functions.
The principal term \(\zeta_1\) is carried by the phase harmonic \(\ell=1\)
and is distinct from the leading frozen-frequency velocity response. The true
zero harmonic \(\ell=0\) is a nonoscillatory coefficient created when equal
harmonics interact; it is not the Fourier zero mode and need not be spatially
constant. Products of two harmonics generate their sums and
differences, so the structured residual after level \(j\) contains indices
up to \(2^{j+1}\). For \(1\leq j<M\), a weighted Duhamel formula chooses
\(\zeta_{j+1}\) to cancel the complete correctable nonzero-harmonic defect
\(\mathcal Q_j\).
The nonlinear interactions are then recomputed, producing a new and smaller
defect \(\mathcal Q_{j+1}\). At coefficient order zero, each successive
correction is smaller by one factor of
\(\lambda_{q+1}^{-\beta_{q+1}}\), up to logarithmic factors. Taking
\(M=\lceil\beta_{q+1}^{-2}\rceil\) correction levels supplies enough decay
to absorb the losses in the prescribed mixed-derivative range.

The correction procedure reduces the residual to the prescribed size; it
does not make it vanish. At the next level, the activation error, the
zero-harmonic terms due to nonlinear interaction, the exact multiplier
remainders, and the final
nonzero-harmonic defect all remain in
\(E_{q+1}^{\mathrm{pert}}\). This residual and the cutoff commutator are
placed in the force through \eqref{eq:intro-stage-map}. The higher
corrections are lower order in the origin-gradient geometry, so the
principal packet continues to carry the instability.

The amplification is geometric. The activation time is the unique time at
which the child phase-gradient direction is perpendicular to the leading
phase-gradient direction of the \(q\)-th packet, which dominates
\(\nabla P_{q+1}\) near the origin. The leading child velocity is transverse
to its own phase gradient and is therefore aligned with the leading
parent-density gradient. Both principal trigonometric factors are then close
to one, giving a favorable amplitude rate of size
\(\lambda_q^{\beta_q}\). The child itself vanishes at the precise activation
time because \(w_{q+1}=0\), but it is switched on immediately afterward with
an exponentially small coefficient. During the favorable part of the
evolution, the parent shear produces an exponentially large amplitude gain;
the normalization \(a_{q+1}(1)=1\) then gives a seed small enough to insert
the child at the next frequency.

The same parent shear transports the child phase gradient from the
perpendicular configuration toward the parent direction. By terminal time
the two phase gradients are nearly parallel, so the favorable amplitude
rate has faded. The remaining angular motion is slow. This is the precise
sense in which the leading origin geometry of the terminal child is almost
stationary: the phase is still transported, but
\(a_{q+1}(t)=1+o(1)\) and \(\nabla X_{q+1}(0,t)\) is
\(o(1)\)-close to its terminal value uniformly on the child's terminal
interval of time. The former child then has the geometry needed to become the dominant
newest part of the next parent.

After choosing \(\beta_{q+1}\in[\beta_q/2,\beta_q]\), a deterministic
frequency law selects \(\lambda_{q+1}\) exponentially large in
\(\lambda_q^{\beta_q/8}\), and the Fourier cutoff scale is
\(\mu_{q+1}=\lambda_{q+1}^{\beta_{q+1}/2}\). These choices separate the
child oscillation, its cutoff, and its terminal interval of time from the
corresponding parent scales.

The inductive step is closed by truncating the complete stage, not the parent and child
separately. Namely, the following level begins from
\(P_{q+2}=S_{\mu_{q+2}}\rho^{(q+1)}\). The nested cutoffs preserve all older
cutoff components and truncate only the newest child. The Fourier cutoff
supplies the higher spatial derivatives
needed at the following step through Bernstein estimates, thereby preventing
a loss of derivatives from propagating through the iteration. The cutoff
does introduce an error, but its exact commutator is again retained in the
force. This completes one parent--child cycle.

\paragraph{\textbf{The global induction and the scale exponent.}}

The iteration has two simultaneous goals. First, the new force increment
\(G_{q+1}:=\mathcal C_{\mu_{q+1}}(\rho^{(q)})
+E_{q+1}^{\mathrm{pert}}\) must decay at the child frequency. Once
\(\beta_{q+1}\leq1/40\), the estimates from the inductive step described
above give the following estimate uniformly in time for all integers
$m,k\geq0$:
\begin{equation}
  \|\partial_t^mG_{q+1}(t)\|_{C_x^k}
  \leq\lambda_{q+1}^{-1/4}
  \quad\text{whenever}\quad
  m+k\leq
  \left\lfloor\frac1{4\beta_{q+1}}\right\rfloor-10.
  \label{eq:intro-force-range}
\end{equation}
The second goal is that the terminal density gradient and velocity gradient
must keep growing through the inductive process.
Their principal scales are respectively
\(\lambda_q^{\beta_q}\) and
\(\lambda_q^{7\beta_q/8}\).

The scale exponent \(\beta_q\) therefore has two roles. It enters the size
and spatial concentration of the packets, and it controls the finite range
of mixed derivatives in \eqref{eq:intro-force-range}. Roughly
\(1/\beta_q\) derivatives of the force are controlled at level \(q\).
Smaller \(\beta_q\) gives a larger range, but it also increases both the
correction depth \(M_q\simeq\beta_q^{-2}\) and the number of parent
derivatives required to construct the next child. Although every finite
stage is smooth, the recursion records only a finite mixed-derivative range
for the density $\rho^{(q)}$ at level $q$, so \(\beta_q\) cannot simply be halved at every
step.

The recursion instead follows a retain-or-halve rule. It retains
\(\beta_q\) while the verified derivative range of the density $\rho^{(q)}$ at level $q$
catches up with the larger finite requirement at exponent \(\beta_q/2\).
Once that range and the associated finite frequency thresholds are
available, it sets \(\beta_{q+1}=\beta_q/2\). While \(\beta_q\) is retained,
frequency separation increases the available derivative reserve until the
halving criterion is satisfied; hence a positive exponent cannot be retained
forever. Consequently \(\beta_q\downarrow0\), and every
fixed pair \((m,k)\) therefore eventually lies in the range of
\eqref{eq:intro-force-range}.

The frequency growth dominates the decrease of the scale exponent: the scale
separation gives \(\beta_q\log\lambda_q\to\infty\). Thus
\(\lambda_q^{\beta_q}\) and
\(\lambda_q^{7\beta_q/8}\) still diverge. The same recursion can therefore
make the force increments small in successively more derivatives while
preserving the density-gradient and velocity-gradient amplification.

\paragraph{\textbf{The mixed space--time induction.}}

The force must be smooth jointly in space and time, whereas at a fixed stage
the decay estimate \eqref{eq:intro-force-range} covers only finitely many
mixed derivatives. Let
\(D=\partial_t+u_{\mathbb T}(P_{q+1})\cdot\nabla\) be the parent material
derivative. After the exact scalar amplitude factor is divided out, a
normalized Duhamel coefficient \(J\) satisfies
\((D+\widetilde c_P)J=\varphi\) with \(J(s_{q+1})=0\), where \(c_P\) is the
leading parent-interaction coefficient,
\(\widetilde c_P=c_P-c_P(0,t)\) is its centered form, and \(\varphi\) is the
already constructed Duhamel source. For \(p\geq1\),
differentiating this identity \(p-1\) times gives
\begin{equation}
  D^pJ
  =D^{p-1}\varphi
   -\sum_{r=0}^{p-1}\binom{p-1}{r}
      (D^r\widetilde c_P)D^{p-1-r}J.
  \label{eq:intro-duhamel-triangle}
\end{equation}
An order-\(p\) correction therefore requires positive material derivatives
of its Duhamel source only through order \(p-1\), at the cost of additional
spatial derivatives. This one-order gain is the mechanism by which the time
integral closes the induction.

The construction first closes the complete correction cascade at time order
zero. For each \(p\geq1\), it next proves the order-\(p\) estimate for the
seeded principal coefficient and then, successively, those for the
higher-correction coefficients, using only the fixed seed and already known
lower time orders of the parent inputs and preceding defects.
Only afterward does it estimate the order-\(p\) structured defects,
perturbation residual, cutoff commutator, and force increment. Thus no
order-\(p\) defect estimate is used to prove an order-\(p\) child-coefficient
estimate. Since
the phase satisfies \(DX_{q+1}=0\), conversion from material to ordinary
time derivatives uses only the additional spatial derivatives already built
into the finite range.

The activation cutoff and all its derivatives vanish at
\(s_{q+1}\), and this infinite-order vanishing propagates through the
Duhamel corrections.
Each child and its residual therefore extend smoothly by zero to earlier
times. Because \(\beta_q\to0\), every fixed mixed derivative eventually
satisfies \eqref{eq:intro-force-range}; the summability of
\(\lambda_q^{-1/4}\) then yields
\(F\in C^\infty([0,1]\times\mathbb T^2)\), including at the endpoint.

Finally, the activation times tend to one and the frequencies are lacunary.
On every compact interval \([0,T]\), \(T<1\), all sufficiently high-frequency
children vanish, so the finite stages stabilize there and give a classical
solution without a separate nonlinear compactness argument. The density
increments are summable in \(C^\eta\) for every \(\eta<1\), while the
terminal density gradients and velocity gradients diverge. Support separation
and Fourier-cutoff tail estimates ensure that later children do not interfere with the
dominant geometry on the preceding terminal intervals. This proves both
limits in \eqref{eq:main-blowup-limits} and the convergence in
\eqref{eq:endpoint-holder-convergence}.

Section~\ref{sec:velocity-and-jets} develops the localized oscillation
calculus using the periodic multiplier, cutoff, and kernel estimates proved
in the appendix. Section~\ref{sec:inductive-dynamics}
states the finite-stage invariant and proves the angle--amplitude mechanism.
Section~\ref{sec:one-layer} constructs one corrected child, and
Section~\ref{sec:global-iteration} gives the global recursion and proves
Theorem~\ref{thm:main}. The appendix also supplies the all-order mixed
estimates used to obtain joint smoothness of the force.
\newpage
\subsection{Relevant Literature}

\paragraph{Classical IPM and stable regimes.}
The behavior of the incompressible porous media equation (IPM) depends
strongly on the solution class and the background stratification.
C\'ordoba, Gancedo, and Orive established local existence and uniqueness
for classical solutions and several continuation criteria; they also
exhibited blow-up in a special infinite-energy class
\cite{CordobaGancedoOrive2007}. Around stable stratifications, Elgindi
proved nonlinear asymptotic stability \cite{Elgindi2017}, Castro,
C\'ordoba, and Lear obtained global quasi-stratified solutions in a
confined domain \cite{CastroCordobaLear2019}, and Bianchini, Crin-Barat,
and Paicu established asymptotic stability near a linearly stratified
state \cite{BianchiniCrinBaratPaicu2024}.

The stability theory coexists with mechanisms that create small scales.
Kiselev and Yao constructed solutions whose derivatives must grow without
bound if they remain smooth globally, and deduced nonlinear instability
for a class of stratified steady states \cite{KiselevYao2023}. At the
critical Sobolev index, Bianchini, C\'ordoba, and Mart\'inez-Zoroa proved
nonexistence and strong ill-posedness in $H^2$ for small perturbations of a
linearly stable profile \cite{BianchiniCordobaMartinezZoroa2025}. Park
recently sharpened stability analysis in a periodic channel by exploiting
the energy structure shared by IPM and Stokes transport \cite{Park2025}.
None of these stability or small-scale-growth results supplies a smooth,
finite-energy, unforced IPM singularity.

\paragraph{Weak nonuniqueness and mixing.}
At lower regularity, the picture changes sharply. C\'ordoba, Faraco, and
Gancedo constructed nontrivial bounded weak solutions compactly supported
in time, yielding nonuniqueness from zero initial data
\cite{CordobaFaracoGancedo2011}. Sz\'ekelyhidi then computed an explicit
relaxation of IPM and used convex integration to construct weak mixing
solutions for the unstable Muskat problem \cite{Szekelyhidi2012}. Isett
and Vicol proved the existence of H\"older-continuous weak solutions and
an $h$-principle for active-scalar multipliers that are not odd, a class
containing IPM \cite{IsettVicol2015}.

Subsequent work extended these constructions substantially. F\"orster and
Sz\'ekelyhidi treated broad classes of Muskat interfaces
\cite{ForsterSzekelyhidi2018}; Castro, C\'ordoba, and Faraco constructed
mixing solutions for fully unstable $H^5$ data
\cite{CastroCordobaFaraco2021}; Castro, Faraco, and Mengual quantified
degraded mixing \cite{CastroFaracoMengual2019} and later localized the
mixing zone for bubbles and turned interfaces
\cite{CastroFaracoMengual2022}; and Mengual incorporated viscosity jumps
\cite{Mengual2022}. Castro, Faraco, and Gebhard developed a macroscopic
entropy formulation intended to select a coarse-scale evolution within
this nonunique regime \cite{CastroFaracoGebhard2025}. These are rigorous
results about low-regularity weak solutions and must be kept logically
separate from loss of regularity along a classical solution.

\paragraph{Rigorous IPM singularity results.}
The known rigorous singularity results occupy several distinct settings.
Castro, C\'ordoba, Gancedo, and Orive identified an explicit blow-up
mechanism in a special infinite-energy reduction
\cite{CastroCordobaGancedoOrive2009}. Collot, Prange, and Tan recently
proved nonlinear stability of the resulting self-similar blow-up, again
within that infinite-energy reduction on an infinite periodic strip
\cite{CollotPrangeTan2026}. Kiselev and Sarsam proved finite-time blow-up
for a one-dimensional boundary-layer model of IPM, rather than the full
two-dimensional system \cite{KiselevSarsam2025}.

Two recent works are closer to the full equation but retain essential
qualifications. Dembski's preprint proves singularity formation for the
unforced IPM equation in a wedge, starting from Lipschitz data whose
density vanishes on the boundary \cite{Dembski2025}. C\'ordoba and
Mart\'inez-Zoroa construct smooth, finite-energy solutions of
two-dimensional IPM that become singular under a compactly supported
spatially smooth force \cite{CordobaMartinezZoroa2025}. The latter provides the
closest precedent for the present periodic construction. Its introductory
theorem states only \(F\in L_t^\infty C_x^\infty\), while Remark~1 treats
higher time regularity as requiring additional work.

\paragraph{The $C^{1,\alpha}$ Euler comparison.}
The $C^{1,\alpha}$ incompressible Euler literature supplies an important
comparison. Earlier strong-solution constructions of Elgindi and Jeong
used conical or corner domains \cite{ElgindiJeong2019}. In the whole
space, Elgindi constructed finite-time singularities for unforced,
axisymmetric Euler flows without swirl when $\alpha$ is sufficiently
small \cite{ElgindiEuler2021}. Elgindi, Ghoul, and Masmoudi subsequently
obtained finite-energy, locally self-similar examples and stability in the
same regularity class \cite{ElgindiGhoulMasmoudi2021}.

A separate boundary mechanism originated in the numerical scenario of
Luo and Hou \cite{LuoHouPNAS2014,LuoHouMMS2014}. For axisymmetric Euler
with swirl in a cylindrical domain, the boundary stagnation structure is
related to a two-dimensional Boussinesq system. Chen and Hou proved
blow-up for $C^{1,\alpha}$ data in this boundary setting
\cite{ChenHou2021} and later analyzed stability and instability of the
associated singular solutions \cite{ChenHou2024}.

Among boundary-free constructions, C\'ordoba, Mart\'inez-Zoroa, and Zheng
produced an infinitely layered, unforced solution that is globally
$C^{1,\alpha}$, smooth away from one point, and singular in finite time
\cite{CordobaMartinezZoroaZheng2025}. Elgindi and Pasqualotto announced a
different mechanism driven by Taylor--Couette instability
\cite{ElgindiPasqualotto2023}. Chen showed that the earlier Elgindi and
Chen--Hou constructions can be modified so that the velocity is nonsmooth
only at the singular point \cite{Chen2024Smoothness}. For every
\(\varepsilon>0\), C\'ordoba and Mart\'inez-Zoroa also constructed a forced,
nonaxisymmetric \(C^{1,\alpha}\) singularity with a force uniformly bounded
in \(C^{1,1/2-\varepsilon}\cap L^2\)
\cite{CordobaMartinezZoroa2023}.

Two 2026 developments reach the sharp range below the classical threshold.
For every $0<\alpha<1/3$, Shkoller announced Type-I blow-up for unforced,
finite-energy, axisymmetric no-swirl flow with $C^{1,\alpha}$ initial
velocity \cite{Shkoller2026}. Independently, Chen's companion preprints
construct exact $C^\alpha$ self-similar vorticity profiles and
asymptotically self-similar blow-up with locally $C^\alpha$ vorticity and
locally $C^{1,\alpha}$ velocity for the same range
\cite{Chen2026PartI,Chen2026PartII}. These results approach
$C^{1,1/3}$ from below; neither claims the endpoint.

\paragraph{Mollification as a methodological precedent.}
The use of an intermediate smoothing operator has a direct methodological
precedent in the admissible-Euler convex-integration construction of
Buckmaster, De Lellis, Sz\'ekelyhidi, and Vicol
\cite[Section~2.4]{BuckmasterDeLellisSzekelyhidiVicol2019}. In that paper,
an Euler--Reynolds pair is mollified at a scale $\ell$ before the gluing
and perturbation stages to control derivative loss typical of
convex-integration schemes. In the present IPM argument, nested Fourier
cutoffs play an analogous bookkeeping role, but the exact cutoff
commutators are retained in the force. The analogy is methodological
only; it does not invoke convex integration or a weak-nonuniqueness
conclusion.

\paragraph{PINN discovery of candidate blow-up profiles.}
Physics-informed neural networks have recently been used as targeted
discovery tools for self-similar singular profiles. Wang, Lai,
G\'omez-Serrano, and Buckmaster found numerical profiles for the
two-dimensional Boussinesq and three-dimensional axisymmetric Euler
equations, as well as an unstable profile for the CCF model
\cite{WangLaiGomezSerranoBuckmaster2023}. A subsequent large-scale search
reported stable and unstable self-similar profiles for IPM with boundary
and for Boussinesq, with the latter linked to axisymmetric Euler with
boundary \cite{WangEtAl2025Discovery}. Wang, L\'eger, Lai, and Buckmaster
later introduced a gradient-normalized residual to resolve sharp profiles
more accurately and numerically identified a fourth unstable IPM
candidate \cite{WangLegerLaiBuckmaster2025}.

\section{Periodic multipliers, nested cutoffs, and transported oscillations}
\label{sec:velocity-and-jets}

For \(\alpha\in\mathbb R\), set
\[
  e(\alpha)=(\cos\alpha,\sin\alpha),
  \qquad
  e^\perp(\alpha)=(-\sin\alpha,\cos\alpha).
\]
To avoid overloading the notation later, we write \(C_0=2\pi\).

\subsection{The periodic operator and parity}

The Fourier multiplier symbol in \eqref{eq:periodic-velocity-symbol} is real,
even, homogeneous
of degree zero away from the origin, and orthogonal to the frequency.
Consequently \(u_{\mathbb T}\) commutes with translations and spatial
derivatives and maps real scalars to real divergence-free vector fields.

Define the scalar operator \(V_{\mathbb T}\) by
\[
  \widehat{V_{\mathbb T}f}(k)
  =\frac{2\pi\mathrm{i} k_1}{|k|^2}\widehat f(k)
  \quad(k\ne0),
  \qquad
  \widehat{V_{\mathbb T}f}(0)=0.
\]
Writing
\(u_{\mathbb T}(\rho)
=(u_{\mathbb T,1}(\rho),u_{\mathbb T,2}(\rho))\)
for its two vector components, we have
\begin{equation}
  u_{\mathbb T,1}(\rho)=-V_{\mathbb T}(\partial_2\rho),
  \qquad
  u_{\mathbb T,2}(\rho)=V_{\mathbb T}(\partial_1\rho).
  \label{eq:periodic-velocity-from-V}
\end{equation}

Write \(A\Subset B\) when \(\overline A\) is a compact subset of \(B\).
Fix symmetric coordinate balls
\begin{equation}
  U_0\Subset U\Subset(-\pi,\pi)^2,
  \label{eq:periodic-coordinate-balls}
\end{equation}
centered at the origin. Every function supported in \(U_0\) below is
identified with its local lift; in Euclidean expressions, this lift is
extended by zero to \(\mathbb R^2\). Fix an
even function \(\eta_{\rm lin}\in C_c^\infty(U)\) that equals one on
a neighborhood of \(\overline{U_0}\).  For \(p\in\mathbb R^2\), let
\(\psi_p\in C^\infty(\mathbb T^2)\) be the periodic function obtained by
extending
\[
  x\longmapsto\eta_{\rm lin}(x)(p\cdot x)
\]
by zero from the chart.  Thus \(\psi_p\) is odd and agrees with the linear
phase \(p\cdot x\) near \(\overline{U_0}\). Extend the periodic Fourier
multiplier symbol homogeneously to
\begin{equation}
  m(\xi)
  =2\pi\left(\frac{\xi_1\xi_2}{|\xi|^2},
             -\frac{\xi_1^2}{|\xi|^2}\right),
  \qquad \xi\in\mathbb R^2\setminus\{0\}.
  \label{eq:homogeneous-velocity-symbol}
\end{equation}

For \(\lambda>0\), \(\alpha,\vartheta\in\mathbb R\), and a smooth amplitude
\(b=b(x)\) compactly supported in \(U_0\), set
\[
  z(x)=b(x)\sin(\lambda e(\alpha)\cdot x+\vartheta).
\]
Define the zeroth-order approximation to the velocity by
\begin{equation}
  u^{[0]}[z]
  =-C_0\cos\alpha\,e^\perp(\alpha)z.
  \label{eq:leading-wave-response}
\end{equation}
Indeed, evaluating the homogeneous Fourier multiplier symbol
\eqref{eq:homogeneous-velocity-symbol}
at \(e(\alpha)\)
gives
\[
  2\pi(\cos\alpha\sin\alpha,-\cos^2\alpha)
  =-C_0\cos\alpha\,e^\perp(\alpha).
\]
The high-order localized version of this calculation is proved below.

\subsection{Nested Fourier cutoffs}

Fix an even radial function \(\chi\in C_c^\infty(\mathbb R^2)\) such that
\[
  \chi(\xi)=1\quad(|\xi|\leq1),
  \qquad
  \chi(\xi)=0\quad(|\xi|\geq2).
\]
For \(\mu\geq1\), define
\begin{equation}
  \widehat{S_\mu f}(k)=\chi(k/\mu)\widehat f(k),
  \qquad k\in\mathbb Z^2.
  \label{eq:periodic-cutoff}
\end{equation}

For later use, define the cutoff commutator by
\begin{equation}
  \mathcal C_\mu(h)
  :=u_{\mathbb T}(S_\mu h)\cdot\nabla S_\mu h
    -S_\mu\bigl(u_{\mathbb T}(h)\cdot\nabla h\bigr).
  \label{eq:periodic-cutoff-commutator-definition}
\end{equation}

If \(f\) is odd, then
\(u_{\mathbb T}(f)\cdot\nabla f\) is odd and has zero mean.  The same is
true after applying any cutoff.  These observations account both for the
global zero mode and for the exact nonlinear Fourier support used later in the
force recursion. Proofs of the Bernstein-type estimates for $S_\mu$ and $\mathcal{C}_\mu$ used throughout this work may be found in the appendix.

\subsection{The local periodic kernel}

For \(f\in C^\infty(\mathbb T^2)\) supported in \(U_0\), define
\begin{equation}
  V_{\mathbb R}f(x)
  =\int_{\mathbb R^2}\frac{h_1}{|h|^2}f(x+h)\,dh,
  \label{eq:euclidean-V}
\end{equation}
acting on the extension by zero to \(\mathbb R^2\) of the local lift of \(f\).

\noindent\emph{Remark.} For \(f\) supported in \(U_0\), we decompose
\(V_{\mathbb T}f\) on \(U\) into \(V_{\mathbb R}f\) plus an integral of
\(f\) against a smooth kernel; this decomposition is proved in the appendix
in Lemma~\ref{lem:periodic-kernel-decomposition}.

\subsection{Affine high-frequency velocity expansion}

The results below isolate the oscillatory inputs used in the layer
construction.  An affine expansion identifies the local multiplier response;
transport keeps the nonlinear phase nearly affine on the perturbation support;
the periodic expansion separates finitely many correctable terms from a small
global remainder; and a common transported phase yields the anisotropic gain
in quadratic interactions.

Choose \(\kappa_0\in C_c^\infty([0,\infty))\), equal to one near zero, so
that \(h\mapsto\kappa_0(|h|)\) is smooth on \(\mathbb R^2\). For a unit
vector \(\omega=(\omega_1,\omega_2)\) and a multi-index
\(\gamma\), set
\[
  \mathcal K_\gamma(h)
  =\frac{h_1h^\gamma}{|h|^2\gamma!},
\]
and define
\begin{align}
  c_\gamma^s(\omega)
  &=\lim_{\sigma\downarrow0}
    \int_{\mathbb R^2}\mathcal K_\gamma(h)
      \cos(\omega\cdot h)\kappa_0(\sigma|h|)\,dh,
  \label{eq:periodic-profile-s}\\
  c_\gamma^c(\omega)
  &=\lim_{\sigma\downarrow0}
    \int_{\mathbb R^2}\mathcal K_\gamma(h)
      \sin(\omega\cdot h)\kappa_0(\sigma|h|)\,dh.
  \label{eq:periodic-profile-c}
\end{align}
For a scalar-valued profile \(f\), an affine phase
\(\Phi=\nu\omega\cdot x+\vartheta\), and an integer \(N\geq0\), put
\[
  \mathcal V_\Phi^N[f]
  =\sum_{|\gamma|\leq N}
    \nu^{-|\gamma|-1}\bigl(\partial^\gamma f\bigr)\cdot
    \bigl(c_\gamma^s(\omega)\sin\Phi
          +c_\gamma^c(\omega)\cos\Phi\bigr),
  \qquad
  \mathcal V_\Phi^{-1}=0.
\]
Define the Euclidean singular velocity by
\[
  u_{\mathbb R}(h)
  =\bigl(-V_{\mathbb R}(\partial_2h),
          V_{\mathbb R}(\partial_1h)\bigr).
\]
The subscript \(\mathrm{pw}\) below indicates that the expansion is built from
the affine plane wave \(f\sin\Phi\).
The order-\(N\) truncated high-frequency velocity expansion is
\begin{align}
  u_{\mathrm{pw},1}^{[N]}[f;\Phi]
  &=-\mathcal V_{\Phi+\pi/2}^N[\nu\omega_2 f]
    -\mathcal V_\Phi^{N-1}[\partial_2f],
  \label{eq:periodic-plane-wave-u1}\\
  u_{\mathrm{pw},2}^{[N]}[f;\Phi]
  &=\mathcal V_{\Phi+\pi/2}^N[\nu\omega_1 f]
    +\mathcal V_\Phi^{N-1}[\partial_1f].
  \label{eq:periodic-plane-wave-u2}
\end{align}

\begin{lemma}[Affine high-frequency velocity expansion]
\label{lem:affine-velocity-expansion}
The limits in \eqref{eq:periodic-profile-s}--
\eqref{eq:periodic-profile-c} exist uniformly in \(\omega\in\mathbb S^1\)
and define smooth functions on \(\mathbb S^1\).  They satisfy
\begin{equation}
  c_\gamma^s=0\quad(|\gamma|\text{ even}),
  \qquad
  c_\gamma^c=0\quad(|\gamma|\text{ odd}),
  \qquad
  c_0^s=0,\quad c_0^c(\omega)=2\pi\omega_1.
  \label{eq:periodic-profile-identities}
\end{equation}
Let \(N,J\geq0\), \(0<\varepsilon<1\), \(\nu\geq2\),
\(\omega\in\mathbb S^1\), \(\vartheta\in\mathbb R\), and a
scalar-valued profile \(f\in C_c^\infty(U_0)\). For
\(\Phi=\nu\omega\cdot x+\vartheta\),
\begin{align}
 \|V_{\mathbb R}(f\sin\Phi)-\mathcal V_\Phi^N[f]\|_{C^J(\mathbb R^2)}
 \leq C_{N,J,\varepsilon}\bigg(
  \sum_{d=0}^N\nu^{d\varepsilon+J-N}
      \|f\|_{C^{N-d+J}}
  +\nu^{J-(N+2)\varepsilon}\|f\|_{C^{N+J+1}}
 \bigg).
 \label{eq:periodic-scalar-master-error}
\end{align}
For \(N\geq0\), define
\[
  \mathcal R_\Phi^N[f]
  :=V_{\mathbb R}(f\sin\Phi)-\mathcal V_\Phi^N[f].
\]
If \(N\geq1\), then
\begin{align*}
 u_{\mathbb R}(f\sin\Phi)-u_{\mathrm{pw}}^{[N]}[f;\Phi]
 =\bigl(&-\mathcal R_{\Phi+\pi/2}^N[(\nu\omega_2)f]
         -\mathcal R_\Phi^{N-1}[\partial_2f],\\
        &\mathcal R_{\Phi+\pi/2}^N[(\nu\omega_1)f]
         +\mathcal R_\Phi^{N-1}[\partial_1f]\bigr).
\end{align*}
Thus \(u_{\mathrm{pw}}^{[N]}[f;\Phi]\) approximates the Euclidean singular
velocity of the localized oscillation \(f\sin\Phi\), with each scalar
remainder controlled by \eqref{eq:periodic-scalar-master-error} at order
\(N\) or \(N-1\).  It does not approximate the oscillatory scalar itself.
\end{lemma}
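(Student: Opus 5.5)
The plan is to write the Euclidean operator as an oscillatory integral centered at \(x\) and Taylor expand the profile. For \(\Phi=\nu\omega\cdot x+\vartheta\) we have
\[
  V_{\mathbb R}(f\sin\Phi)(x)=\int_{\mathbb R^2}\frac{h_1}{|h|^2}f(x+h)\sin\bigl(\Phi(x)+\nu\omega\cdot h\bigr)\,dh .
\]
Expanding \(\sin(\Phi+\nu\omega\cdot h)=\sin\Phi\cos(\nu\omega\cdot h)+\cos\Phi\sin(\nu\omega\cdot h)\) and replacing \(f(x+h)\) by its Taylor polynomial \(\sum_{|\gamma|\le N}\partial^\gamma f(x)h^\gamma/\gamma!\) produces the integrals \(\int\mathcal K_\gamma(h)\cos(\nu\omega\cdot h)\,dh\) and \(\int\mathcal K_\gamma(h)\sin(\nu\omega\cdot h)\,dh\). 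These are interpreted via the \(\kappa_0(\sigma|h|)\) regularization. Since \(\mathcal K_\gamma\) is homogeneous of degree \(|\gamma|-1\), the substitution \(h=h'/\nu\) gives exactly \(\nu^{-|\gamma|-1}c^{s,c}_\gamma(\omega)\), which is the structure of \(\mathcal V^N_\Phi[f]\).

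\textbf{Step 1: the coefficients.} I would first prove existence, uniformity and smoothness of the limits \eqref{eq:periodic-profile-s}--\eqref{eq:periodic-profile-c}. I split \(\mathcal K_\gamma=\kappa_0(|h|)\mathcal K_\gamma+(1-\kappa_0(|h|))\mathcal K_\gamma\). The inner piece is absolutely integrable because \(|\mathcal K_\gamma|\lesssim|h|^{|\gamma|-1}\). For the outer piece, I integrate by parts repeatedly using \((\omega\cdot\nabla_h)e^{i\omega\cdot h}=ie^{i\omega\cdot h}\). Each step gains one power of \(|h|^{-1}\) from the kernel, so after \(|\gamma|+2\) steps the integrand is absolutely integrable uniformly in \(\sigma\). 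The terms in which derivatives hit \(\kappa_0(\sigma|h|)\) tend to zero, and everything depends smoothly on \(\omega\).

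The parity identities follow from \(\mathcal K_\gamma(-h)=(-1)^{|\gamma|+1}\mathcal K_\gamma(h)\). For even \(|\gamma|\) the kernel is odd, so the cosine integral vanishes; for odd \(|\gamma|\) it is even, so the sine integral vanishes. For \(c_0^c\) I use the distributional Fourier transform of \(h_1/|h|^2\), which is \(-2\pi i\,\xi_1/|\xi|^2\) up to the sign convention. Evaluating at \(\xi=\omega\) gives \(2\pi\omega_1\), consistent with \eqref{eq:leading-wave-response}.

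\textbf{Step 2: the error estimate \eqref{eq:periodic-scalar-master-error}, which I expect to be the main obstacle.} I insert the partition \(1=\kappa_0(\nu^{\varepsilon}|h|)+(1-\kappa_0(\nu^{\varepsilon}|h|))\).
\begin{itemize}
\item \emph{Near region} \(|h|\lesssim\nu^{-\varepsilon}\). The Taylor remainder of order \(N+1\) is bounded by \(\|f\|_{C^{N+1}}|h|^{N+1}\). Integrated against \(|h|^{-1}\), this gives \(\nu^{-(N+2)\varepsilon}\).
\item \emph{Far region}. I do not Taylor expand there. Instead, I integrate by parts \(N\) times in the direction \(\omega\), both in the full integral \(\int(1-\kappa_0(\nu^\varepsilon|h|))\frac{h_1}{|h|^2}f(x+h)e^{i\nu\omega\cdot h}\,dh\) and in each Taylor-term integral. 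Each integration by parts gains \(\nu^{-1}\). When \(d\) derivatives fall on the kernel or cutoff, they cost at most \(\nu^{d\varepsilon}\) on the support \(|h|\gtrsim\nu^{-\varepsilon}\), and the remaining \(N-d\) derivatives fall on \(f\). This yields the sum \(\sum_d\nu^{d\varepsilon-N}\|f\|_{C^{N-d}}\). The compact support of \(f\) handles large \(|h|\) for the first integral. For the polynomial terms, the \(\sigma\)-regularized integrals are controlled by the same scaling as in Step 1.
\item \emph{Derivatives}. The \(J\) spatial derivatives either fall on \(f\) or on the phase, and the phase derivatives cost at most \(\nu^J\). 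This accounts for the shifts \(\nu^J\) and \(\|f\|_{C^{\cdot+J}}\) in the bound.
\end{itemize}
The delicate bookkeeping is in matching the near/far split of the Taylor-polynomial integrals with the exact coefficients \(c_\gamma\). Here I would write \(c_\gamma=\) (near part) \(+\) (far part) and estimate the far part of the Taylor terms by the same integration by parts, at cost \(\nu^{d\varepsilon-N}\|f\|_{C^{|\gamma|}}\) with \(|\gamma|\le N\).

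\textbf{Step 3: the velocity identity.} This is algebraic. We have \(\partial_2(f\sin\Phi)=\nu\omega_2f\cos\Phi+(\partial_2f)\sin\Phi\), and \(\cos\Phi=\sin(\Phi+\pi/2)\), and similarly for \(\partial_1\). By linearity of \(V_{\mathbb R}\) and the definitions \eqref{eq:periodic-plane-wave-u1}--\eqref{eq:periodic-plane-wave-u2}, the difference \(u_{\mathbb R}(f\sin\Phi)-u_{\mathrm{pw}}^{[N]}[f;\Phi]\) is exactly the stated combination of \(\mathcal R^N\) and \(\mathcal R^{N-1}\). The expansion order \(N-1\) is used on the \(\partial_jf\) terms because they lack the factor \(\nu\).
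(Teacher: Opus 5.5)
Your proposal is correct and follows the paper's proof essentially step for step. The shared steps are: regularized moments via repeated integration by parts; parity under \(h\mapsto-h\); the Fourier multiplier for \(c_0^c\); a smooth split at scale \(\nu^{-\varepsilon}\), with Taylor expansion only in the near field and \(N\) integrations by parts in the far field; and the algebraic identity \(\partial_j(f\sin\Phi)=(\nu\omega_j)f\sin(\Phi+\pi/2)+(\partial_jf)\sin\Phi\) for the velocity. The one point to tighten is your ``far part of the Taylor moments.'' The paper treats it as the gap between the rescaled finite-cutoff moments (cutoff \(\kappa_0(\nu^{\varepsilon-1}|h|)\)) and their limits, and bounds that gap by an arbitrarily negative power of \(\nu\) using as many integrations by parts as in the existence argument. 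Exactly \(N\) integrations by parts would not make those tails absolutely integrable when \(|\gamma|\geq N-1\), so you should use the Step~1 count there rather than \(N\).
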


\begin{proof}
Subtract two radial cutoffs in
\eqref{eq:periodic-profile-s}--\eqref{eq:periodic-profile-c} and integrate in
a coordinate for which \(|\omega_a|\geq1/2\).  For every \(R>0\), more than
\(|\gamma|+2+R\) integrations by parts make the difference
\(O(\sigma^R)\), uniformly on each of two direction charts.  Differentiating
in \(\omega\) proves smooth uniform convergence.  Reflection
\(h\mapsto-h\) gives the first two identities in
\eqref{eq:periodic-profile-identities}.  The Fourier multiplier of
\eqref{eq:euclidean-V} is
\(2\pi\mathrm{i}\xi_1|\xi|^{-2}\); applying it to
\(\sin(\omega\cdot x)\) gives
\(2\pi\omega_1\cos(\omega\cdot x)\), proving the last two identities.

Set \(\kappa_\nu(h)=\kappa_0(\nu^\varepsilon|h|)\) and split
\eqref{eq:euclidean-V} smoothly into the terms with factors \(\kappa_\nu\)
and \(1-\kappa_\nu\). In the near-field term, Taylor-expand \(f(x+h)\) through
order \(N\). Scaling \(h\) by \(\nu\) gives
\(\mathcal V_\Phi^N[f]\); after \(J\) output derivatives, the Taylor
remainder is bounded by
\[
  C\nu^{J-(N+2)\varepsilon}\|f\|_{C^{N+J+1}}.
\]
In the far-field term, choose \(a\) with \(|\omega_a|\geq1/2\) and use, for its
complex form,
\[
 \int (1-\kappa_\nu)\frac{h_1}{|h|^2}f(x+h)
       e^{\mathrm{i}\nu\omega\cdot h}\,dh
 =\frac{(-1)^N}{(\mathrm{i}\nu\omega_a)^N}
   \int e^{\mathrm{i}\nu\omega\cdot h}
     \partial_{h_a}^N\!\left[(1-\kappa_\nu)
       \frac{h_1}{|h|^2}f(x+h)\right]dh.
\]
There are no boundary terms because the split is smooth and the profile is
compactly supported. When \(d\) derivatives hit the cutoff or kernel, the
cutoff cost is \(\nu^{d\varepsilon}\), and the contribution is bounded by
\[
  C\nu^{d\varepsilon+J-N}\|f\|_{C^{N-d+J}},
  \qquad 0\leq d\leq N.
\]
After scaling the near-field term, its cutoff is
\(\kappa_0(\nu^{\varepsilon-1}|h|)\); hence the finite-cutoff moments differ
from \eqref{eq:periodic-profile-s}--\eqref{eq:periodic-profile-c} by an
arbitrarily negative power of \(\nu\). This proves
\eqref{eq:periodic-scalar-master-error}.  Finally,
\[
  \partial_j(f\sin\Phi)
  =(\nu\omega_j)f\sin(\Phi+\pi/2)+(\partial_jf)\sin\Phi.
\]
Apply the scalar identity at orders \(N\) and \(N-1\) to obtain the displayed
velocity remainder.
\end{proof}

At a fixed time, let \(N\geq0\), \(\lambda>0\), \(\ell\in\mathbb N\),
\(\vartheta\in\mathbb R\), \(X\in C^\infty(U;\mathbb R)\) with
\(A:=\nabla X(0)\ne0\), and \(b\in C_c^\infty(U_0)\). Set
\[
  Y_\ell=\ell(X-A\cdot x),\qquad
  \nu_\ell=\lambda\ell|A|,\qquad
  \omega=A/|A|,
  \qquad
  \Phi_{\ell,\vartheta}=\nu_\ell\omega\cdot x+\vartheta.
\]
For the chosen sine-band presentation \((b,\ell,\vartheta)\), define
\begin{align}
  u^{[N]}\!\left[b\sin(\lambda\ell X+\vartheta)\right]
  ={}&u_{\mathrm{pw}}^{[N]}
       [b\cos(\lambda Y_\ell);\Phi_{\ell,\vartheta}]
       \notag\\
    &+u_{\mathrm{pw}}^{[N]}
       [b\sin(\lambda Y_\ell);
          \Phi_{\ell,\vartheta}+\pi/2].
  \label{eq:periodic-nonlinear-approximant}
\end{align}
Cosine bands are defined by shifting \(\vartheta\) by \(\pi/2\), and finite
presentations are treated band by band. Thus \(u^{[N]}\) here denotes a
quantity defined from the chosen presentation, rather than an operator
determined only by the resulting scalar function. Formula
\eqref{eq:periodic-nonlinear-approximant} is the exact sine-addition split;
no affine approximation of the density is made. When the data depend smoothly
on time, this definition is applied at each fixed time.

\subsection{Transported phase and oscillatory products}

For \(a\in\mathbb R\), write \(a_+=\max\{a,0\}\).

The next lemma assumes an odd parent with compact Fourier support and with scale-local derivative
bounds and an odd transported phase that is nearly affine on \(B_r\). It
controls the nonlinear phase remainder \(Y_\ell\) and the time dependence of
the induced frequency \(\nu_\ell\) and direction \(\alpha\); the following
corollary then converts these bounds into estimates for localized oscillatory
products.

\begin{lemma}[Transported-phase estimates]
\label{lem:transported-phase-estimates}
Let \(0<\beta\leq1/8\), \(\lambda\geq e\), and set
\[
  L=\log\lambda,\qquad
  \mu=\lambda^{\beta/2},\qquad
  M=\lceil\beta^{-2}\rceil,\qquad
  r=\lambda^{-2\beta}L^4.
\]
Let \(I=[s,1]\), where \(0\leq s\leq1\), and let \(B_r\) denote the
Euclidean ball of radius \(r\) centered at the origin. Assume
\(\overline{B_r}\subset U_0\). Fix integers \(T,K\geq0\), and define
\begin{equation}
  N_{\mathrm{prof}}=K+M+T+2,\qquad
  N_{\mathrm{phase}}=N_{\mathrm{prof}}+1.
  \label{eq:periodic-packet-orders}
\end{equation}
The parent estimates use derivatives through \(N_{\mathrm{prof}}\), whereas
the phase estimates use one additional derivative through
\(N_{\mathrm{phase}}\).  All constants in the hypotheses below are
independent of \(\lambda\) and the harmonic index \(\ell\).
Let \(P\in C^\infty(\mathbb T^2\times I)\) be odd in space and satisfy
\(\operatorname{supp}\widehat{P(\cdot,t)}\subset\{|k|\leq2\mu\}\) for
every \(t\in I\).  Put
\[
  v=u_{\mathbb T}(P),\qquad
  D=\partial_t+v\cdot\nabla,\qquad
  \mathcal G=\nabla P,
\]
and suppose on \(B_r\times I\) that
\begin{align}
  \|D^pv\|_{C^0(B_r)}
  &\leq C_pL^{C_p}r,
  \notag\\
  \|D^pv\|_{C^{j+1}(B_r)}
  &\leq C_{p,j}L^{C_{p,j}}\mu^j,
  \label{eq:periodic-packet-velocity-input}
\end{align}
for \(0\leq p\leq(T-1)_+\), \(0\leq j\leq N_{\mathrm{prof}}\), and
\begin{equation}
  \|D^p\mathcal G\|_{C^j(B_r)}
  \leq C_{p,j}L^{C_{p,j}}\mu^j
  \quad(0\leq p\leq T,\ p+j\leq N_{\mathrm{prof}}).
  \label{eq:periodic-packet-gradient-input}
\end{equation}
Let \(X\in C^\infty(U\times I;\mathbb R)\) be odd in space and satisfy on
\(B_r\times I\)
\[
  DX=0,\qquad
  L^{-C}\leq|\nabla X|\leq L^C,
\]
and
\begin{align}
  \|\nabla^jX\|_{C^0(B_r)}
  &\leq C_jL^{C_j}\mu^{(j-1)_+}
  &&(1\leq j\leq N_{\mathrm{phase}}),
  \notag\\
  \|X-\nabla X(0,t)\cdot x\|_{C^0(B_r)}
  &\leq L^C\lambda^{-4\beta},
  \notag\\
  \|\nabla X-\nabla X(0,t)\|_{C^0(B_r)}
  &\leq L^C\lambda^{-2\beta}.
  \label{eq:periodic-packet-phase-input}
\end{align}
The Fourier-support condition on \(P\) records how
\eqref{eq:periodic-packet-velocity-input}--
\eqref{eq:periodic-packet-gradient-input} are obtained later; the arguments
from this lemma through Corollary~\ref{cor:common-phase-interactions} use
those displayed bounds and oddness, not that support condition itself. The
last two lines of \eqref{eq:periodic-packet-phase-input} also follow from its
first line and oddness by Taylor's theorem, but are recorded in the form used
below.
Set
\[
  A(t)=\nabla X(0,t),\qquad
  Y_\ell=\ell(X-A(t)\cdot x),\qquad
  \nu_\ell=\lambda\ell|A(t)|.
\]
Assume that \(\nu_\ell\geq2\) on \(I\) for every integer
\(1\leq\ell\leq2^{M+1}\).

Choose the continuous angle \(\alpha\) by
\[
  \frac{\nabla X(0,t)}{|\nabla X(0,t)|}=e(\alpha(t)),
\]
with \(\alpha(1)\in[-\pi,\pi)\). One has
\begin{align}
  \max_{\mathcal D\in\{D,\partial_t\}}
  \|\mathcal D^pY_\ell\|_{C^0(B_r)}
  &\leq C_pL^{C_p}\lambda^{-4\beta},
  \notag\\
  \max_{\mathcal D\in\{D,\partial_t\}}
  \|\mathcal D^p\nabla Y_\ell\|_{C^j(B_r)}
  &\leq C_{p,j}L^{C_{p,j}}
     \lambda^{-2\beta+2j\beta},
  \label{eq:periodic-packet-phase-block}\\
  \nu_\ell^{-1}|\partial_t^p\nu_\ell|
  +|\partial_t^p\alpha|
  &\leq C_pL^{C_p}.
  \label{eq:periodic-packet-parameter-path}
\end{align}
Here \(0\leq p\leq T\), and \(p+j\leq N_{\mathrm{prof}}\) in the second
line.  Also
\(L^{-C}\lambda\leq\nu_\ell\leq C_\beta L^C\lambda\).
The constants depend only on \(\beta,T,K\), the fixed chart and cutoff data,
and the finitely many constants in the hypotheses.  They are independent of
\(\lambda\) and \(\ell\). In the iteration,
\(\beta\in\{2^{-3-k}:k\geq0\}\); for \(\beta\geq\beta_0>0\), all constants
and logarithmic exponents here and in the results below may therefore be
replaced by their maximum over this finite set.
\end{lemma}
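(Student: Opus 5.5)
The plan is to derive everything from two transport identities. Since \(DX=0\), differentiating in \(x_i\) gives
\[
  D\nabla X=-(\nabla v)^{T}\nabla X .
\]
Evaluating at the origin, and using that \(v\) is odd so that \(v(0,t)=0\), we get
\[
  \frac{d}{dt}A(t)=-(\nabla v(0,t))^{T}A(t).
\]
Differentiating this ODE repeatedly, with the \(C^1\) bounds on \(D^pv\) from \eqref{eq:periodic-packet-velocity-input} (note \(\partial_t^p[\nabla v(0,t)]\) is a combination of \(D^{p'}\nabla v\) at the origin, since \(v(0,t)=0\)), yields \(|\partial_t^pA|\leq C_pL^{C_p}|A|\). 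Together with \(L^{-C}\leq|A|\leq L^C\), this gives \eqref{eq:periodic-packet-parameter-path} for \(\nu_\ell=\lambda\ell|A|\) and for \(\alpha\). Indeed, \(\partial_t\alpha=e^\perp(\alpha)\cdot\partial_tA/|A|\), and higher derivatives follow by the Faà di Bruno rule. The two-sided bound on \(\nu_\ell\) follows from \(1\leq\ell\leq2^{M+1}\), so \(C_\beta\) absorbs \(2^{M+1}\).

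\textbf{Estimates on \(\nabla Y_\ell\).} Write \(\nabla Y_\ell=\ell(\nabla X-A)\). For \(p=0\), the bound \(C_jL^{C_j}\lambda^{-2\beta+2j\beta}\) comes from the last line of \eqref{eq:periodic-packet-phase-input} when \(j=0\), and from the first line when \(j\geq1\). In the latter case one uses \(\mu^{j}=\lambda^{j\beta/2}\leq\lambda^{-2\beta+2j\beta}\) for \(j\geq1\) (since \(\beta j/2\leq 2j\beta-2\beta\) once \(j\geq2\); for \(j=1\) one needs the sharper Taylor bound from oddness, \(|\nabla^2X|\) with \(\mu\)). The factor \(\ell\leq 2^{M+1}\) is a \(\beta\)-dependent constant.

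For \(p\geq1\), I will compute
\[
  D\nabla Y_\ell=-\ell(\nabla v)^{T}\nabla X-\ell\,\partial_tA
  =-\ell\bigl[(\nabla v)^{T}\nabla X-(\nabla v(0))^{T}A\bigr].
\]
This difference is small: it equals \((\nabla v-\nabla v(0))^T\nabla X+(\nabla v(0))^T(\nabla X-A)\), where \(|\nabla v-\nabla v(0)|\lesssim r\|\nabla^2v\|\lesssim L^{C}\lambda^{-2\beta}L^4\mu\). Here I need care: \(r\mu=\lambda^{-3\beta/2}L^4\), which is not \(\lambda^{-2\beta}\). So I would instead exploit the \(C^0\) bound \(\|v\|_{C^0(B_r)}\leq L^Cr\) together with interpolation, or rather accept that the hypothesis \(\|D^pv\|_{C^0}\leq L^Cr\) signals that the paper means \(\nabla v\) nearly constant, of size \(L^C\) with \(C^1\) norm \(L^C\mu\). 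This mismatch between \(r\mu\) and \(\lambda^{-2\beta}\) is the main obstacle. My first attempt: iterate \(D^{p}\) on \(\nabla Y_\ell\) via Leibniz, bounding \(D^{p}\nabla X\) in \(C^j\) by \(L^C\mu^{j}\) (using the velocity inputs and commutators \([D,\nabla]=-(\nabla v)^T\nabla\)). I would then check whether the target exponent \(-2\beta+2j\beta\) is achieved by the difference structure at \(j=0\) alone, while for \(j\geq1\) the crude bound \(L^C\mu^{j+1}\leq\lambda^{-2\beta+2j\beta}\) holds once \(j\geq1\) (since \((j+1)\beta/2\leq(2j-2)\beta\) fails only at \(j=1\)). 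The \(j=0\) and \(j=1\) cases therefore need the difference argument plus Taylor expansion around the origin using oddness, which gives vanishing of even-order terms, so \(\nabla^2 v(0)=0\) and the error is quadratic, \(r^2\mu^2\leq\lambda^{-4\beta}L^{C}\mu^2\leq\lambda^{-2\beta}\)-type.

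\textbf{Estimates on \(Y_\ell\) and ordinary time derivatives.} For \(Y_\ell\) itself, \(DY_\ell=-\ell\,\partial_tA\cdot x-\ell A\cdot v\). Expanding at the origin, the linear parts cancel because \(\partial_tA=-(\nabla v(0))^TA\) and \(v(x)=\nabla v(0)x+O(|x|^3)\) by oddness. The remainder is therefore \(O(L^C\mu^2r^3)\), which is \(\lesssim\lambda^{-4\beta}\) up to logs since \(\mu^2r^3=\lambda^{\beta-6\beta}L^{12}\). Higher \(D^p\) are handled similarly using \(D^pv\) inputs, losing one spatial derivative per material derivative, which fits within \(N_{\mathrm{prof}}\). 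Finally, ordinary time derivatives are obtained from \(\partial_t=D-v\cdot\nabla\), expanded as iterated commutators. Each \(v\cdot\nabla\) factor costs \(|v|\leq L^Cr\) times one spatial derivative of size at most \(\lambda^{2\beta}\) relative to the target, so it is harmless since \(r\lambda^{2\beta}=L^4\). The order count \(p+j\leq N_{\mathrm{prof}}\) and \(N_{\mathrm{phase}}=N_{\mathrm{prof}}+1\) is exactly what this conversion consumes.
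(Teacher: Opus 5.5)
Your proposal is correct and follows essentially the paper's route. It uses the origin ODE $\dot A=-D_xv(0,t)^{\mathsf T}A$ for the parameter bounds and the cancellation $DY_\ell=-\ell A\cdot(v-D_xv(0,t)x)$. Oddness (vanishing even jets of $v$ and $X$ at the origin) upgrades the naive $r\mu$ loss you flag to $r^2\mu^2$ and $r^3\mu^2$. The crude bound $\mu^j\leq\lambda^{-2\beta+2j\beta}$ handles $j\geq2$, and the conversion $\partial_t=D-v\cdot\nabla$ costs only $r\lambda^{2\beta}=L^4$. For general $p$, the step you leave as ``handled similarly'' is cleanest via $\partial_t^pA=(D^p\nabla X)(0,t)$: this gives $D^p\nabla Y_\ell=\ell\bigl(D^p\nabla X-(D^p\nabla X)(0,t)\bigr)$ with $D^p\nabla X$ even, which is the paper's remark that every material derivative is odd with zero linear jet, written in gradient form.
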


\begin{proof}
Oddness of \(X\) gives zero constant and quadratic jets, and
\(Y_\ell=\ell(X-A\cdot x)\) has zero linear jet. The
spatial conclusions at \(p=0\) follow from
\eqref{eq:periodic-packet-phase-input}, Taylor's theorem, and
\(\mu=\lambda^{\beta/2}\).  Differentiating \(DX=0\) at the origin gives
\[
  \dot A=-D_xv(0,t)^{\mathsf T}A,
\]
and hence
\begin{equation}
  DY_\ell
  =-\ell A(t)\cdot\bigl(v(x,t)-D_xv(0,t)x\bigr).
  \label{eq:periodic-material-phase-identity}
\end{equation}
Every material derivative of the right side is odd with zero linear jet.
For the material estimates, induct on \(n=p+j\), where \(j\) is the
\(C^j\)-order of \(\nabla Y_\ell\). For \(p>0\), differentiate
\eqref{eq:periodic-material-phase-identity} \(p-1\) times and commute the
spatial derivatives using
\[
  [D,\partial_a]=-(\partial_av)\cdot\nabla
\]
together with \eqref{eq:periodic-packet-velocity-input}. Every parent factor
has material order at most \(p-1\). Every phase factor has material order at
most \(p-1\) and spatial order at most \(j\), hence total order at most
\(n-1\). Thus no output pair is used to prove itself. The largest parent
demand occurs when all spatial derivatives
land on the velocity remainder in
\eqref{eq:periodic-material-phase-identity}; it is
\(D^{p-1}v\) in \(C^{j+1}\), which lies in
\eqref{eq:periodic-packet-velocity-input}. At low spatial order,
\[
  \|D^pY_\ell\|_{C^0(B_r)}
  \leq L^C\mu^2r^3
  \leq L^C\lambda^{-4\beta},
  \qquad
  \|\nabla D^pY_\ell\|_{C^0(B_r)}
  \leq L^C\mu^2r^2
  \leq L^C\lambda^{-2\beta}.
\]
The Hessian is odd and therefore vanishes at the origin, so
\[
  \|\nabla^2D^pY_\ell\|_{C^0(B_r)}
  \leq L^C\mu^2r
  \leq L^C.
\]
For \(j\geq2\), \(\mu^j\leq\lambda^{-2\beta+2j\beta}\). At the top spatial
endpoint \((p,j)=(0,N_{\mathrm{prof}})\), all spatial derivatives may land on
the phase, requiring \(\nabla^{N_{\mathrm{prof}}+1}X\). This is exactly the
one additional derivative in
\(N_{\mathrm{phase}}=N_{\mathrm{prof}}+1\). At the opposite endpoint
with \(T\geq1\), the material identity uses only \(D^{T-1}v\); when
\(T=0\), the positive-material-order induction is vacuous.

To obtain ordinary-time estimates, iterate
\(\partial_t=D-v\cdot\nabla\). The largest phase demand occurs when all
transport factors land on the phase block, reaching spatial order \(j+p\).
Each accompanying undifferentiated velocity costs only
\(r\lambda^{2\beta}=L^4\); if a derivative instead lands on a velocity
factor, the phase order decreases. Hence the diagonal restriction
\(p+j\leq N_{\mathrm{prof}}\) is sufficient. The origin equation gives
\eqref{eq:periodic-packet-parameter-path}.  This proves
\eqref{eq:periodic-packet-phase-block}.
\end{proof}

\begin{corollary}[Oscillatory-product estimates]
\label{cor:oscillatory-product-estimates}
Assume the hypotheses of Lemma~\ref{lem:transported-phase-estimates}.  Let
\(\mathcal A>0\) and \(b\in C^\infty(U\times I)\), with
\[
  \bigcup_{t\in I}\operatorname{supp}b(\cdot,t)\Subset B_r,
\]
and suppose that
\begin{equation}
  \|D^pb\|_{C^j(B_r)}
  \leq C_{p,j}L^{C_{p,j}}\mathcal A\lambda^{2j\beta}
  \quad(0\leq p\leq T,\ p+j\leq N_{\mathrm{prof}}).
  \label{eq:periodic-packet-coefficient-input}
\end{equation}
The constants in this hypothesis are independent of
\(\lambda,\ell,\vartheta\), and \(\mathcal A\).  For
\(\mathfrak t\in\{\sin,\cos\}\), \(\vartheta\in\mathbb R\), and an integer
\(1\leq\ell\leq2^{M+1}\), let
\[
  z=b\,\mathfrak t(\lambda\ell X+\vartheta),
\]
extended by zero from the coordinate chart.  For \(0\leq p\leq T\) and
\(0\leq j\leq K\),
\begin{align}
  \|D^pz\|_{C^j(B_r)}
  &\leq C_{p,j}L^{C_{p,j}}\mathcal A\lambda^j,
  \notag\\
  \|\partial_t^pz\|_{C^j(B_r)}
  &\leq C_{p,j}L^{C_{p,j}}\mathcal A
    \lambda^{j+p(1-2\beta)}.
  \label{eq:periodic-packet-product}
\end{align}
The conclusion constants depend only on \(\beta,T,K\), the fixed chart and
cutoff data, and the finitely many constants in the hypotheses.  They are
independent of \(\lambda,\ell,\vartheta\), and \(\mathcal A\).
\end{corollary}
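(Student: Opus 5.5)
The plan is to split the phase as \(\lambda\ell X+\vartheta=\Phi_{\ell,\vartheta}+\lambda Y_\ell\), where \(\Phi_{\ell,\vartheta}=\nu_\ell\omega\cdot x+\vartheta\). With \(\omega=e(\alpha(t))\), this gives
\[
  z=b\cos(\lambda Y_\ell)\,\mathfrak t(\Phi_{\ell,\vartheta})
   \pm b\sin(\lambda Y_\ell)\,\mathfrak t'(\Phi_{\ell,\vartheta}).
\]
Each factor is then estimated separately with the Leibniz rule. The key observation is that, by Lemma~\ref{lem:transported-phase-estimates}, the nonlinear phase remainder \(\lambda Y_\ell\) is a ``slow'' coefficient. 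Its size is \(L^C\lambda^{1-4\beta}\) in \(C^0\), but its gradient satisfies \(\lambda\|\nabla Y_\ell\|_{C^j}\leq L^C\lambda^{1-2\beta+2j\beta}\leq L^C\lambda^{1+2j\beta}\). So each spatial derivative hitting \(\cos(\lambda Y_\ell)\) or \(\sin(\lambda Y_\ell)\) costs at most \(\lambda\) times powers of \(\lambda^{2\beta}\), and these extra powers are absorbed in the final count.

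\emph{Material derivatives.} I would avoid the split for \(D^p\) and work directly with \(z=b\,\mathfrak t(\lambda\ell X+\vartheta)\). Since \(DX=0\), we have \(D\,\mathfrak t(\lambda\ell X+\vartheta)=0\), hence \(D^pz=(D^pb)\,\mathfrak t(\lambda\ell X+\vartheta)\). Now apply \(\partial^j\). Derivatives falling on the trigonometric factor produce products of \(\lambda\ell\nabla^{i}X\). By \eqref{eq:periodic-packet-phase-input}, these are bounded by \(L^C\lambda^{m}\mu^{\sum(i-1)}\) when \(m\) derivatives hit the trig factor. The remaining \(j-m\) derivatives hit \(D^pb\), which costs \(\mathcal A\lambda^{2(j-m)\beta}\) by \eqref{eq:periodic-packet-coefficient-input}. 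Since \(\mu,\lambda^{2\beta}\leq\lambda\) and \(\ell\leq2^{M+1}\) is a constant depending only on \(\beta\), every term is at most \(C L^C\mathcal A\lambda^j\).

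The index bookkeeping needs \(p+j\leq N_{\mathrm{prof}}=K+M+T+2\) for the \(b\) inputs and \(j\leq N_{\mathrm{phase}}\) for the phase inputs. Both hold because \(p\leq T\) and \(j\leq K\). The support condition \(\operatorname{supp}b\Subset B_r\) lets us take all norms on \(B_r\) and extend by zero. This gives the first line of \eqref{eq:periodic-packet-product}. Note that the split into \(\Phi\) and \(Y_\ell\) is not even needed here; it only serves as a check that the \(\nabla X\) factors are bounded.

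\emph{Ordinary time derivatives.} Here I would iterate \(\partial_t=D-v\cdot\nabla\) and expand \(\partial_t^p z\) as a sum of terms of the form (products of spatial derivatives of \(D^{p_i}v\)) \(\cdot\,\nabla^{j'}D^{p'}z\), with \(p'\leq p\). The worst terms are those in which each replacement of \(D\) by \(-v\cdot\nabla\) puts a full gradient onto the oscillation at cost \(\lambda\), multiplied by an undifferentiated velocity. On \(B_r\) this velocity is only of size \(L^Cr=L^C\lambda^{-2\beta}\) by \eqref{eq:periodic-packet-velocity-input}. Each such step therefore costs \(L^C\lambda^{1-2\beta}\), which yields the factor \(\lambda^{p(1-2\beta)}\).

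If instead a derivative lands on a velocity factor, we lose at most \(\mu^{j}\) but gain a full power \(\lambda\) (one fewer gradient hits \(z\)), so these terms are smaller. The spatial order demanded of \(z\) reaches \(j+p\). For this I use the first line already proved, in its version with \(j+p\leq K+T\) derivatives; this is available since \(N_{\mathrm{prof}}\geq K+T+2\) and the phase bounds cover that range. The velocity inputs needed have material order at most \(p-1\leq (T-1)_+\).

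\emph{Main obstacle.} I expect the hard step to be this last combinatorial accounting. One must verify that every term in the \(\partial_t=D-v\cdot\nabla\) expansion, including commutators \([D,\partial_a]=-(\partial_a v)\cdot\nabla\), is bounded by \(\lambda^{j+p(1-2\beta)}\). The point is that each gradient hitting the oscillation without an accompanying small factor \(r\) from the velocity must be compensated. The \(C^0\) smallness \(\|D^pv\|_{C^0(B_r)}\leq L^Cr\) from \eqref{eq:periodic-packet-velocity-input} is exactly what supplies the \(\lambda^{-2\beta}\) per time derivative. I would organize the argument by induction on \(p\), mirroring the ordinary-time step in the proof of Lemma~\ref{lem:transported-phase-estimates}.
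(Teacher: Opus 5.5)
Your proposal is correct and is essentially the paper's argument: \(DX=0\) and Leibniz' rule give the material bound, and iterating \(\partial_t=D-v\cdot\nabla\) gives the ordinary-time bound, because each derivative landing on the oscillatory factor comes paired with a velocity of size \(L^Cr\) on \(B_r\) and so costs \(r\lambda=L^4\lambda^{1-2\beta}\). The affine split in your first paragraph is not needed, and in your chain-rule count a product of \(k\) phase factors carrying \(m\) derivatives is bounded by \(L^C\lambda^{k}\mu^{m-k}\leq L^C\lambda^m\) rather than by \(\lambda^m\mu^{m-k}\) as you wrote; this does not affect the conclusion.
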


\begin{proof}
Leibniz' rule and \(DX=0\) give the material derivative estimate in
\eqref{eq:periodic-packet-product}.  Converting to ordinary time derivatives costs
\(r\lambda=L^4\lambda^{1-2\beta}\) whenever a derivative hits the
oscillatory factor.
\end{proof}

\subsection{Localized periodic velocity expansion}

For the data of Corollary~\ref{cor:oscillatory-product-estimates}, a
\emph{localized oscillatory packet} is a scalar
\[
  z=b\,\mathfrak t(\lambda\ell X+\vartheta),
  \qquad
  \mathfrak t\in\{\sin,\cos\},
  \qquad \ell\in\mathbb Z,\quad 1\leq\ell\leq2^{M+1},
\]
with \(b\) supported in \(U_0\) and satisfying the coefficient bounds in
that corollary.

\begin{proposition}[Localized periodic velocity expansion]
\label{prop:localized-periodic-velocity-expansion}
Assume the hypotheses of
Corollary~\ref{cor:oscillatory-product-estimates}, and use its notation.  For
a localized oscillatory packet \(z\), the truncated expansion in
\eqref{eq:periodic-nonlinear-approximant} has a finite-order decomposition
at the same phase. The index \(n\) is the net power of
\(\nu_\ell^{-1}\), namely
\(n=|\gamma|\) in the branch containing \(\nu_\ell\omega_a f\) and
\(n=|\gamma|+1\) in the branch containing \(\partial_af\).
\begin{equation}
  u^{[M]}[z]
  =\sum_{n=0}^M
   \bigl(B_n^s\sin(\lambda\ell X+\vartheta)
        +B_n^c\cos(\lambda\ell X+\vartheta)\bigr),
  \label{eq:periodic-packet-strata}
\end{equation}
where the coefficients are supported in \(\operatorname{supp}b\), extended
by zero from the coordinate chart.  All unqualified \(C^j\) norms of these
coefficients are periodic norms.  For
\(0\leq p\leq T\), \(0\leq j\leq K\), and \(0\leq n\leq M\),
\begin{equation}
  \max_{\mathcal D\in\{D,\partial_t\}}
  \bigl(\|\mathcal D^pB_n^s\|_{C^j}
       +\|\mathcal D^pB_n^c\|_{C^j}\bigr)
  \leq C_{p,j,n}L^{C_{p,j,n}}\mathcal A
    \lambda^{2(j-n)\beta}.
  \label{eq:periodic-packet-stratum-bound}
\end{equation}
For the same \(p,j\), the exact periodic velocity satisfies
\begin{equation}
  \left\|\partial_t^p
    (u_{\mathbb T}-u^{[M]})[z]\right\|_{C^j(\mathbb T^2)}
  \leq C_{p,j}L^{C_{p,j}}\mathcal A
    \lambda^{p+2(j+1)(1-\beta)-M\beta}.
  \label{eq:periodic-packet-leak}
\end{equation}
No material derivative estimate or local phase-harmonic presentation is asserted for
this grouped exact remainder.
The constants have the dependence stated in
Corollary~\ref{cor:oscillatory-product-estimates}.
\end{proposition}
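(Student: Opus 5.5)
Unfolding the definition \eqref{eq:periodic-nonlinear-approximant} gives the strata; the leak is then controlled by splitting the exact velocity into its periodic and Euclidean parts.

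\emph{Step 1: the strata.} With \(A=\nabla X(0,t)\) and \(z=b\sin(\lambda\ell X+\vartheta)\) (cosine bands by shifting \(\vartheta\)), \(u^{[M]}[z]\) is the sum of two plane-wave expansions \(u_{\mathrm{pw}}^{[M]}\) with profiles \(f_1=b\cos(\lambda Y_\ell)\) and \(f_2=b\sin(\lambda Y_\ell)\) and affine phases \(\Phi_{\ell,\vartheta}\) and \(\Phi_{\ell,\vartheta}+\pi/2\).

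\begin{itemize}
\item By \eqref{eq:periodic-plane-wave-u1}--\eqref{eq:periodic-plane-wave-u2}, each term is \(\nu_\ell^{-|\gamma|-1}\partial^\gamma(\nu_\ell\omega_af_i)\) or \(\nu_\ell^{-|\gamma|-2}\partial^\gamma\partial_af_i\), multiplied by \(c_\gamma^{s,c}(\omega)\) times \(\sin\) or \(\cos\) of the affine phase.
\item Using \(\Phi_{\ell,\vartheta}+\lambda Y_\ell=\lambda\ell X+\vartheta\), we recombine \(\cos(\lambda Y_\ell)\) and \(\sin(\lambda Y_\ell)\) with the affine trigonometric factors by the addition formulas. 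This rewrites every term as a slowly varying coefficient times \(\sin\) or \(\cos(\lambda\ell X+\vartheta)\).
\item Grouping by net power \(n\) of \(\nu_\ell^{-1}\) defines \(B_n^{s,c}\). Each is a finite sum of products of \(c_\gamma(\omega(t))\), \(\nu_\ell^{-n}\), derivatives of \(b\), and polynomials in \(\lambda\partial^\beta Y_\ell\) with \(|\beta|\ge1\). These polynomials arise when \(\partial^\gamma\) hits \(\cos(\lambda Y_\ell)\) or \(\sin(\lambda Y_\ell)\), after which the factors \(\cos,\sin(\lambda Y_\ell)\) are reabsorbed into the recombination.
\item All \(B_n^{s,c}\) are supported in \(\operatorname{supp}b\), since every term carries a factor of \(b\) or of its derivatives.
\end{itemize}

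\emph{Step 2: the bound \eqref{eq:periodic-packet-stratum-bound}.}

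\begin{itemize}
\item By \eqref{eq:periodic-packet-phase-block}, \(\lambda\nabla Y_\ell\) has \(C^j\) size \(L^C\lambda^{1-2\beta+2j\beta}\).
\item A term in stratum \(n\) carries \(\nu_\ell^{-n}\approx\lambda^{-n}\) and \(n\) spatial derivatives in total, distributed between \(b\) (cost \(\lambda^{2\beta}\) each, by \eqref{eq:periodic-packet-coefficient-input}) and the factors \(\lambda\partial Y_\ell\). A factor \(\lambda\partial^{1+i}Y_\ell\) absorbs \(1+i\) derivatives and costs at most \(\lambda^{1-2\beta+2i\beta}\le\lambda^{(1+i)(1-2\beta)}\).
\item Each derivative therefore costs at most \(\lambda^{1-2\beta}\). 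After the \(\lambda^{-n}\) this gives \(\lambda^{-2n\beta}\), and \(j\) further derivatives cost \(\lambda^{2j\beta}\).
\item Material derivatives fall on \(b\) by \eqref{eq:periodic-packet-coefficient-input}, on \(Y_\ell\) by \eqref{eq:periodic-packet-phase-block}, and on \(\nu_\ell\), \(\omega\) by \eqref{eq:periodic-packet-parameter-path} and the smoothness of \(c_\gamma\) on \(\mathbb S^1\). The commutator \([D,\partial]\) costs only \(\mu\)-powers by \eqref{eq:periodic-packet-velocity-input}.
\item The ranges \(p+j+M\le N_{\mathrm{prof}}\) are exactly built into \eqref{eq:periodic-packet-orders}.
\end{itemize}

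\emph{Step 3: the leak \eqref{eq:periodic-packet-leak}.}

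\begin{itemize}
\item Write \(u_{\mathbb T}(z)=u_{\mathbb R}(z)+(\text{smooth kernel term})\) on \(U\) by Lemma~\ref{lem:periodic-kernel-decomposition}, via \eqref{eq:periodic-velocity-from-V}.
\item The smooth-kernel term, together with the contribution away from \(U\), is a smooth kernel against \(\partial z\). Moving all derivatives onto the kernel and integrating by parts against the oscillation gains arbitrary powers of \(\lambda^{-1+2\beta}\), so this part is negligible.
\item The main part is \(u_{\mathbb R}(f_i\sin\Phi)-u^{[M]}_{\mathrm{pw}}\), which by Lemma~\ref{lem:affine-velocity-expansion} equals the combination of \(\mathcal R^M\) and \(\mathcal R^{M-1}\).
\item Apply \eqref{eq:periodic-scalar-master-error} with \(N=M\) and \(\nu=\nu_\ell\sim\lambda\), using \(\|f_i\|_{C^k}\lesssim L^C\mathcal A\lambda^{k(1-2\beta)}\).
\item Far-field terms give \(\lambda^{d\varepsilon+J-M}\lambda^{(M-d+J)(1-2\beta)}\). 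Choosing \(\varepsilon\le 2\beta\), this is \(\le\lambda^{2J(1-\beta)-2M\beta}\).
\item The near-field term gives \(\lambda^{J-(M+2)\varepsilon}\lambda^{(M+J+1)(1-2\beta)}\). This is the delicate balance: we need \(\varepsilon\) close to \(1\) here but \(\varepsilon\le2\beta\) above. If that fails, choose \(\varepsilon\) as the largest value compatible with the far-field bound and check that \(M=\lceil\beta^{-2}\rceil\) still leaves a net \(\lambda^{-M\beta}\) after the crude \(\lambda^{2(j+1)(1-\beta)}\) allowance in the stated estimate; that allowance appears designed for this loss.
\item The extra \(\nu\omega_af\) factor in \(u_{\mathrm{pw}}\) supplies the \(+1\) in \(j+1\).
\item Time derivatives are handled by differentiating the identity for the remainder in \(t\). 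Each \(\partial_t\) hits \(b\), \(Y_\ell\), \(\nu_\ell\), or \(\omega\) and costs at most \(\lambda\) by Corollary~\ref{cor:oscillatory-product-estimates} and \eqref{eq:periodic-packet-parameter-path}, giving the factor \(\lambda^p\).
\end{itemize}

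I expect the main obstacle to be this exponent bookkeeping for the remainder, that is, selecting \(\varepsilon\) so that both error families in \eqref{eq:periodic-scalar-master-error} fit under \(\lambda^{2(j+1)(1-\beta)-M\beta}\).
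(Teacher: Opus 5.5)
Your Steps 1 and 2 follow the paper's route. The paper does the recombination in complex form, with \(g=be^{\mathrm i\lambda Y_\ell}\). That form makes explicit the cancellation of the \(\lambda Y_\ell-\Phi_{\ell,\vartheta}\) phase on which your ``reabsorption'' claim relies, and you should state it: a leftover \(\cos(\lambda Y_\ell)\) in a coefficient would cost \(\lambda^{1-2\beta}\) per derivative and ruin \eqref{eq:periodic-packet-stratum-bound}.

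\textbf{The gap: the choice of \(\varepsilon\).} The genuine gap is in Step 3, exactly where you flag it, and it comes from a miscomputation. Insert \(\nu_\ell\approx\lambda\) and \(\|f\|_{C^k}\lesssim L^C\mathcal A\lambda^{k(1-2\beta)}\) into \eqref{eq:periodic-scalar-master-error} with \(N=M\), and write \(\varepsilon=1-2\beta+\delta\). The far-field and near-field exponents are then
\[
  d\delta+2J(1-\beta)-2M\beta\quad(0\le d\le M),
  \qquad
  2J(1-\beta)-1+2\beta-(M+2)\delta .
\]
So the far-field threshold is \(\varepsilon\le1-2\beta\), not \(\varepsilon\le2\beta\). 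Neither reading of your fallback closes:
\begin{itemize}
\item At \(\varepsilon=2\beta\), the near-field term has size \(\lambda^{(M+2)(1-4\beta)+\cdots}\), which grows with \(M\).
\item At \(\varepsilon=1-2\beta\), the largest value keeping the far field at \(\lambda^{-2M\beta}\), the near field has no decay in \(M\) at all.
\end{itemize}
The factor \(\lambda^{2(j+1)(1-\beta)}\) in \eqref{eq:periodic-packet-leak} cannot absorb an \(M\)-dependent loss. It only pays for the \(j\) output derivatives and for the extra \(\nu_\ell\) in the \(\nu_\ell\omega_af\) branch. The particular value \(M=\lceil\beta^{-2}\rceil\) plays no role here.

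\textbf{The fix.} Give up half of the far-field gain by taking \(\varepsilon=1-\beta\), i.e.\ \(\delta=\beta\), as the paper does.
\begin{itemize}
\item The far-field sum is then dominated by its \(d=M\) endpoint, of size \(\lambda^{2J(1-\beta)-M\beta}\).
\item The near-field Taylor term becomes \(\lambda^{2J(1-\beta)-1-M\beta}\).
\item The \(\mathcal R^{M-1}_\Phi[\partial_af]\) branch is handled the same way.
\item The \(\nu_\ell\omega_af\) branch costs one extra \(\lambda\le\lambda^{2(1-\beta)}\).
\end{itemize}
This balance is the sole source of the \(-M\beta\) in the statement.

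\textbf{A secondary point.} For \(p\ge1\) you cannot simply ``differentiate the identity for the remainder.'' The estimate \eqref{eq:periodic-scalar-master-error} holds for fixed parameters, whereas \(\partial_t\) also acts on \(\nu_\ell(t)\) and \(\omega(t)\) inside the singular integral, its cutoff, and the moments \(c^{s,c}_\gamma\). The paper instead differentiates the far-field, Taylor, and moment-tail pieces of the proof of Lemma~\ref{lem:affine-velocity-expansion} term by term. On \(\operatorname{supp}b\), a time derivative of the affine phase costs \(L^C\lambda r\), and every time derivative costs at most \(L^C\lambda\). That is how the factor \(\lambda^p\) arises.
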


\begin{proof}
Apply Lemma~\ref{lem:affine-velocity-expansion} with
\(\varepsilon=1-\beta\) to the exact sine-addition decomposition
\eqref{eq:periodic-nonlinear-approximant}. For each affine branch, choose the
coefficients recursively, with \(a\in\{1,2\}\) and \(e_a\) the corresponding
coordinate multi-index, by
\begin{align*}
  C_{\gamma+e_a}^s
  &=\partial_aC_\gamma^s
    -\lambda(\partial_aY_\ell)C_\gamma^c,\\
  C_{\gamma+e_a}^c
  &=\partial_aC_\gamma^c
    +\lambda(\partial_aY_\ell)C_\gamma^s,
\end{align*}
starting from \((C_0^s,C_0^c)=(b,0)\) for a sine input and \((0,b)\) for a
cosine input. Order the pairs
\((|\gamma|,j)\) lexicographically by
\((|\gamma|+j,|\gamma|)\). The derivative term at
\((|\gamma|+1,j)\) uses the preceding coefficient in \(C^{j+1}\), on the
same diagonal but with smaller \(|\gamma|\), so this recursion is noncircular.
The largest regularity demand occurs when these derivative terms put all
\(|\gamma|+j\) derivatives on \(b\); it is exactly the range in
\eqref{eq:periodic-packet-coefficient-input}. At derivative order \(d\), the
carrier term has the largest frequency exponent, since
\(\|\lambda\nabla Y_\ell\|_{C^d}
\leq L^C\lambda^{1-2\beta+2d\beta}\), whereas the derivative term costs
only \(\lambda^{2\beta}\leq\lambda^{1-2\beta}\). Leibniz' rule therefore
gives, for \(|\gamma|\leq M\) and
\(|\gamma|+j\leq N_{\mathrm{prof}}\),
\[
 \partial^\gamma[b\,\mathfrak t(\lambda Y_\ell)]
 =C_\gamma^s\sin(\lambda Y_\ell)
  +C_\gamma^c\cos(\lambda Y_\ell),
 \qquad
 \|C_\gamma^{s,c}\|_{C^j}
 \leq L^C\mathcal A
   \lambda^{|\gamma|(1-2\beta)+2j\beta}.
\]
An order-\(n\) term in the affine expansion contains \(\nu_\ell^{-n}\), so
it is at most
\(L^C\mathcal A\lambda^{2(j-n)\beta}\). To see the exact recombination,
put \(g=b e^{\mathrm{i}\lambda Y_\ell}\) and
\(\Phi=\Phi_{\ell,\vartheta}\). For each scalar branch,
\begin{equation}
 \mathcal V_\Phi^N[\operatorname{Re}g]
 +\mathcal V_{\Phi+\pi/2}^N[\operatorname{Im}g]
 =\sum_{|\gamma|\leq N}\nu_\ell^{-|\gamma|-1}
   \operatorname{Im}\!\left[
     (c_\gamma^s+\mathrm{i}c_\gamma^c)
     \partial^\gamma g\,e^{\mathrm{i}\Phi}\right].
 \label{eq:periodic-exact-recombination}
\end{equation}
Thus the terms with phase \(\lambda Y_\ell-\Phi\) cancel index by index,
and the remaining oscillatory factor is
\(\mathfrak t(\lambda\ell X+\vartheta)\). Since \(DX=0\), its material
derivative vanishes, so material derivatives fall only on the recombined
coefficients. Equivalently, these coefficients are finite linear combinations
of products of smooth functions of \(\nu_\ell\) and \(\alpha\) from the
affine moments with the real and imaginary parts of
\[
  e^{-\mathrm{i}\lambda Y_\ell}\partial^\gamma
    (b e^{\mathrm{i}\lambda Y_\ell}).
\]
When \(D\) hits this expression, the term from the first exponential cancels
the term in which \(D\) hits the second exponential without a spatial
derivative falling on \(DY_\ell\). Every remaining term contains a material
derivative of \(b\) or one of those moment factors, a commutator derivative
of \(v\), or a spatial derivative of \(DY_\ell\). The coefficient input,
\eqref{eq:periodic-material-phase-identity},
Lemma~\ref{lem:transported-phase-estimates}, and
\eqref{eq:periodic-packet-parameter-path} therefore close a triangular
induction in material order. At the endpoint it uses
\(p+|\gamma|+j\leq T+M+K=N_{\mathrm{prof}}-2\).
Writing \(\partial_t=D-v\cdot\nabla\), and using the
\(C^0\) bound for \(v\), the spatial coefficient bounds, and
\eqref{eq:periodic-packet-parameter-path}, gives the same estimate for
ordinary-time derivatives. This conversion uses only coefficient pairs on
the diagonal \(p'+j'\leq T+K\), which are supplied by the preceding
induction. Lemma~\ref{lem:transported-phase-estimates} and
Corollary~\ref{cor:oscillatory-product-estimates} therefore give
\eqref{eq:periodic-packet-strata} and
\eqref{eq:periodic-packet-stratum-bound} under material and ordinary
derivatives.

For \(p\geq1\), we differentiate the three-part proof of
\eqref{eq:periodic-scalar-master-error} term by term, rather than
differentiating its fixed-parameter statement. After \(p\) ordinary-time
and \(j\) spatial derivatives, every singular remainder is a far
integration-by-parts term, a near Taylor remainder, or a differentiated
moment tail. On \(\operatorname{supp}b\subset B_r\), a derivative of the
affine phase costs at most \(L^C\lambda r\), derivatives of its cutoff and
moments cost at most \(L^C\), and derivatives of its profile cost at most
\(L^C\lambda^{1-2\beta}\). The largest regularity demand is the
order-\(M\) Taylor remainder
in the differentiated velocity branch, with all \(p+j\) output derivatives
on that branch. It uses \(M+p+j+2\) coefficient derivatives, exactly
\(N_{\mathrm{prof}}\) at \((p,j)=(T,K)\), and has at
most \(p\) angular or radial parameter derivatives.  Each time derivative
costs at most \(L^C\lambda\). If \(E_{\rm far}\), \(E_{\rm Tay}\), and
\(E_{\rm mom}\) denote the three corresponding contributions to the
singular velocity remainder, substitution in
\eqref{eq:periodic-scalar-master-error} gives, for each
\(\star\in\{{\rm far},{\rm Tay},{\rm mom}\}\),
\[
  \|\partial_t^pE_\star\|_{C^j}
  \leq C_{p,j}L^C\mathcal A
    \lambda^{p+2(j+1)(1-\beta)-M\beta}.
\]

For the smooth periodic part, Lemma~\ref{lem:periodic-kernel-decomposition}
reduces the estimate to finitely many operators
\[
  \mathcal Hz(x,t)
  =\int_{U_0}H(x,y)b(y,t)e^{\mathrm{i}\lambda\ell X(y,t)}\,dy
\]
with smooth \(H\). On a neighborhood of
\(\operatorname{supp}b(\cdot,t)\Subset B_r\), the lower gradient bound in
Lemma~\ref{lem:transported-phase-estimates} makes
\[
  \mathcal L
  =\frac{\nabla_yX}
    {\mathrm{i}\lambda\ell|\nabla_yX|^2}\cdot\nabla_y
\]
well defined. Direct differentiation gives
\[
  \mathcal L(e^{\mathrm{i}\lambda\ell X})
  =\frac{\nabla_yX\cdot(\mathrm{i}\lambda\ell\nabla_yX)}
    {\mathrm{i}\lambda\ell|\nabla_yX|^2}
      e^{\mathrm{i}\lambda\ell X}
  =e^{\mathrm{i}\lambda\ell X}.
\]
The compact support of \(b\) removes the boundary term. Integrate exactly
\(M\) times with the formal transpose of \(\mathcal L\).
Every integration gains \(\lambda^{-1}\) and costs at most
\(L^C\lambda^{2\beta}\); every ordinary-time derivative costs at most
\(L^C\lambda^{1-2\beta}\).  Hence
\[
  \|\partial_t^p\mathcal Hz\|_{C^j(\mathbb T^2)}
  \leq C_{p,j}L^C\mathcal A
    \lambda^{(p-M)(1-2\beta)}.
\]
Only \(M+p\leq M+T\) coefficient derivatives are used.  Since
\[
 (p-M)(1-2\beta)
 \leq p+2(j+1)(1-\beta)-M\beta
\]
for \(\beta\leq1/8\), the smooth and singular contributions prove
\eqref{eq:periodic-packet-leak} within the block
\eqref{eq:periodic-packet-orders}.
\end{proof}

\begin{corollary}[Leading response at the origin]
\label{cor:leading-periodic-response}
Under the hypotheses of
Proposition~\ref{prop:localized-periodic-velocity-expansion}, let \(z\) be a
sine input. Then
\begin{equation}
  B_0^s=-C_0\cos\alpha\,e^\perp(\alpha)b,
  \qquad B_0^c=0,
  \label{eq:periodic-packet-leading}
\end{equation}
and
\begin{equation}
  D_xu^{[0]}[z](0,t)
  =-C_0\cos\alpha(t)e^\perp(\alpha(t))
    \bigl(\nabla z(0,t)\bigr)^{\mathsf T}.
  \label{eq:periodic-packet-origin-jet}
\end{equation}
\end{corollary}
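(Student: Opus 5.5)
We compute the $n=0$ stratum of \eqref{eq:periodic-packet-strata} directly from the definitions \eqref{eq:periodic-plane-wave-u1}--\eqref{eq:periodic-plane-wave-u2} and \eqref{eq:periodic-nonlinear-approximant}, and then differentiate at the origin.

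\textbf{Step 1: isolating the $n=0$ terms.} By the convention in Proposition~\ref{prop:localized-periodic-velocity-expansion}, the index $n$ is the net power of $\nu_\ell^{-1}$. The $n=0$ terms therefore come only from the branch $\mathcal V^N_{\Phi+\pi/2}[\nu_\ell\omega_a f]$ with $\gamma=0$. The $\partial_a f$ branch carries $n=|\gamma|+1\geq1$ and is excluded. For a profile $f$, the $\gamma=0$ term of $\mathcal V^N_{\Phi+\pi/2}[\nu\omega_a f]$ is
\[
  \nu^{-1}\nu\omega_a f\bigl(c_0^s\sin(\Phi+\pi/2)+c_0^c\cos(\Phi+\pi/2)\bigr).
\]
Using \eqref{eq:periodic-profile-identities}, namely $c_0^s=0$ and $c_0^c=2\pi\omega_1$, this equals $-2\pi\omega_1\omega_a f\sin\Phi$. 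With the signs in \eqref{eq:periodic-plane-wave-u1}--\eqref{eq:periodic-plane-wave-u2}, the zeroth-order vector is
\[
  2\pi\omega_1\,(\omega_2,-\omega_1)\,f\sin\Phi=-C_0\cos\alpha\,e^\perp(\alpha)\,f\sin\Phi,
\]
where $\omega=e(\alpha)$.

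\textbf{Step 2: recombining the two sine-addition pieces.} For a sine input $z=b\sin(\lambda\ell X+\vartheta)$, the two pieces of \eqref{eq:periodic-nonlinear-approximant} contribute
\[
  -C_0\cos\alpha\,e^\perp(\alpha)\bigl(b\cos(\lambda Y_\ell)\sin\Phi+b\sin(\lambda Y_\ell)\sin(\Phi+\pi/2)\bigr).
\]
The parenthesized expression equals $b\sin(\lambda Y_\ell+\Phi)=b\sin(\lambda\ell X+\vartheta)$, because $\lambda Y_\ell+\Phi_{\ell,\vartheta}=\lambda\ell X+\vartheta$: here $\nu_\ell\omega\cdot x=\lambda\ell A\cdot x$. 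This is the $n=0$ case of the exact recombination \eqref{eq:periodic-exact-recombination}. It gives $B_0^s=-C_0\cos\alpha\,e^\perp(\alpha)b$ and $B_0^c=0$. One must check that the proposition's grouping into strata assigns exactly these terms to $n=0$, which holds by the stated definition of $n$.

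\textbf{Step 3: the origin jet.} By definition, $u^{[0]}[z]$ is the $N=0$ truncation, and the $\partial_a f$ branch $\mathcal V^{-1}=0$ drops out. Hence $u^{[0]}[z]=-C_0\cos\alpha(t)\,e^\perp(\alpha(t))\,z$ identically in $x$, since $\alpha$ depends only on $t$. Taking the spatial Jacobian then gives $D_xu^{[0]}[z](0,t)=-C_0\cos\alpha\,e^\perp(\alpha)(\nabla z(0,t))^{\mathsf T}$, which is \eqref{eq:periodic-packet-origin-jet}.

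The only genuine obstacle is sign bookkeeping in Step 1: the interplay of the $\pi/2$ shift in $\mathcal V_{\Phi+\pi/2}$ with $c_0^c$, and the minus sign on the first component. I would verify these against the direct symbol evaluation $m(e(\alpha))=-C_0\cos\alpha\,e^\perp(\alpha)$ given after \eqref{eq:leading-wave-response}.
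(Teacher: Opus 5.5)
Your proof is correct and follows the paper's own route. The paper likewise reads off $B_0^s=-C_0\cos\alpha\,e^\perp(\alpha)b$ and $B_0^c=0$ from the order-zero identities $c_0^s=0$, $c_0^c=2\pi\omega_1$ in \eqref{eq:periodic-profile-identities}, and then differentiates $u^{[0]}[z]=-C_0\cos\alpha\,e^\perp(\alpha)z$ in $x$; you simply make explicit the sign bookkeeping and the sine-addition recombination $\lambda Y_\ell+\Phi_{\ell,\vartheta}=\lambda\ell X+\vartheta$ that the paper leaves implicit.
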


\begin{proof}
At order zero, \eqref{eq:periodic-profile-identities} gives
\eqref{eq:periodic-packet-leading}; differentiating that response gives
\eqref{eq:periodic-packet-origin-jet}.
\end{proof}

\subsection{Common-phase interactions}

\begin{corollary}[Common-phase interaction bounds]
\label{cor:common-phase-interactions}
Assume the parent and phase hypotheses of
Lemma~\ref{lem:transported-phase-estimates} for a single shared
\(P,D,X,\lambda,\beta,T,K\) and the associated scales.  For
\(\iota=1,2\), let \(\mathcal A_\iota>0\) and
\(b_\iota\in C^\infty(U\times I)\), and set
\[
  z_\iota
  =b_\iota\mathfrak t_\iota
     (\lambda\ell_\iota X+\vartheta_\iota),
\]
where \(\ell_\iota\in\mathbb Z\), \(1\leq\ell_\iota\leq2^{M+1}\),
\(\vartheta_\iota\in\mathbb R\),
\(\mathfrak t_\iota\in\{\sin,\cos\}\),
\(\bigcup_{t\in I}\operatorname{supp}b_\iota(\cdot,t)\Subset B_r\), and
\(b_\iota\) satisfies \eqref{eq:periodic-packet-coefficient-input} with
size \(\mathcal A_\iota\). Thus both oscillations share the Fourier-truncated parent,
material derivative, transported phase, base oscillation parameter, and
scale exponent; their harmonic indices and phase shifts may differ.

Every coefficient obtained from
\(u^{[M]}[z_1]\cdot\nabla z_2\) by product-to-sum reduction satisfies
\begin{equation}
  \max_{\mathcal D\in\{D,\partial_t\}}
  \|\mathcal D^pc\|_{C^j}
  \leq C_{p,j}L^{C_{p,j}}\mathcal A_1\mathcal A_2
    \lambda^{1-2\beta+2j\beta}.
  \label{eq:periodic-packet-anisotropic}
\end{equation}
For either \(z=z_\iota\), every coefficient in
\((u^{[M]}-u^{[0]})[z]\cdot\mathcal G\) satisfies
\begin{equation}
  \max_{\mathcal D\in\{D,\partial_t\}}
  \|\mathcal D^pc\|_{C^j}
  \leq C_{p,j}L^{C_{p,j}}\mathcal A_\iota
    \lambda^{-2\beta+2j\beta}.
  \label{eq:periodic-packet-background}
\end{equation}
Both estimates hold for \(0\leq p\leq T\) and \(0\leq j\leq K\).
The constants have the dependence stated in
Corollary~\ref{cor:oscillatory-product-estimates}.
\end{corollary}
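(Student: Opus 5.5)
The plan is to combine the strata decomposition of Proposition~\ref{prop:localized-periodic-velocity-expansion} for \(u^{[M]}[z_1]\) with an explicit computation of \(\nabla z_2\), and then track the single derivative that falls on the fast phase. I would write
\[
  u^{[M]}[z_1]=\sum_{n=0}^M\bigl(B_{n}^{s}\sin\Psi_1+B_n^c\cos\Psi_1\bigr),
  \qquad
  \nabla z_2=(\nabla b_2)\,\mathfrak t_2(\Psi_2)+\lambda\ell_2 b_2\nabla X\,\mathfrak t_2'(\Psi_2),
\]
where \(\Psi_\iota=\lambda\ell_\iota X+\vartheta_\iota\). Product-to-sum reduction then expresses \(u^{[M]}[z_1]\cdot\nabla z_2\) as a finite sum of trigonometric functions of \(\lambda(\ell_1\pm\ell_2)X+\text{const}\). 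Each coefficient is a bilinear expression either of the form \(B_n^{s,c}\cdot\nabla b_2\) or of the form \(\lambda\ell_2 b_2\,B_n^{s,c}\cdot\nabla X\). Since \(DX=0\), material derivatives fall only on these coefficients. Ordinary time derivatives are handled as in the proof of Proposition~\ref{prop:localized-periodic-velocity-expansion}: we write \(\partial_t=D-v\cdot\nabla\) and use the \(C^0\) bound \(|v|\lesssim L^C r\) on \(B_r\) from \eqref{eq:periodic-packet-velocity-input}. Each such conversion then costs at most \(L^Cr\lambda^{2\beta}=L^{C+4}\), so it suffices to prove everything with \(\mathcal D=D\).

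The terms of the first type are harmless. Estimate \eqref{eq:periodic-packet-stratum-bound} together with \eqref{eq:periodic-packet-coefficient-input} gives a bound \(L^C\mathcal A_1\mathcal A_2\lambda^{2\beta+2j\beta}\), and this is at most the target \(\lambda^{1-2\beta+2j\beta}\) because \(\beta\le 1/8\). The same bound holds for the strata \(n\ge1\) in the terms of the second type: there \(\lambda\cdot\lambda^{-2n\beta}\le\lambda^{1-2\beta}\), so these terms also fit the target.

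The key step is the \(n=0\) stratum of the second type. By Corollary~\ref{cor:leading-periodic-response}, \(B_0\) is parallel to \(e^\perp(\alpha)\) and is a multiple of \(b_1\). Since \(e(\alpha)=\nabla X(0,t)/|\nabla X(0,t)|\), we get \(e^\perp(\alpha)\cdot\nabla X(0,t)=0\). The leading term is therefore proportional to
\[
  \lambda\ell_2\, b_1b_2\,e^\perp(\alpha)\cdot\bigl(\nabla X-\nabla X(0,t)\bigr)
  =\lambda\ell_2\, b_1b_2\,e^\perp(\alpha)\cdot\ell_2^{-1}\nabla Y_{\ell_2}\cdot(\ldots),
\]
which is precisely the common-phase anisotropic cancellation. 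By \eqref{eq:periodic-packet-phase-block} and \eqref{eq:periodic-packet-parameter-path}, including the material derivatives of \(\alpha\), this coefficient obeys
\[
  \|D^p(\cdot)\|_{C^j}\lesssim L^C\mathcal A_1\mathcal A_2\lambda\cdot\lambda^{-2\beta+2j\beta}.
\]
The \(C^j\) loss is distributed via Leibniz among \(b_1\), \(b_2\) (each costing \(\lambda^{2\beta}\) per derivative) and \(\nabla Y\), which gives \eqref{eq:periodic-packet-anisotropic}. A cosine input is handled by the phase shift in the definition of the cosine band, which gives the same leading structure. The ranges match: \(p\le T\) and \(j\le K\) use \(p+j+1\le N_{\mathrm{prof}}\) for \(b_\iota\) and for \(\nabla Y\).

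For \eqref{eq:periodic-packet-background}, the quantity \((u^{[M]}-u^{[0]})[z]\) is exactly the sum of the strata with \(n\ge1\), so each coefficient is \(B_n^{s,c}\cdot\mathcal G\) with \(n\ge1\). Leibniz' rule with \eqref{eq:periodic-packet-stratum-bound} and \eqref{eq:periodic-packet-gradient-input} gives a bound \(L^C\mathcal A_\iota\lambda^{2(j-1)\beta}\), since \(\mu^{j}\le\lambda^{2j\beta}\). I expect the main obstacle to be the bookkeeping for \(D^p\) acting on \(\alpha(t)\) and on the moment factors inside \(B_0\). It must be checked that the orthogonality cancellation survives differentiation. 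This holds because the identity \(e^\perp(\alpha)\cdot A(t)=0\) is exact for every \(t\), so the material derivative of the whole product can be taken after the cancellation has already been made.
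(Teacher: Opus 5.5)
Your proposal is correct and takes essentially the same route as the paper. The only nontrivial term is the $n=0$ stratum paired with the carrier derivative $\lambda\ell_2 b_2\nabla X$. There $e^\perp(\alpha)\cdot\nabla X=\ell_2^{-1}e^\perp(\alpha)\cdot\nabla Y_{\ell_2}$ vanishes at the origin and costs only $\lambda^{-2\beta}$ by \eqref{eq:periodic-packet-phase-block}. The other strata and the $\nabla b_2$ terms are lower order, and the background bound comes from $n\geq1$ in \eqref{eq:periodic-packet-stratum-bound}.

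One bookkeeping remark concerns the ordinary-time conversion. Do it factor by factor, as the paper does, rather than at the level of the product coefficient $c$. Convert only $b_2$, $\nabla b_2$ and $\mathcal G$, whose input ranges $p+j\leq N_{\mathrm{prof}}$ absorb the extra spatial derivative. Take the $\partial_t$ bounds for $B_n^{s,c}$ directly from Proposition~\ref{prop:localized-periodic-velocity-expansion}, and those for $\nabla Y_\ell$ and $\alpha$ from Lemma~\ref{lem:transported-phase-estimates}. The stratum bounds are stated only for $j\leq K$, so converting the product would need them at a higher spatial order.
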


\begin{proof}
Put \(A=\nabla X(0,t)=|A|e(\alpha(t))\).  For every permitted harmonic
index \(\ell\), the definition of \(Y_\ell\) gives
\[
  \mathcal W_X
  :=e^\perp(\alpha)\cdot\nabla X
  =e^\perp(\alpha)\cdot(\nabla X-A)
  =\ell^{-1}e^\perp(\alpha)\cdot\nabla Y_\ell.
\]
This function and all its material or ordinary time derivatives vanish at
the origin. The mean-value theorem, together with
\eqref{eq:periodic-packet-phase-block} and
\eqref{eq:periodic-packet-parameter-path}, gives, for
\(\mathcal D\in\{D,\partial_t\}\),
\[
  \|\mathcal D^p\mathcal W_X\|_{C^0(B_r)}
  \leq r\|\nabla\mathcal D^p\mathcal W_X\|_{C^0(B_r)}
  \leq L^Cr
  \leq L^C\lambda^{-2\beta},
\]
and
\[
  \|\mathcal D^p\mathcal W_X\|_{C^j(B_r)}
  \leq L^C\lambda^{2(j-1)_+\beta}.
\]
Thus the leading oscillatory derivative costs only
\(\lambda^{1-2\beta}\), and every higher velocity term gains
\(\lambda^{-2\beta}\). This proves
\eqref{eq:periodic-packet-anisotropic}. Product-to-sum reduction produces
only indices \(|\ell_1\pm\ell_2|\leq2^{M+2}\); no velocity expansion is
applied to these products.

For ordinary-time derivatives of \(b_2\), \(\nabla b_2\), and
\(\mathcal G\), use \(\partial_t=D-v\cdot\nabla\) at one additional
spatial order. The respective transport costs are
\(r\lambda^{2\beta}=L^4\) and \(r\mu\leq L^4\), within the diagonal ranges
of \eqref{eq:periodic-packet-coefficient-input} and
\eqref{eq:periodic-packet-gradient-input}. Finally, every term in
\((u^{[M]}-u^{[0]})[z]\cdot\mathcal G\) has \(n\geq1\), so
\eqref{eq:periodic-packet-stratum-bound} and
\eqref{eq:periodic-packet-gradient-input} give
\eqref{eq:periodic-packet-background}.
\end{proof}

\section{Finite-stage geometry and amplification}
\label{sec:inductive-dynamics}

This section states the finite-stage invariant, selects the next spatial
oscillation parameter, and proves the phase-direction and amplitude mechanism
for one adjacent pair of perturbations.  An overdot denotes differentiation
in time. With \(C_0=2\pi\), define
\[
  \mathcal M(\alpha)
  :=-e^\perp(\alpha)e(\alpha)^{\mathsf T},
  \qquad \alpha\in\mathbb R.
\]
Fix
\[
  \widetilde C=\frac{16\pi^3}{C_0}=8\pi^2.
\]
For every layer, set
\begin{equation}
  \underline t_q
  =1-\widetilde C\lambda_q^{-3\beta_q/4},
  \qquad
  I_q=[\underline t_q,1].
  \label{eq:terminal-window}
\end{equation}

For parameters \(\lambda_i>2\) and \(\beta_i>0\), the associated scales are
defined by
\begin{equation}
  L_i=\log\lambda_i,
  \qquad
  \mu_i=\lambda_i^{\beta_i/2},
  \qquad
  M_i=\lceil\beta_i^{-2}\rceil,
  \qquad
  r_i=\lambda_i^{-2\beta_i}L_i^4.
  \label{eq:scale-dictionary}
\end{equation}
Thus \(\lambda_i\) is the base oscillation parameter, \(\mu_i\) is the
Fourier cutoff scale, \(M_i\) is both the velocity-expansion order and the
number of correction levels, and \(r_i\) is the support radius.

\subsection{The periodic inductive invariant}

The induction preserves four kinds of information: the exact stage equation,
separation of adjacent scales, support and derivative bounds, and terminal
geometry at the origin.

\begin{definition}[Admissible layered family]
\label{def:admissible-layered-family}
An admissible layered family through level \(q\) consists of functions that
are odd in the spatial variable,
\[
  P_i,\theta_i,\rho^{(i)},F^{(i)}
  \in C^\infty([0,1]\times\mathbb T^2),
  \qquad 0\leq i\leq q,
\]
and parameters
\[
  \lambda_i>2,\qquad 0<\beta_i\leq\frac18,
  \qquad k_i,K_i\in\mathbb N,
\]
with the following properties.

\begin{enumerate}[label=\textup{(\roman*)}]
\item \emph{Stage decomposition and equation.}

At level zero,
\[
  P_0=0,\qquad \rho^{(0)}=\theta_0.
\]
For \(1\leq i\leq q\),
\begin{equation}
  P_i=S_{\mu_i}\rho^{(i-1)},
  \qquad
  \rho^{(i)}=P_i+\theta_i,
  \qquad
  \operatorname{supp}\widehat P_i
  \subset\{k:|k|\leq2\mu_i\}.
  \label{eq:admissible-stage-decomposition}
\end{equation}
Every finite stage solves
\begin{equation}
  \partial_t\rho^{(i)}
  +u_{\mathbb T}(\rho^{(i)})\cdot\nabla\rho^{(i)}
  =F^{(i)}
  \quad\text{on }[0,1]\times\mathbb T^2.
  \label{eq:admissible-stage-equation}
\end{equation}
Oddness makes every displayed scalar mean zero.  Here \(F^{(i)}\) denotes
the cumulative finite-stage force.

\item \emph{Scale ranges and separation.}

The integer \(k_i\) specifies the finite range \(j+2m\leq k_i\) for the
perturbation \(\theta_i\), whereas \(K_i\) specifies the propagated range
\(1\leq j+2m\leq K_i\) for the complete stage \(\rho^{(i)}\). They satisfy
\begin{equation}
  k_i,K_i\geq
  d(\beta_i):=\left\lceil\frac4{\beta_i^2}\right\rceil+1.
  \label{eq:admissible-parameter-domain}
\end{equation}
The terminal window is required to lie in the time domain:
\begin{equation}
  \widetilde C\lambda_i^{-3\beta_i/4}\leq1.
  \label{eq:admissible-terminal-window-domain}
\end{equation}
Thus \(0\leq\underline t_i<1\) and \(I_i\subset[0,1]\).
In addition,
\begin{equation}
  K_i\geq d(\beta_{i+1})
  \qquad(0\leq i<q).
  \label{eq:admissible-child-authorization}
\end{equation}
The scales \(L_i,\mu_i,M_i,r_i\) are those in
\eqref{eq:scale-dictionary}.  When both indices exist,
\begin{align}
  \frac{\beta_i}{2}
  &\leq\beta_{i+1}\leq\beta_i,
  \label{eq:admissible-beta-step}\\
  \lambda_{i+1}&\geq4\lambda_i,
  \qquad
  \lambda_{i+1}^{\beta_{i+1}/8}\geq4\lambda_i,
  \label{eq:admissible-frequency-separation}\\
  2\lambda_{i+1}^{-3\beta_{i+1}/4}
  &\leq\lambda_i^{-3\beta_i/4},
  \label{eq:admissible-time-separation}\\
  \mu_{i+1}&\geq\lambda_i^4,
  \qquad
  \mu_{i+1}\geq4\mu_i.
  \label{eq:admissible-bandwidth-separation}
\end{align}

\item \emph{Support, activation, and derivative estimates.}

Each perturbation is supported in the fixed coordinate chart:
\begin{equation}
  \operatorname{supp}\theta_i(t)\subset B_{r_i}\Subset U_0.
  \label{eq:admissible-perturbation-support}
\end{equation}
For \(i\geq1\), there is an activation time \(s_i\in I_{i-1}\) such that
\(\theta_i(t)=0\) for \(t\leq s_i\) (and hence vanishes to infinite order
in time at \(t=s_i\)), and
\begin{equation}
  s_i+L_i^{-8}<\underline t_i.
  \label{eq:admissible-activation-before-terminal-window}
\end{equation}
The finite spatial and mixed-derivative estimates are
\begin{align}
  \|\partial_t^m\theta_i(t)\|_{C^j}
  &\leq\lambda_i^{j+2m}
  &&(j+2m\leq k_i),
  \label{eq:admissible-perturbation-mixed-bound}\\
  \|\partial_t^m\rho^{(i)}(t)\|_{C^j}
  &\leq2\lambda_i^{j+2m}
  &&(1\leq j+2m\leq K_i).
  \label{eq:admissible-stage-mixed-bound}
\end{align}
For every fixed \((j,m)\in\mathbb N_0^2\), the perturbation also satisfies
\begin{equation}
  \|\partial_t^m\theta_i(t)\|_{C^j}
  \leq C_{j,m}(\beta_i)L_i^{N_{j,m}(\beta_i)}
    \lambda_i^{j+m(1-2\beta_i)-1+\beta_i}.
  \label{eq:admissible-perturbation-sharp-bound}
\end{equation}
The families \(C_{j,m}\) and \(N_{j,m}\) are fixed before the stage
induction.  Induct on \(m\): once every spatial order at time orders below
\(m\) has been fixed, choose the order-\((j,m)\) values simultaneously for
all \(j\).  In the order-\(m\) density pass, obtain every
threshold-relevant cutoff-parent input from the
constant-free \((j,m)=(2,0)\) case of
\eqref{eq:admissible-stage-mixed-bound} and the Fourier-cutoff Bernstein
inequality.  The remaining sharp inputs have time order strictly below
\(m\), so their constants have already been fixed.  Same-time parent
estimates are used only in the subsequent defect and force estimates and do
not enter this choice.  For
\(\beta\leq b\leq\min\{2\beta,1/8\}\), choose finite
\(\widehat C_{j,m}(b,\beta)\geq1\) and
\(\widehat N_{j,m}(b,\beta)\in\mathbb N_0\) that dominate the resulting
perturbation estimate.  At \((b,\beta)=(1/8,1/8)\), include the level-zero
estimate.  Finally, set
\begin{equation}
  C_{j,m}(\beta)
  :=\sup_{\substack{\beta\leq\gamma\leq1/8\\
                     \gamma\leq b\leq\min\{2\gamma,1/8\}}}
       \widehat C_{j,m}(b,\gamma),
  \qquad
  N_{j,m}(\beta)
  :=\left\lceil
    \sup_{\substack{\beta\leq\gamma\leq1/8\\
                     \gamma\leq b\leq\min\{2\gamma,1/8\}}}
       \widehat N_{j,m}(b,\gamma)
  \right\rceil.
  \label{eq:sharp-constant-envelope}
\end{equation}
Local uniformity of the estimates makes these suprema finite.  The resulting
families are independent of the stage index and both base oscillation
parameters, and \eqref{eq:sharp-constant-envelope} is the envelope used when
the earlier exponents lie in \([\beta,1/8]\).

\item \emph{Terminal origin geometry and angle evolution.}

On \(I_i\), the inherited cutoff density is small relative to the newest
perturbation:
\begin{equation}
  \|P_i(t)\|_{C^1}
  +\|u_{\mathbb T}(P_i)(t)\|_{C^1}
  \leq\lambda_i^{\beta_i/8}.
  \label{eq:admissible-cutoff-parent-bound}
\end{equation}
The two amplitudes have different roles: \(A_i\) measures the perturbation
density gradient, while \(B_i\) is the coefficient in its velocity-Jacobian
expansion. There are positive functions
\[
  A_i,B_i\in C\!\left(I_i;
    [\lambda_i^{\beta_i}/2,2\lambda_i^{\beta_i}]\right),
\]
an angle \(\alpha_i\in C^1(I_i)\), and remainders
\[
  E_i^\rho\in C(I_i;\mathbb R^2),
  \qquad
  E_i^u\in C(I_i;\mathbb R^{2\times2})
\]
such that
\begin{align}
  \nabla\theta_i(0,t)
  &=A_i(t)e(\alpha_i(t))+E_i^\rho(t),
  &
  |E_i^\rho(t)|
  &\leq\sqrt2\,\lambda_i^{\beta_i/8},
  \label{eq:admissible-gradient-geometry}\\
  D_xu_{\mathbb T}(\theta_i)(0,t)
  &=C_0B_i(t)\cos\alpha_i(t)\mathcal M(\alpha_i(t))
    +E_i^u(t),
  &
  |(E_i^u)_{ab}(t)|
  &\leq\lambda_i^{\beta_i/8}.
  \label{eq:admissible-jacobian-geometry}
\end{align}
The reference angle is transported by \(P_i\):
\begin{equation}
  \dot\alpha_i
  =-\left\langle
    D_xu_{\mathbb T}(P_i)(0,t)^{\mathsf T}e(\alpha_i),
    e^\perp(\alpha_i)\right\rangle,
  \qquad
  \alpha_i(1)=\frac\pi2+\lambda_i^{-\beta_i/8}.
  \label{eq:admissible-angle-law}
\end{equation}
For \(i=0\), the right side is zero.
\end{enumerate}
\end{definition}

The stage decomposition and cutoff nesting give an exact finite-stage formula
before any global parameter recursion is imposed.  For every
\(1\leq Q\leq q\),
\begin{equation}
  \rho^{(Q)}
  =S_{\mu_1}\theta_0
   +\sum_{i=1}^{Q-1}S_{\mu_{i+1}}\theta_i
   +\theta_Q.
  \label{eq:iterated-density-formula}
\end{equation}
Indeed, \(S_{\mu_j}S_{\mu_i}=S_{\mu_i}\) for \(j>i\), by
\eqref{eq:admissible-bandwidth-separation} and
Lemma~\ref{lem:periodic-cutoff-calculus}; induction in \(Q\) proves the
formula.

\subsection{Selection of the next scales}

Set
\[
  k_-:=\frac1{192\pi}.
\]

\begin{proposition}[Deterministic scale selection]
\label{prop:deterministic-frequency-selection}
Let an admissible family be given through level \(q\), and choose
\(\beta_{q+1}\) subject to
\begin{equation}
  \frac{\beta_q}{2}\leq\beta_{q+1}\leq\beta_q,
  \qquad
  K_q\geq d(\beta_{q+1}).
  \label{eq:child-beta-range}
\end{equation}
Define
\begin{equation}
  \boxed{\;
  \lambda_{q+1}
  =\exp\left(
    \frac{\beta_{q+1}}{192\pi}
    \lambda_q^{\beta_q/8}\right).
  \;}
  \label{eq:deterministic-frequency-law}
\end{equation}
There is a finite threshold
\[
  \Lambda_{\rm sep}=\Lambda_{\rm sep}(\beta_q)
\]
such that, if \(\lambda_q\geq\Lambda_{\rm sep}\), the adjacent frequency,
terminal-time, and frequency separations
\eqref{eq:admissible-frequency-separation}--
\eqref{eq:admissible-bandwidth-separation} hold for the pair \((q,q+1)\).
In particular,
\begin{equation}
  \lambda_{q+1}^{\beta_{q+1}/8}\geq4\lambda_q,
  \qquad
  \lambda_{q+1}^{-\beta_{q+1}/8}\leq\lambda_q^{-1},
  \qquad
  \mu_{q+1}\geq\lambda_q^4,
  \qquad
  \mu_{q+1}\geq4\mu_q.
  \label{eq:selected-bandwidth-separation}
\end{equation}
The exact transfer identity is
\begin{equation}
  \lambda_q^A
  =\left(
     \frac{192\pi L_{q+1}}{\beta_{q+1}}
    \right)^{8A/\beta_q}
  \qquad(A\in\mathbb R).
  \label{eq:frequency-transfer}
\end{equation}
\end{proposition}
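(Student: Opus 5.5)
The whole proposition reduces to elementary inequalities once we take logarithms of the frequency law \eqref{eq:deterministic-frequency-law}. Put \(x:=\lambda_q^{\beta_q/8}\), so that \(L_{q+1}=\log\lambda_{q+1}=\beta_{q+1}x/(192\pi)\).

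\emph{Transfer identity.} Solving for \(x\) gives \(x=192\pi L_{q+1}/\beta_{q+1}\). Since \(\lambda_q=x^{8/\beta_q}\), we get \(\lambda_q^A=x^{8A/\beta_q}\) for every real \(A\), which is \eqref{eq:frequency-transfer}. Nothing more is needed here.

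\emph{Separation inequalities.} Each inequality will be written in logarithmic form and compared with \(x\), using \(\beta_q/2\le\beta_{q+1}\le\beta_q\) from \eqref{eq:child-beta-range}.
\begin{itemize}
\item[(a)] \(\lambda_{q+1}^{\beta_{q+1}/8}\ge4\lambda_q\) is equivalent to \(\frac{\beta_{q+1}^2}{8\cdot192\pi}x\ge\log4+\frac8{\beta_q}\log x\). Its left side is at least \(\frac{\beta_q^2}{4\cdot1536\pi}x\), so it holds once \(x\) is large depending only on \(\beta_q\).
\item[(b)] The bound \(\lambda_{q+1}^{-\beta_{q+1}/8}\le\lambda_q^{-1}\) and the first half of \eqref{eq:admissible-frequency-separation}, \(\lambda_{q+1}\ge4\lambda_q\), both follow from (a).
\item[(c)] \(\mu_{q+1}\ge\lambda_q^4\) reads \(\frac{\beta_{q+1}}{2}L_{q+1}\ge\frac{32}{\beta_q}\log x\). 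Since \(\mu_{q+1}=\lambda_{q+1}^{\beta_{q+1}/2}\ge(\lambda_{q+1}^{\beta_{q+1}/8})^4\), it also follows from (a).
\item[(d)] \(\mu_{q+1}\ge4\mu_q\) follows from \(\mu_q=\lambda_q^{\beta_q/2}\le\lambda_q\le\lambda_q^4\), using (c), together with \(\lambda_q^4\ge4\lambda_q\).
\item[(e)] The time separation \eqref{eq:admissible-time-separation} is equivalent to \(\lambda_{q+1}^{3\beta_{q+1}/4}\ge2\lambda_q^{3\beta_q/4}\). It follows from \(\lambda_{q+1}^{3\beta_{q+1}/4}=(\lambda_{q+1}^{\beta_{q+1}/8})^6\ge(4\lambda_q)^6\) and \(\lambda_q^{3\beta_q/4}\le\lambda_q\).
\end{itemize}

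\emph{Choice of threshold.} Every condition is implied by \(x\ge x_0(\beta_q)\), where \(x_0\) is fixed by the finite comparison in (a) (polynomial-in-\(x\) growth versus \(\log x\), with constants depending only on the lower bound \(\beta_{q+1}\ge\beta_q/2\)). We then set \(\Lambda_{\rm sep}(\beta_q):=\max\{x_0^{8/\beta_q},e\}\). No earlier lemma is needed.

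The only step requiring any care is (a): one must check that \(x_0\) depends only on \(\beta_q\), even though \(\beta_{q+1}\) is allowed to vary in \([\beta_q/2,\beta_q]\). This is a uniformity check, not a genuine obstacle.
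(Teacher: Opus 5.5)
Your proof is correct and follows essentially the same route as the paper: take logarithms of the frequency law, compare a quantity linear in \(\lambda_q^{\beta_q/8}\) against \(\log\lambda_q\) uniformly for \(\beta_{q+1}\in[\beta_q/2,\beta_q]\), and obtain the transfer identity by solving the law for \(\lambda_q\). The only difference is organizational. The paper checks three such limits separately, whereas you note that the single inequality \(\lambda_{q+1}^{\beta_{q+1}/8}\ge4\lambda_q\) implies all the other separations algebraically, which gives a single threshold slightly more directly.
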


\begin{proof}
Put \(b=\beta_q\), \(\beta=\beta_{q+1}\), and
\(\Lambda=\lambda_q\).  Then
\begin{equation}
  L_{q+1}=k_-\beta\Lambda^{b/8}.
  \label{eq:next-log-frequency}
\end{equation}
Since \(\beta\in[b/2,b]\), the quantities
\[
  \frac{\beta L_{q+1}}8-\log(4\Lambda),
  \qquad
  \frac{3\beta L_{q+1}}4-\frac{3b}4\log\Lambda,
  \qquad
  \frac{\beta L_{q+1}}2-4\log\Lambda
\]
tend to infinity with \(\Lambda\), uniformly for \(\beta\in[b/2,b]\).
These three limits give respectively
\eqref{eq:admissible-frequency-separation},
\eqref{eq:admissible-time-separation}, and
\(\mu_{q+1}\geq\lambda_q^4\).  They also imply
\(\lambda_{q+1}\geq4\lambda_q\),
and \(\lambda_{q+1}^{-\beta/8}\leq\Lambda^{-1}\), above one threshold
depending only on \(b\).  Moreover,
\(\mu_{q+1}\geq\Lambda^4\geq4\Lambda^{b/2}=4\mu_q\) for
\(\Lambda\geq2\) and \(b\leq1/8\), so this separation requires no further
threshold.  Solving \eqref{eq:next-log-frequency} for \(\Lambda^A\)
proves \eqref{eq:frequency-transfer}.
\end{proof}

\subsection{Cutoff-parent geometry, flow, and transported phase}

\begin{definition}[Adjacent-layer setup]
\label{def:adjacent-layer-setup}
Fix an admissible family through level \(q\), and set the parent aliases
\[
  \Lambda:=\lambda_q,\qquad b:=\beta_q.
\]
Choose the child exponent \(\beta_{q+1}\) subject to
\[
  \beta:=\beta_{q+1}\in[b/2,b],
  \qquad K_q\geq d(\beta).
\]
Define the child frequency \(\lambda_{q+1}\) from
\((\Lambda,b,\beta)\) by \eqref{eq:deterministic-frequency-law}, write
\(\lambda:=\lambda_{q+1}\), and assume
\[
  \Lambda\geq\Lambda_{\rm sep}(b).
\]
Proposition~\ref{prop:deterministic-frequency-selection} supplies all
adjacent-scale separations.  Set
\[
  \begin{aligned}
    L&:=L_{q+1}=\log\lambda,
    &M&:=M_{q+1}=\lceil\beta^{-2}\rceil,
    &\mu&:=\mu_{q+1}=\lambda^{\beta/2},\\
    R&:=\lambda^{-2\beta},
    &r&:=r_{q+1}=RL^4,
  \end{aligned}
\]
where \(B_R\) is the initial child core and \(B_r\) is its buffered support
region. Define the cutoff parent
\begin{equation}
  P_{q+1}=S_\mu\rho^{(q)},
  \qquad
  v_q=u_{\mathbb T}(P_{q+1}).
  \label{eq:actual-next-parent}
\end{equation}
\end{definition}

Constants in the remainder of this section
may depend on the adjacent exponents, displayed finite derivative orders, the
cutoff \(\chi\), the constants in
Definition~\ref{def:admissible-layered-family}, but not on \(\Lambda\) or
\(\lambda\).

Cutting off the complete stage at the child Fourier scale leaves all older cutoff
multipliers unchanged and removes only a rapidly decaying tail of \(\theta_q\);
the next lemma transfers the terminal origin geometry to this cutoff parent.

\begin{lemma}[Cutoff preservation of the newest geometry]
\label{lem:cutoff-parent-geometry}
Assume Definition~\ref{def:adjacent-layer-setup}.  There is a finite threshold
\[
  \Lambda_{\rm geom}=\Lambda_{\rm geom}(b,\beta)
\]
such that, if \(\Lambda\geq\Lambda_{\rm geom}\), the following conclusions
hold for \(t\in I_q\):
\begin{align}
  |\alpha_q(t)-\alpha_q(1)|
  &\leq\Lambda^{-b/2},
  \label{eq:parent-angle-freeze}\\
  -2\Lambda^{-b/8}
  \leq\cos\alpha_q(t)
  &\leq-\tfrac12\Lambda^{-b/8},
  \label{eq:parent-cosine-window}\\
  |\cos\alpha_q(t)|
  &\leq(1+\Lambda^{-3b/8})\Lambda^{-b/8}.
  \label{eq:sharp-parent-cosine}
\end{align}
The nested Fourier cutoffs give, for \(t\in[0,1]\), the exact identity
\begin{equation}
  P_{q+1}=P_q+S_\mu\theta_q.
  \label{eq:next-parent-nested-decomposition}
\end{equation}
For \(\tau_q=(I-S_\mu)\theta_q\),
\begin{equation}
  \|\tau_q(t)\|_{C^2}
  +\|D_xu_{\mathbb T}(\tau_q)(t)\|_{L^\infty}
  \leq C\Lambda^{-4},
  \label{eq:newest-perturbation-cutoff-tail}
\end{equation}
for every \(t\in[0,1]\), where \(C\) is absolute and in particular
independent of \(b,\beta\).  Consequently, for \(t\in I_q\),
\begin{align}
  \nabla S_\mu\theta_q(0,t)
  &=A_qe(\alpha_q)+\widetilde E_q^\rho,
  &
  |\widetilde E_q^\rho|
  &\leq4\Lambda^{b/8},
  \label{eq:cutoff-parent-gradient-geometry}\\
  D_xu_{\mathbb T}(S_\mu\theta_q)(0,t)
  &=C_0B_q\cos\alpha_q\,\mathcal M(\alpha_q)
    +\widetilde E_q^u,
  &
  |(\widetilde E_q^u)_{ab}|
  &\leq4\Lambda^{b/8}.
  \label{eq:cutoff-parent-jacobian-geometry}
\end{align}
\end{lemma}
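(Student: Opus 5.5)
The plan has three steps, taken in this order: the nested identity, then the tail bound, then the angle and cosine estimates, which feed the final geometric transfer.

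\emph{Nested identity.} By \eqref{eq:admissible-stage-decomposition}, $\rho^{(q)}=P_q+\theta_q$ with $P_q=S_{\mu_q}\rho^{(q-1)}$, so $\operatorname{supp}\widehat P_q\subset\{|k|\le2\mu_q\}$. Since $\mu_{q+1}\ge4\mu_q$ by \eqref{eq:admissible-bandwidth-separation}, the multiplier $\chi(k/\mu)$ equals one on $\{|k|\le2\mu_q\}$, so $S_\mu P_q=P_q$. Hence $P_{q+1}=S_\mu\rho^{(q)}=P_q+S_\mu\theta_q$, which is \eqref{eq:next-parent-nested-decomposition}. For $q=0$ we have $P_0=0$ and the identity is trivial.

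\emph{Tail bound.} The operator $I-S_\mu$ only sees frequencies $|k|\ge\mu$. I would use the high-frequency Bernstein estimate from the appendix (Lemma~\ref{lem:periodic-cutoff-calculus}) in the form
\[
\|(I-S_\mu)f\|_{C^2}+\|D_xu_{\mathbb T}((I-S_\mu)f)\|_{L^\infty}\lesssim\mu^{-N}\|f\|_{C^{N+4}},
\]
where the extra derivatives absorb the zero-order multiplier through Sobolev embedding on $\mathbb T^2$. Apply this with $f=\theta_q$ and \eqref{eq:admissible-perturbation-mixed-bound} at $m=0$, $j=N+4\le k_q$. This gives the bound $C\mu^{-N}\Lambda^{N+4}$. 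The key choice is $N=2$: since $\mu\ge\Lambda^4$, we get $\mu^{-2}\Lambda^{6}\le\Lambda^{-2}$. That is not quite enough, so I would take $N=3$ instead, giving $\Lambda^{-12+7}=\Lambda^{-5}\le\Lambda^{-4}$. This needs $k_q\ge7$, which holds since $d(\beta)\ge257$. The constant is absolute because $N$ is fixed and the appendix constant depends only on $\chi$.

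\emph{Angle and cosines.} Integrate \eqref{eq:admissible-angle-law} from $t$ to $1$. The right side is bounded by $\|D_xu_{\mathbb T}(P_q)\|_{L^\infty}\le\Lambda^{b/8}$ by \eqref{eq:admissible-cutoff-parent-bound}. The interval length is $|I_q|=\widetilde C\Lambda^{-3b/4}$, so
\[
|\alpha_q(t)-\alpha_q(1)|\le\widetilde C\Lambda^{-5b/8}\le\Lambda^{-b/2}
\]
once $\Lambda$ is large, proving \eqref{eq:parent-angle-freeze}. Since $\alpha_q(1)=\pi/2+\Lambda^{-b/8}$, we have $\cos\alpha_q=-\sin(\alpha_q-\pi/2)$ with $\alpha_q-\pi/2=\Lambda^{-b/8}+O(\Lambda^{-b/2})$. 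Using $|\sin x-x|\le x^3/6$, we get
\[
|\cos\alpha_q|\le\Lambda^{-b/8}+\Lambda^{-b/2}+O(\Lambda^{-3b/8}).
\]
This gives \eqref{eq:sharp-parent-cosine} after absorbing constants, which needs a large threshold since $\Lambda^{-b/2}\le\Lambda^{-b/8}\Lambda^{-3b/8}$ holds exactly. The two-sided window \eqref{eq:parent-cosine-window} follows similarly.

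\emph{Geometric transfer.} Write $S_\mu\theta_q=\theta_q-\tau_q$. Then \eqref{eq:admissible-gradient-geometry} together with the tail bound gives $|\widetilde E_q^\rho|\le\sqrt2\Lambda^{b/8}+C\Lambda^{-4}\le4\Lambda^{b/8}$. Linearity of $u_{\mathbb T}$ together with \eqref{eq:admissible-jacobian-geometry} gives the Jacobian statement \eqref{eq:cutoff-parent-jacobian-geometry} in the same way.

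\emph{Main obstacle.} The main obstacle is the sharp cosine bound \eqref{eq:sharp-parent-cosine}. Its relative error is $\Lambda^{-3b/8}$, so the error sources must be organized carefully. The cubic Taylor error is of size $\Lambda^{-3b/8}$, which is exactly the relative error allowed, so it consumes the entire margin. The angle drift $\Lambda^{-5b/8}$ must therefore be shown to be genuinely smaller; it is, by the factor $\Lambda^{-b/8}$ relative to the relative size. I would check these exponents first, before the routine parts.
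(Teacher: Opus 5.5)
Apart from one step, your argument is the paper's: the nested identity, the tail bound, the integration of the angle law, and the transfer of the gradient and Jacobian geometry. Two bookkeeping points. For the pair $(q,q+1)$, the inequalities $\mu\ge4\mu_q$ and $\mu\ge\Lambda^4$ come from the scale selection \eqref{eq:selected-bandwidth-separation}, not from \eqref{eq:admissible-bandwidth-separation}, since admissibility only covers indices up to $q$. Also, \eqref{eq:periodic-spectral-tail} as stated costs $C^{j+s+3}$, so $j=2$, $s=3$ uses $\|\theta_q\|_{C^8}\le\Lambda^8$ and gives exactly $C\Lambda^{-4}$, which is what the paper does.

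\textbf{The gap: the sharp cosine bound.} Your proof of \eqref{eq:sharp-parent-cosine} does not close, and your closing analysis confuses absolute and relative sizes. Write $\alpha_q(t)=\pi/2+z$ with $z=\Lambda^{-b/8}+h$ and $|h|\le\Lambda^{-b/2}$, so that $|\cos\alpha_q|=\sin z$.

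\begin{itemize}
\item The target $\sin z\le\Lambda^{-b/8}+\Lambda^{-b/2}$ leaves an \emph{absolute} slack of only $\Lambda^{-b/2}$ over the main term. The bound \eqref{eq:parent-angle-freeze} on $h$ already uses all of it.
\item The two-sided Taylor estimate $|\sin z-z|\le z^3/6$ adds an absolute error $z^3/6\approx\frac16\Lambda^{-3b/8}$. Relative to the main term this is $\Lambda^{-b/4}$, not the $\Lambda^{-3b/8}$ you claim.
\item So this error exceeds the allowed relative error $\Lambda^{-3b/8}$ by a factor of order $\Lambda^{b/8}$.
\end{itemize}

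Hence your inequality $|\cos\alpha_q|\le\Lambda^{-b/8}+\Lambda^{-b/2}+O(\Lambda^{-3b/8})$ does not imply \eqref{eq:sharp-parent-cosine}. No threshold can absorb a positive additive $\Lambda^{-3b/8}$ into a bound with leading constant exactly one and slack $\Lambda^{-b/2}$. Replacing $\Lambda^{-b/2}$ by the true drift $O(\Lambda^{-5b/8})$ does not help either, since $\frac16\Lambda^{-3b/8}$ still dominates $\Lambda^{-b/2}$.

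\textbf{The fix.} The cubic term has a favorable sign. For $\Lambda$ large one has $0<z\le\pi/2$, so $\sin z\le z$, and therefore
$|\cos\alpha_q|\le\Lambda^{-b/8}+|h|\le(1+\Lambda^{-3b/8})\Lambda^{-b/8}$
with no remainder at all. Combined with $\sin z\ge2z/\pi$, the same observation also gives \eqref{eq:parent-cosine-window}. This is exactly the paper's argument via $2|z|/\pi\le|\sin z|\le|z|$.
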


\begin{proof}
The exact angle law and \eqref{eq:admissible-cutoff-parent-bound} give
\[
  |\dot\alpha_q|\leq C\Lambda^{b/8}.
\]
Integration over \(I_q\) proves
\eqref{eq:parent-angle-freeze}.  Write
\(\alpha_q=\pi/2+\Lambda^{-b/8}+h\), where
\(|h|\leq\Lambda^{-b/2}\).  The bounds
\(2|z|/\pi\leq|\sin z|\leq|z|\) for \(|z|\leq\pi/2\) give
\eqref{eq:parent-cosine-window} and \eqref{eq:sharp-parent-cosine}.

The Fourier support of \(P_q\) lies in \(B_{2\mu_q}\), and
\(\mu\geq2\mu_q\).  Lemma~\ref{lem:periodic-cutoff-calculus} gives
\(S_\mu P_q=P_q\), proving
\eqref{eq:next-parent-nested-decomposition}.  Taking \(j=2,s=3\) in
\eqref{eq:periodic-spectral-tail}, using
\(\|\theta_q\|_{C^8}\leq\Lambda^8\) and \(\mu\geq\Lambda^4\), yields
\[
  \|\tau_q\|_{C^2}\leq C\mu^{-3}\Lambda^8
  \leq C\Lambda^{-4}.
\]
The Jacobian estimate in Lemma~\ref{lem:periodic-parity-schauder} proves
the second part of \eqref{eq:newest-perturbation-cutoff-tail}.  Combining this tail with
\eqref{eq:admissible-gradient-geometry} and
\eqref{eq:admissible-jacobian-geometry} proves the last two conclusions.
\end{proof}

The next three results prepare the hypotheses of the transported-phase
calculus in Section~\ref{sec:velocity-and-jets}. The cutoff-parent flow first
keeps the child core inside the coordinate chart and controls its derivatives.
Pulling a terminal linear phase back by this flow then constructs the child
phase; the final corollary records the resulting finite-order inputs.

Fix once and for all a radius \(\rho_{\rm ch}>0\) such that
\begin{equation}
  \overline{B_{2\rho_{\rm ch}}}\subset U_0.
  \label{eq:fixed-flow-chart-buffer}
\end{equation}
The subscript {\textrm{ch}} is shorthand for child. 
\begin{lemma}[Cutoff-parent flow]
\label{lem:cutoff-parent-flow}
Assume the hypotheses of Lemma~\ref{lem:cutoff-parent-geometry}, and fix an
integer \(J\geq1\).  There is a finite threshold
\[
  \Lambda_{\rm flow}
  =\Lambda_{\rm flow}(b,\beta)
\]
such that the following conclusions hold when
\(\Lambda\geq\Lambda_{\rm flow}\), where the threshold is chosen at least
as large as \(\Lambda_{\rm geom}\).

Let \(\Gamma_q(t;\sigma)\) solve
\[
  \partial_t\Gamma_q(t;\sigma)
  =D_xv_q(0,t)\Gamma_q(t;\sigma),
  \qquad
  \Gamma_q(\sigma;\sigma)=\operatorname{Id}.
\]
All the following conclusions hold for arbitrary
\(\sigma,t\in I_q\), in either time order.  First,
\begin{equation}
  \|\Gamma_q(t;\sigma)\|_{\mathrm{op}}
  +\|\Gamma_q(\sigma;t)\|_{\mathrm{op}}
  \leq C\Lambda^{b/4}.
  \label{eq:cutoff-parent-linear-flow-distortion}
\end{equation}
Let \(\Phi_q(t;\sigma,\cdot)\) be the global flow on \(\mathbb T^2\) of
\(v_q\):
\[
  \partial_t\Phi_q(t;\sigma,x)
  =v_q(\Phi_q(t;\sigma,x),t),
  \qquad
  \Phi_q(\sigma;\sigma,x)=x.
\]
For \(x\in B_{2r}\), let
\(\widetilde\Phi_q(t;\sigma,x)\) denote the lift of this trajectory whose
value at \(t=\sigma\) is the coordinate representative \(x\). Then
\begin{align}
  \sup_{x\in B_{2r}}
    |\widetilde\Phi_q(t;\sigma,x)|
  &\leq C_{b,\beta}L^2r<\rho_{\rm ch},
  \label{eq:actual-parent-buffered-flow}\\
  \Phi_q(t;\sigma,\overline{B_R})&\subset B_{3r/4}\Subset B_r,
  \label{eq:actual-parent-core-flow}\\
  \|D_x^j\Phi_q(t;\sigma,\cdot)\|_{C^0(B_r)}
  &\leq C_jL^{C_j}\mu^{(j-1)_+}
  \qquad(1\leq j\leq J).
  \label{eq:actual-parent-flow-derivatives}
\end{align}
Moreover, for \(x\in B_r\),
\begin{equation}
  \det D_x\Phi_q(t;\sigma,x)=1,
  \qquad
  \left\|\bigl(D_x\Phi_q(t;\sigma,x)\bigr)^{-1}\right\|_{\mathrm{op}}
  \leq CL^C.
  \label{eq:actual-parent-flow-inverse}
\end{equation}
Thus every trajectory used below to define the transported phase stays in
the fixed coordinate chart.
\end{lemma}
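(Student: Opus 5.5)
We first bound the origin Jacobian of the cutoff parent velocity on \(I_q\), and from it the linear flow \(\Gamma_q\). By \eqref{eq:next-parent-nested-decomposition}, \(v_q=u_{\mathbb T}(P_q)+u_{\mathbb T}(S_\mu\theta_q)\). By \eqref{eq:admissible-cutoff-parent-bound}, the first Jacobian is at most \(\Lambda^{b/8}\). By \eqref{eq:cutoff-parent-jacobian-geometry} and \eqref{eq:sharp-parent-cosine}, the second equals \(C_0B_q\cos\alpha_q\,\mathcal M(\alpha_q)+O(\Lambda^{b/8})\), with \(|B_q\cos\alpha_q|\leq 2(1+\Lambda^{-3b/8})\Lambda^{7b/8}\). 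Hence \(\|D_xv_q(0,t)\|\lesssim\Lambda^{7b/8}\). Since \(|I_q|=\widetilde C\Lambda^{-3b/4}\), the crude Gronwall bound \(\exp(C\Lambda^{7b/8}\Lambda^{-3b/4})=\exp(C\Lambda^{b/8})\) is far too weak, so we must use structure. The leading matrix \(\mathcal M(\alpha_q)=-e^\perp e^{\mathsf T}\) is nilpotent, and by \eqref{eq:parent-angle-freeze} it is frozen on \(I_q\) up to \(\Lambda^{-b/2}\). We therefore write \(D_xv_q(0,t)=\kappa(t)N+E(t)\), where \(N=\mathcal M(\alpha_q(1))\), \(\kappa=C_0B_q\cos\alpha_q\), and \(\|E\|\lesssim\Lambda^{7b/8}\Lambda^{-b/2}+\Lambda^{b/8}\lesssim\Lambda^{3b/8}\). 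Because \(N^2=0\), the propagator of \(\kappa N\) is exactly \(\mathrm{Id}+(\int\kappa)N\), and \(|\int_{I_q}\kappa|\lesssim\Lambda^{7b/8-3b/4}=\Lambda^{b/8}\). Variation of constants then treats \(E\) as a perturbation. Its Gronwall factor is \(\exp(\int\|(\mathrm{Id}+\int\kappa N)^{-1}E(\mathrm{Id}+\int\kappa N)\|)\leq\exp(C\Lambda^{b/4}\Lambda^{3b/8}\Lambda^{-3b/4})=O(1)\). This yields \eqref{eq:cutoff-parent-linear-flow-distortion} with room to spare, in both time orders.

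Next we treat the nonlinear flow. We write \(v_q(x,t)=D_xv_q(0,t)x+\mathcal N(x,t)\), using oddness of \(v_q\), so that \(v_q(0)=0\) and the Hessian vanishes at \(0\). The cubic remainder satisfies \(|\mathcal N|\lesssim\|v_q\|_{C^3}|x|^3\), and by Bernstein at the Fourier scale \(2\mu\) together with \eqref{eq:admissible-stage-mixed-bound} (since \(K_q\geq d(\beta)\) covers order \(3\)), \(\|v_q\|_{C^3}\lesssim\Lambda^3\). Writing \(x(t)=\Gamma_q(t;\sigma)y(t)\), we get \(|\dot y|\lesssim\Lambda^{b/4}\Lambda^3|x|^3\). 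A continuity/bootstrap argument on \(|x|\leq C\Lambda^{b/4}\cdot 2r\), using \(r^2\Lambda^{3+3b/4}|I_q|\ll1\) (true because \(r^2=\lambda^{-4\beta}L^8\) and \(\lambda^{\beta/8}\geq4\Lambda\) by \eqref{eq:selected-bandwidth-separation}), keeps trajectories in \(B_{C\Lambda^{b/4}r}\). Since \(\Lambda^{b/4}\leq L^2\) by \eqref{eq:frequency-transfer}, this gives \eqref{eq:actual-parent-buffered-flow}, and smallness of \(r\) gives \(<\rho_{\rm ch}\). This argument works on the lift because the trajectory never leaves the chart. For \eqref{eq:actual-parent-core-flow}, the same bound with \(R=rL^{-4}\) gives \(|\Phi|\leq CL^2R\leq 3r/4\) once \(L\) is large.

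For the flow derivatives, \(\det D_x\Phi_q=1\) follows from \(\operatorname{div}v_q=0\) (Liouville). The first variation \(D_x\Phi\) solves \(\dot Z=D_xv_q(\Phi,t)Z\). We compare \(D_xv_q(\Phi,t)\) with \(D_xv_q(0,t)\): the difference is at most \(\|v_q\|_{C^3}|\Phi|^2\lesssim\Lambda^3L^4r^2\), which is tiny. So the perturbation argument above gives \(\|D_x\Phi\|\leq CL^2\). The inverse is the \(2\times2\) adjugate because \(\det=1\), so it obeys the same bound, giving \eqref{eq:actual-parent-flow-inverse}. For \(j\geq2\), we differentiate the ODE: \(D^j\Phi\) solves the same linear equation forced by terms \(D^{i}v_q(\Phi)\) times products of lower derivatives, with \(i\leq j\). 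Bernstein gives \(\|v_q\|_{C^{i}}\lesssim\mu^{i-1}\Lambda^{1}\) (stage bound at order one, then powers of \(2\mu\)), integrated over \(|I_q|\leq1\) and propagated through the \(O(L^2)\) linear propagator. Induction on \(j\) gives \(L^{C_j}\mu^{j-1}\), absorbing the factors of \(\Lambda\) into \(\mu^{(j-1)}\) times logs, since \(\Lambda\lesssim L^{8/b}\).

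The main obstacle is the first step: showing that the linear propagator is only polynomially distorting (\(\Lambda^{b/4}\)) despite a Jacobian of size \(\Lambda^{7b/8}\). This requires exploiting the nilpotent, almost frozen shear structure of \(\mathcal M(\alpha_q)\) rather than Gronwall. The threshold \(\Lambda_{\rm flow}\) collects all the "for \(\Lambda\) large" absorptions.
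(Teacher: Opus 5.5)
Your proposal is correct. Its overall architecture matches the paper's: linear flow at the origin, variation of constants for the trajectories, Liouville for the determinant, the $2\times2$ inverse, and a Fa\`a di Bruno induction for $D_x^j\Phi_q$. The two key estimates, however, are obtained differently.

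\textbf{Linear flow.} For \eqref{eq:cutoff-parent-linear-flow-distortion}, the paper works with the inverse transpose $Z=\Gamma_q^{-\mathsf T}$ in the frame frozen at $\alpha_q(1)$. There the system is triangular up to errors of size $\Lambda^{b/2}$, and the paper bootstraps the components separately: first $m_2\le C(1+\Lambda^{-b/4}m_1)$, then Gr\"onwall for $m_1$. You instead use $N^2=0$ for $N=\mathcal M(\alpha_q(1))$, so the principal propagator is exactly $\operatorname{Id}+(\int\kappa)N$, of norm $O(\Lambda^{b/8})$. A single Gr\"onwall in the interaction picture then absorbs the $O(\Lambda^{3b/8})$ remainder, since $\Lambda^{b/4}\Lambda^{3b/8}\Lambda^{-3b/4}=\Lambda^{-b/8}$. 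This is the same shear mechanism, presented more transparently, and it yields the sharper $\Lambda^{b/8}$ directly.

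\textbf{Nonlinear remainder.} The paper uses only the quadratic Taylor bound $|\mathcal N|\le C_*L^{N_*}\mu|x|^2$, built from Bernstein and the constant-free $(j,m)=(2,0)$ stage bound, with smallness coming from $\mu r=\lambda^{-3\beta/2}L^4$. You use oddness to make the remainder cubic and bound $\|v_q\|_{C^3}$ by a power of $\Lambda$ through the uncut stage bound at order three or four, with smallness coming from $\lambda^{-4\beta}\le(4\Lambda)^{-32}$. Your route is legitimate because $K_q\ge d(\beta)$ and the stage bound is constant-free at every order. The paper's choice buys consistency with its bookkeeping in Definition~\ref{def:admissible-layered-family}, where every threshold-relevant parent input comes from the $(2,0)$ case plus Bernstein.

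\textbf{Minor correction.} $u_{\mathbb T}$ is a zero-order multiplier and is not bounded on $C^k$. So bounds such as $\|v_q\|_{C^3}\lesssim\Lambda^3$, or $\|\nabla^iv_q\|\lesssim\mu^{i-1}\Lambda$ from the $C^1$ stage bound, should carry a factor $\log\mu\lesssim L$. Alternatively, spend one more derivative of $\rho^{(q)}$ and use the Schauder estimate, as the paper does with $C^2$. This is harmless, since every target allows powers of $L$.

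Your threshold is also independent of $J$, as the notation $\Lambda_{\rm flow}(b,\beta)$ requires. The $j\ge2$ step only integrates the forcing through the $O(L^{C})$ linear propagator and never absorbs anything.
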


\begin{proof}
We estimate the inverse transpose in the frame frozen at \(\alpha_q(1)\),
where the leading system is triangular. We first control its second component,
then its first; incompressibility transfers the resulting bounds to
\(\Gamma_q\) and its inverse. Let \(Z=\Gamma_q^{-\mathsf T}\). By
\eqref{eq:next-parent-nested-decomposition},
\eqref{eq:cutoff-parent-jacobian-geometry}, and
\eqref{eq:admissible-cutoff-parent-bound}, each column
\(z=(z_1,z_2)\) of \(Z\) satisfies
\begin{align*}
  \dot z_1
  &=C_0B_q\cos\alpha_q\,z_2
    +\varepsilon_{11}z_1+\varepsilon_{12}z_2,\\
  \dot z_2&=\varepsilon_{21}z_1+\varepsilon_{22}z_2,
\end{align*}
where
\[
  \sum_{a,b}|\varepsilon_{ab}|
  \leq C\Lambda^{b/2}.
\]
The last bound includes the angular change of the principal matrix:
\(\Lambda^{7b/8}\Lambda^{-b/2}=\Lambda^{3b/8}
\leq\Lambda^{b/2}\).
It suffices to consider forward time; the same estimates apply after reversing
time.  For \(\sigma\leq u\leq t\), set
\[
  m_a(u)=\sup_{\sigma\leq\tau\leq u}|z_a(\tau)|,
  \qquad a\in\{1,2\}.
\]
Integration of the second equation and absorption of
\(C\Lambda^{-b/4}m_2(u)\) give
\[
  m_2(u)
  \leq C
    (1+\Lambda^{-b/4}m_1(u)).
\]
Since \(|C_0B_q\cos\alpha_q|\leq C\Lambda^{7b/8}\), the coefficient of
\(m_1\) after this substitution is bounded by
\(C\Lambda^{7b/8-b/4}=C\Lambda^{5b/8}\), while the source is bounded by
\(C\Lambda^{7b/8}\).  Hence
\[
  1+m_1(u)
  \leq C
   +\int_\sigma^u
    \left(C\Lambda^{5b/8}m_1(\tau)
          +C\Lambda^{7b/8}\right)\,d\tau.
\]
The interval has length at most
\(\widetilde C\Lambda^{-3b/4}\).  Integral Gr\"onwall therefore gives
\(m_1(u)\leq C\Lambda^{b/8}\), and hence
\(\|Z\|\leq C\Lambda^{b/4}\). The same argument with
reversed time controls the inverse. Since \(v_q\) is divergence free,
\(\det\Gamma_q=1\), and in two dimensions these bounds give
\eqref{eq:cutoff-parent-linear-flow-distortion} for both \(\Gamma_q\) and its
inverse.

Cutoff boundedness, the \(j=2,m=0\) case of
\eqref{eq:admissible-stage-mixed-bound}, the Fourier support of \(P_{q+1}\),
Bernstein's inequality, the periodic Schauder estimate, and
\eqref{eq:frequency-transfer} give, for every \(k\geq1\), constants
\(C_k,N_k\), depending only on \(b,\beta,k\) and the fixed cutoff, such that
\begin{equation}
  \|\nabla^kv_q\|_{L^\infty(\mathbb T^2)}
  \leq C_kL^{N_k}\mu^{k-1}.
  \label{eq:flow-parent-spatial-input}
\end{equation}
In particular, fix
\(C_*=\max\{C_1,C_2\}\) and
\(N_*=\max\{N_1,N_2\}\).  These two constants, and not the requested
order \(J\), determine the threshold below.
Put
\[
  \mathsf A(t)=D_xv_q(0,t),
  \qquad
  \mathcal N(x,t)=v_q(x,t)-\mathsf A(t)x.
\]
Oddness gives \(v_q(0,t)=0\). Taylor's theorem and
\eqref{eq:flow-parent-spatial-input} therefore imply, for
\(x\in B_{\rho_{\rm ch}}\),
\begin{equation}
  |\mathcal N(x,t)|
  \leq C_*L^{N_*}\mu|x|^2,
  \qquad
  \|D_x\mathcal N(x,t)\|
  \leq C_*L^{N_*}\mu|x|.
  \label{eq:flow-nonlinear-remainder}
\end{equation}

Let
\[
  H=\widetilde C\Lambda^{-3b/4},
  \qquad
  G=\max\{1,C_{b,\beta}\Lambda^{b/4}\},
\]
where \(G\) dominates both linear-flow norms in
\eqref{eq:cutoff-parent-linear-flow-distortion}. The frequency law gives
\begin{equation}
  H=\widetilde C(k_-\beta)^6L^{-6},
  \qquad
  G\leq C_{b,\beta}L^2,
  \qquad
  \mu r=\lambda^{-3\beta/2}L^4.
  \label{eq:flow-small-parameters}
\end{equation}
After increasing \(\Lambda_{\rm flow}\), we may require simultaneously
\begin{equation}
  4Gr<\rho_{\rm ch},\qquad
  2GR<r,\qquad
  8C_*G^2HL^{N_*}\mu r\leq\frac12.
  \label{eq:flow-bootstrap-thresholds}
\end{equation}
These conditions are compatible because
\(\Lambda^{b/4}=(L/(k_-\beta))^2\),
\(GR/r=G/L^4\to0\), and
\(L^N\mu r\to0\) for every fixed \(N\).

Fix \(\sigma,t\in I_q\), put
\(h=|t-\sigma|\leq H\), and write
\[
  y(u)=\widetilde\Phi_q(u;\sigma,x).
\]
On the interval between \(\sigma\) and \(t\), variation of constants gives
\begin{equation}
  y(u)=\Gamma_q(u;\sigma)x+
  \int_\sigma^u
    \Gamma_q(u;\tau)\mathcal N(y(\tau),\tau)\,d\tau.
  \label{eq:nonlinear-flow-variation}
\end{equation}
For \(x\in B_{2r}\), bootstrap \(|y|\leq4Gr\). Equations
\eqref{eq:flow-nonlinear-remainder} and
\eqref{eq:flow-bootstrap-thresholds} give
\[
  |y(u)|
  \leq2Gr+16C_*G^3hL^{N_*}\mu r^2
  <4Gr.
\]
The inequality is strict, so the continuation argument proves
\eqref{eq:actual-parent-buffered-flow}, in both time directions.

For \(x\in\overline{B_R}\), bootstrap instead \(|y|\leq r\). Then
\[
  |y(u)|
  \leq GR+C_*GhL^{N_*}\mu r^2
  <\frac r2+\frac r{16}<\frac{3r}{4}
\]
by \eqref{eq:flow-bootstrap-thresholds}. This proves
\eqref{eq:actual-parent-core-flow}.

For \(x\in B_r\), set
\[
  J_1(u)=D_x\widetilde\Phi_q(u;\sigma,x),
  \qquad
  E(u)=D_xv_q(y(u),u)-\mathsf A(u).
\]
The buffered-flow estimate and
\eqref{eq:flow-nonlinear-remainder} give
\[
  \|E(u)\|\leq4C_*GL^{N_*}\mu r.
\]
Variation of constants around \(\Gamma_q\) yields
\[
  J_1(u)=\Gamma_q(u;\sigma)
  +\int_\sigma^u\Gamma_q(u;\tau)E(\tau)J_1(\tau)\,d\tau.
\]
Hence
\[
  \sup_u\|J_1(u)\|
  \leq G+4C_*G^2HL^{N_*}\mu r\sup_u\|J_1(u)\|
  \leq G+\frac14\sup_u\|J_1(u)\|,
\]
and therefore \(\sup_u\|J_1(u)\|\leq2G\leq CL^2\).

For \(m\geq2\), let \(J_m=D_x^m\widetilde\Phi_q\). Differentiating the
flow equation gives
\[
  \partial_uJ_m
  =\mathsf A J_m+EJ_m+\mathcal F_m,
  \qquad
  J_m(\sigma)=0,
\]
where \(\mathcal F_m\) is the sum over partitions
\(m_1+\cdots+m_a=m\) with \(a\geq2\). Assuming the result below orders
\(m\), each such term satisfies
\[
  \|\nabla^av_q\|
  \prod_{\nu=1}^a\|J_{m_\nu}\|
  \leq C_mL^{C_m}
    \mu^{a-1}\prod_{\nu=1}^a\mu^{m_\nu-1}
  =C_mL^{C_m}\mu^{m-1}.
\]
Variation of constants, followed by the same absorption used for \(J_1\),
proves
\[
  \|J_m\|_{C^0(B_r)}
  \leq C_mL^{C_m}\mu^{m-1}
  \qquad(2\leq m\leq J).
\]
This proves \eqref{eq:actual-parent-flow-derivatives}.

Finally, incompressibility gives
\[
  \partial_u\det J_1(u)
  =(\operatorname{div}v_q)(y(u),u)\det J_1(u)=0.
\]
Thus \(\det J_1=1\). In two dimensions a determinant-one matrix and its
inverse have the same Euclidean operator norm. The \(m=1\) estimate proves
\eqref{eq:actual-parent-flow-inverse}.
\end{proof}

\begin{lemma}[Transported child phase]
\label{lem:transported-child-phase}
Assume Definition~\ref{def:adjacent-layer-setup}, fix an integer \(J\geq3\),
and take the
flows from Lemma~\ref{lem:cutoff-parent-flow} with
\(\Lambda\geq\Lambda_{\rm flow}\). Put
\[
  p_{\rm term}=e\!\left(\frac\pi2+\lambda^{-\beta/8}\right).
\]
Define globally on \(\mathbb T^2\times I_q\)
\begin{equation}
  X_{q+1}(x,t)
  =\psi_{p_{\rm term}}\bigl(\Phi_q(1;t,x)\bigr).
  \label{eq:actual-parent-phase}
\end{equation}
Then \(X_{q+1}\) is odd,
\[
  (\partial_t+v_q\cdot\nabla)X_{q+1}=0,
  \qquad
  \nabla X_{q+1}(0,t)=\Gamma_q(1;t)^{\mathsf T}p_{\rm term},
\]
and the following estimates hold on \(B_r\times I_q\):
\begin{align}
  L^{-C}\leq|\nabla X_{q+1}|
  &\leq L^C,
  \notag\\
  \|\nabla^jX_{q+1}\|_{C^0(B_r)}
  &\leq C_jL^{C_j}\mu^{(j-1)_+},
  \qquad 1\leq j\leq J,
  \label{eq:actual-parent-phase-derivatives}\\
  \|X_{q+1}-\nabla X_{q+1}(0,t)\cdot x\|_{C^0(B_r)}
  &\leq L^C\lambda^{-4\beta},
  \notag\\
  \|\nabla X_{q+1}-\nabla X_{q+1}(0,t)\|_{C^0(B_r)}
  &\leq L^C\lambda^{-2\beta}.
  \label{eq:actual-parent-phase-affine}
\end{align}
The continuous lift \(\alpha_{q+1}\) determined by
\begin{equation}
  e(\alpha_{q+1}(t))
  =\frac{\nabla X_{q+1}(0,t)}
    {|\nabla X_{q+1}(0,t)|},
  \qquad
  \alpha_{q+1}(1)=\frac\pi2+\lambda^{-\beta/8},
  \label{eq:child-angle-phase-gradient}
\end{equation}
satisfies
\begin{equation}
  \dot\alpha_{q+1}
  =-\left\langle
    D_xu_{\mathbb T}(P_{q+1})(0,t)^{\mathsf T}e(\alpha_{q+1}),
    e^\perp(\alpha_{q+1})\right\rangle.
  \label{eq:new-angle-transport}
\end{equation}
\end{lemma}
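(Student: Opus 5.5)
The plan is to read every conclusion off the definition $X_{q+1}(x,t)=\psi_{p_{\rm term}}(\Phi_q(1;t,x))$, using the flow bounds of Lemma~\ref{lem:cutoff-parent-flow}.

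\emph{Transport identity and oddness.} Since $\Phi_q(1;t,\Phi_q(t;\sigma,x))=\Phi_q(1;\sigma,x)$, the function $X_{q+1}$ is constant along trajectories of $v_q$. Differentiating in $t$ gives $(\partial_t+v_q\cdot\nabla)X_{q+1}=0$. The parent $P_{q+1}=S_\mu\rho^{(q)}$ is odd, because $S_\mu$ has an even symbol. Hence $v_q$ is odd, since the symbol \eqref{eq:periodic-velocity-symbol} is even. Uniqueness of trajectories then makes $\Phi_q$ commute with $x\mapsto-x$, and in particular $\Phi_q(1;t,0)=0$. As $\psi_p$ is odd, $X_{q+1}$ is odd.

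\emph{Gradient at the origin.} By \eqref{eq:actual-parent-buffered-flow}, points of $B_r$ (and of $B_{2r}$) stay in $B_{\rho_{\rm ch}}\subset U_0$, where $\psi_{p_{\rm term}}(y)=p_{\rm term}\cdot y$. Therefore on $B_r$ we have the exact formula
\[
  X_{q+1}(x,t)=p_{\rm term}\cdot\widetilde\Phi_q(1;t,x).
\]
The linearization of the flow at the fixed point $0$ is $\Gamma_q(1;t)$, which gives $\nabla X_{q+1}(0,t)=\Gamma_q(1;t)^{\mathsf T}p_{\rm term}$.

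\emph{Derivative bounds.} For $j\geq1$, we have $\nabla^jX_{q+1}=p_{\rm term}\cdot D_x^j\widetilde\Phi_q(1;t,\cdot)$. So \eqref{eq:actual-parent-phase-derivatives} follows from \eqref{eq:actual-parent-flow-derivatives}, applied with $J$ in place of its order.

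For the two-sided gradient bound, write $\nabla X_{q+1}=(D_x\Phi_q)^{\mathsf T}p_{\rm term}$ with $|p_{\rm term}|=1$. The upper bound comes from $\|J_1\|\leq CL^2$. The lower bound comes from \eqref{eq:actual-parent-flow-inverse}, which gives $|M^{\mathsf T}p|\geq\|M^{-1}\|^{-1}\geq L^{-C}$.

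\emph{Affine remainder.} Oddness kills the even jets at the origin, so Taylor's theorem to third order together with the $j=3$ bound gives
\[
  |X_{q+1}-\nabla X_{q+1}(0,t)\cdot x|\leq L^C\mu^2r^3.
\]
Since $\mu=\lambda^{\beta/2}$ and $r=\lambda^{-2\beta}L^4$, this is at most $L^C\lambda^{-5\beta}\leq L^C\lambda^{-4\beta}$. Similarly, $\nabla X_{q+1}-\nabla X_{q+1}(0,t)$ has vanishing linear part at the origin, because the Hessian is odd. Hence it is bounded by $L^C\mu^2r^2\leq L^C\lambda^{-3\beta}\leq L^C\lambda^{-2\beta}$. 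This is why $J\geq3$ is required.

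\emph{Angle law.} From $\partial_t\Gamma_q(1;t)=-\Gamma_q(1;t)\mathsf A(t)$, the vector $a(t)=\nabla X_{q+1}(0,t)$ satisfies
\[
  \dot a=-\mathsf A(t)^{\mathsf T}a,\qquad \mathsf A(t)=D_xv_q(0,t).
\]
Write $a=|a|e(\alpha_{q+1})$. Then $\dot a=\tfrac{d}{dt}|a|\,e(\alpha_{q+1})+|a|\dot\alpha_{q+1}e^\perp(\alpha_{q+1})$, so
\[
  \dot\alpha_{q+1}=\langle\dot a,e^\perp\rangle/|a|.
\]
Substituting $\dot a$ gives \eqref{eq:new-angle-transport}, since $v_q=u_{\mathbb T}(P_{q+1})$.

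The lift is well defined because $a$ never vanishes. Indeed $a(1)=p_{\rm term}$ and $\Gamma_q$ is invertible. At $t=1$ the direction of $a$ is that of $p_{\rm term}$, which fixes $\alpha_{q+1}(1)=\pi/2+\lambda^{-\beta/8}$.

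\emph{Main obstacle.} Everything is routine once we know that, for every $t\in I_q$, the trajectories from $B_r$ remain where $\psi_{p_{\rm term}}$ is exactly linear. This is exactly \eqref{eq:actual-parent-buffered-flow}, together with the requirement that $\eta_{\rm lin}=1$ near $\overline{U_0}\supset B_{2\rho_{\rm ch}}$.

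The point needing most care is verifying that the constants are independent of $\lambda$. Lemma~\ref{lem:cutoff-parent-flow} gives $\|J_1\|\leq 2G\leq CL^2$, and both $\det=1$ and the two-dimensional norm identity hold uniformly on $B_r$ and for all $t\in I_q$. Combining these, all the $L^C$ bounds above hold with constants independent of $\lambda$.
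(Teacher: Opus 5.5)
Your proposal is correct and follows essentially the same route as the paper. Transport and oddness come from the flow property and the parity of \(v_q\). Exact linearity of \(\psi_{p_{\rm term}}\) on the buffered flow image gives the formula on \(B_r\). The derivative and two-sided gradient bounds come from \eqref{eq:actual-parent-flow-derivatives}--\eqref{eq:actual-parent-flow-inverse}, and the affine remainders from the vanishing even Taylor jets together with the \(j=3\) bound. The angle law comes from \(\partial_t\nabla X_{q+1}(0,t)=-D_xv_q(0,t)^{\mathsf T}\nabla X_{q+1}(0,t)\); you derive it via \(\partial_t\Gamma_q(1;t)=-\Gamma_q(1;t)D_xv_q(0,t)\), which is equivalent to the paper's differentiation of the transport equation at the origin. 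Your explicit exponents \(\lambda^{-5\beta}\) and \(\lambda^{-3\beta}\), and your check that the gradient never vanishes so the lift is well defined, are small additions the paper leaves implicit.
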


\begin{proof}
Write \(X:=X_{q+1}\) in this fixed-step proof.
The flow property gives
\[
  (\partial_t+v_q\cdot\nabla)X=0
  \quad\text{on }\mathbb T^2\times I_q.
\]
Both \(v_q\) and \(\psi_{p_{\rm term}}\) are odd, so uniqueness for the
flow gives that \(X\) is odd. By
\eqref{eq:actual-parent-buffered-flow}, if \(x\in B_r\), then
\(\widetilde\Phi_q(1;t,x)\in B_{\rho_{\rm ch}}\subset U_0\).
Consequently, on \(B_r\),
\[
  X(x,t)
  =p_{\rm term}\cdot\widetilde\Phi_q(1;t,x),
  \qquad
  \nabla X(0,t)
  =\Gamma_q(1;t)^{\mathsf T}p_{\rm term}.
\]
Equations~\eqref{eq:actual-parent-flow-derivatives} and
\eqref{eq:actual-parent-flow-inverse} give
\[
  L^{-C}\leq|\nabla X(x,t)|\leq L^C
  \qquad(x\in B_r),
\]
together with all the asserted higher spatial derivative bounds.

Since \(X\) is odd, its quadratic jet vanishes at the origin. Taylor's
theorem and \(\|D_x^3X\|\leq L^C\mu^2\) give
\[
  \|X-\nabla X(0,t)\cdot x\|_{C^0(B_r)}
  \leq L^C\mu^2r^3,
  \qquad
  \|\nabla X-\nabla X(0,t)\|_{C^0(B_r)}
  \leq L^C\mu^2r^2.
\]
Substitution of \(\mu=\lambda^{\beta/2}\) and
\(r=\lambda^{-2\beta}L^4\) proves
\eqref{eq:actual-parent-phase-affine}. Differentiating the transport equation
at the origin gives
\[
  \partial_t\nabla X(0,t)
  =-D_xv_q(0,t)^{\mathsf T}\nabla X(0,t).
\]
Taking its angular component proves \eqref{eq:new-angle-transport}.
\end{proof}

The following corollary packages the hypotheses of
Lemma~\ref{lem:transported-phase-estimates} in the derivative ranges
authorized by \(K_q\).

\begin{corollary}[Finite-order transported-phase inputs]
\label{cor:finite-packet-inputs}
Assume Definition~\ref{def:adjacent-layer-setup}.  Fix integers \(T,K\geq0\),
let \(N_{\mathrm{prof}},N_{\mathrm{phase}}\) be defined by
\eqref{eq:periodic-packet-orders}, and suppose
\begin{equation}
  N_{\mathrm{phase}}+2T+2\leq K_q.
  \label{eq:finite-packet-authorization}
\end{equation}
Set \(I=I_q\), let \(X_{q+1}\) be the child phase supplied by
Lemma~\ref{lem:transported-child-phase}, and set
\[
  D_q=\partial_t+v_q\cdot\nabla.
\]
There is a finite threshold
\[
  \Lambda_{\rm pkt}=\Lambda_{\rm pkt}(b,\beta)
\]
such that, when \(\Lambda\geq\Lambda_{\rm pkt}\), the fields
\(P_{q+1},v_q,D_q,X_{q+1}\) satisfy, on \(B_r\times I_q\),
\eqref{eq:periodic-packet-velocity-input}--
\eqref{eq:periodic-packet-phase-input} with
\(P=P_{q+1}\), \(v=v_q\), \(D=D_q\), and
\(X=X_{q+1}\), \(\mathcal G=\nabla P_{q+1}\). More precisely, the velocity
input holds for
\[
  0\leq p\leq(T-1)_+,
  \qquad 0\leq j\leq N_{\mathrm{prof}},
\]
the density-gradient input holds for
\[
  0\leq p\leq T,
  \qquad p+j\leq N_{\mathrm{prof}},
\]
and the spatial phase estimates hold through order \(N_{\mathrm{phase}}\).
Moreover,
\[
  \lambda\ell|\nabla X_{q+1}(0,t)|\geq2
  \qquad(\ell\in\mathbb Z,\ 1\leq\ell\leq2^{M+1}).
\]
\end{corollary}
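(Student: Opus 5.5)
The plan has four parts: the spatial bounds on \(v_q\) and \(\nabla P_{q+1}\), their material derivatives, the phase inputs, and the non-degeneracy of the local frequency.

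\textbf{Spatial bounds.} Start from \(P_{q+1}=S_\mu\rho^{(q)}\), whose Fourier support lies in \(\{|k|\leq2\mu\}\). The key input is the constant-free bound
\[
  \|\partial_t^m\rho^{(q)}\|_{C^j}\leq2\Lambda^{j+2m}
  \qquad(1\leq j+2m\leq K_q),
\]
which is \eqref{eq:admissible-stage-mixed-bound}.
\begin{itemize}
\item Use the \(j=2\), \(m=0\) case together with Bernstein's inequality, as in \eqref{eq:flow-parent-spatial-input}. This gives \(\|\nabla^{k}v_q\|_{L^\infty}\leq C_kL^{N_k}\mu^{k-1}\) and \(\|\nabla^{k}P_{q+1}\|_{L^\infty}\leq C_kL^{N_k}\mu^{k-1}\) for every \(k\geq1\). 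The conversion \(\Lambda^2\leq CL^{C}\) comes from \eqref{eq:frequency-transfer}.
\item This yields the \(p=0\) cases of \eqref{eq:periodic-packet-velocity-input} and \eqref{eq:periodic-packet-gradient-input}, with \(\mu^j\) absorbing the spatial derivatives.
\item The \(C^0\) velocity bound \(\|v_q\|_{C^0(B_r)}\leq L^Cr\) follows from oddness, \(v_q(0,t)=0\), and the mean value theorem with \(\|\nabla v_q\|\leq L^C\).
\end{itemize}

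\textbf{Material derivatives.} Expand \(D_q^p\) as a sum of products \(\partial_t^{m}\) times transport factors \(v_q\cdot\nabla\) via Leibniz.
\begin{itemize}
\item Estimate \(\partial_t^m\) on \(P_{q+1}\) by \(\|\partial_t^m\rho^{(q)}\|_{C^2}\leq2\Lambda^{2+2m}\), then apply Bernstein to reach higher spatial orders at cost \(\mu\) each. The order \(2m+2\) must stay within \(K_q\), and \eqref{eq:finite-packet-authorization} is exactly this room.
\item The velocity is handled the same way through the Schauder estimate of Lemma~\ref{lem:periodic-parity-schauder}.
\item All powers of \(\Lambda\) are \(L^{C}\) by \eqref{eq:frequency-transfer}.
\item Each transport factor costs \(\|v_q\|_{C^0(B_r)}\cdot\mu\leq L^Cr\mu\leq L^C\). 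Each spatial derivative costs \(\mu\) by Bernstein.
\item For \(D^pv\) at \(C^0(B_r)\), oddness is preserved by \(D_q\), since \(v_q\) is odd and \(\partial_t\) commutes with parity. So \(D^pv\) vanishes at the origin and the mean value theorem again gives the factor \(r\).
\end{itemize}

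\textbf{Phase inputs.} These are exactly Lemma~\ref{lem:transported-child-phase} with \(J=N_{\mathrm{phase}}\geq3\), including the transport identity \(D_qX_{q+1}=0\). The threshold is taken at least \(\Lambda_{\rm flow}\) for that \(J\).

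\textbf{Frequency lower bound.} This follows from \(|\nabla X_{q+1}(0,t)|\geq L^{-C}\), giving \(\lambda\ell L^{-C}\geq\lambda L^{-C}\geq2\) for large \(\Lambda\). Increasing \(\Lambda\) also absorbs the finitely many constants into \(\Lambda_{\rm pkt}(b,\beta)\).

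\textbf{Main obstacle.} The hard step is the bookkeeping of derivative orders in the time-derivative estimates: each \(\partial_t\) counts as two units in \(K_q\). One must check that the worst term, with all \(p\) material derivatives becoming \(\partial_t\) and \(j\leq N_{\mathrm{prof}}\) spatial ones, uses at most \(N_{\mathrm{phase}}+2T+2\) units. Bernstein applied after the \(C^2\)-level bound removes the spatial excess, which is why only the time orders consume \(K_q\).
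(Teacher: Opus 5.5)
Your proposal is correct and follows the paper's proof essentially step for step. The parent inputs come from Fourier-support/Bernstein bounds on the mixed stage estimate, converted to powers of \(L\) by the transfer identity. Oddness gives the \(C^0(B_r)\) velocity gain, Lemma~\ref{lem:transported-child-phase} with \(J=N_{\mathrm{phase}}\) gives the phase inputs, and the lower gradient bound gives \(\lambda\ell|\nabla X_{q+1}(0,t)|\geq2\). Your version uses only \(\partial_t^m\rho^{(q)}\) in \(C^2\) before applying Bernstein, so it actually needs just \(2T+2\leq K_q\), whereas the paper counts weighted order \(j+2p+2\).

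The one point worth stating explicitly is why \(\Lambda_{\rm pkt}\) depends only on \((b,\beta)\) and not on \(T,K\). Three facts give this: \(\Lambda_{\rm flow}\) is independent of the requested order \(J\); the lower bound on \(|\nabla X_{q+1}|\) uses only the first flow derivative; and every order-dependent constant stays in the factors \(C_{p,j}L^{C_{p,j}}\) rather than being absorbed into a threshold.
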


\begin{proof}[Proof of Corollary~\ref{cor:finite-packet-inputs}]
Apply Lemma~\ref{lem:cutoff-parent-flow} and
Lemma~\ref{lem:transported-child-phase} with \(J=N_{\mathrm{phase}}\).
Their common threshold is independent of this requested order; only the
constants in the resulting derivative estimates depend on it.
Fourier support limitation, the mixed stage estimate
\eqref{eq:admissible-stage-mixed-bound}, and
\eqref{eq:next-log-frequency} give, in the displayed finite ranges,
\[
  \|D_q^pv_q\|_{C^{j+1}}
  +\|D_q^p\nabla P_{q+1}\|_{C^j}
  \leq C_{p,j}L^{C_{p,j}}\mu^j.
\]
Indeed, expansion of \(D_q^p\) uses ordinary stage derivatives of weighted
order at most \(j+2p+2\), all of which are supplied by
\eqref{eq:finite-packet-authorization}.  Oddness improves the
\(C^0(B_r)\) velocity bound to \(C_pL^{C_p}r\).  The phase conclusions are
those of Lemma~\ref{lem:transported-child-phase}.  Its lower gradient bound
also gives the stated lower frequency bound after increasing a threshold
depending only on \(b,\beta\), since that bound uses only the first flow
derivative.
\end{proof}

The identity \(S_\mu P_q=P_q\) has a second consequence.  The angle
\(\alpha_q\) in \eqref{eq:admissible-angle-law} continues to be driven by
exactly \(P_q\) when the cutoff parent is formed.  Thus there is no cutoff
error in the reference-angle equation; only the spectral tail created when
\(\theta_q\) is cut off, estimated in
\eqref{eq:newest-perturbation-cutoff-tail}, enters the child equation.

\subsection{Perpendicularity time}

The child is activated when its phase-gradient direction is perpendicular to
the newest parent direction. The parent shear makes the relative angle satisfy
a Riccati comparison, which yields one such time and locates it inside \(I_q\).

Define the positive strain coefficient of the newest perturbation by
\begin{equation}
  \kappa_q(t)=-C_0B_q(t)\cos\alpha_q(t).
  \label{eq:parent-strain-coefficient}
\end{equation}
When \(\Lambda\geq
\Lambda_{\rm geom}(b,\beta)\),
Lemma~\ref{lem:cutoff-parent-geometry} gives on \(I_q\)
\begin{equation}
  \frac{C_0}{4}\Lambda^{7b/8}
  \leq\kappa_q(t)
  \leq2C_0(1+\Lambda^{-3b/8})\Lambda^{7b/8}.
  \label{eq:parent-strain-bounds}
\end{equation}

\begin{lemma}[Perpendicularity crossing]
\label{lem:perpendicularity-crossing}
Assume Definition~\ref{def:adjacent-layer-setup}, and take
the child phase \(X_{q+1}\) and angle \(\alpha_{q+1}\) from
Lemma~\ref{lem:transported-child-phase} with \(J=3\). Thus, on \(I_q\),
\begin{gather*}
  e(\alpha_{q+1})
  =\frac{\nabla X_{q+1}(0,t)}
    {|\nabla X_{q+1}(0,t)|},
  \qquad
  \alpha_{q+1}(1)=\frac\pi2+\lambda^{-\beta/8},
  \\
  \dot\alpha_{q+1}
  =-\left\langle
    D_xu_{\mathbb T}(P_{q+1})(0,t)^{\mathsf T}e(\alpha_{q+1}),
    e^\perp(\alpha_{q+1})\right\rangle.
\end{gather*}
Choose a finite threshold
\[
  \Lambda_{\perp}=\Lambda_{\perp}(b,\beta)
\]
with
\[
  \Lambda_\perp\geq
  \max\left\{
    \Lambda_{\rm geom}(b,\beta),
    \Lambda_{\rm flow}(b,\beta)
  \right\}.
\]
If \(\Lambda\geq\Lambda_{\perp}\), there is a unique
\(s_{q+1}\in I_q\) such that
\begin{equation}
  \alpha_{q+1}(s_{q+1})-\alpha_q(s_{q+1})
  =-\frac\pi2.
  \label{eq:perpendicularity-crossing-definition}
\end{equation}
Writing \(\delta=\alpha_{q+1}-\alpha_q\), one has on
\([s_{q+1},1]\)
\begin{align}
  -\frac\pi2\leq\delta(t)
  &\leq-\frac34\Lambda^{-b/8},
  \label{eq:angle-gap-range}\\
  \frac{C_0}{8\pi^3}\Lambda^{7b/8}\delta(t)^2
  \leq\dot\delta(t)
  &\leq3C_0\Lambda^{7b/8}\delta(t)^2.
  \label{eq:riccati-comparison}
\end{align}
Moreover,
\begin{equation}
  \frac1{6C_0}\Lambda^{-3b/4}
  \leq1-s_{q+1}
  \leq\frac{32\pi^3}{3C_0}\Lambda^{-3b/4}
  <\widetilde C\Lambda^{-3b/4},
  \label{eq:perpendicularity-window}
\end{equation}
and the crossing is transverse:
\begin{equation}
  \dot\delta(s_{q+1})
  \geq\frac{C_0}{32\pi}\Lambda^{7b/8}.
  \label{eq:perpendicularity-transversality}
\end{equation}
\end{lemma}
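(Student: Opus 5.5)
The plan is to derive a scalar ODE for the relative angle $\delta=\alpha_{q+1}-\alpha_q$ from the two angle laws, show it is a Riccati equation $\dot\delta\approx\kappa_q\sin^2\delta$ up to small errors, and integrate $1/\delta$ backward from $t=1$.

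\emph{Step 1: the equation for $\delta$.} Subtract the angle law \eqref{eq:admissible-angle-law} for $\alpha_q$, which is driven by $D_xu_{\mathbb T}(P_q)(0,t)$, from \eqref{eq:new-angle-transport} for $\alpha_{q+1}$. By \eqref{eq:next-parent-nested-decomposition} the latter is driven by $D_xu_{\mathbb T}(P_q)+D_xu_{\mathbb T}(S_\mu\theta_q)$ at the origin.

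\begin{itemize}
\item \emph{Principal part.} Insert \eqref{eq:cutoff-parent-jacobian-geometry}. The principal matrix is $C_0B_q\cos\alpha_q\,\mathcal M(\alpha_q)=\kappa_q\,e^\perp(\alpha_q)e(\alpha_q)^{\mathsf T}$, with transpose $\kappa_q\,e(\alpha_q)e^\perp(\alpha_q)^{\mathsf T}$. Using $\langle e^\perp(\alpha_q),e(\alpha_{q+1})\rangle=\sin\delta$ and $\langle e(\alpha_q),e^\perp(\alpha_{q+1})\rangle=-\sin\delta$, its contribution to $\dot\alpha_{q+1}$ is exactly $+\kappa_q\sin^2\delta$.
\item \emph{The $P_q$ difference.} The function $g(\alpha)=-\langle D_xu_{\mathbb T}(P_q)(0,t)^{\mathsf T}e(\alpha),e^\perp(\alpha)\rangle$ has size at most $C\Lambda^{b/8}$ by \eqref{eq:admissible-cutoff-parent-bound}. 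Hence the difference $g(\alpha_{q+1})-g(\alpha_q)$ is $O(\Lambda^{b/8})$.
\item \emph{The error matrix.} The term $\widetilde E^u_q$ contributes $O(\Lambda^{b/8})$.
\end{itemize}

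Altogether,
\[
  \dot\delta=\kappa_q\sin^2\delta+O(\Lambda^{b/8}).
\]
Use $\tfrac{4}{\pi^2}\delta^2\le\sin^2\delta\le\delta^2$ for $|\delta|\le\pi/2$ and the strain bounds \eqref{eq:parent-strain-bounds}. On the region $|\delta|\ge\tfrac34\Lambda^{-b/8}$ the main term is at least of order $\Lambda^{7b/8}\delta^2\ge c\Lambda^{5b/8}$. This dominates the $O(\Lambda^{b/8})$ error once $\Lambda$ is large, and gives \eqref{eq:riccati-comparison} with room in both constants. For instance, the lower constant comes out as $\tfrac{C_0}{\pi^2}(1-o(1))\ge C_0/(8\pi^3)$.

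\emph{Step 2: terminal value and backward integration.} At $t=1$, \eqref{eq:parent-angle-freeze} and the normalization of $\alpha_{q+1}(1)$ give
\[
  \delta(1)=\lambda^{-\beta/8}-\Lambda^{-b/8}+O(\Lambda^{-b/2}).
\]
Since $\lambda^{-\beta/8}\le\Lambda^{-1}$ by \eqref{eq:selected-bandwidth-separation}, this yields $\delta(1)\in[-(1+o(1))\Lambda^{-b/8},-(1-o(1))\Lambda^{-b/8}]$. I then run a continuity argument backward from $t=1$ on the maximal interval where $\delta\in[-\pi/2,-\tfrac34\Lambda^{-b/8}]$.

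On that interval Step 1 gives
\[
  \frac{d}{dt}\frac1\delta=-\frac{\dot\delta}{\delta^2}\in\Bigl[-3C_0\Lambda^{7b/8},\,-\frac{C_0}{8\pi^3}\Lambda^{7b/8}\Bigr].
\]
So, going backward, $1/\delta$ increases strictly from about $-\Lambda^{b/8}$. The condition $\delta$ stays below $-\tfrac34\Lambda^{-b/8}$ is preserved because $\delta$ increases in forward time, so backward it only moves away from $-\tfrac34\Lambda^{-b/8}$. The exit happens when $1/\delta=-2/\pi$, that is $\delta=-\pi/2$.

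The time required is $(\Lambda^{b/8}(1+o(1))-2/\pi)$ divided by a rate between $\tfrac{C_0}{8\pi^3}\Lambda^{7b/8}$ and $3C_0\Lambda^{7b/8}$. This lies between $\tfrac1{6C_0}\Lambda^{-3b/4}$ and $\tfrac{32\pi^3}{3C_0}\Lambda^{-3b/4}$, which is strictly less than $|I_q|=\widetilde C\Lambda^{-3b/4}=\tfrac{16\pi^3}{C_0}\Lambda^{-3b/4}$. So the crossing time $s_{q+1}$ lies in $I_q$, and \eqref{eq:angle-gap-range} and \eqref{eq:perpendicularity-window} follow.

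\emph{Step 3: transversality and uniqueness.} At $\delta=-\pi/2$ we have $\sin^2\delta=1$, so $\dot\delta(s_{q+1})\ge\tfrac{C_0}{4}\Lambda^{7b/8}-O(\Lambda^{b/8})$. This gives \eqref{eq:perpendicularity-transversality}.

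For uniqueness, the same estimate shows $\dot\delta>0$ at any time in $I_q$ where $\delta=-\pi/2$: the Step 1 identity holds for all angles, and $\sin^2\delta=1$ there. Hence every crossing of the level $-\pi/2$ is upward. A continuous function can cross a level upward only once, so $s_{q+1}$ is unique.

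\emph{Expected obstacle.} The main care point is the bookkeeping in Step 1. The additive errors do not vanish at $\delta=0$, so the comparison is valid only on the bootstrap region $|\delta|\gtrsim\Lambda^{-b/8}$. The threshold $\Lambda_\perp$ must be chosen so that $\Lambda^{5b/8}$ beats all $O(\Lambda^{b/8})$ terms while retaining the stated explicit constants. A second point is checking that $\delta(1)$ is close enough to $-\Lambda^{-b/8}$ for the window constants to come out as stated.
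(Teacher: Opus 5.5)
Your proposal is correct and follows essentially the same route as the paper. You subtract the two angle laws to get $\dot\delta=\kappa_q\sin^2\delta+O(\Lambda^{b/8})$, absorb the error on a strip bounded away from $\delta=0$, and integrate $1/\delta$ backward from the terminal gap to get existence and the window; the upward-crossing argument at $\delta=-\pi/2$ then gives uniqueness and transversality. The differences are cosmetic: you bound the $P_q$-difference by $O(\Lambda^{b/8})$ directly rather than by the mean-value bound $C\Lambda^{b/8}|\delta|$, and you track $\delta(1)$ more precisely, which yields slightly better constants than the ones stated.
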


\begin{proof}
Subtract \eqref{eq:admissible-angle-law} from
\eqref{eq:new-angle-transport}. For a matrix \(B\), set
\[
  \mathfrak a_B(\vartheta)
  :=-\langle B^{\mathsf T}e(\vartheta),e^\perp(\vartheta)\rangle.
\]
Then
\[
  \mathfrak a_B'(\vartheta)
  =\langle B^{\mathsf T}e(\vartheta),e(\vartheta)\rangle
   -\langle B^{\mathsf T}e^\perp(\vartheta),e^\perp(\vartheta)\rangle,
\]
so \(|\mathfrak a_B'|\leq2\|B\|_{\mathrm{op}}\). By
\eqref{eq:next-parent-nested-decomposition}, the contribution of \(P_q\) is
\(\mathfrak a_{D_xu_{\mathbb T}(P_q)(0,t)}(\alpha_{q+1})
-\mathfrak a_{D_xu_{\mathbb T}(P_q)(0,t)}(\alpha_q)\). The mean-value theorem
and \eqref{eq:admissible-cutoff-parent-bound} bound it by
\(C\Lambda^{b/8}|\delta|\). The principal part of
\eqref{eq:cutoff-parent-jacobian-geometry} gives exactly
\(\kappa_q\sin^2\delta\).  Hence
\begin{equation}
  \dot\delta=\kappa_q\sin^2\delta+\mathcal E_q^\alpha,
  \qquad
  |\mathcal E_q^\alpha|
  \leq16\Lambda^{b/8}|\delta|+8\Lambda^{b/8}.
  \label{eq:defective-parent-angle-ode}
\end{equation}
There is no term involving \((I-S_\mu)P_q\), because that function is
identically zero.

At \(t=1\),
\[
  -\Lambda^{-b/8}
  \leq\delta(1)
  \leq-\frac34\Lambda^{-b/8}.
\]
On the strip
\[
  -\frac\pi2\leq\delta
  \leq-\frac14\Lambda^{-b/8},
\]
the remainder in \eqref{eq:defective-parent-angle-ode} is absorbed by the
lower bound in \eqref{eq:parent-strain-bounds}.  Using
\(2|z|/\pi\leq|\sin z|\leq|z|\) for \(|z|\leq\pi/2\), and the sharp upper
bound in \eqref{eq:parent-strain-bounds}, gives
\eqref{eq:riccati-comparison}.  In particular, the gap is strictly
increasing in forward time throughout the strip.

At every \(t\in I_q\) for which \(\delta(t)=-\pi/2\),
\eqref{eq:defective-parent-angle-ode} and
\eqref{eq:parent-strain-bounds} give
\[
  \dot\delta(t)
  \geq\frac{C_0}{4}\Lambda^{7b/8}
      -(8\pi+8)\Lambda^{b/8}>0.
\]
Thus every crossing of \(-\pi/2\) is upward, so there can be at most one
such crossing in \(I_q\).

Integrating the inequalities for \(-1/\delta\) from the terminal gap
backward to \(-\pi/2\) gives
\eqref{eq:perpendicularity-window}.  The upper endpoint lies strictly inside
\(I_q\) because \(32/3<16\), which gives existence.  The preceding estimate
at an arbitrary crossing gives uniqueness on all of \(I_q\).
Evaluating the lower inequality in \eqref{eq:riccati-comparison} at
\(\delta=-\pi/2\) proves
\eqref{eq:perpendicularity-transversality}.
\end{proof}

\subsection{Amplitude gain}

Set
\[
  k_{\mathrm g}:=\frac1{96\pi}.
\]

The cutoff-parent gradient decomposes on \(I_q\) as
\begin{equation}
  \nabla P_{q+1}(0,t)
  =A_q(t)e(\alpha_q(t))+E_q^{\mathrm{par}}(t),
  \qquad
  |E_q^{\mathrm{par}}(t)|
  \leq C\Lambda^{b/8}.
  \label{eq:actual-parent-gradient-geometry}
\end{equation}
Indeed, combine \eqref{eq:next-parent-nested-decomposition},
\eqref{eq:admissible-cutoff-parent-bound}, and
\eqref{eq:cutoff-parent-gradient-geometry}.

The principal perturbation constructed in the next section, with amplitude
\(a_{q+1}\) from Lemma~\ref{lem:amplitude-gain} and activation cutoff \(w\) from
\eqref{eq:activation-and-seed}, has origin coefficient
\(w(t)a_{q+1}(t)\lambda^{\beta-1}\). Since \(X_{q+1}(0,t)=0\), its
origin gradient is
\[
  w(t)a_{q+1}(t)\lambda^\beta\nabla X_{q+1}(0,t).
\]
For \(t\geq s_{q+1}+L_{q+1}^{-8}\), the activation cutoff satisfies
\(w(t)=1\). Its leading velocity response is
multiplication by
\[
  -C_0\cos\alpha_{q+1}e^\perp(\alpha_{q+1}).
\]
The following equation cancels the resulting leading interaction with
\(\nabla P_{q+1}\).

\begin{lemma}[Amplitude gain]
\label{lem:amplitude-gain}
Under the hypotheses and notation of
Lemma~\ref{lem:perpendicularity-crossing}, let \(s_{q+1}\) be the unique
crossing furnished there. Choose a finite threshold
\[
  \Lambda_{\rm gain}=\Lambda_{\rm gain}(b,\beta)
  \geq\Lambda_\perp(b,\beta).
\]
If \(\Lambda\geq\Lambda_{\rm gain}\), let \(a_{q+1}\) be the unique positive
solution on \(I_q\) of
\begin{equation}
  \frac{\dot a_{q+1}}{a_{q+1}}
  =C_0\cos\alpha_{q+1}\,
    e^\perp(\alpha_{q+1})\cdot\nabla P_{q+1}(0,t),
  \qquad
  a_{q+1}(1)=1.
  \label{eq:new-amplitude-equation}
\end{equation}
Then
\begin{equation}
  \frac{a_{q+1}(1)}{a_{q+1}(s_{q+1})}
  \geq\exp\left(k_{\mathrm g}\Lambda^{b/8}\right).
  \label{eq:amplitude-gain-bounds}
\end{equation}
On every subinterval,
\begin{equation}
  \frac{a_{q+1}(t_2)}{a_{q+1}(t_1)}
  \geq e^{-1}
  \qquad
  (s_{q+1}\leq t_1\leq t_2\leq1).
  \label{eq:amplitude-no-collapse}
\end{equation}
\end{lemma}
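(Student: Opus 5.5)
Since the amplitude equation \eqref{eq:new-amplitude-equation} is linear with a continuous coefficient on \(I_q\), existence, uniqueness and positivity of \(a_{q+1}\) are immediate. The solution is the exponential of an integral ending at \(t=1\). Both conclusions therefore reduce to bounding the integral of the rate
\[
  g(t):=C_0\cos\alpha_{q+1}\,e^\perp(\alpha_{q+1})\cdot\nabla P_{q+1}(0,t).
\]
I would substitute the parent decomposition \eqref{eq:actual-parent-gradient-geometry}. With \(\delta=\alpha_{q+1}-\alpha_q\) one has \(e^\perp(\alpha_{q+1})\cdot e(\alpha_q)=\sin(\alpha_q-\alpha_{q+1})=-\sin\delta\), so
\[
  g=-C_0A_q\cos\alpha_{q+1}\sin\delta
   +O\bigl(\Lambda^{b/8}\bigr).
\]
The next step is to rewrite \(\cos\alpha_{q+1}=\cos(\alpha_q+\delta)=\cos\alpha_q\cos\delta-\sin\alpha_q\sin\delta\). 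By \eqref{eq:parent-angle-freeze}--\eqref{eq:sharp-parent-cosine}, \(\sin\alpha_q=1+O(\Lambda^{-b/4})\) and \(|\cos\alpha_q|\le2\Lambda^{-b/8}\). Hence
\[
  g=C_0A_q\sin^2\delta+O\bigl(\Lambda^{7b/8}|\delta|\bigr)+O\bigl(\Lambda^{b/8}\bigr),
\]
where the first error comes from \(A_q\cos\alpha_q\cos\delta\sin\delta\) and uses \(A_q\le2\Lambda^{b}\).

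\emph{Gain.} The interval \([s_{q+1},1]\) has length at most \(C\Lambda^{-3b/4}\) by \eqref{eq:perpendicularity-window}, so the \(O(\Lambda^{b/8})\) error integrates to \(o(1)\). For the remaining terms I would change variables \(t\mapsto\delta\), using the Riccati comparison \eqref{eq:riccati-comparison}, which gives \(dt\le \frac{8\pi^3}{C_0}\Lambda^{-7b/8}\delta^{-2}\,d\delta\) along the monotone range \(\delta\in[-\pi/2,\delta(1)]\), with \(\delta(1)\approx-\Lambda^{-b/8}\).
\begin{itemize}
  \item The main term is nonnegative. On the subrange \(\delta\in[-\pi/2,-\pi/4]\) the lower bound \(\dot\delta\le3C_0\Lambda^{7b/8}\delta^2\) gives time at least \(c\Lambda^{-7b/8}\), while \(\sin^2\delta\ge1/2\) and \(A_q\ge\Lambda^b/2\). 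The contribution is therefore at least \(c\Lambda^{b/8}\).
  \item The error \(\Lambda^{7b/8}|\delta|\) integrates to at most \(C\int|\delta|^{-1}d\delta\le C\log\Lambda\).
\end{itemize}
Thus \(\log\bigl(a(1)/a(s)\bigr)\ge c\Lambda^{b/8}-C\log\Lambda\). Tracking the constants \(C_0/8\pi^3\), \(1/2\) and \(\pi/4\) should produce \(c\ge 2k_{\mathrm g}\), after which \(\Lambda_{\rm gain}\) is chosen large enough to absorb the logarithm.

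\emph{No collapse.} Here I need \(\int_{t_1}^{t_2}g\ge-1\). The main term is nonnegative, the \(O(\Lambda^{b/8})\) term is \(o(1)\), and this leaves the cross term \(-C_0A_q\cos\alpha_q\cos\delta\sin\delta\). This is the main obstacle, because a crude bound gives \(C\log\Lambda\), which is too large. The fix is the sign: \(\cos\alpha_q<0\) by \eqref{eq:parent-cosine-window}, while \(\sin\delta<0\) and \(\cos\delta\ge0\) for \(\delta\in[-\pi/2,0)\). The product \(\cos\alpha_q\sin\delta\cos\delta\) is therefore nonnegative, and the cross term \(-C_0A_q\cos\alpha_q\cos\delta\sin\delta\) is nonpositive. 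It therefore still has to be bounded from below. I would bound it against the main term:
\[
  |C_0A_q\cos\alpha_q\cos\delta\sin\delta|
  \le C_0A_q\cdot2\Lambda^{-b/8}|\sin\delta|.
\]
Where \(|\delta|\ge 4\Lambda^{-b/8}\), this is at most half of \(C_0A_q\sin^2\delta\) and is absorbed. Where \(|\delta|\le4\Lambda^{-b/8}\), the Riccati bound gives a time length of at most \(C\Lambda^{-7b/8}(\Lambda^{b/8})^{-1}\cdot\)(ratio), which is order \(\Lambda^{-3b/4}\). There the integrand is at most \(C\Lambda^{b}\Lambda^{-2b/8}=C\Lambda^{3b/4}\), so the contribution is \(O(1)\) with an explicit constant. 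I expect to check that this constant, coming from \(\delta\in[-4\Lambda^{-b/8},-\tfrac34\Lambda^{-b/8}]\) and the sharp bounds \eqref{eq:sharp-parent-cosine} and \eqref{eq:parent-strain-bounds}, stays below \(1\) after absorption. If it does not, I would refine by comparing the cross term directly with \(\dot\delta\): the quantity \(-C_0A_q\cos\alpha_q\) is essentially \(\kappa_q\), so the cross term is about \(\kappa_q\delta\approx\dot\delta/\delta\). Its integral is then \(\log|\delta(t_2)/\delta(t_1)|\), which is bounded below by \(\log\bigl(|\delta(1)|/(\pi/2)\bigr)\). That bound is too weak, so the absorption into the main term must be used on most of the range, with the logarithm only on the final window, where the ratio \(|\delta(t_2)/\delta(t_1)|\ge 3/4\cdot\)const gives the \(e^{-1}\) bound.
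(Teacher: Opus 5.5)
Your gain estimate \eqref{eq:amplitude-gain-bounds} is correct. The term $C_0A_q\sin\alpha_q\sin^2\delta$ is nonnegative, the subrange $\delta\in[-\pi/2,-\pi/4]$ alone contributes about $\frac{1}{6\pi}\Lambda^{b/8}$, and your Riccati change of variables bounds the cross term by $C\log\Lambda$, which a threshold absorbs.

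The no-collapse bound \eqref{eq:amplitude-no-collapse}, however, is not proved, and the obstruction comes from your own decomposition. Writing $\cos\alpha_{q+1}=\cos\alpha_q\cos\delta-\sin\alpha_q\sin\delta$ replaces one term that has a sign by two terms of opposite sign. Near the terminal gap $|\delta|\approx\Lambda^{-b/8}$ these two terms have the same size, about $C_0A_q\Lambda^{-b/4}$, over a time of order $\Lambda^{-3b/4}$. So each integrates to an $O(1)$ quantity, and you need its constant to be below $1$. Your crude estimate on $\frac34\Lambda^{-b/8}\le|\delta|\le4\Lambda^{-b/8}$ fails: with $|\cos\alpha_q|\le2\Lambda^{-b/8}$ and the lossy Riccati constant $C_0/(8\pi^3)$, it gives a constant in the thousands.

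Your logarithmic fallback also fails if you use only the logarithm on the final window. It gives roughly $(A_q/B_q)\log\bigl(|\delta(t_2)|/|\delta(t_1)|\bigr)\ge-(A_q/B_q)\log(16/3)$, which is below $-1$ even when $A_q=B_q$. Moreover, the admissible family only places $A_q$ and $B_q$ separately in $[\Lambda^b/2,2\Lambda^b]$, so $-C_0A_q\cos\alpha_q$ can differ from $\kappa_q$ by a factor up to $4$. Rescuing this would require keeping the positive term on the final window and moving the absorption threshold to the exact point where the sum changes sign, which amounts to undoing the split.

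The missing observation is that your two terms sum to the unsplit principal rate
\[
C_0A_q\cos\alpha_{q+1}\sin(\alpha_q-\alpha_{q+1})=-C_0A_q\cos\alpha_{q+1}\sin\delta .
\]
This is nonnegative whenever $\alpha_{q+1}\le\pi/2$, because $\delta\in[-\pi/2,0)$. Moreover, $\alpha_{q+1}=\alpha_q+\delta$ is strictly increasing on $[s_{q+1},1]$. Indeed, the lower Riccati bound together with $|\delta|\ge\frac34\Lambda^{-b/8}$ gives $\dot\delta\ge c\Lambda^{5b/8}$, while $|\dot\alpha_q|\le C\Lambda^{b/8}$.

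Since $\alpha_{q+1}(s_{q+1})\approx\Lambda^{-b/8}$ and $\alpha_{q+1}(1)=\pi/2+\lambda^{-\beta/8}$, there is a unique $t_0$ with $\alpha_{q+1}(t_0)=\pi/2$. The principal rate is nonnegative on $[s_{q+1},t_0]$. On $[t_0,1]$ one has $|\cos\alpha_{q+1}|\le\lambda^{-\beta/8}\le\Lambda^{-1}$ by \eqref{eq:selected-bandwidth-separation}.

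Hence, on any subinterval, the negative principal contribution is at most $C\Lambda^{b}\cdot\Lambda^{-1}\cdot\Lambda^{-3b/4}=C\Lambda^{b/4-1}$, and the $g_q$-contribution is at most $C\Lambda^{-5b/8}$. Both are below $\frac12$ for large $\Lambda$, which gives \eqref{eq:amplitude-no-collapse}. This is the paper's argument, and it also removes the $C\log\Lambda$ loss from your gain estimate.
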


\begin{proof}
Write \(a:=a_{q+1}\).
We isolate a short interval of positive gain and then bound every later
negative contribution.
By \eqref{eq:actual-parent-gradient-geometry}, the amplitude equation is
equivalently
\begin{align}
  \frac{\dot a}{a}
  ={}&C_0A_q\cos\alpha_{q+1}
      \sin(\alpha_q-\alpha_{q+1})+g_q,
  \notag\\
  g_q
  ={}&C_0\cos\alpha_{q+1}\,
      e^\perp(\alpha_{q+1})\cdot E_q^{\mathrm{par}},
  \qquad
  |g_q|\leq C\Lambda^{b/8}.
  \label{eq:amplitude-error}
\end{align}

For the lower bound, let \(t_-\) be the unique time at which
\(\delta(t_-)=-2\pi/5\).  The upper Riccati inequality gives
\begin{equation}
  t_--s_{q+1}
  \geq\frac1{6\pi C_0}\Lambda^{-7b/8}.
  \label{eq:positive-gain-subinterval}
\end{equation}
Throughout this interval, by \eqref{eq:parent-angle-freeze},
\[
  \alpha_q(t)
  =\frac\pi2+\Lambda^{-b/8}+O(\Lambda^{-b/2}),
  \qquad
  -\frac\pi2\leq\delta(t)\leq-\frac{2\pi}{5}.
\]
Consequently, after increasing the threshold,
\(0<\alpha_{q+1}(t)<\pi/6\) and
\(2\pi/5\leq\alpha_q-\alpha_{q+1}\leq\pi/2\). Thus
\(\cos\alpha_{q+1}\geq1/2\) and
\(\sin(\alpha_q-\alpha_{q+1})\geq1/2\).  Since
\(A_q\geq\Lambda^b/2\), the principal contribution to
\(\log(a(t_-)/a(s_{q+1}))\) is at least
\[
  \frac1{48\pi}\Lambda^{b/8}.
\]

The angle \(\alpha_{q+1}\) is strictly increasing on
\([s_{q+1},1]\): the lower Riccati bound dominates
\(|\dot\alpha_q|\) even at the terminal gap.  Let \(t_0\) be its unique
crossing of \(\pi/2\).  The principal amplitude term is nonnegative on
\([t_-,t_0]\). By \eqref{eq:selected-bandwidth-separation}, on \([t_0,1]\),
\[
  0\leq\alpha_{q+1}-\frac\pi2
  \leq\lambda^{-\beta/8}\leq\Lambda^{-1},
\]
and \eqref{eq:perpendicularity-window} gives
\begin{align*}
  \left|\int_{t_0}^1
    C_0A_q\cos\alpha_{q+1}
      \sin(\alpha_q-\alpha_{q+1})\,dt\right|
  &\leq C\Lambda^{b/4-1},\\
  \int_{s_{q+1}}^1|g_q|\,dt
  &\leq C\Lambda^{-5b/8}.
\end{align*}
Both powers tend to zero because \(0<b\leq1/8\). For a sufficiently large
threshold,
the total logarithmic gain is therefore at least
\((96\pi)^{-1}\Lambda^{b/8}\).

The same displayed bounds on an arbitrary subinterval show, after increasing
the threshold, that its negative principal contribution and its
\(g_q\)-contribution have total absolute value at most \(1\). This proves
\eqref{eq:amplitude-no-collapse}.
\end{proof}

\begin{corollary}[Deterministic seed smallness]
\label{cor:deterministic-seed-smallness}
Assume the hypotheses and notation of Lemma~\ref{lem:amplitude-gain}. There is
a finite threshold
\[
  \Lambda_{\rm seed}=\Lambda_{\rm seed}(b,\beta)
  \geq\Lambda_{\rm gain}(b,\beta)
\]
such that, if \(\Lambda\geq\Lambda_{\rm seed}\), then
\begin{equation}
  a_{q+1}(s_{q+1})
  \leq\lambda^{-2/\beta}
  \leq\lambda^{-1/\beta}.
  \label{eq:deterministic-seed-bound}
\end{equation}
\end{corollary}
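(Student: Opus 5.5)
The plan is to combine the normalization \(a_{q+1}(1)=1\) with the gain bound \eqref{eq:amplitude-gain-bounds}. Together they give
\[
  a_{q+1}(s_{q+1})\leq\exp\bigl(-k_{\mathrm g}\Lambda^{b/8}\bigr).
\]
It then remains to compare this with \(\lambda^{-2/\beta}\), which is a statement about the deterministic frequency law \eqref{eq:deterministic-frequency-law} alone.

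By \eqref{eq:next-log-frequency}, \(\log\lambda=L_{q+1}=k_-\beta\Lambda^{b/8}\) with \(k_-=1/(192\pi)\). Hence
\[
  \lambda^{-2/\beta}
  =\exp\Bigl(-\tfrac{2}{\beta}k_-\beta\Lambda^{b/8}\Bigr)
  =\exp\Bigl(-\tfrac{1}{96\pi}\Lambda^{b/8}\Bigr)
  =\exp\bigl(-k_{\mathrm g}\Lambda^{b/8}\bigr).
\]
The two exponents coincide exactly, because \(2k_-=k_{\mathrm g}\). The first inequality in \eqref{eq:deterministic-seed-bound} therefore holds with equality at worst. This is why the constant \(192\pi\) was built into the frequency law.

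The second inequality \(\lambda^{-2/\beta}\leq\lambda^{-1/\beta}\) is immediate, since \(\lambda>1\) and \(\beta>0\).

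The only delicate point is to make sure no threshold is lost. It suffices to take \(\Lambda_{\rm seed}=\Lambda_{\rm gain}\), since the gain bound is valid for \(\Lambda\geq\Lambda_{\rm gain}\) and the comparison above holds for every \(\Lambda\). There is no real obstacle. One should only check that \eqref{eq:amplitude-gain-bounds} was stated with the exact constant \(k_{\mathrm g}=1/(96\pi)\), not with an unspecified \(c>0\); this is indeed the case in Lemma~\ref{lem:amplitude-gain}.
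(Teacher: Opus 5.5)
Your proposal is correct and follows the paper's proof exactly: you combine $a_{q+1}(1)=1$ with the lower bound in \eqref{eq:amplitude-gain-bounds}, use \eqref{eq:next-log-frequency} and $k_{\mathrm g}=2k_-$ to identify $e^{-k_{\mathrm g}\Lambda^{b/8}}$ with $\lambda^{-2/\beta}$, and note that the second inequality is trivial since $\lambda>1$. Your observation that no threshold beyond $\Lambda_{\rm gain}$ is needed is also consistent with the paper.
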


\begin{proof}
By \eqref{eq:next-log-frequency}, \(k_{\mathrm g}=2k_-\), and
\(a_{q+1}(1)=1\),
the lower bound in \eqref{eq:amplitude-gain-bounds} gives
\[
  a_{q+1}(s_{q+1})
  \leq e^{-2k_-\Lambda^{b/8}}
  =\lambda^{-2/\beta}
  \leq\lambda^{-1/\beta}.
\]
\end{proof}

\section{Construction of one new layer}
\label{sec:one-layer}

We first form the cutoff parent and its leading transported oscillation. The
leading linearized operator can be inverted along the parent flow; repeating
this operation finitely many times cancels the successive nonzero
phase-harmonic errors. The final uncancelled term, together with the exact
velocity remainders and the cutoff commutator, is placed in the force.

Fix the adjacent-layer setup of
Definition~\ref{def:adjacent-layer-setup}. We retain the one-step aliases
\[
  \Lambda=\lambda_q,\qquad b=\beta_q,\qquad
  \beta=\beta_{q+1},\qquad \lambda=\lambda_{q+1},
  \qquad
  L=L_{q+1},\quad \mu=\mu_{q+1},\quad r=r_{q+1},
\]
together with its local core radius \(R\), and set
\[
  M:=M_{q+1}=\lceil\beta^{-2}\rceil.
\]
Proposition~\ref{prop:deterministic-frequency-selection} supplies all scale
separations used below.  The adjacent-layer setup includes
\(K_q\geq d(\beta)\) and defines \(\lambda\) by
\eqref{eq:deterministic-frequency-law}.

All constants in this section may depend on the two adjacent exponents, the
displayed derivative orders, the fixed perturbation and activation profiles,
and the cutoff in \eqref{eq:periodic-cutoff}, but not on either base
oscillation parameter. They may also depend on the finite subset of the
a priori family \(C_{j,m}(\eta),N_{j,m}(\eta)\) fixed in
Definition~\ref{def:admissible-layered-family} that is explicitly used.

Mixed time derivatives close in two passes. Within one child, at each order
\(p\), all its perturbation coefficients are estimated first,
using structured-defect estimates only below order \(p\). Only then are the
order-\(p\) residual and force-increment terms formed. The closure order is
stated and
explained after the correction hierarchy. Appendix~\ref{sec:smooth-force}
proves Lemmas~\ref{lem:mixed-parent-inputs},
\ref{lem:normalized-weighted-propagator}, and
\ref{lem:weighted-duhamel-estimate}, and supplies the technical endpoint and
derivative-range details for Lemma~\ref{lem:density-first-mixed-closure}.

\subsection{The cutoff parent and its weighted propagator}

The parent for the new perturbation is the cutoff of the complete preceding
stage.  Write
\begin{equation}
  P:=P_{q+1}=S_\mu\rho^{(q)}.
  \label{eq:actual-parent}
\end{equation}
Lemma~\ref{lem:cutoff-parent-geometry} gives
\begin{equation}
  S_\mu P_q=P_q,
  \qquad
  P=P_q+S_\mu\theta_q,
  \label{eq:nested-parent-identity}
\end{equation}
and
\begin{equation}
  \|(I-S_\mu)\theta_q\|_{C^2}
  +\|D_xu_{\mathbb T}((I-S_\mu)\theta_q)\|_{L^\infty}
  \leq C\Lambda^{-4}.
  \label{eq:parent-newest-spectral-tail}
\end{equation}
Thus the previously filtered terms are unchanged and only the newest
perturbation is truncated.

Put
\[
  v:=u_{\mathbb T}(P),
  \qquad
  D:=\partial_t+v\cdot\nabla.
\]
\begin{lemma}[All-orders cutoff-parent inputs]
\label{lem:mixed-parent-inputs}
Fix \(T,S\in\mathbb N_0\), and assume the adjacent-layer setup above. Use
the constant-free \((j,m)=(2,0)\) case of
\eqref{eq:admissible-stage-mixed-bound} at time order zero. For
\(1\leq p\leq T\), use the sharp perturbation estimate
\eqref{eq:admissible-perturbation-sharp-bound} only at spatial order two,
with the a priori constants fixed in
Definition~\ref{def:admissible-layered-family}. Assume also
\[
  \lambda
  =\exp\left(\frac{\beta}{192\pi}\Lambda^{b/8}\right),
\]
and hence the exact transfer identity \eqref{eq:frequency-transfer}.  Then
there are constants depending on \(b,\beta,T,S\) and the fixed profile and
cutoff data, but not on the base oscillation parameters, such that
\begin{equation}
  \max_{0\leq p\leq T}
  \left(
    \|\partial_t^p\rho^{(q)}\|_{C^2}
    +\|\partial_t^pu_{\mathbb T}(\rho^{(q)})\|_{C^1}
  \right)
  \leq C_TL^{C_T}.
  \label{eq:mixed-parent-low-norms}
\end{equation}
Consequently,
\begin{align}
  \|\partial_t^pv\|_{C^{j+1}}
  &\leq C_{p,j}L^{C_{p,j}}\mu^j,
  \label{eq:mixed-cutoff-velocity}\\
  \|\partial_t^p\nabla P\|_{C^j}
  &\leq C_{p,j}L^{C_{p,j}}\mu^j,
  \label{eq:mixed-cutoff-gradient}
\end{align}
for \(0\leq p\leq T\) and \(0\leq j\leq S\).  On \(B_r\),
\begin{equation}
  \|\partial_t^pv\|_{C^0(B_r)}
  \leq C_pL^{C_p}r.
  \label{eq:mixed-cutoff-velocity-core}
\end{equation}
For \(0\leq p\leq T\) and \(0\leq j\leq S\), the local material estimates are
\begin{align}
  \|D^pv\|_{C^{j+1}(B_r)}
  +\|D^p\nabla P\|_{C^j(B_r)}
  &\leq C_{p,j}L^{C_{p,j}}\mu^j,
  \notag\\
  \|D^pv\|_{C^0(B_r)}
  &\leq C_pL^{C_p}r.
  \label{eq:mixed-cutoff-local-material-estimates}
\end{align}
At material order \(p\), only ordinary parent time orders at most \(p\)
occur. The high spatial orders in
\eqref{eq:mixed-cutoff-velocity}--\eqref{eq:mixed-cutoff-local-material-estimates}
follow from the Fourier cutoff and Bernstein's inequality; they consume no
higher-order member of the a priori family. In the local material estimates,
the extra derivative from a transport factor is absorbed by
\(r\mu=L^4\lambda^{-3\beta/2}\leq1\).
\end{lemma}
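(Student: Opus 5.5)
The argument splits into two stages. First comes the low-norm bound \eqref{eq:mixed-parent-low-norms}. Everything else then follows from the Fourier support of \(P\) together with Bernstein's inequality, with no further use of the a priori family.

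\textbf{Low norms.} For \eqref{eq:mixed-parent-low-norms} at \(p=0\), apply the constant-free \((j,m)=(2,0)\) case of \eqref{eq:admissible-stage-mixed-bound}, which gives \(\|\rho^{(q)}\|_{C^2}\leq2\Lambda^2\). The velocity is handled by the periodic Schauder estimate of Lemma~\ref{lem:periodic-parity-schauder}. Converting \(\Lambda^2\) through the transfer identity \eqref{eq:frequency-transfer} gives \(\Lambda^A=(CL)^{8A/b}\), a power of \(L\).

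For \(1\leq p\leq T\) I would use the iterated formula \eqref{eq:iterated-density-formula}. It writes \(\partial_t^p\rho^{(q)}\) as \(\sum_{i<q}S_{\mu_{i+1}}\partial_t^p\theta_i+\partial_t^p\theta_q\); for \(q=0\) this is just \(\partial_t^p\theta_0\). Each cutoff \(S_{\mu_{i+1}}\) is bounded on \(C^2\) up to a fixed constant, or one may estimate in a slightly higher Hölder norm to avoid the endpoint.

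I then bound each term by the sharp estimate \eqref{eq:admissible-perturbation-sharp-bound} at spatial order two (and order \(2+\epsilon\) where needed). This gives \(C(\beta_i)L_i^{N}\lambda_i^{1+\beta_i+p(1-2\beta_i)}\). The constants are uniform because of the envelope \eqref{eq:sharp-constant-envelope}, and \(\beta_i\geq\beta_q\geq b\). Frequency lacunarity \eqref{eq:admissible-frequency-separation} makes the sum over \(i\) dominated by its top term, \(C\Lambda^{C_T}\). The transfer identity again converts this to \(C_TL^{C_T}\).

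\textbf{Main obstacle: uniformity in the stage index.} The delicate point is that the sum over \(i\leq q\) involves unboundedly many layers, yet the constants must not depend on \(q\). I would handle this by making the top term dominant through lacunarity. Because \(\lambda_{i+1}\geq4\lambda_i\), and in fact superexponential growth holds, the earlier terms sum geometrically. This requires the constants \(C_{2,p}(\beta_i)\) to be bounded by \(C_{2,p}(b)\), which is exactly what the envelope \eqref{eq:sharp-constant-envelope} provides. I would also check that no time order above \(T\), and no spatial order above two, is consumed.

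\textbf{Spatial bounds.} Next I prove \eqref{eq:mixed-cutoff-velocity}--\eqref{eq:mixed-cutoff-gradient}. Since \(\partial_t^pP=S_\mu\partial_t^p\rho^{(q)}\) has Fourier support in \(|k|\leq2\mu\), Bernstein's inequality (Lemma~\ref{lem:periodic-cutoff-calculus}) gives \(\|\partial_t^p\nabla P\|_{C^j}\leq C\mu^{j}\|\partial_t^p\rho^{(q)}\|_{C^{1}}\), up to a logarithm from Hölder-to-\(C^k\) losses, which are absorbed into \(L^{C}\). The same argument applied to \(v=u_{\mathbb T}(P)\) starts from the \(C^1\) bound on \(\partial_t^pu_{\mathbb T}(\rho^{(q)})\).

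For the core bound \eqref{eq:mixed-cutoff-velocity-core}, oddness gives \(\partial_t^pv(0)=0\). The mean value theorem with the \(C^1\) bound then yields \(\|\partial_t^pv\|_{C^0(B_r)}\leq C_pL^{C_p}r\).

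\textbf{Material bounds.} Finally, for \eqref{eq:mixed-cutoff-local-material-estimates} I expand \(D^p=(\partial_t+v\cdot\nabla)^p\) into sums of products of the form \(\partial_t^{p_0}v\cdots\) times \(\nabla^{a}\partial_t^{p'}(\cdot)\), with total time order at most \(p\), so that only ordinary time orders at most \(p\) appear. I would induct on \(p\). Each undifferentiated factor of \(v\) costs \(r\), while the extra spatial derivative it forces costs \(\mu\). Since \(r\mu=L^4\lambda^{-3\beta/2}\leq1\), the product is harmless. If instead a derivative lands on \(v\), there is no gain but also no loss, since \(\|\partial_t^{p'}v\|_{C^{k+1}}\lesssim L^{C}\mu^{k}\). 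Counting powers of \(\mu\) gives \(\mu^j\) in every term, and the bound \(r\) at spatial order zero follows as before from \(v(0)=0\).
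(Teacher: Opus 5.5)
Your proposal is correct and follows the paper's proof essentially step for step. The ingredients match in order: the constant-free \(C^2\) stage bound at \(p=0\), then the iterated cutoff formula with the order-two sharp estimates, uniformity in the stage index from the envelope \eqref{eq:sharp-constant-envelope} and lacunarity, and conversion to powers of \(L\) via the transfer identity \eqref{eq:frequency-transfer}. The rest also matches: Bernstein for the cutoff parent, oddness plus the mean-value theorem for the core bound, and the \(r\mu\leq1\) absorption for material derivatives.

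Your hedges are unnecessary and should be dropped. \(S_\mu\) is bounded on \(C^2\) directly, since its kernel is uniformly in \(L^1\). Also \(\|u_{\mathbb T}(f)\|_{C^1}\leq C\|f\|_{C^2}\) by Lemma~\ref{lem:periodic-parity-schauder}, and Bernstein applied to \(S_\mu\rho^{(q)}\) and \(S_\mu u_{\mathbb T}(\rho^{(q)})\) loses no logarithm. So no spatial order \(2+\epsilon\), which the a priori family does not supply, is ever needed.
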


Hereafter, the cutoff-parent inputs mean the local estimates
\eqref{eq:mixed-cutoff-local-material-estimates} on \(B_r\), supplied by
Lemma~\ref{lem:mixed-parent-inputs} under admissibility through level \(q\).
At coefficient time order zero only the constant-free stage bound is used.
For \(p\geq1\), the coefficient pass invokes this lemma only through parent
time order \(p-1\); the sharp order-\(p\) parent estimate is used afterwards
in the defect and force pass.

Let \(\alpha=\alpha_{q+1}\) be the phase angle transported by
\(u_{\mathbb T}(P)\), with terminal value
\[
  \alpha(1)=\frac\pi2+\lambda^{-\beta/8}.
\]
The admissible geometry and scale separation permit the application of
Lemmas~\ref{lem:cutoff-parent-flow} and
\ref{lem:transported-child-phase}.  Lemma
\ref{lem:perpendicularity-crossing} gives the unique crossing time
\(s=s_{q+1}\). Lemma~\ref{lem:amplitude-gain} supplies the positive
amplitude \(a=a_{q+1}\), normalized by \(a(1)=1\), and
Corollary~\ref{cor:deterministic-seed-smallness} gives
\begin{equation}
  a(s)\leq\lambda^{-2/\beta}.
  \label{eq:seed-amplitude-smallness}
\end{equation}

Let \(\Phi:=\Phi_q\), \(X:=X_{q+1}\), and
\(p_{\rm term}=e(\pi/2+\lambda^{-\beta/8})\) be the flow, transported phase, and
terminal direction supplied by Lemmas~\ref{lem:cutoff-parent-flow} and
\ref{lem:transported-child-phase}.  Thus \(DX=0\),
\(X(\cdot,1)=p_{\rm term}\cdot x\) on \(B_r\), \(X\) is odd, and
\begin{equation}
  \Phi(t;\sigma,\overline{B_R})
  \subset B_{3r/4}\Subset B_r\Subset U_0.
  \label{eq:one-step-core-flow}
\end{equation}

The leading periodic packet response in
Corollary~\ref{cor:leading-periodic-response} is
\[
  u^{[0]}[z]=m_0(t)z,
  \qquad
  m_0(t):=-C_0\cos\alpha(t)e^\perp(\alpha(t)).
\]
Put \(c_P(x,t):=m_0(t)\cdot\nabla P(x,t)\). On a shared phase, the leading
linearized operator acting on the coefficient is \(D+c_P\). For a coefficient
\(\varphi\) prescribed at time \(\sigma\), its characteristic solution
operator is
\begin{equation}
  H_{\sigma,t}[\varphi](x)
  :=\varphi(\Phi(\sigma;t,x))
  \exp\!\left(-\int_\sigma^t
    c_P(\Phi(\tau;t,x),\tau)\,d\tau\right).
  \label{eq:weighted-coefficient-propagator}
\end{equation}
It satisfies \((D+c_P)H_{\sigma,t}[\varphi]=0\) and
\(H_{\sigma,\sigma}[\varphi]=\varphi\).  Write
\begin{equation}
  \mathcal S_{\sigma,t}[\varphi;\ell,\vartheta]
  :=H_{\sigma,t}[\varphi]
    \sin(\lambda\ell X+\vartheta).
  \label{eq:packet-propagator}
\end{equation}
The cosine slot is \(\vartheta=\pi/2\).
Since the phase is transported and the leading velocity is multiplication by
\(m_0\),
\begin{equation}
  D\mathcal S_{\sigma,t}[\varphi;\ell,\vartheta]
  +u^{[0]}[\mathcal S_{\sigma,t}[\varphi;\ell,\vartheta]]
    \cdot\nabla P=0.
  \label{eq:packet-propagator-identity}
\end{equation}
Oddness fixes the origin. The amplitude equation is
\(\dot a/a=-c_P(0,t)\); hence
\begin{equation}
  H_{\sigma,t}[\varphi](0)
  =\frac{a(t)}{a(\sigma)}\varphi(0).
  \label{eq:origin-weight}
\end{equation}

The normalization below factors out the exact origin growth
\(a(t)/a(\sigma)\). Define
\begin{equation}
  \widetilde H_{\sigma,t}[\varphi]
  :=\frac{a(\sigma)}{a(t)}H_{\sigma,t}[\varphi].
  \label{eq:normalized-weighted-propagator}
\end{equation}
It solves
\[
  \left(D+c_P-c_P(0,t)\right)
  \widetilde H_{\sigma,t}[\varphi]=0,
  \qquad
  \widetilde H_{\sigma,\sigma}[\varphi]=\varphi.
\]
Put
\[
  \Omega_t:=\Phi(t;s,B_R)\Subset B_r.
\]

\begin{lemma}[Normalized weighted-propagator estimates]
\label{lem:normalized-weighted-propagator}
Fix \(T,S\in\mathbb N_0\) and \(\mathcal A>0\), and assume the
cutoff-parent estimates of
Lemma~\ref{lem:mixed-parent-inputs} through ordinary time orders
\(0\leq m\leq(T-1)_+\) and spatial order \(\max\{S+T,2\}\). Assume
\(\Lambda\geq\Lambda_{\rm flow}(b,\beta)\), whose threshold is
independent of derivative order, and take the flow and phase from
Lemmas~\ref{lem:cutoff-parent-flow} and
\ref{lem:transported-child-phase} with
\(J\geq\max\{S+T+1,3\}\). Thus
\eqref{eq:actual-parent-flow-derivatives}--
\eqref{eq:actual-parent-flow-inverse} and
\eqref{eq:actual-parent-phase-derivatives}--
\eqref{eq:actual-parent-phase-affine} hold through every order used below.
Let
\(\varphi\in C^\infty([s,1]\times\mathbb T^2)\), with
\(\operatorname{supp}\varphi(t)\Subset\Omega_t\) for every \(t\), and assume
\begin{equation}
  \|D^m\varphi\|_{C^k(B_r)}
  \leq C_{m,k}L^{C_{m,k}}\mathcal A\lambda^{2k\beta}
  \qquad(0\leq m\leq T,\ m+k\leq S+T).
  \label{eq:diagonal-material-coefficient-class}
\end{equation}
Then, for \(s\leq\sigma\leq t\leq1\),
\begin{equation}
  \left\|D_t^p\partial_\sigma^n
    \widetilde H_{\sigma,t}[\varphi(\sigma)]\right\|_{C^j(B_r)}
  \leq C_{p,n,j}L^{C_{p,n,j}}
    \mathcal A\lambda^{2j\beta},
  \label{eq:mixed-normalized-weight}
\end{equation}
whenever \(p+n\leq T\) and \(p+n+j\leq S+T\). Here \(D_t\) is the
material derivative in the output variables \((x,t)\). For a fixed
admissible triple \((p,n,j)\), the estimate uses datum bounds
\eqref{eq:diagonal-material-coefficient-class} only for
\(0\leq m\leq n\) and \(0\leq k\leq j\). At positive total endpoint order
\(p+n\), it uses ordinary parent time derivatives only through order
\(p+n-1\); at endpoint order zero, only order-zero parent coefficients
enter. The propagated coefficient has support compactly contained in \(\Omega_t\).
The conclusion constants may depend on the finite collection of datum
constants that occurs in the stated ranges.
\end{lemma}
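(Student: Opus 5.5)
The plan is to work with the transported-coordinate representation of \(\widetilde H\) and a triangular induction on the total order \(p+n+j\), mirroring the structure already used in Lemma~\ref{lem:transported-phase-estimates}. By \eqref{eq:weighted-coefficient-propagator}, \eqref{eq:origin-weight} and \eqref{eq:normalized-weighted-propagator},
\[
  \widetilde H_{\sigma,t}[\varphi(\sigma)](x)
  =\varphi(\sigma,\Phi(\sigma;t,x))\,
   \exp\!\Bigl(-\int_\sigma^t\widetilde c_P(\Phi(\tau;t,x),\tau)\,d\tau\Bigr),
  \qquad
  \widetilde c_P=c_P-c_P(0,\tau).
\]
The support claim is immediate: \(\Phi(\sigma;t,\cdot)\) maps \(\Omega_t\) onto \(\Omega_\sigma\), and \(\operatorname{supp}\varphi(\sigma)\Subset\Omega_\sigma\).

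First I would treat \(p=n=0\). Spatial derivatives of the pulled-back datum are handled by Fa\`a di Bruno together with \eqref{eq:actual-parent-flow-derivatives}. Each \(k\)-th derivative of \(\varphi\) costs \(\lambda^{2k\beta}\), while each extra flow derivative costs \(L^C\mu\le L^C\lambda^{2\beta}\), so the total is \(L^C\mathcal A\lambda^{2j\beta}\). For the weight, the key point is that \(\widetilde c_P\) vanishes at the origin. Since \(m_0\) is bounded by \(C\) and \eqref{eq:mixed-cutoff-gradient} holds, \(\|\nabla\widetilde c_P\|_{C^k}\le L^C\mu^{k+1}\). Hence on \(B_r\) we have \(|\widetilde c_P|\le L^C\mu r\le1\), and integrating over an interval of length \(\le\widetilde C\Lambda^{-3b/4}\) keeps the exponential bounded above and below by constants. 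Its spatial derivatives cost \(L^C\mu^k\) times the interval length, which is harmless. Only order-zero parent coefficients enter, as claimed.

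Next, the material derivatives. I would use the equation
\[
  D_t\widetilde H=-\widetilde c_P\widetilde H,
\]
and differentiate it \(p-1\) times, as in \eqref{eq:intro-duhamel-triangle}. Each term is then a product \((D^r\widetilde c_P)D^{p-1-r}\widetilde H\), whose spatial derivatives are moved through the commutator \([D,\partial_a]=-(\partial_av)\cdot\nabla\). The factor \(D^r\widetilde c_P\) with \(r\le p-1\) contains \(\dot m_0\) (bounded via \eqref{eq:periodic-packet-parameter-path}, that is, via the angle law with order-zero coefficients) and \(D^r\nabla P\). These are controlled by \eqref{eq:mixed-cutoff-local-material-estimates}, which uses parent time orders at most \(p-1\). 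The \(\widetilde H\) factors have lower total order, so the induction on \(p+n+j\) closes.

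For \(\partial_\sigma\) I would use the identity
\[
  \partial_\sigma\widetilde H_{\sigma,t}[\varphi(\sigma)]
  =\widetilde H_{\sigma,t}\bigl[(D\varphi+\widetilde c_P\varphi)(\sigma)\bigr].
\]
This follows by differentiating the composition in \(\sigma\), since \(\partial_\sigma\Phi(\sigma;t,x)=v(\Phi(\sigma;t,x),\sigma)\). Iterating \(n\) times expresses the \(\sigma\)-derivatives as \(\widetilde H\) applied to a new datum built from \(D^m\varphi\) with \(m\le n\) and \(D^{r}\widetilde c_P\) with \(r\le n-1\). This new datum satisfies \eqref{eq:diagonal-material-coefficient-class} with total order reduced by \(n\), and it is still supported in \(\Omega_\sigma\). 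Applying the \(n=0\) case to it gives \eqref{eq:mixed-normalized-weight}, together with the stated accounting of which datum and parent orders are used.

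The main obstacle I anticipate is the top spatial endpoint \(p+n+j=S+T\). There the derivatives of the weight produce \(\nabla^{k}\widetilde c_P\) with \(k\) up to \(S+T\), and also flow derivatives of order up to \(S+T+1\); this is why \(J\ge S+T+1\) and parent spatial order \(\max\{S+T,2\}\) are required. I would check this carefully by assigning each factor its weight \(\lambda^{2\beta}\) per spatial derivative and verifying that \(\mu^k\le\lambda^{2k\beta}\) absorbs every \(\nabla P\) factor. The check must also confirm that the time-integral factor never loses a power.
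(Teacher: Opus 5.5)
Your proposal is correct and follows essentially the paper's own proof. Both use the centered characteristic formula for $\widetilde H_{\sigma,t}$, a tube bound on $\widetilde c_P$ from its vanishing at the origin, Fa\`a di Bruno with the flow bounds for spatial derivatives, the endpoint identity $\partial_\sigma\widetilde H_{\sigma,t}[\varphi(\sigma)]=\widetilde H_{\sigma,t}[(D+\widetilde c_P)\varphi(\sigma)]$ iterated $n$ times, and $D_t\widetilde H=-\widetilde c_P\widetilde H$ iterated $p$ times.

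The differences are only in the details, and neither affects correctness:
\begin{itemize}
\item The paper also uses the vanishing of the linear jet to get $L^C\mu^2r^2$. Your linear bound $L^C\mu r\le1$ already suffices, provided you evaluate it along trajectories issuing from the support, which stay in $\Omega_\tau\subset B_{3r/4}$, rather than on all of $B_r$.
\item $D^r\widetilde c_P$ involves all of $m_0,\dots,m_0^{(r)}$, not just $\dot m_0$. Controlling these through the angle law uses parent time orders up to $r-1$, which is still within the claimed range.
\end{itemize}
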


\begin{corollary}[Ordinary-time oscillatory estimates]
\label{lem:mixed-weighted-propagator}
Under the hypotheses of Lemma~\ref{lem:normalized-weighted-propagator}, one
has
\begin{equation}
  \|\partial_t^p\varphi\|_{C^j(B_r)}
  \leq C_{p,j}L^{C_{p,j}}
    \mathcal A\lambda^{2j\beta},
  \label{eq:ordinary-coefficient-class}
\end{equation}
for \(0\leq p\leq T\) and \(0\leq j\leq S\). Suppose in addition that the
phase hypotheses of Lemma~\ref{lem:transported-phase-estimates} and, taking
the coefficient there to be \(\varphi\), the coefficient hypotheses of
Corollary~\ref{cor:oscillatory-product-estimates} hold with output orders
\(T,S\).
Then, for the same ranges, \(\mathfrak t\in\{\sin,\cos\}\),
\(\vartheta\in\mathbb R\), and \(\ell\in\mathbb Z\),
\(1\leq\ell\leq2^{M+1}\),
\begin{equation}
  \left\|\partial_t^p\partial_x^\kappa
    \bigl[\varphi\,\mathfrak t
      (\lambda\ell X+\vartheta)\bigr]\right\|_{C^0}
  \leq C_{p,j}L^{C_{p,j}}\mathcal A
    \lambda^{j+p(1-2\beta)},
  \qquad |\kappa|=j.
  \label{eq:ordinary-live-packet}
\end{equation}
\end{corollary}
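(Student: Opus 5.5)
The plan is to prove \eqref{eq:ordinary-coefficient-class} first and then deduce \eqref{eq:ordinary-live-packet} from it using Corollary~\ref{cor:oscillatory-product-estimates}.

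\textbf{Step 1: ordinary-time coefficient bounds.} Write \(\partial_t=D-v\cdot\nabla\) and iterate. Then \(\partial_t^p\varphi\) becomes a finite sum of terms of the form
\[
  \Bigl(\prod_i \partial_x^{\gamma_i}D^{m_i}v\Bigr)\,\partial_x^{\kappa}D^{m}\varphi .
\]
In each term the material orders satisfy \(m+\sum m_i\leq p\). Every removed material derivative leaves one velocity factor and one extra spatial derivative, which falls either on \(\varphi\) or on another velocity factor. After \(j\) further spatial derivatives, the worst case is that all \(p-m\) transport derivatives land on \(\varphi\). This needs \(\|D^m\varphi\|_{C^{j+p-m}(B_r)}\), and it lies in the diagonal class \eqref{eq:diagonal-material-coefficient-class} because \(m+(j+p-m)=j+p\leq S+T\).

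Such a term costs
\[
  \mathcal A\lambda^{2(j+p-m)\beta}\,\|D^{m_i}v\|_{C^0(B_r)}^{p-m}
  \leq \mathcal A\lambda^{2j\beta}\,(L^C r\lambda^{2\beta})^{p-m}.
\]
Here I use \eqref{eq:mixed-cutoff-local-material-estimates} for the undifferentiated velocity factors. Since \(r\lambda^{2\beta}=L^4\), this is only a logarithmic loss. If instead some spatial derivatives land on velocity factors, each costs at most \(L^C\mu\leq\lambda^{2\beta}\), again by \eqref{eq:mixed-cutoff-local-material-estimates}, while the order on \(\varphi\) drops by one. These terms are therefore no worse.

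The support condition \(\operatorname{supp}\varphi(t)\Subset\Omega_t\subset B_r\) lets me compute all norms on \(B_r\). The material orders of \(v\) used are at most \(p-1\leq(T-1)_+\), which is within the hypotheses.

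\textbf{Step 2: the live packet.} Under the added hypotheses, apply Corollary~\ref{cor:oscillatory-product-estimates} with coefficient \(b=\varphi\), size \(\mathcal A\), and output orders \(T,S\). Its second line \eqref{eq:periodic-packet-product} is exactly \eqref{eq:ordinary-live-packet} for \(0\leq p\leq T\) and \(0\leq j\leq S\).

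I could also argue directly, to see that no extra derivative is lost. By Leibniz, each spatial derivative landing on the oscillatory factor costs \(L^C\lambda\). Each ordinary time derivative landing on it costs \(\lambda\ell|\partial_tX|=\lambda\ell|v\cdot\nabla X|\leq L^C\lambda r=L^C\lambda^{1-2\beta}\), using \(DX=0\) and the core bound on \(v\). Derivatives of the factors \(\partial_t^{p'}\nabla X\) are controlled by Lemma~\ref{lem:transported-phase-estimates}. Derivatives landing on \(\varphi\) are handled by Step~1 and cost only \(\lambda^{2\beta}\leq\lambda^{1-2\beta}\) each. The global \(C^0\) norm reduces to \(B_r\) because the coefficient is supported there and is extended by zero.

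\textbf{Main obstacle.} The main difficulty is the bookkeeping in Step~1. I must check that the conversion never requests a pair \((m,k)\) outside the diagonal range \(m+k\leq S+T\), and never requests a parent velocity derivative beyond time order \((T-1)_+\) or spatial order \(\max\{S+T,2\}\). I would organize this by induction on \(p\), using the commutator \([D,\partial_a]=-(\partial_av)\cdot\nabla\), exactly as in the ordinary-time conversion of Lemma~\ref{lem:transported-phase-estimates}.
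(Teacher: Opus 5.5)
Your proposal is correct and matches the paper's proof: convert with \(\partial_t=D-v\cdot\nabla\), so that a transport factor landing on the coefficient costs \(r\lambda^{2\beta}=L^4\) and one landing on the oscillatory factor costs \(r\lambda=L^4\lambda^{1-2\beta}\), with commutators and phase terms controlled by the parent and transported-phase bounds. Your citation of Corollary~\ref{cor:oscillatory-product-estimates} in Step~2 is legitimate, since its proof is this same conversion.

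One small bookkeeping correction: the first spatial derivative falling on a previously undifferentiated velocity factor changes its bound from \(L^Cr\) to \(L^C\). That is a relative cost of \(r^{-1}=L^{-4}\lambda^{2\beta}\), not \(L^C\mu\). It is still at most \(\lambda^{2\beta}\), so your conclusion that such terms are no worse stands.
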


\begin{lemma}[Weighted Duhamel estimate]
\label{lem:weighted-duhamel-estimate}
Fix \(T,S\in\mathbb N_0\) and \(\mathcal A>0\). Assume the cutoff-parent
estimates of Lemma~\ref{lem:mixed-parent-inputs} through ordinary time order
\((T-1)_+\) and spatial order \(\max\{S+T,2\}\). Assume
\(\Lambda\geq\Lambda_{\rm flow}(b,\beta)\), independently of
\(T,S\), and take the flow and phase in
Lemmas~\ref{lem:cutoff-parent-flow} and
\ref{lem:transported-child-phase} with
\(J\geq\max\{S+T+1,3\}\). Let
\(\varphi\in C^\infty([s,1]\times\mathbb T^2)\), with
\(\operatorname{supp}\varphi(t)\Subset\Omega_t\) for every \(t\), and
assume that its pulled-back supports are uniformly compact:
\begin{equation}
  K_\varphi
  :=\overline{\bigcup_{\sigma\in[s,1]}
      \Phi(s;\sigma,\operatorname{supp}\varphi(\sigma))}
  \Subset B_R.
  \label{eq:duhamel-uniform-pulled-back-support}
\end{equation}
Assume also
\begin{align*}
  \|\varphi\|_{C^k(B_r)}
  &\leq C_kL^{C_k}\mathcal A\lambda^{2k\beta}
  &&(0\leq k\leq S+T),\\
  \|D^m\varphi\|_{C^k(B_r)}
  &\leq C_{m,k}L^{C_{m,k}}\mathcal A\lambda^{2k\beta}
  &&(1\leq m\leq T-1,\ m+k\leq S+T-1).
\end{align*}
The second range is empty when \(T=0\) or \(T=1\). Set
\[
  h(\sigma)=a(\sigma)\varphi(\sigma),
  \qquad
  g(t)=\int_s^tH_{\sigma,t}[h(\sigma)]\,d\sigma.
\]
Then
\begin{equation}
  \max_{\mathcal D\in\{D,\partial_t\}}
  \|\mathcal D^pg(t)\|_{C^j(B_r)}
  \leq C_{p,j}a(t)L^{C_{p,j}}
    \mathcal A\lambda^{2j\beta},
  \label{eq:weighted-duhamel-mixed-bound}
\end{equation}
for \(0\leq p\leq T\) and \(0\leq j\leq S\).
For a fixed output pair \((p,j)\), only the undifferentiated Duhamel source
\(\varphi\) through spatial order \(j+p\) and the Duhamel-source
derivative pairs
\[
  (m,j+p-1-m),\qquad 0\leq m\leq p-1,
\]
enter. In particular, for \(p\geq1\), no order-\(p\) material or ordinary
time derivative of \(\varphi\) is assumed; for \(p=0\), only the
undifferentiated source is used. The function \(g(t)\) has support compactly
contained in \(\Omega_t\).
\end{lemma}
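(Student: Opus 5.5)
The approach is to normalize out the exact origin growth and reduce to Lemma~\ref{lem:normalized-weighted-propagator}. By \eqref{eq:normalized-weighted-propagator},
\[
  g(t)=a(t)\int_s^t\widetilde H_{\sigma,t}[\varphi(\sigma)]\,d\sigma,
\]
so it suffices to bound \(\widetilde g(t):=\int_s^t\widetilde H_{\sigma,t}[\varphi(\sigma)]\,d\sigma\) by \(L^C\mathcal A\lambda^{2j\beta}\) under \(D^p\) and \(\partial_t^p\) in \(C^j(B_r)\), and then restore the factor \(a(t)\).

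The supports come first. The datum \(\varphi(\sigma)\) at time \(\sigma\) is supported in \(\Phi(\sigma;s,K_\varphi)\). The propagator transports it to \(\Phi(t;s,K_\varphi)\subset\Phi(t;s,B_R)=\Omega_t\). Since \(K_\varphi\Subset B_R\) is independent of \(\sigma\), the supports are uniformly compactly contained in \(\Omega_t\), and this carries over to \(g\). In particular, differentiation under the integral and the boundary term at \(\sigma=t\) cause no support issues.

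For \(p=0\), apply \eqref{eq:mixed-normalized-weight} with \(p=n=0\) to each \(\widetilde H_{\sigma,t}[\varphi(\sigma)]\), and integrate over an interval of length at most \(|I_q|\leq1\). This uses only \(\varphi\) in \(C^j\).

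For \(p\geq1\), I use the equation
\[
  (D+\widetilde c_P)\widetilde g=\varphi,\qquad \widetilde g(s)=0,
\]
which holds because \(\widetilde H_{t,t}=\mathrm{Id}\) and \((D+\widetilde c_P)\widetilde H_{\sigma,t}=0\). Iterating as in \eqref{eq:intro-duhamel-triangle} expresses \(D^p\widetilde g\) through \(D^{p-1}\varphi\) and products \((D^r\widetilde c_P)\,D^{p-1-r}\widetilde g\). I then induct on \(p\), and within each \(p\) on \(j\), commuting spatial derivatives through \(D\) with \([D,\partial_a]=-(\partial_av)\cdot\nabla\). Each commutation costs one spatial derivative of \(\widetilde g\) at lower material order, which the diagonal ranges supply.

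The bounds on \(\widetilde c_P\) come from \(c_P=m_0\cdot\nabla P\) together with \eqref{eq:mixed-cutoff-local-material-estimates} and \eqref{eq:periodic-packet-parameter-path} for \(\alpha\). They give \(\|D^r\widetilde c_P\|_{C^k(B_r)}\leq L^C\mu^k\leq L^C\lambda^{2k\beta}\), and this requires parent time orders only up to \(r\leq p-1\). Counting orders, the sources used are exactly \(\varphi\) through spatial order \(j+p\) at material order zero and the pairs \((m,j+p-1-m)\) with \(m\leq p-1\), as claimed; the \(C^0\) control of \(\widetilde g\) itself comes from the \(p=0\) step.

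To pass to ordinary time derivatives, write \(\partial_t=D-v\cdot\nabla\). Each transport factor costs \(\|v\|_{C^0(B_r)}\lambda^{2\beta}\leq L^Cr\lambda^{2\beta}=L^{C}\), by \eqref{eq:mixed-cutoff-velocity-core}. Finally, differentiating the factor \(a(t)\) gives \(\dot a/a=-c_P(0,t)\), whose \(D\)- and \(\partial_t\)-derivatives are only \(L^C\mu^{0}\)-sized in the same ranges. So Leibniz' rule returns \(a(t)L^C\mathcal A\lambda^{2j\beta}\).

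I expect the main obstacle to be the bookkeeping of the triangular induction. I must verify that no order-\(p\) derivative of \(\varphi\) and no order-\(p\) parent time derivative enters, and that the top spatial order \(j+p\) is reached only on the undifferentiated source. The delicate point is that \(\widetilde c_P\) is not small: it is of size \(L^C\) near the core after centering, since \(|x|\mu\Lambda^{b}\lesssim r\mu L^C\leq1\). Gr\"onwall-type absorption is therefore unnecessary, but this must be checked so that the constants stay polylogarithmic rather than exponential in \(\Lambda^{7b/8}\).
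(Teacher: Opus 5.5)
Your proposal is correct and follows the paper's proof essentially step for step: the factorization \(g=a\widetilde g\), the fixed-endpoint \(p=n=0\) case of Lemma~\ref{lem:normalized-weighted-propagator} integrated in \(\sigma\) for every spatial order through \(S+T\), the algebraic recurrence for \(D^p\widetilde g\) from \((D+\widetilde c_P)\widetilde g=\varphi\), \(\widetilde g(s)=0\), conversion via \(\partial_t=D-v\cdot\nabla\) at cost \(r\lambda^{2\beta}=L^4\), Leibniz in \(a\), and the support bound \(\operatorname{supp}g(t)\subset\Phi(t;s,K_\varphi)\Subset\Omega_t\). Two minor remarks that do not affect the argument: you do not need the spatial commutators inside the material induction, because you can take \(C^k\) norms of the pointwise recurrence directly. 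Also, \(\widetilde c_P\) is in fact small on the tube, of size at most \(CL^C\mu^2r^2=CL^C\lambda^{-3\beta}\), but only an upper bound is used, and the exponential weight is already controlled inside the cited propagator lemma.
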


\subsection{The finite correction cascade}

Products of corrections with the common phase \(X\) generate finitely many
harmonics of \(\lambda X\). We encode their coefficients in arrays so that
the next Duhamel correction can cancel the nonzero harmonics, while the zero
harmonic and the exact velocity remainders remain in the perturbation
residual.

For \(H\geq0\), a fixed phase-harmonic coefficient array is
\[
  \mathbf g
  =\left(g_0,(g^s_h,g^c_h)_{1\leq h\leq H}\right),
\]
with evaluation
\begin{equation}
  \operatorname{Ev}_{\lambda,X}\mathbf g
  :=g_0+\sum_{h=1}^H
    \left(g^s_h\sin(\lambda hX)+g^c_h\cos(\lambda hX)\right).
  \label{eq:phase-array-evaluation}
\end{equation}
This is a presentation rather than an intrinsic decomposition: evaluation
need not be injective. When \(X\) is odd, we call the array odd if \(g_0\)
and every cosine coefficient \(g_h^c\) are odd while every sine coefficient
\(g_h^s\) is even, componentwise for vector-valued arrays. A band with a
fixed phase shift is first put into this convention by the angle-addition
identities. Arrays of different lengths are identified by appending zero
coefficients.

Their product \(\mathbf g\star\mathbf h\) is defined formally: multiply the
displayed sine and cosine slots, apply the product-to-sum identities, and
collect coefficients with the same nonnegative harmonic index.  Thus
\[
  \operatorname{Ev}_{\lambda,X}(\mathbf g\star\mathbf h)
  =\operatorname{Ev}_{\lambda,X}\mathbf g\,
   \operatorname{Ev}_{\lambda,X}\mathbf h.
\]
For each coordinate \(d\), define formal spatial differentiation by
\[
\begin{aligned}
  (\partial_d^{\rm arr}\mathbf g)_0
    &=\partial_dg_0,\\
  (\partial_d^{\rm arr}\mathbf g)^s_\ell
    &=\partial_dg^s_\ell
      -\lambda\ell(\partial_dX)g^c_\ell,\\
  (\partial_d^{\rm arr}\mathbf g)^c_\ell
    &=\partial_dg^c_\ell
      +\lambda\ell(\partial_dX)g^s_\ell.
\end{aligned}
\]
Then
\[
  \operatorname{Ev}_{\lambda,X}
    (\partial_d^{\rm arr}\mathbf g)
  =\partial_d\operatorname{Ev}_{\lambda,X}\mathbf g.
\]
All array operations act on the chosen presentations, not on equivalence
classes modulo evaluation.  We write
\(\boldsymbol\nabla\mathbf g
=(\partial_1^{\rm arr}\mathbf g,\partial_2^{\rm arr}\mathbf g)\).  If
\(\mathbf a\) and
\(\mathbf b\) are vector valued, then
\(\mathbf a\mathbin{\boldsymbol{\cdot}}\mathbf b\) denotes the scalar array
obtained by
applying \(\star\) componentwise and summing.

Define operations on arrays by
\begin{equation}
  \Pi_0\mathbf g=(g_0,(0,0)_{1\leq\ell\leq H}),
  \qquad
  \Pi_{>0}\mathbf g=\mathbf g-\Pi_0\mathbf g.
  \label{eq:formal-harmonic-projections}
\end{equation}
These operations are not defined on arbitrary functions.  In particular,
\(\operatorname{Ev}_{\lambda,X}(\Pi_0\mathbf g)=g_0\) is a spatially varying
coefficient, not a Fourier zero mode.  When \(u^{[M]}\) is applied band by
band, we retain the vector-valued array produced by the formulas of
Proposition~\ref{prop:localized-periodic-velocity-expansion} and denote it by
\(\mathbf u^{[M]}[\mathbf g]\).

Fix an even \(f\in C_c^\infty(B_1)\), equal to one on \(B_{1/2}\), and a
nondecreasing \(W\in C^\infty(\mathbb R)\) such that
\(W=0\) on \((-\infty,0]\) and \(W=1\) on \([1,\infty)\).  Set
\begin{equation}
  w(t):=W(L^8(t-s)),
  \qquad
  \varphi_s(x):=a(s)\lambda^{\beta-1}f(\lambda^{2\beta}x).
  \label{eq:activation-and-seed}
\end{equation}
The principal correction and its activation error are
\begin{equation}
  \zeta_1(t):=w(t)\mathcal S_{s,t}[\varphi_s;1,0],
  \qquad
  \mathcal A_0(t):=w'(t)\mathcal S_{s,t}[\varphi_s;1,0].
  \label{eq:principal-correction}
\end{equation}
They are smooth after extension by zero to \([0,s]\).  Equation
\eqref{eq:origin-weight} gives
\(H_{s,t}[\varphi_s](0)=a(t)\lambda^{\beta-1}\). Hence the coefficient
of the activated correction \(\zeta_1\) at the origin is exactly
\(w(t)a(t)\lambda^{\beta-1}\).

Suppose \(\zeta_1,\ldots,\zeta_j\) have been defined.  Let \(\mathbf z_j\)
be the fixed coefficient array whose evaluation is
\begin{equation}
  \zeta_j
  =\operatorname{Ev}_{\lambda,X}\mathbf z_j
  =\sum_{\ell=1}^{2^j}
   \left(g^s_{\ell,j}\sin(\lambda\ell X)
        +g^c_{\ell,j}\cos(\lambda\ell X)\right).
  \label{eq:correction-presentation}
\end{equation}
For \(j=1\), the \(\ell=2\) slot is zero by the padding convention above.
Put
\[
  \mathbf T_j:=\sum_{i=1}^j\mathbf z_i,
  \qquad
  \Theta_j:=\operatorname{Ev}_{\lambda,X}\mathbf T_j,
  \qquad
  \mathbf T_0=0,\quad \Theta_0=0.
\]

Let \(u^{[M]}\) be the truncated periodic velocity expansion in
Proposition~\ref{prop:localized-periodic-velocity-expansion}. Define the quadratic-increment
array and its evaluation by
\begin{equation}
  \mathbf N_j
  :=\mathbf u^{[M]}[\mathbf z_j]\mathbin{\boldsymbol{\cdot}}
       \boldsymbol\nabla\mathbf T_j
    +\mathbf u^{[M]}[\mathbf T_{j-1}]\mathbin{\boldsymbol{\cdot}}
       \boldsymbol\nabla\mathbf z_j,
  \qquad
  \mathcal N_j:=\operatorname{Ev}_{\lambda,X}\mathbf N_j.
  \label{eq:quadratic-increment}
\end{equation}
Since \(u^{[0]}[z]=m_0(t)z\), define its bandwise coefficient array on
\(\mathbf z_j\) by
\[
  \mathbf u^{[0]}[\mathbf z_j]
  :=\left(0,
    \bigl(m_0(t)g^s_{\ell,j},m_0(t)g^c_{\ell,j}\bigr)_{1\leq\ell\leq2^j}
  \right).
\]
Regard \(\nabla P\) as a vector-valued array with only its zero-harmonic
coefficient, and set
\[
  \mathbf R_j^P
  :=\bigl(\mathbf u^{[M]}[\mathbf z_j]
          -\mathbf u^{[0]}[\mathbf z_j]\bigr)
       \mathbin{\boldsymbol{\cdot}}\boldsymbol\nabla P.
\]
Define the structured-defect array and function by
\begin{equation}
  \mathbf Q_j:=\Pi_{>0}\mathbf N_j+\mathbf R_j^P,
  \qquad
  \mathcal Q_j:=\operatorname{Ev}_{\lambda,X}\mathbf Q_j.
  \label{eq:structured-defect}
\end{equation}
Thus \(\mathcal Q_j\) is the correctable nonzero-harmonic defect.
The part deposited directly into the perturbation residual is
\begin{align}
  \mathcal E_j:={}&
    \operatorname{Ev}_{\lambda,X}(\Pi_0\mathbf N_j)\notag\\
  &+(u_{\mathbb T}-u^{[M]})[\zeta_j]\cdot\nabla P\notag\\
  &+(u_{\mathbb T}-u^{[M]})[\zeta_j]\cdot\nabla\Theta_j\notag\\
  &+(u_{\mathbb T}-u^{[M]})[\Theta_{j-1}]\cdot\nabla\zeta_j.
  \label{eq:deposited-packet-error}
\end{align}
The same field \(P\) supplies both the transport velocity and the gradient
in \eqref{eq:packet-propagator-identity}.

The array \(\mathbf Q_j\) has nonzero harmonics at most \(2^{j+1}\); write
\[
  \mathbf Q_j
  =\left(0,(h^s_{\ell,j},h^c_{\ell,j})_{1\leq\ell\leq2^{j+1}}\right).
\]
For \(1\leq j<M\), define
\begin{equation}
  \zeta_{j+1}(t)
  :=-\sum_{\ell=1}^{2^{j+1}}\int_s^t
    \left(
      \mathcal S_{\sigma,t}[h^s_{\ell,j}(\sigma);\ell,0]
      +\mathcal S_{\sigma,t}[h^c_{\ell,j}(\sigma);\ell,\pi/2]
    \right)\,d\sigma.
  \label{eq:duhamel-correction}
\end{equation}
Apply \(D+u^{[0]}[\cdot]\cdot\nabla P\) to each integral.  Identity
\eqref{eq:packet-propagator-identity} annihilates the integrand, while the
upper endpoint is respectively
\(h^s_{\ell,j}\sin(\lambda\ell X)\) or
\(h^c_{\ell,j}\cos(\lambda\ell X)\).  Summing the fixed array coefficients
therefore gives
\begin{equation}
  D\zeta_{j+1}
  +u^{[0]}[\zeta_{j+1}]\cdot\nabla P
  =-\mathcal Q_j.
  \label{eq:duhamel-cancellation}
\end{equation}
This is an explicit recursion: \(\mathcal Q_j\) depends only on
\(\zeta_1,\ldots,\zeta_j\), which are known before
\(\zeta_{j+1}\) is defined. It is also triangular after differentiation.
For an order-\(p\), \(C^k\) output, the weighted Duhamel estimate uses the
undifferentiated defect through \(C^{k+p}\) and positive defect time orders
only through \(p-1\); the exact pairs are recorded below in
Lemma~\ref{lem:density-first-mixed-closure}.

\begin{lemma}[Parity of the array operations]
\label{lem:array-parity}
Suppose that \(X\) and \(P\) are odd. Applying
Proposition~\ref{prop:localized-periodic-velocity-expansion} harmonic by
harmonic to an odd array produces an odd vector-valued array. The grouped
exact velocity remainder is odd. Moreover, the propagator
\(H_{\sigma,t}\) preserves parity, and, when initialized by an odd array,
the recursion
\eqref{eq:principal-correction}--\eqref{eq:duhamel-correction} produces odd
\(\zeta_j,\mathcal Q_j,\mathcal E_j\).
\end{lemma}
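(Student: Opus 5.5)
The plan is to reduce everything to a parity bookkeeping in terms of the reflection \(x\mapsto-x\). Since \(X\) is odd, \(\sin(\lambda hX)\) is odd and \(\cos(\lambda hX)\) is even. So an array in the stated convention (odd \(g_0\), odd \(g^c_h\), even \(g^s_h\)) evaluates to an odd function, and each slot carries the parity that makes its product with the carrier odd. I would first check that the convention is stable under the three formal array operations.

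\textbf{Products.} For \(\star\) I would use \(\sin A\sin B=\tfrac12(\cos(A-B)-\cos(A+B))\) and its companions. A product of two odd arrays has a zero slot that is a sum of (odd)(odd) and (even)(even) products, hence even, and likewise for the other slots. So the product of two odd arrays is an \emph{even} array. The dot product \(\mathbf a\mathbin{\boldsymbol{\cdot}}\mathbf b\) of two odd vector arrays is then even. \textbf{Differentiation.} Since \(\partial_d\) flips parity and \(\partial_dX\) is even, \(\partial_d^{\rm arr}\) maps an odd array to an even one, slot by slot. For example, \((\partial_d^{\rm arr}\mathbf g)^s_\ell=\partial_dg^s_\ell-\lambda\ell(\partial_dX)g^c_\ell\) is odd minus odd, and \(\partial_dg_0\) is even. \textbf{Velocity.} For the velocity expansion I would go through the recursion in the proof of Proposition~\ref{prop:localized-periodic-velocity-expansion}. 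The input is \((C_0^s,C_0^c)=(b,0)\) with \(b\) even for a sine band, and \(\partial_a\) and multiplication by \(\partial_aY_\ell\) (even, since \(Y_\ell\) is odd) alternate the parities consistently. Meanwhile \(\nu_\ell\), \(\alpha\) and the moments depend only on time. Lemma~\ref{lem:affine-velocity-expansion} then gives that \(c^s_\gamma\) vanishes for even \(|\gamma|\) and \(c^c_\gamma\) vanishes for odd \(|\gamma|\). Combined with \(\partial^\gamma\) of parity \((-1)^{|\gamma|}\), this makes each recombined coefficient \(B_n^{s}\) even and \(B_n^c\) odd, so \(\mathbf u^{[M]}\) of an odd array is an odd vector array. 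A cleaner route, which I would use as a cross-check, is that \(u_{\mathbb T}\) and the multiplier \(m\) commute with reflection because the symbol is even. Hence \(u_{\mathbb T}(z)\) is odd for odd \(z\), and \(u^{[M]}[z]\) is odd as well, so their difference, the grouped remainder \((u_{\mathbb T}-u^{[M]})[z]\), is odd.

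\textbf{Propagator.} The flow \(\Phi\) of the odd field \(v\) commutes with reflection by uniqueness of trajectories. Also \(c_P=m_0\cdot\nabla P\) is even, since \(\nabla P\) is even. So the exponential weight in \eqref{eq:weighted-coefficient-propagator} is even in \(x\), and \(H_{\sigma,t}\) preserves the parity of \(\varphi\).

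\textbf{Induction on \(j\).} The seed \(\varphi_s\) is even because \(f\) is even, so \(\zeta_1\) is an even coefficient times \(\sin(\lambda X)\), i.e.\ an odd array. Assuming \(\mathbf z_1,\dots,\mathbf z_j\) are odd, the array \(\mathbf N_j\) is a dot product of odd velocity arrays with even gradient arrays. The previous rule must be refined here: the product of an odd array with an \emph{even} array is odd. So \(\mathbf N_j\) is odd, and then \(\Pi_{>0}\mathbf N_j\), \(\Pi_0\mathbf N_j\) and \(\mathbf R^P_j\) are odd (with \(\nabla P\) an even zero-harmonic array). Hence \(\mathcal Q_j\) is odd, and each term of \(\mathcal E_j\) is odd, being an odd velocity dotted with an even gradient. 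Finally, in \eqref{eq:duhamel-correction}, \(h^s\) is even and is placed in the sine slot, while \(h^c\) is odd and placed in the cosine slot. \(H_{\sigma,t}\) preserves these parities, so \(\zeta_{j+1}\) is odd with an odd array, which closes the induction.

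\textbf{Main obstacle.} The step I expect to be delicate is the bookkeeping of "odd array" versus "odd function" for products and gradients. The product rule above has to be stated for mixed parity classes (odd, even, and vector-valued), not just for odd arrays. I would introduce an "even array" convention (even \(g_0\) and \(g^c\), odd \(g^s\)) and tabulate the \(\star\) and \(\partial^{\rm arr}\) rules once, after which the induction is mechanical.
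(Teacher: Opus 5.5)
Your proposal is correct and follows the paper's own proof. The velocity parity comes from the recursion coefficients \(C_\gamma^{s,c}\) via evenness of \(\partial_aY_\ell\) together with the vanishing identities \eqref{eq:periodic-profile-identities}, the grouped remainder is odd because the symbol is even, \(H_{\sigma,t}\) preserves parity because the flow is odd and \(c_P\) is even, and induction in \(j\) closes the argument; your explicit odd/even tables for \(\star\) and \(\partial_d^{\rm arr}\) simply spell out the paper's one-line remark that products with gradients, the harmonic projections, and the Duhamel step preserve the stated parities.
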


\begin{proof}
For a sine input, the coefficient recursion in the proof of
Proposition~\ref{prop:localized-periodic-velocity-expansion} gives
\[
  C_\gamma^s(-x)=(-1)^{|\gamma|}C_\gamma^s(x),
  \qquad
  C_\gamma^c(-x)=(-1)^{|\gamma|+1}C_\gamma^c(x);
\]
the cosine input has the reversed parities. This follows inductively because
\(\partial_dX\) is even. The vanishing identities
\eqref{eq:periodic-profile-identities} then leave even sine coefficients and
odd cosine coefficients in the velocity array. The Fourier multiplier symbol
of the exact velocity is even, so the exact velocity and its grouped
remainder preserve oddness.

The flow of the odd field \(u_{\mathbb T}(P)\) is odd, while \(c_P\) is
even; hence \eqref{eq:weighted-coefficient-propagator} preserves parity.
Finally, products with gradients, the harmonic projections, and the Duhamel
step preserve the stated parities. Induction in \(j\) proves the last claim.
\end{proof}

\begin{lemma}[Exact cascade identity]
\label{lem:exact-cascade}
For \(1\leq J\leq M\),
\begin{align}
 &\partial_t\Theta_J
  +u_{\mathbb T}(P)\cdot\nabla\Theta_J
  +u_{\mathbb T}(\Theta_J)\cdot\nabla P
  +u_{\mathbb T}(\Theta_J)\cdot\nabla\Theta_J\notag\\
 &\hspace{35mm}
  =\mathcal A_0+\sum_{j=1}^J\mathcal E_j+\mathcal Q_J.
  \label{eq:exact-cascade-identity}
\end{align}
\end{lemma}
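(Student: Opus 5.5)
The proof is an induction on \(J\), using only exact algebraic identities. Write \(\mathcal L\Theta:=\partial_t\Theta+u_{\mathbb T}(P)\cdot\nabla\Theta+u_{\mathbb T}(\Theta)\cdot\nabla P+u_{\mathbb T}(\Theta)\cdot\nabla\Theta\) for the left side of \eqref{eq:exact-cascade-identity}. Since \(\Theta_J=\Theta_{J-1}+\zeta_J\), the quadratic term splits as
\[
  u_{\mathbb T}(\Theta_J)\cdot\nabla\Theta_J-u_{\mathbb T}(\Theta_{J-1})\cdot\nabla\Theta_{J-1}
  =u_{\mathbb T}(\zeta_J)\cdot\nabla\Theta_J+u_{\mathbb T}(\Theta_{J-1})\cdot\nabla\zeta_J,
\]
which has exactly the shape of \eqref{eq:quadratic-increment}. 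Therefore
\[
  \mathcal L\Theta_J-\mathcal L\Theta_{J-1}
  =D\zeta_J+u_{\mathbb T}(\zeta_J)\cdot\nabla P+u_{\mathbb T}(\zeta_J)\cdot\nabla\Theta_J+u_{\mathbb T}(\Theta_{J-1})\cdot\nabla\zeta_J,
\]
with \(D=\partial_t+u_{\mathbb T}(P)\cdot\nabla\). Each exact velocity is then split as \(u_{\mathbb T}=u^{[0]}+(u^{[M]}-u^{[0]})+(u_{\mathbb T}-u^{[M]})\) in the term against \(\nabla P\), and as \(u_{\mathbb T}=u^{[M]}+(u_{\mathbb T}-u^{[M]})\) in the two quadratic terms. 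Linearity of \(u^{[M]}\) over the band presentation gives \(u^{[M]}[\Theta_{J-1}]=\sum_{i<J}u^{[M]}[\zeta_i]\), consistent with \(\mathbf u^{[M]}[\mathbf T_{J-1}]\).

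Under this splitting, the three \(u_{\mathbb T}-u^{[M]}\) pieces are exactly the last three lines of \eqref{eq:deposited-packet-error}. The \(u^{[M]}\) quadratic pieces evaluate, via \(\operatorname{Ev}(\mathbf g\star\mathbf h)=\operatorname{Ev}\mathbf g\operatorname{Ev}\mathbf h\) and \(\operatorname{Ev}(\partial^{\rm arr}\mathbf g)=\partial\operatorname{Ev}\mathbf g\), to \(\mathcal N_J=\operatorname{Ev}\Pi_0\mathbf N_J+\operatorname{Ev}\Pi_{>0}\mathbf N_J\). The \((u^{[M]}-u^{[0]})[\zeta_J]\cdot\nabla P\) piece is \(\operatorname{Ev}\mathbf R^P_J\). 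Hence
\[
  \mathcal L\Theta_J-\mathcal L\Theta_{J-1}
  =\bigl(D\zeta_J+u^{[0]}[\zeta_J]\cdot\nabla P\bigr)+\mathcal E_J+\mathcal Q_J .
\]

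The remaining linearized term is handled separately for \(J=1\) and for \(J\geq2\).
\begin{itemize}
\item For \(J=1\), the product rule applied to \(\zeta_1=w\,\mathcal S_{s,t}[\varphi_s;1,0]\) together with \eqref{eq:packet-propagator-identity} gives \(D\zeta_1+u^{[0]}[\zeta_1]\cdot\nabla P=w'\mathcal S_{s,t}[\varphi_s;1,0]=\mathcal A_0\). This uses that \(w\) depends only on time, so \(u^{[0]}[w\,\cdot]=w\,u^{[0]}[\cdot]\). Since \(\Theta_0=0\), this yields the base case.
\item For \(J\geq2\), \eqref{eq:duhamel-cancellation} at index \(J-1\) gives \(D\zeta_J+u^{[0]}[\zeta_J]\cdot\nabla P=-\mathcal Q_{J-1}\). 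This cancels the \(\mathcal Q_{J-1}\) in the inductive hypothesis and leaves \(\mathcal Q_J\). Summing the increments then gives \eqref{eq:exact-cascade-identity}.
\end{itemize}

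The main obstacle is bookkeeping rather than estimates: the band-by-band definition of \(u^{[M]}\) must be additive and must commute with the array identifications. In particular, \(u^{[0]}\) has to act as multiplication by \(m_0\) on each presentation used, which is Corollary~\ref{cor:leading-periodic-response}. Differentiating the Duhamel integral \eqref{eq:duhamel-correction} under the integral sign, with the endpoint contribution, must also be justified. Both rest on the paper's convention that all operations act on fixed presentations. Smoothness up to \(t=s\), where \(w\) vanishes to infinite order, makes the identities valid on all of \(I_q\), and after extension by zero they hold trivially on \([0,s]\).
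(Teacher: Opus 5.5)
Your proposal is correct and follows essentially the same route as the paper's proof. Both argue by induction on \(J\): the increment from adding \(\zeta_J\) is split into the leading linearized part \(D\zeta_J+u^{[0]}[\zeta_J]\cdot\nabla P\), the finite nonleading parent interaction \(\mathbf R^P_J\), the finite quadratic array \(\mathbf N_J\) separated by \(\Pi_0\) and \(\Pi_{>0}\), and the three grouped exact remainders; the leading part equals \(\mathcal A_0\) at \(J=1\) and \(-\mathcal Q_{J-1}\) afterwards by the Duhamel cancellation. The only difference is cosmetic: you compute the increment \(\mathcal L\Theta_J-\mathcal L\Theta_{J-1}\) uniformly and then specialize its leading term, whereas the paper writes out the base case separately.
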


\begin{proof}
For \(J=1\), one has \(\Theta_1=\zeta_1\), and
\eqref{eq:packet-propagator-identity} together with
\eqref{eq:principal-correction} gives
\[
  D\zeta_1+u^{[0]}[\zeta_1]\cdot\nabla P=\mathcal A_0.
\]
The remaining parent interaction splits exactly as
\[
  (u_{\mathbb T}-u^{[0]})[\zeta_1]\cdot\nabla P
  =(u^{[M]}-u^{[0]})[\zeta_1]\cdot\nabla P
   +(u_{\mathbb T}-u^{[M]})[\zeta_1]\cdot\nabla P.
\]
Since \(\mathbf T_0=0\), the finite part of
\(u_{\mathbb T}(\zeta_1)\cdot\nabla\zeta_1\) is
\(\mathcal N_1\), and its grouped exact remainder is the third term in
\eqref{eq:deposited-packet-error}. Splitting \(\mathbf N_1\) by
\(\Pi_{>0}+\Pi_0\) and using
\eqref{eq:structured-defect}--\eqref{eq:deposited-packet-error} gives
\[
  \mathcal A_0+\mathcal Q_1+\mathcal E_1,
\]
which proves the base case.

Assume the identity at level \(J<M\). The exact nonlinear increment after
adding \(\zeta_{J+1}\) is
\[
  \partial_t\zeta_{J+1}
  +u_{\mathbb T}(P)\cdot\nabla\zeta_{J+1}
  +u_{\mathbb T}(\zeta_{J+1})\cdot\nabla P
  +u_{\mathbb T}(\zeta_{J+1})\cdot\nabla\Theta_{J+1}
  +u_{\mathbb T}(\Theta_J)\cdot\nabla\zeta_{J+1}.
\]
By \eqref{eq:duhamel-cancellation}, the first two terms split as
\begin{align*}
  &D\zeta_{J+1}
   +u_{\mathbb T}(\zeta_{J+1})\cdot\nabla P\\
  &\quad=-\mathcal Q_J
   +(u^{[M]}-u^{[0]})[\zeta_{J+1}]\cdot\nabla P
   +(u_{\mathbb T}-u^{[M]})[\zeta_{J+1}]\cdot\nabla P.
\end{align*}
The last two terms in the increment are
\begin{align*}
  \mathcal N_{J+1}
  &+(u_{\mathbb T}-u^{[M]})[\zeta_{J+1}]
      \cdot\nabla\Theta_{J+1}\\
  &+(u_{\mathbb T}-u^{[M]})[\Theta_J]
      \cdot\nabla\zeta_{J+1}.
\end{align*}
The positive-harmonic part of \(\mathcal N_{J+1}\), together with the
nonleading finite parent interaction, is \(\mathcal Q_{J+1}\). Its
zero-harmonic part and the three grouped exact remainders are
\(\mathcal E_{J+1}\). Thus the increment is exactly
\[
  -\mathcal Q_J+\mathcal Q_{J+1}+\mathcal E_{J+1}.
\]
It cancels the terminal \(+\mathcal Q_J\) in the induction hypothesis and
proves the claim. In particular, the two ordered quadratic cross terms are
both contained in \(\mathcal N_{J+1}\).
\end{proof}

Define the new perturbation and its exact residual by
\begin{equation}
  \theta_{q+1}:=\Theta_M,
  \qquad
  E_{q+1}^{\rm pert}
  :=\mathcal A_0+\sum_{j=1}^M\mathcal E_j+\mathcal Q_M.
  \label{eq:packet-residual-cascade}
\end{equation}
Equivalently, Lemma~\ref{lem:exact-cascade} states the intrinsic identity
\begin{align}
  E_{q+1}^{\rm pert}={}&
  \partial_t\theta_{q+1}
  +u_{\mathbb T}(P_{q+1})\cdot\nabla\theta_{q+1}
  +u_{\mathbb T}(\theta_{q+1})\cdot\nabla P_{q+1}\notag\\
  &+u_{\mathbb T}(\theta_{q+1})\cdot\nabla\theta_{q+1}.
  \label{eq:exact-packet-residual}
\end{align}
Both \(\theta_{q+1}\) and \(E_{q+1}^{\rm pert}\) vanish to infinite
order in time at \(t=s_{q+1}\). Indeed, since \(W\) vanishes to infinite order at zero, we have
\(\lvert\zeta_1(t)\rvert\leq C_N(t-s)^N\) for every \(N\), and
\eqref{eq:duhamel-correction} gives inductively
\[
  \lvert\zeta_{j+1}(t)\rvert\leq C_N(t-s)^{N+1}.
\]
The same argument after differentiation, followed by
\eqref{eq:packet-residual-cascade}, proves the assertion.

\subsection{Descending derivative orders and residual estimates}

Appendix~\ref{sec:smooth-force} proves the all-order parent and weighted
propagator estimates used in this subsection. Put \(\gamma=1-2\beta\). One
correction step uses \(M+T+2\) additional coefficient derivatives and one
additional phase derivative; the following descending derivative orders
reserve that loss.
For requested output orders \(T,K\), define
\begin{equation}
  \mathcal K_{M+1}:=K,
  \qquad
  \mathcal K_j:=K+(M-j+1)(M+T+2)
  \quad(1\leq j\leq M).
  \label{eq:descending-derivative-blocks}
\end{equation}
The identity \(\mathcal K_j=\mathcal K_{j+1}+M+T+2\) supplies the additional
coefficient orders required by
Proposition~\ref{prop:localized-periodic-velocity-expansion} at the next
correction. Lemma~\ref{lem:mixed-parent-inputs} supplies every cutoff-parent
order; Lemma~\ref{lem:transported-child-phase}, via
Lemma~\ref{lem:cutoff-parent-flow}, supplies the additional phase derivative.
At the passage from \(\zeta_j\) to \(\mathcal Q_j\), the exact orders used
by that proposition are
\begin{equation}
  N_{\mathrm{prof}}=\mathcal K_{j+1}+M+T+2=\mathcal K_j,
  \qquad
  N_{\mathrm{phase}}=\mathcal K_j+1.
  \label{eq:descending-profile-and-phase-orders}
\end{equation}
Set
\begin{equation}
  \Lambda_{\rm cas}(b,\beta)
  :=\max\left\{
    \Lambda_{\rm seed}(b,\beta),
    \Lambda_{\rm geom}(b,\beta),
    \Lambda_{\rm flow}(b,\beta),
    \Lambda_{\rm pkt}(b,\beta)
  \right\}.
  \label{eq:cascade-threshold}
\end{equation}
This threshold is independent of \(T,K\); derivative orders affect only the
constants in the estimates below.

\begin{lemma}[Correction estimates]
\label{lem:correction-estimates}
Assume the adjacent-layer setup in
Definition~\ref{def:adjacent-layer-setup}, fix \(T,K\in\mathbb N_0\), and let the
derivative orders \(\mathcal K_j\) be defined by
\eqref{eq:descending-derivative-blocks}. Assume
\(\Lambda\geq\Lambda_{\rm cas}(b,\beta)\). Take the local parent estimates
\eqref{eq:mixed-cutoff-local-material-estimates} in the ranges
\[
  \begin{aligned}
    D^pv:\quad&
      0\leq p\leq(T-1)_+,\quad d\in\mathbb N_0,\quad
      p+d\leq\mathcal K_1+T,\\
    D^p\nabla P:\quad&
      0\leq p\leq T,\quad d\in\mathbb N_0,\quad
      p+d\leq\mathcal K_1+T.
  \end{aligned}
\]
and take \eqref{eq:actual-parent-flow-derivatives}--
\eqref{eq:actual-parent-phase-affine} with
\(J\geq\mathcal K_1+T+1\). These inputs are supplied, respectively, by
Lemma~\ref{lem:mixed-parent-inputs} and by
Lemmas~\ref{lem:cutoff-parent-flow}--\ref{lem:transported-child-phase}.
Consequently \eqref{eq:periodic-packet-velocity-input}--
\eqref{eq:periodic-packet-phase-input} hold for every \(1\leq j\leq M\)
with \(N_{\rm prof}=\mathcal K_j\) and
\(N_{\rm phase}=\mathcal K_j+1\).
Let \(P,D,X,H_{\sigma,t}\), the seed, and the correction cascade be defined
by \eqref{eq:actual-parent}--\eqref{eq:duhamel-correction}, and use
\eqref{eq:amplitude-no-collapse}. For
\(\mathcal D\in\{D,\partial_t\}\), \(p,k\in\mathbb N_0\),
\(0\leq p\leq T\), \(1\leq j\leq M\), \(1\leq\ell\leq2^j\), and
\(k+p\leq\mathcal K_j+T\), the coefficients in
\eqref{eq:correction-presentation} satisfy
\begin{equation}
  \|\mathcal D^p g^s_{\ell,j}\|_{C^k}
  +\|\mathcal D^p g^c_{\ell,j}\|_{C^k}
  \leq C_{p,k,j}a(t)L^{C_{p,k,j}}
    \lambda^{-1+(2k+2-j)\beta}.
  \label{eq:correction-coefficient-bound}
\end{equation}
If
\[
  \mathcal Q_j
  =\sum_{\ell=1}^{2^{j+1}}
   \left(h^s_{\ell,j}\sin(\lambda\ell X)
        +h^c_{\ell,j}\cos(\lambda\ell X)\right),
\]
then
\begin{equation}
  \|\mathcal D^p h^s_{\ell,j}\|_{C^k}
  +\|\mathcal D^p h^c_{\ell,j}\|_{C^k}
  \leq C_{p,k,j}a(t)L^{C_{p,k,j}}
    \lambda^{-1+(2k+1-j)\beta},
  \label{eq:structured-coefficient-bound}
\end{equation}
  for \(\mathcal D\in\{D,\partial_t\}\), \(p,k\in\mathbb N_0\),
  \(0\leq p\leq T\), \(1\leq j\leq M\),
  \(1\leq\ell\leq2^{j+1}\), and
  \(k+p\leq\mathcal K_{j+1}+T\).  All presented coefficients are supported in
the transported perturbation region. Every correction \(\zeta_j\) is odd,
as are \(\mathcal Q_j\) and \(\mathcal E_j\).
\end{lemma}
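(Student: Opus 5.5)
The plan is an induction on the correction index \(j\). At each \(j\), the coefficient bound \eqref{eq:correction-coefficient-bound} for \(\zeta_j\) feeds the defect bound \eqref{eq:structured-coefficient-bound} for \(\mathcal Q_j\), which then feeds the coefficient bound for \(\zeta_{j+1}\). Within each \(j\) the argument is organized by time order \(p\), following the triangular closure described before the lemma.

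\emph{Base case \(j=1\).} The only coefficient is \(g^s_{1,1}=w\,H_{s,t}[\varphi_s]\), which equals \(a(s)w\,\widetilde H_{s,t}[\varphi_s]\cdot a(t)/a(s)\). The seed \(\varphi_s=a(s)\lambda^{\beta-1}f(\lambda^{2\beta}\cdot)\) is time independent. It satisfies \(\|\varphi_s\|_{C^k}\leq a(s)\lambda^{\beta-1+2k\beta}\), and its support lies in \(B_R\), so the support hypothesis of Lemma~\ref{lem:normalized-weighted-propagator} holds with \(\Omega_t=\Phi(t;s,B_R)\). That lemma with \(n=0\) and \(\mathcal A=\lambda^{\beta-1}\) gives
\[
  \|D^p\widetilde H_{s,t}[\varphi_s]\|_{C^k}\leq L^C\lambda^{-1+(2k+1)\beta}.
\]
The activation factor contributes \(|w^{(i)}|\leq C L^{8i}\), which is absorbed into the logarithms. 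Multiplying back by \(a(t)\) gives exactly \(a(t)L^C\lambda^{-1+(2k+2-1)\beta}\). Ordinary time derivatives follow from Corollary~\ref{lem:mixed-weighted-propagator}, since the conversion \(\partial_t=D-v\cdot\nabla\) costs only \(r\lambda^{2\beta}=L^4\). Cosine slots and harmonics \(\ell\geq2\) are zero at this level.

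\emph{Defect step (\(\zeta_1,\dots,\zeta_j\Rightarrow\mathcal Q_j\)).} Apply Corollary~\ref{cor:common-phase-interactions} with \(N_{\mathrm{prof}}=\mathcal K_j\) and \(N_{\mathrm{phase}}=\mathcal K_j+1\); this is legitimate by \eqref{eq:descending-profile-and-phase-orders} and the stated parent and phase inputs. Each \(\zeta_i\) with \(i\leq j\) is a finite sum of packets with size \(\mathcal A_i=a(t)L^C\lambda^{-1+(2-i)\beta}\), measured in the normalization \(\mathcal A\lambda^{2k\beta}\) of \eqref{eq:periodic-packet-coefficient-input}. I would first check that \(\mathcal K_i\geq\mathcal K_j\) for \(i\leq j\), so that earlier corrections are available in the needed range.
\begin{itemize}
\item[(a)] The quadratic terms in \(\mathbf N_j\) satisfy \eqref{eq:periodic-packet-anisotropic}. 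They are bounded by
\[
  \mathcal A_j\mathcal A_i\lambda^{1-2\beta+2k\beta}
  \leq a(t)^2L^C\lambda^{-1+(2k+1-j)\beta}\,\lambda^{(1-i)\beta}.
\]
This is at most the claimed bound because \(i\geq1\) and \(a(t)\leq e\), using \eqref{eq:amplitude-no-collapse} together with \(a(1)=1\).
\item[(b)] The background term \(\mathbf R^P_j\) satisfies \eqref{eq:periodic-packet-background}. Each nonleading stratum gains \(\lambda^{-2\beta}\), which yields
\[
  \mathcal A_j\lambda^{-2\beta+2k\beta}=a(t)L^C\lambda^{-1+(2k-j)\beta}.
\]
\end{itemize}
Both contributions fit within \eqref{eq:structured-coefficient-bound}. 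The harmonic count \(2^{j+1}\) follows from product-to-sum reduction. Parity follows from Lemma~\ref{lem:array-parity}, and support is inherited from the coefficients.

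\emph{Duhamel step (\(\mathcal Q_j\Rightarrow\zeta_{j+1}\)).} By \eqref{eq:duhamel-correction}, each coefficient of \(\zeta_{j+1}\) equals \(-g(t)\) in Lemma~\ref{lem:weighted-duhamel-estimate}. The source there is \(\varphi=h_{\ell,j}/a\). This uses the \(\ell\)-dependence only through the phase, since \(H_{\sigma,t}\) acts on coefficients independently of \(\ell\). Dividing by \(a(\sigma)\) gives \(\mathcal A=L^C\lambda^{-1+(1-j)\beta}\) and leaves bounded losses. The output bound \(a(t)L^C\lambda^{-1+(2k+1-j)\beta}\) is exactly \eqref{eq:correction-coefficient-bound} at level \(j+1\). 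The lemma only needs the source undifferentiated in \(C^{k+p}\) and at material orders at most \(p-1\). Consequently, the order-\(p\) coefficient estimate uses defect bounds of time order at most \(p-1\), which are already available.

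\emph{Main obstacle: the support hypothesis \eqref{eq:duhamel-uniform-pulled-back-support}.} The Duhamel lemma requires the pulled-back supports to lie uniformly compactly inside \(B_R\). I would handle this by induction, showing that every coefficient is supported in \(\Phi(t;s,\operatorname{supp}f(\lambda^{2\beta}\cdot))\subset\Phi(t;s,B_{R/2}\cup\text{buffer})\). This holds because propagators and products preserve flowed supports and \(\operatorname{supp}f\subset B_1\). Checking this carefully, together with the derivative-order bookkeeping \(k+p\leq\mathcal K_{j+1}+T\) passing through \(\mathcal K_j=\mathcal K_{j+1}+M+T+2\), is where I expect the real work to lie.
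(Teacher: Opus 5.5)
Your proposal is correct and follows essentially the paper's proof. The seed gives \(j=1\). The common-phase and background bounds, with index sum at least \(j+1\) and \(a^2\leq e\,a\), give the defect estimate. The weighted Duhamel lemma with source \(h/a\) and \(\mathcal A=\lambda^{-1+(1-j)\beta}\) gives the next coefficient using only defect time orders below \(p\). The support hypothesis follows from the same transported-support induction with \(K_0=\operatorname{supp}f(\lambda^{2\beta}\cdot)\Subset B_R\) and \(K_t=\Phi(t;s,K_0)\).

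Two small bookkeeping fixes are needed.
\begin{enumerate}
\item Since \(\varphi_s\) already carries the factor \(a(s)\), the base-case identity should read \(g^s_{1,1}=w\,a(t)\,\widetilde H_{s,t}[\varphi_s/a(s)]\), and the propagator lemma applies to this normalized seed with \(\mathcal A=\lambda^{\beta-1}\). As written, your formula and your propagator bound are each off by \(a(s)\), and the two slips happen to cancel.
\item To cover the whole diagonal \(k+p\leq\mathcal K_{j+1}+T\), apply the interaction corollary separately for each output pair with \((T,K)=(p,k)\). This uses coefficient order \(k+p+M+2\leq\mathcal K_j\). A single application with fixed \(N_{\rm prof}=\mathcal K_j\) only yields a rectangle of output orders.
\end{enumerate}
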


\begin{proof}
The seed coefficient has size
\(a(t)\lambda^{-1+\beta+2k\beta}\), which is
\eqref{eq:correction-coefficient-bound} for \(j=1\). The propagator identity
shows inductively that every correction coefficient has the form
\[
  g_{\ell,i}=a\,\widetilde g_{\ell,i}.
\]
The amplitude equation gives
\(\lvert\partial_t^ma^{\pm1}\rvert\leq C_mL^{C_m}a^{\pm1}\);
the same estimate applies under material differentiation because
\(Da=\dot a\).
Thus the normalized coefficients satisfy the constant-size hypotheses of
Proposition~\ref{prop:localized-periodic-velocity-expansion} and
Corollary~\ref{cor:common-phase-interactions}, with size
\(\lambda^{-1+(2-i)\beta}\). After applying those results band by band, we
restore one factor \(a\) in a linear or background term and two factors in a
quadratic term. The no-collapse estimate and \(a(1)=1\) give
\(a(t)^2\leq Ca(t)\).

For the induction step, assume that every correction with
\(1\leq i\leq j\) has the preceding coefficient size. The corrections share
the parent, transported phase, and compact transported coefficient support;
their harmonics are at most \(2^{j+1}\leq2^{M+1}\). Every pair in
\(\mathcal N_j\) has indices \(i_1+i_2\geq j+1\).  The common-phase
estimate \eqref{eq:periodic-packet-anisotropic} gives
\[
  -2+(4-i_1-i_2)\beta+1-2\beta+2k\beta
  \leq-1+(2k+1-j)\beta.
\]
The background estimate \eqref{eq:periodic-packet-background} gives another
factor \(\lambda^{-\beta}\) in the interaction with \(\nabla P\).
Fix an individual output pair \((p,k)\) with
  \(k+p\leq\mathcal K_{j+1}+T\).  The localized velocity expansion at this
pair is applied with output parameters \((T,K)=(p,k)\), as are
Corollaries~\ref{cor:oscillatory-product-estimates} and
\ref{cor:common-phase-interactions}. Their constants may depend on
\((p,k)\), but not on \(\lambda\). This application requires
coefficient order
\[
  k+M+p+2
  \leq\mathcal K_{j+1}+T+M+2
  =\mathcal K_j
\]
and phase order one larger. Both are supplied by the parent and phase
hypotheses of the lemma. The preceding exponent calculation
therefore proves \eqref{eq:structured-coefficient-bound} throughout its
diagonal range.

If \(j<M\), the Duhamel step constructs the next correction. Fix \((p,k)\)
with \(k+p\leq\mathcal K_{j+1}+T\), and let \(h\) be one coefficient of
\(\mathcal Q_j\). Denote the corresponding Duhamel coefficient in
\(\zeta_{j+1}\) by
\[
  g_{j+1}(t)=-\int_s^tH_{\sigma,t}[h(\sigma)]\,d\sigma.
\]
Write
\[
  h=a\varphi,
  \qquad
  \varphi=\frac{h}{a},
  \qquad
  \mathfrak a_j=\lambda^{-1+(1-j)\beta}.
\]
The amplitude equation gives
\(\lvert\partial_t^m(a^{-1})\rvert\leq C_mL^{C_m}a^{-1}\).
Here is the support induction used in this step.  Put
\[
  K_0:=\operatorname{supp}\bigl(f(\lambda^{2\beta}\,\cdot)\bigr)
  \Subset B_R,
  \qquad
  K_t:=\Phi(t;s,K_0).
\]
The seed coefficient is supported in \(K_t\).  Products, spatial
derivatives, and the array projections \(\Pi_0\) and \(\Pi_{>0}\) do not
enlarge coefficient support, so the local truncated-expansion formulas
show that every coefficient of \(\mathcal Q_j(t)\), including \(h(t)\), is
supported in \(K_t\).  Division by the time-dependent scalar \(a(t)\) does
not change this support.  Consequently
\[
  \Phi(s;\sigma,\operatorname{supp}\varphi(\sigma))
  \subset K_0
  \qquad (\sigma\in[s,1]),
\]
which is \eqref{eq:duhamel-uniform-pulled-back-support}.  Moreover, flow
composition shows that each integrand in the Duhamel formula is supported
in
\[
  \Phi(t;\sigma,K_\sigma)
  =\Phi(t;s,K_0)=K_t.
\]
Thus the new coefficient has the same transported support, completing the
support induction.  The
order-zero defect bound through spatial order \(k+p\), and the lower-time
defect bounds at
\[
  (m,k+p-1-m),\qquad 0\leq m\leq p-1,
\]
imply precisely the Duhamel-source hypotheses of
Lemma~\ref{lem:weighted-duhamel-estimate}, with its parameters
\((T,S,\mathcal A)=(p,k,\mathfrak a_j)\). The undifferentiated pair belongs
to the defect range because
\(k+p\leq\mathcal K_{j+1}+T\), and every endpoint pair belongs to it
because
\[
  m+(k+p-1-m)=k+p-1<\mathcal K_{j+1}+T.
\]
The local parent estimates from Lemma~\ref{lem:mixed-parent-inputs}, applied
through ordinary time order \((p-1)_+\) and spatial order \(k+p\), supply
exactly the cutoff-parent hypotheses of
Lemma~\ref{lem:weighted-duhamel-estimate}.
Consequently,
\[
  \max_{\mathcal D\in\{D,\partial_t\}}
  \|\mathcal D^p g_{j+1}\|_{C^k}
  \leq C_{p,k,j}a(t)L^{C_{p,k,j}}
    \lambda^{-1+(2k+1-j)\beta},
\]
which is \eqref{eq:correction-coefficient-bound} at correction index
\(j+1\). For \(p=0\), this step uses the already formed order-zero defect;
for \(p\geq1\), it uses only defect orders at most \(p-1\).

Thus, for \(1\leq j<M\), the time-order-zero proof alternates the coefficient
estimate at correction index \(j\), the corresponding structured-defect
estimate, and the coefficient estimate at index \(j+1\); after the last
coefficient, it forms the final uncancelled structured defect at index
\(M\). At each
\(p\geq1\), after all lower-time coefficient and defect estimates are known,
it first obtains the order-\(p\) coefficient estimates from the seed datum
prescribed at the fixed initial time \(s\) and the preceding Duhamel step,
and only then forms the order-\(p\)
structured-defect estimates by the interaction calculation. Parity and
support follow from the odd phase, even weight, harmonic projections, and
the transported coefficient support.  The exact periodic multiplier
preserves oddness; no support property is assigned to its grouped remainder.
\end{proof}

For each correction index \(j\), let \(\mathbf C_{p,j}\) denote the
order-\(p\) coefficient estimate \eqref{eq:correction-coefficient-bound} in
the diagonal range \(k+p\leq\mathcal K_j+T\), and let
\(\mathbf R_{p,j}\) denote the order-\(p\) structured-defect estimate
\eqref{eq:structured-coefficient-bound} in the range
\(k+p\leq\mathcal K_{j+1}+T\).

\begin{lemma}[Within-child mixed closure]
\label{lem:density-first-mixed-closure}
Assume precisely the hypotheses and order ranges of
Lemma~\ref{lem:correction-estimates}, including
\(\Lambda\geq\Lambda_{\rm cas}(b,\beta)\). Within the single child
construction, the estimates close in the order
\[
  \mathbf C_{0,1},\mathbf R_{0,1},
  \ldots,\mathbf C_{0,M},\mathbf R_{0,M},
\]
followed, for each \(1\leq p\leq T\), by
\[
  \mathbf C_{p,1},\ldots,\mathbf C_{p,M},
  \quad
  \mathbf R_{p,1},\ldots,\mathbf R_{p,M}.
\]
For \(1\leq j<M\), the order-\(p\), \(C^k\) coefficient at index \(j+1\)
uses \(\mathbf R_{0,j}\) through \(C^{k+p}\) and, if \(p\geq2\),
\(\mathbf R_{m,j}\) through \(C^{k+p-1-m}\) for \(1\leq m\leq p-1\).
Thus each positive-time coefficient pass uses only defect time orders below
\(p\); the sharp order-\(p\) parent estimate enters only afterwards, when
the order-\(p\) defects and force-increment terms are formed.
\end{lemma}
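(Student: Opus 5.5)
The plan is to prove the lemma as a bookkeeping statement about dependencies. I would argue by a double induction: outer on the time order \(p\), inner on the correction index \(j\). At each step I check that every estimate invoked to establish \(\mathbf C_{p,j}\) or \(\mathbf R_{p,j}\) has already been obtained in the stated order. The analytic content is already in Lemma~\ref{lem:correction-estimates}, Lemma~\ref{lem:weighted-duhamel-estimate}, Proposition~\ref{prop:localized-periodic-velocity-expansion} and Corollary~\ref{cor:common-phase-interactions}. What remains is to read off, from the proof of Lemma~\ref{lem:correction-estimates}, exactly which inputs each estimate uses, and to confirm that none of them is circular.

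\textbf{Time order zero.} \(\mathbf C_{0,1}\) is the explicit seed bound: \(H_{s,t}[\varphi_s]\) with \(\varphi_s\) of size \(a(s)\lambda^{\beta-1}\lambda^{2k\beta}\), via Lemma~\ref{lem:normalized-weighted-propagator} with \(T=0\). This uses only order-zero parent coefficients. Next, \(\mathbf R_{0,j}\) follows from \(\mathbf C_{0,1},\dots,\mathbf C_{0,j}\). Indeed, \(\mathcal Q_j\) is built from \(\mathbf z_1,\dots,\mathbf z_j\) via \(\mathbf N_j\) and \(\mathbf R^P_j\). Corollary~\ref{cor:common-phase-interactions}, applied with output pair \((0,k)\), needs coefficient orders up to \(k+M+2\le\mathcal K_j\), which lie in the range of the \(\mathbf C_{0,i}\), \(i\le j\), since \(\mathcal K_i\ge\mathcal K_j\). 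Then \(\mathbf C_{0,j+1}\) follows from \(\mathbf R_{0,j}\) through Lemma~\ref{lem:weighted-duhamel-estimate} with \(T=0\), which uses only the undifferentiated source through \(C^k\). This justifies the alternating order at \(p=0\).

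\textbf{Positive time orders.} Fix \(p\ge1\) and assume all \(\mathbf C_{m,i}\) and \(\mathbf R_{m,i}\) with \(m<p\) are known.
\begin{itemize}
\item For \(\mathbf C_{p,1}\), Lemma~\ref{lem:normalized-weighted-propagator} with \(n=0\) applies to the fixed, time-independent seed. It uses parent time orders only through \(p-1\).
\item For \(\mathbf C_{p,j+1}\), apply Lemma~\ref{lem:weighted-duhamel-estimate} with \((T,S,\mathcal A)=(p,k,\lambda^{-1+(1-j)\beta})\) to \(\varphi=h/a\). The \(a^{-1}\) factor costs only \(L^C\).
\end{itemize}
That lemma explicitly records that only the pairs \((0,k+p)\) and \((m,k+p-1-m)\) with \(m\le p-1\) enter. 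These are \(\mathbf R_{0,j}\) through \(C^{k+p}\) and \(\mathbf R_{m,j}\) through \(C^{k+p-1-m}\), all with \(m<p\), so they are already available. The range check \(k+p\le\mathcal K_{j+1}+T\) was done in Lemma~\ref{lem:correction-estimates}. Hence \(\mathbf C_{p,1},\dots,\mathbf C_{p,M}\) close without any order-\(p\) defect.

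Once all \(\mathbf C_{p,\cdot}\) are in hand, each \(\mathbf R_{p,j}\) is obtained from the \(\mathbf C_{p',i}\) with \(p'\le p\) and \(i\le j\). This uses the interaction corollaries with output \((p,k)\), and the order-\(p\) parent gradient bounds \(D^p\nabla P\) from Lemma~\ref{lem:mixed-parent-inputs}. Only at this point does the sharp order-\(p\) parent estimate, i.e.\ \eqref{eq:admissible-perturbation-sharp-bound} at time order \(p\), enter.

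\textbf{Main obstacle.} The hardest part is verifying that the coefficient pass at order \(p\) really uses parent data only through ordinary time order \(p-1\). The concern is the conversion \(\partial_t=D-v\cdot\nabla\), together with the material derivatives of the weight \(c_P=m_0\cdot\nabla P\). A term \(D^{p}\) of \(H\) hits \(\int c_P\), giving \(D^{p-1}c_P\), which involves \(\nabla P\) at time order \(p-1\). I would check this through the triangular identity \eqref{eq:intro-duhamel-triangle}: \(D^pJ\) involves \(D^r\widetilde c_P\) only for \(r\le p-1\). The ordinary-time conversion also needs \(\partial_t^m a^{\pm1}\) for \(m\le p\). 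By the amplitude equation \eqref{eq:new-amplitude-equation}, \(\dot a/a\) is expressed via \(\nabla P(0,t)\), so \(\partial_t^p a\) needs \(\nabla P\) only to time order \(p-1\). The angle derivatives \(\partial_t^p\alpha\) are likewise driven by \(D_xv(0,t)\) at one lower order, by \eqref{eq:new-angle-transport}.
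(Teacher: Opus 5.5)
Your proposal is correct and follows essentially the same route as the paper's appendix proof: the order-zero alternating chain, then for each $p\geq1$ a coefficient pass through Lemma~\ref{lem:weighted-duhamel-estimate} with $(T,S,\mathcal A)=(p,k,\lambda^{-1+(1-j)\beta})$ that uses only the source pairs $(0,k+p)$ and $(m,k+p-1-m)$ with $m\leq p-1$, followed by the defect pass, which is where the order-$p$ parent inputs first enter. Your final paragraph spells out the lower-parent-order dependence of the weight $c_P$, the amplitude $a$, and the angle $\alpha$, which the paper instead cites through their evolution equations.
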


\paragraph{Proof outline.}
For the seed datum at time \(s\), set
\begin{equation}
  \widehat\varphi_s:=\frac{\varphi_s}{a(s)}
  =\lambda^{\beta-1}f(\lambda^{2\beta}x),
  \qquad
  \widehat\varphi(\sigma,x)
  :=\widehat\varphi_s(\Phi(s;\sigma,x)).
  \label{eq:normalized-seed-extension}
\end{equation}
Then \(D\widehat\varphi=0\), its support is compactly contained in
\(\Omega_\sigma\), and its pulled-back support is the fixed compact set
\(\operatorname{supp}\widehat\varphi_s\Subset B_R\). It therefore supplies
\(\mathbf C_{0,1}\) and every fixed-\(s\) endpoint derivative.
At order zero, each newly obtained coefficient forms its defect, which is
then the Duhamel datum for the next coefficient.

For \(p\geq1\), assume all lower time orders. The precise source pairs in
Lemma~\ref{lem:weighted-duhamel-estimate} are
\[
  (0,k+p),\qquad (m,k+p-1-m)\quad(1\leq m\leq p-1).
\]
They are available from the order-zero enlarged spatial range and the
strictly lower time orders. Hence every \(\mathbf C_{p,j}\) is proved before
any \(\mathbf R_{p,j}\). The interaction estimates then give all
\(\mathbf R_{p,j}\) and the residual terms. The complete derivative-range
verification is given in Appendix~\ref{sec:smooth-force}.

Thus the input chain is explicit: Lemma~\ref{lem:mixed-parent-inputs}
supplies all cutoff-parent spatial orders, Lemmas~\ref{lem:cutoff-parent-flow}
and \ref{lem:transported-child-phase} supply the phase orders, and
\(K_q\geq d(\beta)\) is used only for the final unsmoothed residual and
commutator estimates. For the next layer, the logical order is the
order-zero cascade, the origin geometry of \(\theta_{q+1}\), and only then
the crossing and thresholds for level \(q+2\).

\begin{lemma}[Packet residual estimate]
\label{lem:packet-residual-estimate}
Assume the adjacent-layer setup in
Definition~\ref{def:adjacent-layer-setup} and
\(\beta\leq1/40\). Set
\[
  m_{q+1}:=\left\lfloor\frac1{4\beta}\right\rfloor-10.
\]
For each \(m,k\in\mathbb N_0\) with \(m+k\leq m_{q+1}\), assume the
hypotheses of
Proposition~\ref{prop:localized-periodic-velocity-expansion} with output
orders \(T=m\), \(K=k+1\); the extra spatial order is reserved for the
gradients in \eqref{eq:deposited-packet-error}. Use
Lemma~\ref{lem:correction-estimates} with
\[
  T=m,\qquad K=k+M+3.
\]
These additional \(M+2\) orders supply the coefficient range needed when
\eqref{eq:ordinary-live-packet} is applied to \(\mathcal Q_M\). The
cutoff-parent and phase inputs for this enlarged cascade range come from
Lemma~\ref{lem:mixed-parent-inputs} and
Lemmas~\ref{lem:cutoff-parent-flow}--\ref{lem:transported-child-phase};
only the final grouped remainder uses the finite unsmoothed range authorized
by \(K_q\geq d(\beta)\). The exact velocity remainder in
\eqref{eq:periodic-packet-leak} is kept as one grouped global remainder and
is differentiated only by ordinary time derivatives.
There is a finite threshold
\[
  \Lambda_{\rm res}=\Lambda_{\rm res}(b,\beta)
  \geq\max\{\Lambda_{\rm cas}(b,\beta),\Lambda_{\rm pkt}(b,\beta)\}
\]
obtained by taking the maximum over this finite set of output pairs, such
that, whenever \(\Lambda\geq\Lambda_{\rm res}\),
\begin{equation}
  \|\partial_t^mE_{q+1}^{\rm pert}\|_{C^k}
  \leq\frac12\lambda^{-1/4}
  \qquad(m+k\leq m_{q+1}).
  \label{eq:packet-residual-mixed-smallness}
\end{equation}
\end{lemma}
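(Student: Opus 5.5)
The proof bounds, one at a time, the terms of the decomposition \eqref{eq:packet-residual-cascade},
\[
  E_{q+1}^{\rm pert}=\mathcal A_0+\sum_{j=1}^M\mathcal E_j+\mathcal Q_M .
\]
Each term gets the bound $C L^{C}\lambda^{-\kappa}$ with some $\kappa>1/4$, uniformly for $m+k\leq m_{q+1}$. The constants depend only on $(b,\beta,m,k)$, so taking $\Lambda_{\rm res}$ large (so that $\lambda=\exp(\beta\Lambda^{b/8}/(192\pi))$ is large) absorbs them and gives the factor $\tfrac12$. Since there are finitely many pairs $(m,k)$, a single threshold suffices. Throughout, the output orders are converted from the material coefficient bounds of Lemma~\ref{lem:correction-estimates} to ordinary derivatives through \eqref{eq:ordinary-live-packet}. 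Its cost is $\lambda^{k+m(1-2\beta)}$, which is at most $\lambda^{m+k}\leq\lambda^{m_{q+1}}\leq\lambda^{1/(4\beta)-10}$.

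\emph{Terms carrying a nonzero harmonic.} These are $\mathcal A_0$ and $\mathcal Q_M$.

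For $\mathcal A_0=w'\mathcal S_{s,t}[\varphi_s;1,0]$, the factor satisfies $|\partial_t^i w'|\leq CL^{8(i+1)}$. Its support lies in $[s,s+L^{-8}]$, and on this interval the no-collapse estimate \eqref{eq:amplitude-no-collapse} together with the seed bound \eqref{eq:seed-amplitude-smallness} give $a(t)\leq e\,a(s+L^{-8})\leq e\,a(1)\,e\ldots$. Rather than chasing this, I use a cruder bound: the gain lemma gives $a\leq e\,a(t')$ for every later $t'$, but the simplest route is to bound $a(t)$ on the activation window by $e\,a(s)\exp(CL^{C}\cdot L^{-8}\Lambda^{7b/8})$. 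The logarithmic derivative of $a$ is $O(\Lambda^{7b/8})$ and $\Lambda^{7b/8}\sim L^{7}$, so this is $\lesssim a(s)e^{C}\leq C\lambda^{-2/\beta}$. This huge gain kills every power $\lambda^{m+k}\leq\lambda^{1/(4\beta)}$.

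For $\mathcal Q_M$, estimate \eqref{eq:structured-coefficient-bound} with $j=M=\lceil\beta^{-2}\rceil$ gives coefficients of size $\lambda^{-1+(2k'+1-M)\beta}$. Since $M\beta\geq1/\beta$, the product is $\lesssim\lambda^{-1+\beta+k(1+2\beta)+m-1/\beta}$, which is far below $\lambda^{-1/4}$ because $m+k\leq 1/(4\beta)$. The enlarged range $K=k+M+3$ in the correction cascade supplies the coefficient derivatives that \eqref{eq:ordinary-live-packet} needs.

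\emph{Zero-harmonic pieces.} The zero-harmonic parts $\operatorname{Ev}(\Pi_0\mathbf N_j)$ are the most delicate terms, since they are not cancelled at all. From \eqref{eq:periodic-packet-anisotropic} with $i_1+i_2\geq j+1\geq2$, their coefficients are at most $\lambda^{-1+(2k+1-j)\beta}\leq\lambda^{-1+2k\beta}$. They carry no oscillation, so their time derivatives only cost $L^C$ by the material estimates and $\partial_t=D-v\cdot\nabla$. The resulting bound is $L^C\lambda^{-1+2k\beta}\leq\lambda^{-1+1/2}$, which is below $\lambda^{-1/4}$. I expect this step to need care: one must check that the anisotropic gain is really available for the $\ell_1=\ell_2$ interactions. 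The correct power is exactly what Corollary~\ref{cor:common-phase-interactions} records, so I will invoke it directly.

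\emph{Grouped exact remainders.} These are the terms $(u_{\mathbb T}-u^{[M]})[\cdot]$ in \eqref{eq:deposited-packet-error}. By \eqref{eq:periodic-packet-leak} with $\mathcal A\leq\lambda^{-1+\beta}$, each is bounded by
\[
  \lambda^{p+2(j'+1)(1-\beta)-M\beta-1+\beta}.
\]
Here $M\beta\geq1/\beta$ dominates $p+2j'+2\leq 1/(4\beta)$, so the bound is tiny. The remainders multiply $\nabla P$, $\nabla\Theta_j$ or $\nabla\zeta_j$, which are bounded by Lemma~\ref{lem:mixed-parent-inputs} and by \eqref{eq:ordinary-live-packet} with cost $\lambda^{k+1+m}$. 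The Leibniz rule then distributes derivatives between the two factors, and the extra spatial order reserved via $K=k+1$ covers the gradient.

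\emph{Counting the terms.} The number of terms is at most $M\cdot 2^{2M+2}$, which is a constant depending on $\beta$. Summing the bounds therefore gives \eqref{eq:packet-residual-mixed-smallness} once $\Lambda\geq\Lambda_{\rm res}$.
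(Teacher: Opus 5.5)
Your argument follows the paper's proof: the same splitting $\mathcal A_0+\sum_j\mathcal E_j+\mathcal Q_M$, the same four bounds, and the same exponent bookkeeping via $M\beta\geq\beta^{-1}$ and $m+k\leq m_{q+1}$. It arrives at the paper's powers, roughly $-\frac{7}{4\beta}$, $-\frac12-20\beta$, $-\frac1{2\beta}$, and $-\frac{3}{4\beta}$. Two small slips are harmless. For the grouped remainders the count $p+2j'+2$ is bounded by about $\frac1{2\beta}-18$, not $\frac1{4\beta}$, and $M\beta\geq\beta^{-1}$ still dominates it. For $\mathcal Q_M$, the factor $\lambda^{k(1+2\beta)}$ double counts derivatives on the coefficient, which \eqref{eq:ordinary-live-packet} already includes.

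The one step that fails as written is the amplitude bound on $\operatorname{supp}w'$.

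\begin{itemize}
\item \emph{The rate is wrong.} The growth of $a$ is driven by the parent density gradient, not the parent strain. By \eqref{eq:new-amplitude-equation} and \eqref{eq:actual-parent-gradient-geometry},
\[
  |\dot a/a|\leq C_0\bigl(A_q+C\Lambda^{b/8}\bigr)\leq C\Lambda^{b}.
\]
This is sharp on the activation window, since there $\cos\alpha_{q+1}\geq\frac12$ and $\sin(\alpha_q-\alpha_{q+1})\geq\frac12$; that is the gain mechanism itself. So the rate is $\Lambda^{b}$, not $O(\Lambda^{7b/8})$.
\item \emph{The exponent you wrote is not bounded.} Your exponent $CL^{C}\cdot L^{-8}\Lambda^{7b/8}$ is of order $CL^{C-1}$. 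A factor $e^{CL^{C-1}}$ could overwhelm $\lambda^{-2/\beta}=e^{-2L/\beta}$.
\end{itemize}

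The step is saved only because there is no logarithmic slack at all. The frequency law \eqref{eq:next-log-frequency} gives exactly $L^{-8}=(k_-\beta)^{-8}\Lambda^{-b}$. Hence $\int_s^t|\dot a/a|\leq C(k_-\beta)^{-8}$, and so $a(t)\leq C_\beta\,a(s)\leq C_\beta\lambda^{-2/\beta}$ on $[s,s+L^{-8}]$. The window length $L^{-8}$ in $w$ is matched precisely to the rate $\Lambda^{b}$. With this corrected bound, the rest of your $\mathcal A_0$ estimate, and the proof as a whole, goes through unchanged.
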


\begin{proof}
Use the named decomposition
\eqref{eq:packet-residual-cascade}. Before powers of \(L\) and finite
constants are absorbed, its three parts satisfy, for \(1\leq j\leq M\),
\begin{align}
  \|\partial_t^m\mathcal A_0\|_{C^k}
  &\leq C_\beta L^C
    \lambda^{k+m\gamma-1+\beta-2/\beta},
    \notag\\
  \|\partial_t^m\mathcal E_j\|_{C^k}
  &\leq C_{\beta,j}L^C\left(
    \lambda^{-1+(2k+1-j)\beta}\right.
    \notag\\
  &\hspace{31mm}\left.
    +\lambda^{m+2(k+1)(1-\beta)-M\beta}
    +\lambda^{m+2(k+1)(1-\beta)+1-M\beta}
    \right),
    \notag\\
  \|\partial_t^m\mathcal Q_M\|_{C^k}
  &\leq C_\beta L^C
    \lambda^{k-1+(1-M)\beta+m\gamma}.
  \label{eq:named-residual-estimates}
\end{align}
On \(\operatorname{supp}w'\), one has \(0\leq t-s\leq L^{-8}\).
Equation \eqref{eq:new-amplitude-equation} and
\eqref{eq:actual-parent-gradient-geometry} give
\[
  \left|\frac{\dot a}{a}\right|\leq C\Lambda^b,
  \qquad
  \frac{a(t)}{a(s)}
  \leq\exp(C\Lambda^bL^{-8})\leq C_\beta.
\]
Thus \(a(t)\leq C_\beta\lambda^{-2/\beta}\) on
\(\operatorname{supp}w'\). The first estimate in
\eqref{eq:named-residual-estimates} follows from
this bound and \eqref{eq:ordinary-live-packet}; derivatives of \(w\) cost
only powers of \(L\). For \(\mathcal E_j\), the first displayed summand is
the zero-harmonic coefficient
\(\operatorname{Ev}_{\lambda,X}(\Pi_0\mathbf N_j)\), estimated by
\eqref{eq:periodic-packet-anisotropic}. In the order of
\eqref{eq:deposited-packet-error}, the second displayed summand bounds
\((u_{\mathbb T}-u^{[M]})[\zeta_j]\cdot\nabla P\), while the third bounds
the analogous terms for the packets \(\zeta_j,\Theta_j,\Theta_{j-1}\)
multiplied by \(\nabla\Theta_j\) or \(\nabla\zeta_j\). Leibniz' rule and
\eqref{eq:periodic-packet-leak} give, respectively,
\[
  CL^C\lambda^{m+2(k+1)(1-\beta)-M\beta},
  \qquad
  CL^C\lambda^{m+2(k+1)(1-\beta)-1+2\beta-M\beta};
\]
the latter is bounded by the displayed exponent with \(+1\).
These estimates use
\eqref{eq:periodic-packet-leak}, with output orders \(T=m\), \(K=k+1\),
and the coefficient bounds from Lemma~\ref{lem:correction-estimates}; the
normalization by \(a(t)\)
in its proof matches the constant-size packet hypotheses. The grouped
remainder is global and is differentiated only by ordinary time derivatives.
Finally, the estimate for \(\mathcal Q_M\) is
\eqref{eq:structured-coefficient-bound} at \(j=M\), followed by
\eqref{eq:ordinary-live-packet} to include derivatives falling on the
oscillatory factor. Its coefficient demand is
\(p'+k'\leq m+k+M+3\), which is exactly the terminal range supplied by the
additional orders in Lemma~\ref{lem:packet-residual-estimate}.

For \(m,k\in\mathbb N_0\) with \(m+k\leq m_{q+1}\), the exponents in the
three named estimates are bounded above as follows:
\begin{equation}
  \begin{aligned}
    \mathcal A_0:\quad&
      -\frac7{4\beta}-11+\beta,\\
    \mathcal E_j:\quad&
      -\frac12-20\beta,\qquad
      -\frac1{2\beta}-18-2\beta,\qquad
      -\frac1{2\beta}-17-2\beta,\\
    \mathcal Q_M:\quad&
      -\frac3{4\beta}-11+\beta.
  \end{aligned}
  \label{eq:residual-negative-powers}
\end{equation}
Indeed, \(M\beta\geq\beta^{-1}\),
\(m+2k\leq2m_{q+1}\), and \(j\geq1\) in the
\(\mathcal E_j\) zero-harmonic term. The number of
corrections and harmonics depends only on \(\beta\).  A finite threshold
therefore absorbs all logarithmic factors and proves
\eqref{eq:packet-residual-mixed-smallness}.
\end{proof}

\subsection{The cutoff commutator and exact stage recursion}

Recall the cutoff commutator \(\mathcal C_\mu\) from
\eqref{eq:periodic-cutoff-commutator-definition}.
It is odd and has zero mean when \(h\) is odd.  Since
\(u_{\mathbb T}\) preserves Fourier support,
\begin{equation}
  \operatorname{supp}\widehat{\mathcal C_\mu(h)}\subset B_{4\mu}.
  \label{eq:commutator-bandwidth}
\end{equation}
If \(\operatorname{supp}\widehat h\subset B_{2\nu}\) and
\(\mu\geq4\nu\), then
\[
  S_\mu h=h,
  \qquad
  S_\mu\bigl(u_{\mathbb T}(h)\cdot\nabla h\bigr)
  =u_{\mathbb T}(h)\cdot\nabla h,
\]
and hence
\begin{equation}
  \mathcal C_\mu(h)=0.
  \label{eq:bandlimited-commutator-zero}
\end{equation}

\begin{lemma}[Mixed cutoff commutator]
\label{lem:mixed-cutoff-commutator}
Let \(\mu\geq1\) and
\(h\in C^\infty([0,1]\times\mathbb T^2)\).  For integers \(m,k\geq0\) and
\(s_0\geq1\),
\begin{equation}
  \|\partial_t^m\mathcal C_\mu(h)\|_{C^k}
  \leq C_{m,k,s_0}\mu^{-s_0}
  \left(
    \max_{0\leq p\leq m}
      \|\partial_t^p h\|_{C^{k+s_0+4}}
  \right)^2.
  \label{eq:mixed-commutator-estimate}
\end{equation}
\end{lemma}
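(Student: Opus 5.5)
The plan is to write the commutator as a sum of terms, each carrying a projection onto frequencies $|k|\gtrsim\mu$, and to gain $\mu^{-s_0}$ from Bernstein/tail estimates. We insert and subtract:
\[
  \mathcal C_\mu(h)
  =-u_{\mathbb T}((I-S_\mu)h)\cdot\nabla S_\mu h
   -u_{\mathbb T}(h)\cdot\nabla (I-S_\mu)h
   +(I-S_\mu)\bigl(u_{\mathbb T}(h)\cdot\nabla h\bigr).
\]
This is an exact identity, checked by expanding $u_{\mathbb T}(h)\cdot\nabla h$ with $h=S_\mu h+(I-S_\mu)h$ and using bilinearity. Time derivatives commute with $S_\mu$ and with $u_{\mathbb T}$, since both are time-independent Fourier multipliers. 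By Leibniz, $\partial_t^m$ of each term is a finite sum of bilinear expressions of the same form in $\partial_t^{p}h$ and $\partial_t^{m-p}h$. It therefore suffices to bound, for fixed smooth $f,g$ on $\mathbb T^2$, the $C^k$ norms of
\[
  u_{\mathbb T}((I-S_\mu)f)\cdot\nabla S_\mu g,\qquad
  u_{\mathbb T}(f)\cdot\nabla(I-S_\mu)g,\qquad
  (I-S_\mu)(u_{\mathbb T}(f)\cdot\nabla g)
\]
by $C\mu^{-s_0}\|f\|_{C^{k+s_0+4}}\|g\|_{C^{k+s_0+4}}$.

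Two appendix tools are needed. First, the spectral tail estimate \eqref{eq:periodic-spectral-tail} of Lemma~\ref{lem:periodic-cutoff-calculus}, used in the form $\|(I-S_\mu)f\|_{C^j}\le C\mu^{-s}\|f\|_{C^{j+s+2}}$ (or with whatever fixed loss it states, absorbed into the $+4$). Second, uniform $C^j$ boundedness of $S_\mu$. Third, the periodic Schauder bound for $u_{\mathbb T}$ from Lemma~\ref{lem:periodic-parity-schauder}, in a lossy form $\|u_{\mathbb T}f\|_{C^j}\le C\|f\|_{C^{j+1}}$, which is valid since the symbol is order zero and a derivative's worth of room handles the Calder\'on--Zygmund endpoint.

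The three terms are then handled as follows.
\begin{itemize}
\item For the first term, Leibniz in $C^k$ gives at most $\|u_{\mathbb T}(I-S_\mu)f\|_{C^k}\|S_\mu g\|_{C^{k+1}}$, which is at most $C\|(I-S_\mu)f\|_{C^{k+1}}\|g\|_{C^{k+1}}\le C\mu^{-s_0}\|f\|_{C^{k+s_0+3}}\|g\|_{C^{k+1}}$.
\item The second term is analogous, with $\nabla(I-S_\mu)g$ in $C^k$ costing $\mu^{-s_0}\|g\|_{C^{k+s_0+3}}$.
\item For the third term, apply the tail estimate to the product: $\|(I-S_\mu)(u_{\mathbb T}(f)\cdot\nabla g)\|_{C^k}\le C\mu^{-s_0}\|u_{\mathbb T}(f)\cdot\nabla g\|_{C^{k+s_0+2}}$, which is at most $C\mu^{-s_0}\|f\|_{C^{k+s_0+3}}\|g\|_{C^{k+s_0+3}}$.
\end{itemize}
All losses stay within $k+s_0+4$. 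Taking the maximum over the time orders $p\le m$ and squaring yields \eqref{eq:mixed-commutator-estimate}.

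The only genuine point of care is the precise form of the appendix tail and Schauder estimates: the tail bound must gain an arbitrary power $\mu^{-s}$ at a fixed derivative loss uniform in $\mu\ge1$, and the $C^j$ velocity bound must hold on the torus, including the zero mode. If the tail estimate is stated only in $L^2$/Sobolev form, I would convert it using $\sum_{|k|\ge\mu}|k|^{j-s-3}|\widehat{f}(k)|\lesssim \mu^{-s}\|f\|_{C^{j+s+3}}$, which follows from Cauchy--Schwarz and $H^{j+s+3}\subset$ control by $C^{j+s+3}$ on the compact torus. This conversion may consume the fourth extra derivative. Everything else is Leibniz bookkeeping.
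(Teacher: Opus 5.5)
Your proof is correct and is essentially the paper's argument. The paper writes the exact four-term expansion $(I-S_\mu)(u_{\mathbb T}(h)\cdot\nabla h)-u_{\mathbb T}(h_>)\cdot\nabla h-u_{\mathbb T}(h)\cdot\nabla h_>+u_{\mathbb T}(h_>)\cdot\nabla h_>$ with $h_>=(I-S_\mu)h$, and your first term merges its second and fourth pieces into $-u_{\mathbb T}(h_>)\cdot\nabla S_\mu h$; after that, Leibniz in time, the Schauder bound and the spectral tail estimate are applied as you describe.

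One correction to your count: \eqref{eq:periodic-spectral-tail} loses three derivatives, not two. So your first and third terms land exactly at order $k+s_0+4$, with no room to spare. This is the paper's count of one derivative for the gradient plus three for the tail.
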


\begin{proof}
Put \(h_>=(I-S_\mu)h\).  Exact expansion gives
\begin{align*}
  \mathcal C_\mu(h)={}&
   (I-S_\mu)(u_{\mathbb T}(h)\cdot\nabla h)
   -u_{\mathbb T}(h_>)\cdot\nabla h
   -u_{\mathbb T}(h)\cdot\nabla h_>
   +u_{\mathbb T}(h_>)\cdot\nabla h_>.
\end{align*}
Set
\[
  \mathfrak H
  :=\max_{0\leq p\leq m}
    \|\partial_t^ph\|_{C^{k+s_0+4}}.
\]
After applying \(\partial_t^m\) and Leibniz' rule, the periodic Schauder and
tail estimates give, for every resulting summand,
\[
  \|(I-S_\mu)(u_{\mathbb T}(f)\cdot\nabla g)\|_{C^k}
  +\|u_{\mathbb T}((I-S_\mu)f)\cdot\nabla g\|_{C^k}
  \leq C\mu^{-s_0}\mathfrak H^2,
\]
where \(f\) and \(g\) are time derivatives of \(h\).  The same bound holds
when the tail falls on the differentiated factor; a term with two tails is
smaller because \(\mu\geq1\).  The explicit gradient uses one derivative,
and \eqref{eq:periodic-spectral-tail} uses three more beyond the target
order.  This proves \eqref{eq:mixed-commutator-estimate}.
\end{proof}

\begin{corollary}[Stage commutator estimate]
\label{cor:stage-commutator-estimate}
Assume the adjacent-layer setup in
Definition~\ref{def:adjacent-layer-setup}, and take
\(h=\rho^{(q)}\). Assume \(\beta\leq1/40\), and set
\[
  m_{q+1}:=\left\lfloor\frac1{4\beta}\right\rfloor-10.
\]
There is a finite threshold
\[
  \Lambda_{\rm com}=\Lambda_{\rm com}(b,\beta)
\]
such that, whenever \(\Lambda\geq\Lambda_{\rm com}\) and
\(m,k\in\mathbb N_0\) satisfy \(m+k\leq m_{q+1}\),
\begin{equation}
  \|\partial_t^m\mathcal C_\mu(\rho^{(q)})\|_{C^k}
  \leq\frac12\lambda^{-1/4}.
  \label{eq:commutator-mixed-smallness}
\end{equation}
\end{corollary}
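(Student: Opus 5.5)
The plan is to apply Lemma~\ref{lem:mixed-cutoff-commutator} with \(h=\rho^{(q)}\), \(\mu=\mu_{q+1}=\lambda^{\beta/2}\), and a decay order \(s_0\) chosen large in terms of \(\beta\). The parent is then fed in through the finite mixed range recorded in Definition~\ref{def:admissible-layered-family}. For \(m+k\leq m_{q+1}\), estimate \eqref{eq:mixed-commutator-estimate} gives
\[
  \|\partial_t^m\mathcal C_\mu(\rho^{(q)})\|_{C^k}
  \leq C_{m,k,s_0}\lambda^{-\beta s_0/2}
  \Bigl(\max_{0\leq p\leq m}\|\partial_t^p\rho^{(q)}\|_{C^{k+s_0+4}}\Bigr)^2 .
\]
The task is therefore to bound the stage norms on the right by a power of \(\Lambda\). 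Since \(\Lambda\) is only logarithmic in \(\lambda\), such a bound is negligible against \(\lambda^{-\beta s_0/2}\).

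\textbf{Step 1: the parent norms.} By \eqref{eq:admissible-stage-mixed-bound},
\[
  \|\partial_t^p\rho^{(q)}\|_{C^{j}}\leq2\Lambda^{j+2p}
  \qquad\text{whenever }1\leq j+2p\leq K_q .
\]
We need \(j=k+s_0+4\) and \(p\leq m\), so it suffices that \(k+s_0+4+2m\leq K_q\). By \eqref{eq:admissible-child-authorization}, or directly from the setup \(K_q\geq d(\beta)=\lceil4/\beta^2\rceil+1\), and since \(m+k\leq m_{q+1}\leq 1/(4\beta)\), it is enough that
\[
  \frac{1}{2\beta}+s_0+4\leq\frac{4}{\beta^2}.
\]
I will take \(s_0:=\lceil 3/\beta\rceil\). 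With \(\beta\leq1/40\), the left side is about \(3.5/\beta\), which is far below \(4/\beta^2\). This gives
\[
  \max_{0\leq p\leq m}\|\partial_t^p\rho^{(q)}\|_{C^{k+s_0+4}}
  \leq 2\Lambda^{C/\beta},
\]
with \(C\) absolute.

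\textbf{Step 2: transfer to \(\lambda\).} By the transfer identity \eqref{eq:frequency-transfer}, \(\Lambda^{A}=(192\pi L/\beta)^{8A/b}\), which is a power of \(L\) with exponent depending only on \(b,\beta\). The commutator is therefore bounded by
\[
  C_{\beta}\,L^{N(b,\beta)}\,\lambda^{-\beta s_0/2}
  \leq C_\beta L^{N}\lambda^{-3/2}.
\]
Since \(3/2>1/4\), a threshold \(\Lambda_{\rm com}(b,\beta)\) absorbs the finitely many constants \(C_{m,k,s_0}\) (one for each pair with \(m+k\leq m_{q+1}\)) and the logarithmic factor. This yields \(\tfrac12\lambda^{-1/4}\). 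Because \(\lambda\) is determined by \(\Lambda\) through \eqref{eq:deterministic-frequency-law} and increases with it, "large \(\Lambda\)" is legitimate.

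\textbf{Main obstacle.} The only real point is the derivative bookkeeping in Step~1. The commutator lemma consumes \(s_0+4\) unsmoothed spatial derivatives of the full stage together with \(m\) time derivatives. We need \(s_0\beta/2\) to exceed \(1/4\) plus a margin, while the total weighted order \(k+2m+s_0+4\) stays inside the authorized range \(K_q\). The size \(d(\beta)\sim4/\beta^2\) leaves ample room for this, since \(s_0\sim1/\beta\) suffices.

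A secondary check is that the lower endpoint \(j+2p\geq1\) in \eqref{eq:admissible-stage-mixed-bound} is automatic, because \(j\geq s_0+4\). If one instead wanted a constant-free parent bound, the \(C^0\) level would never be needed, since the commutator lemma only uses \(C^{k+s_0+4}\) norms.
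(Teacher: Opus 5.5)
Your proposal is correct and is essentially the paper's argument. You apply Lemma~\ref{lem:mixed-cutoff-commutator} to \(h=\rho^{(q)}\) with \(s_0\sim1/\beta\), check that the weighted order \(k+2m+s_0+4\) stays below \(d(\beta)\leq K_q\), and absorb the constant-free stage bound. (The inequality \(K_q\geq d(\beta)\) comes from the adjacent-layer setup itself, not from \eqref{eq:admissible-child-authorization}, which only covers \(i<q\).)

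The only difference is minor. The paper takes \(s_0=k+2m+4+\lceil2/\beta\rceil\) and uses \(\lambda_q\leq\lambda^{\beta/8}\), so the exponents cancel to \(-\tfrac\beta4\lceil2/\beta\rceil\leq-\tfrac12\). You instead fix \(s_0=\lceil3/\beta\rceil\) and convert the \(\Lambda\)-powers into powers of \(L\) via \eqref{eq:frequency-transfer}, which works equally well.
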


\begin{proof}

Choose
\[
  s_0=k+2m+4+\left\lceil\frac2\beta\right\rceil.
\]
The largest mixed parent order required is
\[
  k+s_0+4+2m
  =2k+4m+8+\left\lceil\frac2\beta\right\rceil
  \leq\frac3\beta-31
  <\frac4{\beta^2},
\]
where \(m+k\leq m_{q+1}\) and \(\beta\leq1/40\) were used.
The stage bound \eqref{eq:admissible-stage-mixed-bound}, together with
\(K_q\geq d(\beta)\), therefore applies. Its crude form,
together with \(\lambda_q\leq\lambda^{\beta/8}\), bounds the square in
\eqref{eq:mixed-commutator-estimate} by
\[
  4
  \lambda^{\frac\beta4(k+s_0+4+2m)}.
\]
Since \(\mu^{-s_0}=\lambda^{-\beta s_0/2}\), the resulting exponent is
\[
  \frac\beta4(k+2m+4-s_0)
  =-\frac\beta4\left\lceil\frac2\beta\right\rceil
  \leq-\frac12.
\]
Thus increasing the threshold absorbs only \(4C_{m,k,s_0}\) and proves
\eqref{eq:commutator-mixed-smallness}.
The comparison \(3/\beta-31<4/\beta^2\) holds for
\(\beta\leq1/8\); the stronger assumption \(\beta\leq1/40\) is used here
only to ensure \(m_{q+1}\geq0\).
\end{proof}

Set
\begin{equation}
  G_{q+1}
  :=\mathcal C_{\mu_{q+1}}(\rho^{(q)})+E_{q+1}^{\rm pert}.
  \label{eq:stage-force-increment}
\end{equation}
The new stage is
\begin{equation}
  \boxed{\quad
  \rho^{(q+1)}
  :=P_{q+1}+\theta_{q+1}
  =S_{\mu_{q+1}}\rho^{(q)}+\theta_{q+1}.
  \quad}
  \label{eq:full-stage-recursion}
\end{equation}
Its force is defined by
\begin{equation}
  \boxed{\quad
  F^{(q+1)}
  :=S_{\mu_{q+1}}F^{(q)}+G_{q+1}.
  \quad}
  \label{eq:forcing-recursion}
\end{equation}
Indeed, the stage-\(q\) equation and
\eqref{eq:periodic-cutoff-commutator-definition} imply
\[
  \partial_tP_{q+1}
  +u_{\mathbb T}(P_{q+1})\cdot\nabla P_{q+1}
  =S_{\mu_{q+1}}F^{(q)}
    +\mathcal C_{\mu_{q+1}}(\rho^{(q)}).
\]
Adding \eqref{eq:exact-packet-residual} proves
\begin{equation}
  \partial_t\rho^{(q+1)}
  +u_{\mathbb T}(\rho^{(q+1)})\cdot\nabla\rho^{(q+1)}
  =F^{(q+1)},
  \label{eq:finite-stage-equation}
\end{equation}
with every cross term accounted for.

\subsection{The one-step result}

\begin{lemma}[Activation before the terminal interval and terminal normalization]
\label{lem:activation-before-terminal-normalization}
Assume the adjacent-layer setup in
Definition~\ref{def:adjacent-layer-setup}. Let \(s_{q+1}\) and
\(a_{q+1}\) be
supplied by Lemmas~\ref{lem:perpendicularity-crossing} and
\ref{lem:amplitude-gain}.  Set
\(p_{\rm term}=e(\pi/2+\lambda^{-\beta/8})\), and let \(X_{q+1}\) be defined by
Lemma~\ref{lem:transported-child-phase}.  There is a finite threshold
\[
  \Lambda_{\rm activation}=\Lambda_{\rm activation}(b,\beta)
\]
such that, whenever \(\Lambda\geq\Lambda_{\rm activation}\),
\begin{equation}
  s_{q+1}+L^{-8}
  <\underline t_{q+1}
  =1-\widetilde C\lambda^{-3\beta/4}.
  \label{eq:activation-before-child-window}
\end{equation}
Moreover, uniformly for \(t\in I_{q+1}\),
\begin{equation}
  |a_{q+1}(t)-1|
  +|\nabla X_{q+1}(0,t)-p_{\rm term}|
  \leq C L^C\lambda^{-3\beta/4}=o(1).
  \label{eq:terminal-amplitude-phase-normalization}
\end{equation}
\end{lemma}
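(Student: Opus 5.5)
The plan is to prove both claims by comparing the sizes of the parent window, the child window, and the activation time.

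\emph{Activation before the terminal interval.} Lemma~\ref{lem:perpendicularity-crossing}, through \eqref{eq:perpendicularity-window}, gives \(1-s_{q+1}\geq\frac1{6C_0}\Lambda^{-3b/4}\). The terminal interval has length \(1-\underline t_{q+1}=\widetilde C\lambda^{-3\beta/4}\), and the activation ramp has length \(L^{-8}\). It therefore suffices to show
\[
  L^{-8}+\widetilde C\lambda^{-3\beta/4}<\tfrac1{6C_0}\Lambda^{-3b/4}.
\]
By the transfer identity \eqref{eq:frequency-transfer}, \(\Lambda^{-3b/4}=(192\pi L/\beta)^{-6}=c_\beta L^{-6}\). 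So the right side is \(c_\beta L^{-6}/(6C_0)\). On the left, \(L^{-8}\) is smaller by a factor \(L^{-2}\to0\). The term \(\lambda^{-3\beta/4}=e^{-3\beta L/4}\) decays faster than any power of \(L\). Since \(L=k_-\beta\Lambda^{b/8}\to\infty\) with \(\Lambda\), one threshold depending only on \((b,\beta)\) gives \eqref{eq:activation-before-child-window}.

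\emph{Terminal normalization.} Fix \(t\in I_{q+1}\), so \(1-t\leq\widetilde C\lambda^{-3\beta/4}\). By \eqref{eq:activation-before-child-window}, \(t>s_{q+1}\), so all the crossing and amplitude estimates apply.

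For the amplitude, integrate \eqref{eq:new-amplitude-equation} from \(t\) to \(1\) and use \(a(1)=1\). The right side of \eqref{eq:new-amplitude-equation} is bounded by \(C_0|\nabla P_{q+1}(0,\cdot)|\leq C\Lambda^b\), using \eqref{eq:actual-parent-gradient-geometry} and \(A_q\leq2\Lambda^b\). Hence
\[
  |\log a(t)|\leq C\Lambda^b\lambda^{-3\beta/4}.
\]
By the transfer identity, \(\Lambda^b=(192\pi L/\beta)^{8}\leq CL^8\). This gives \(|a(t)-1|\leq CL^C\lambda^{-3\beta/4}\).

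For the phase gradient, Lemma~\ref{lem:transported-child-phase} gives \(\nabla X_{q+1}(0,t)=\Gamma_q(1;t)^{\mathsf T}p_{\rm term}\). Differentiating in \(t\) gives
\[
  \partial_t\nabla X_{q+1}(0,t)=-D_xv_q(0,t)^{\mathsf T}\nabla X_{q+1}(0,t).
\]
Here \(\|D_xv_q(0,t)\|\leq C\Lambda^{7b/8}+C\Lambda^{b/8}\leq CL^C\), by \eqref{eq:cutoff-parent-jacobian-geometry} and \eqref{eq:admissible-cutoff-parent-bound}. Also \(|\nabla X_{q+1}(0,t)|\leq CL^2\), by \eqref{eq:cutoff-parent-linear-flow-distortion} together with \(\Lambda^{b/4}\leq CL^2\). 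The same bound also follows from Gr\"onwall's inequality on an interval of length \(\lesssim\lambda^{-3\beta/4}\), because \(L^C\lambda^{-3\beta/4}\to0\).

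Integrating from \(t\) to \(1\), where \(\nabla X_{q+1}(0,1)=p_{\rm term}\), gives
\[
  |\nabla X_{q+1}(0,t)-p_{\rm term}|\leq CL^C\lambda^{-3\beta/4}.
\]
Finally, \(L^C\lambda^{-3\beta/4}=L^Ce^{-3\beta L/4}=o(1)\) as \(\Lambda\to\infty\). This proves \eqref{eq:terminal-amplitude-phase-normalization}, with all constants depending only on \((b,\beta)\).

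\emph{Main obstacle.} The only delicate point is bookkeeping. Every parent-scale quantity (\(\Lambda^{b}\), \(\Lambda^{7b/8}\), \(\Lambda^{-3b/4}\)) must be converted into a power of \(L\) through \eqref{eq:frequency-transfer}, so that it is beaten by the exponential factor \(\lambda^{-3\beta/4}\). One must also check that \(I_{q+1}\subset[s_{q+1},1]\subset I_q\) before invoking the crossing and amplitude lemmas.
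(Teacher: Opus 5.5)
Your proof is correct and follows the paper's argument. The activation bound uses the same window comparison via the transfer identity, with $L^{-8}$ and $e^{-3\beta L/4}$ both $o(\Lambda^{-3b/4})$; the normalization then integrates $\dot a/a$ and the phase-jet ODE over an interval of length $\widetilde C\lambda^{-3\beta/4}$, with coefficients bounded by $CL^C$. The only minor difference is that you bound those coefficients through the terminal origin geometry and the linear-flow distortion estimate, whereas the paper takes $|c_P(0,t)|+\|D_xv\|_{C^0}\leq CL^C$ directly from Lemma~\ref{lem:mixed-parent-inputs}; both routes are valid.
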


\begin{proof}
Write \(s:=s_{q+1}\), \(a:=a_{q+1}\), and \(X:=X_{q+1}\).
The lower bound in \eqref{eq:perpendicularity-window} gives
\[
  1-s\geq\frac1{6C_0}\Lambda^{-3b/4}.
\]
Equation~\eqref{eq:next-log-frequency} gives
\[
  L=k_-\beta\Lambda^{b/8},
  \qquad
  L^{-8}=(k_-\beta)^{-8}\Lambda^{-b}.
\]
Consequently \(L^{-8}=o(\Lambda^{-3b/4})\), while
\[
  \lambda^{-3\beta/4}
  =\exp\left(-\frac{3\beta L}{4}\right)
  =o(\Lambda^{-3b/4}).
\]
This proves \eqref{eq:activation-before-child-window} above a finite threshold.

On \(I_{q+1}\), Lemma~\ref{lem:mixed-parent-inputs} gives
\[
  |c_P(0,t)|+\|D_xv(t)\|_{C^0}\leq CL^C.
\]
Since \(a(1)=1\), integration of \(\dot a/a=-c_P(0,t)\) yields
\[
  |\log a(t)|\leq CL^C(1-t)
  \leq CL^C\lambda^{-3\beta/4}.
\]
The phase jet solves
\[
  \partial_t\nabla X(0,t)
  =-D_xv(0,t)^{\mathsf T}\nabla X(0,t),
  \qquad
  \nabla X(0,1)=p_{\rm term}.
\]
A short-time Gr\"onwall estimate proves
\eqref{eq:terminal-amplitude-phase-normalization}.
\end{proof}

\begin{proposition}[One-step extension]
\label{prop:one-step}
Let an admissible family be given through level \(q\). Choose
\[
  \beta_{q+1}\in
  \left[\frac{\beta_q}{2},\beta_q\right]
\]
and an integer \(k_{q+1}\geq d(\beta_{q+1})\). Assume
\(K_q\geq d(\beta_{q+1})\), and define \(\lambda_{q+1}\) by
\eqref{eq:deterministic-frequency-law}. For the fields defined by
\eqref{eq:full-stage-recursion}, let \(\mathcal B_{q+1}(J)\) denote the
property
\[
  \sup_{t\in[0,1]}
  \|\partial_t^m\rho^{(q+1)}(t)\|_{C^j}
  \leq2\lambda_{q+1}^{j+2m}
  \quad\text{for all }j,m\in\mathbb N_0
  \text{ with }1\leq j+2m\leq J,
\]
and define
\begin{equation}
  K_{q+1}:=\max\left(
    \{0\}\cup
    \{J\in\mathbb N_0:J\leq k_{q+1},\quad
      \mathcal B_{q+1}(J)\}\right).
  \label{eq:maximal-cumulative-mixed-order}
\end{equation}
There is a finite threshold
\[
  \lambda_{\mathrm{th},\mathbb T}
  =\lambda_{\mathrm{th},\mathbb T}
    (\beta_q,\beta_{q+1},k_{q+1};f,\chi,W).
\]
Choose it to dominate
\(\Lambda_{\rm sep},\Lambda_{\rm cas},\Lambda_{\rm activation}\), and, when
\(\beta_{q+1}\leq1/40\), \(\Lambda_{\rm res},\Lambda_{\rm com}\), together
with the finitely many absorptions for \(j+2m\leq k_{q+1}\). The first
group is independent of derivative order; only the last finite group depends
on \(k_{q+1}\). Whenever
\(\lambda_q\geq\lambda_{\mathrm{th},\mathbb T}\), the fields in
\eqref{eq:full-stage-recursion} and \eqref{eq:forcing-recursion} extend the
admissible family through level \(q+1\). The activation interval ends
before \(I_{q+1}\) by \eqref{eq:activation-before-child-window}, and the
new perturbation and its exact residual are smooth and vanish to infinite
order in time at \(s_{q+1}\). The following additional estimates hold
uniformly for \(t\in[0,1]\).

\begin{enumerate}[label=\textup{(\roman*)}]
  \item For every fixed \(j,m\geq0\), the a priori constants
  \(C_{j,m}(\beta_{q+1})\) and \(N_{j,m}(\beta_{q+1})\) fixed in
  Definition~\ref{def:admissible-layered-family} and
  \eqref{eq:sharp-constant-envelope} give exactly
  \begin{equation}
    \|\partial_t^m\theta_{q+1}(t)\|_{C^j}
    \leq C_{j,m}(\beta_{q+1})
      L_{q+1}^{N_{j,m}(\beta_{q+1})}
      \lambda_{q+1}^{j+m(1-2\beta_{q+1})-1+\beta_{q+1}}.
    \label{eq:mixed-perturbation-production}
  \end{equation}
  These fixed-order estimates require no enlargement of
  \(\lambda_{\mathrm{th},\mathbb T}\). The prescribed family already takes
  the maximum over the possible parent exponents
  \(\beta_q\in[\beta_{q+1},2\beta_{q+1}]\), so its constants depend only on
  \(\beta_{q+1}\). For every pair with
  \(j+2m\leq k_{q+1}\),
  \begin{equation}
    \|\partial_t^m\theta_{q+1}(t)\|_{C^j}
    \leq\lambda_{q+1}^{j+2m}
    \qquad(j+2m\leq k_{q+1}).
    \label{eq:mixed-perturbation-finite-order-bound}
  \end{equation}
  In particular, for \(0\leq j\leq k_{q+1}\),
  \begin{equation}
    \|\theta_{q+1}(t)\|_{C^j}\leq\lambda_{q+1}^j,
    \qquad
    \|\theta_{q+1}(t)\|_{C^0}
       \leq\lambda_{q+1}^{-1+2\beta_{q+1}},
    \qquad
    \|\theta_{q+1}(t)\|_{C^1}
       \leq\lambda_{q+1}^{2\beta_{q+1}}.
    \label{eq:new-perturbation-size}
  \end{equation}

  \item If \(\beta_{q+1}\leq1/40\), define
  \(
    m_{q+1}:=\lfloor(4\beta_{q+1})^{-1}\rfloor-10.
  \)
  The force increment satisfies
  \begin{equation}
    \|\partial_t^mG_{q+1}(t)\|_{C^k}
    \leq\lambda_{q+1}^{-1/4}
    \qquad(m+k\leq m_{q+1}).
    \label{eq:force-increment-smallness}
  \end{equation}

  \item One has
  \begin{equation}
    K_{q+1}\geq\min\{K_q,k_{q+1}\}
    \geq d(\beta_{q+1}).
    \label{eq:cumulative-order-propagation}
  \end{equation}
\end{enumerate}
\end{proposition}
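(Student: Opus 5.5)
The proof is an assembly of the results of Sections~\ref{sec:inductive-dynamics} and~\ref{sec:one-layer}. I would fix $\lambda_{\mathrm{th},\mathbb T}$ as the maximum of the listed thresholds and then verify the clauses of Definition~\ref{def:admissible-layered-family} at level $q+1$ in the logical order recorded after Lemma~\ref{lem:density-first-mixed-closure}.

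\textbf{Clauses (i)--(ii).} The stage identity \eqref{eq:finite-stage-equation} is exact. Oddness follows from Lemma~\ref{lem:array-parity} and the parity of $S_\mu$ and $\mathcal C_\mu$. The Fourier support $\operatorname{supp}\widehat P_{q+1}\subset B_{2\mu_{q+1}}$ comes from the cutoff. The separations \eqref{eq:admissible-beta-step}--\eqref{eq:admissible-bandwidth-separation} and the window condition \eqref{eq:admissible-terminal-window-domain} come from Proposition~\ref{prop:deterministic-frequency-selection}. Support in $B_{r_{q+1}}$ follows from the transported support $K_t\subset\Omega_t$ in Lemma~\ref{lem:correction-estimates} together with \eqref{eq:one-step-core-flow}. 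Vanishing for $t\le s_{q+1}$ and \eqref{eq:admissible-activation-before-terminal-window} come from \eqref{eq:activation-before-child-window}.

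\textbf{Sharp bound and item (i).} I would apply Lemma~\ref{lem:correction-estimates} with $T=m$ and $K=j$. This bounds the coefficients of $\zeta_i$ by $a\,L^C\lambda^{-1+(2k+2-i)\beta}$. Corollary~\ref{lem:mixed-weighted-propagator} then gives \eqref{eq:mixed-perturbation-production}, with the $i=1$ term dominant and $a\le Ca(1)=C$ by \eqref{eq:amplitude-no-collapse}. The constants depend only on $\beta_{q+1}$, by the envelope construction \eqref{eq:sharp-constant-envelope}. Because $-1+\beta-2m\beta<0$, the finite-order bound \eqref{eq:mixed-perturbation-finite-order-bound} and \eqref{eq:new-perturbation-size} follow after finitely many absorptions of $L^N$ into $\lambda^{\beta}$; this is the $k_{q+1}$-dependent part of the threshold.

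\textbf{Items (ii)--(iii).} Item (ii) is the sum of Lemma~\ref{lem:packet-residual-estimate} and Corollary~\ref{cor:stage-commutator-estimate}. For (iii), I would use \eqref{eq:iterated-density-formula}, which gives $\rho^{(q+1)}=P_{q+1}+\theta_{q+1}$, and split $P_{q+1}=P_q+S_\mu\theta_q$.
\begin{itemize}
\item For $1\le j+2m\le\min\{K_q,k_{q+1}\}$, the stage bound \eqref{eq:admissible-stage-mixed-bound} and cutoff boundedness give $\|\partial_t^m P_{q+1}\|_{C^j}\le C\lambda_q^{j+2m}$.
\item Since $\lambda_q^{j+2m}\le\lambda_{q+1}^{\beta(j+2m)/8}$, this contribution is negligible against $\lambda_{q+1}^{j+2m}$.
\item Adding \eqref{eq:mixed-perturbation-finite-order-bound}, sharpened by the factor $\lambda^{-1+\beta}$ from \eqref{eq:mixed-perturbation-production}, gives $\mathcal B_{q+1}(\min\{K_q,k_{q+1}\})$.
\end{itemize}
This is exactly \eqref{eq:cumulative-order-propagation}. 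The hypothesis $K_q\ge d(\beta_{q+1})$ provides the authorization \eqref{eq:admissible-child-authorization}.

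\textbf{Clause (iv), the main obstacle.} This is the terminal origin geometry on $I_{q+1}$. For the cutoff-parent bound \eqref{eq:admissible-cutoff-parent-bound}, I would use Lemma~\ref{lem:mixed-parent-inputs}, which gives $\|P_{q+1}\|_{C^1}+\|u_{\mathbb T}(P_{q+1})\|_{C^1}\le CL^C\le\lambda^{\beta/8}$.

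For the gradient, I would split $\theta_{q+1}=\zeta_1+\sum_{i\ge2}\zeta_i$. On $I_{q+1}$ we have $w=1$, so \eqref{eq:origin-weight} gives $\nabla\zeta_1(0,t)=a\lambda^\beta\nabla X(0,t)$ plus coefficient-gradient terms. Those terms vanish at the origin when $f$ is even and the coefficient is even, because $\zeta_1$ is odd with an even coefficient. The corrections $\zeta_i$ with $i\ge2$ are $O(L^C\lambda^{(2-i)\beta+\beta})\le\lambda^{\beta/8}$ in $C^1$. This gives \eqref{eq:admissible-gradient-geometry} with
\[
A_{q+1}=a\lambda^\beta|\nabla X(0,t)|,\qquad \alpha_{q+1}\ \text{from Lemma~\ref{lem:transported-child-phase}}.
\]
By \eqref{eq:terminal-amplitude-phase-normalization}, $A_{q+1}\in[\lambda^\beta/2,2\lambda^\beta]$.

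For the Jacobian, I would use \eqref{eq:periodic-packet-origin-jet} for $u^{[0]}[\zeta_1]$. Its principal part is
\[
-C_0\cos\alpha\,e^\perp e^{\mathsf T}A_{q+1}=C_0A_{q+1}\cos\alpha\,\mathcal M(\alpha),
\]
so $B_{q+1}=A_{q+1}$. The higher strata of \eqref{eq:periodic-packet-stratum-bound} at $n\ge1$, differentiated once, cost $\lambda^{1}\cdot\lambda^{-2\beta}$ relative to the coefficient size $\lambda^{\beta-1}$. The leak \eqref{eq:periodic-packet-leak} and the higher $\zeta_i$ then need to be at most $\lambda^{\beta/8}$. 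The delicate point is that the $n=1$ stratum gives $L^C\lambda^{-\beta}$ times $\lambda^{\beta}$, which is $O(L^C)$ and therefore fine. I would check these exponents carefully.

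The angle law \eqref{eq:admissible-angle-law} is \eqref{eq:new-angle-transport}, and its terminal value is \eqref{eq:child-angle-phase-gradient}.
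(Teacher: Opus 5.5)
Your outline follows the paper's proof essentially step for step. The matching steps are:
\begin{itemize}
\item the exact stage identity;
\item the sharp bound from Lemma~\ref{lem:correction-estimates} and \eqref{eq:ordinary-live-packet}, with $a\le e\,a(1)$;
\item item (ii) as the sum of Lemma~\ref{lem:packet-residual-estimate} and Corollary~\ref{cor:stage-commutator-estimate};
\item item (iii) via $J_*=\min\{K_q,k_{q+1}\}$ and cutoff boundedness;
\item the origin geometry from $w=1$ on $I_{q+1}$, \eqref{eq:origin-weight}, and \eqref{eq:terminal-amplitude-phase-normalization}, with $A_{q+1}=B_{q+1}$.
\end{itemize}

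However, one estimate in clause (iv) is wrong as written, and it is the one that protects the terminal geometry. You claim $\|\zeta_i\|_{C^1}=O(L^C\lambda^{(2-i)\beta+\beta})\le\lambda^{\beta/8}$ for $i\ge2$. At $i=2$ your bound is $L^C\lambda^{\beta}$, the same order as $A_{q+1}\in[\tfrac34\lambda^\beta,\tfrac54\lambda^\beta]$. It therefore does not imply the required error bound $\sqrt2\,\lambda^{\beta/8}$ in \eqref{eq:admissible-gradient-geometry}. Your exponent $(3-i)\beta$ is the crude $\lambda^{2\beta}$-scale count and is off by $\lambda^{\beta}$.

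The correct count from \eqref{eq:correction-coefficient-bound} at $k=0$ gives coefficients of $\zeta_i$ of size $aL^C\lambda^{-1+(2-i)\beta}$. A derivative on the oscillatory factor costs $\lambda$, and a derivative on the coefficient costs only $\lambda^{2\beta}$. Hence $|\nabla\zeta_i(0,t)|+\|D_xu^{[M]}[\zeta_i](0,t)\|\le CL^C\lambda^{(2-i)\beta}\le CL^C$ for $i\ge2$, which is the bound the paper uses. You make the same overcount for the $n=1$ stratum: it actually contributes $L^C\lambda^{-\beta}$, not $O(L^C)$, but there it is harmless.

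There are three smaller omissions.
\begin{itemize}
\item The statement, and the requirement $\theta_{q+1},F^{(q+1)}\in C^\infty([0,1]\times\mathbb T^2)$ in Definition~\ref{def:admissible-layered-family}, need $\theta_{q+1}$ and $E^{\rm pert}_{q+1}$ to vanish to infinite order at $s_{q+1}$, so that extension by zero is smooth. You never address this. It follows from the infinite-order vanishing of $W$ at $0$ propagated through \eqref{eq:duhamel-correction}, as noted after \eqref{eq:exact-packet-residual}.
\item You should also cite $s_{q+1}\in I_q$ from Lemma~\ref{lem:perpendicularity-crossing}.
\item ``The constants depend only on $\beta_{q+1}$ by the envelope'' is not the full reason the prescribed family $C_{j,m},N_{j,m}$ is reproduced exactly. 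What makes \eqref{eq:sharp-constant-envelope} non-circular has two parts. First, the order-$m$ coefficient pass uses only the constant-free $(2,0)$ stage bound and parent/defect time orders at most $m-1$ (Lemma~\ref{lem:density-first-mixed-closure}). Second, $\Lambda_{\rm cas}$ is independent of $(j,m)$, so no order-dependent constant is absorbed into a threshold.
\end{itemize}
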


\begin{proof}
We verify the conclusions in the order in which their ingredients enter the
construction.  The exact cascade identity and the cutoff-commutator algebra
give \eqref{eq:finite-stage-equation}. The seed support, the flow inclusion
\eqref{eq:one-step-core-flow}, the infinite-order vanishing of the activation
cutoff at \(s_{q+1}\), and the parity statements in
Lemma~\ref{lem:correction-estimates} show that the new fields are smooth,
odd, and mean zero, and give the support and infinite-order-vanishing
assertions required by Definition~\ref{def:admissible-layered-family}.

The identity \(S_\mu P_q=P_q\) leaves the level-\(q\) reference-angle law
\eqref{eq:admissible-angle-law} exact.  The newest-perturbation cutoff-tail
estimate \eqref{eq:parent-newest-spectral-tail} perturbs by only
\(O(\lambda_q^{-4})\) the parent gradient and velocity-Jacobian geometry entering
the child equation. Lemma~\ref{lem:perpendicularity-crossing} gives
\(s_{q+1}\) and transversality; Lemma~\ref{lem:amplitude-gain} and
Corollary~\ref{cor:deterministic-seed-smallness} give the no-collapse and seed
bounds, including \eqref{eq:seed-amplitude-smallness}. The perturbation-flow
bounds come from Lemmas~\ref{lem:cutoff-parent-flow} and
\ref{lem:transported-child-phase}.

Proposition~\ref{prop:localized-periodic-velocity-expansion} and Lemma
\ref{lem:correction-estimates}, together with
Corollary~\ref{lem:mixed-weighted-propagator},
organized by the within-child induction in
Lemma~\ref{lem:density-first-mixed-closure}, give
\eqref{eq:mixed-perturbation-production} with exactly the a priori family.
Indeed, at time order zero the cutoff-parent input comes from the
constant-free \(j\leq2,m=0\) stage bound and Bernstein's inequality. At time
order \(m\geq1\), the coefficient pass uses parent and defect time orders at
most \(m-1\). Higher cutoff-parent spatial orders again follow from
Bernstein's inequality, and the flow and phase thresholds are independent of
their derivative order. Thus the raw constants
\(\widehat C_{j,m}(b,\beta),\widehat N_{j,m}(b,\beta)\) generated at
\((j,m)\) use only the lower-time constants allowed in
\eqref{eq:sharp-constant-envelope} and are bounded by the prescribed
\(C_{j,m}(\beta),N_{j,m}(\beta)\). No order-dependent constant is
absorbed into a frequency gain or enlarges the all-order threshold.
At time order zero this proves \eqref{eq:new-perturbation-size} after increasing
the finite threshold. More generally, if \(j+2m\leq k_{q+1}\), then
\[
  C_{j,m}(\beta)L^{N_{j,m}(\beta)}
  \lambda^{j+m(1-2\beta)-1+\beta}
  \leq\frac12\lambda^{j+2m}
\]
above one finite threshold, simultaneously for all such pairs.  This proves
\eqref{eq:mixed-perturbation-finite-order-bound} and part~(i).  Equations
\eqref{eq:packet-residual-mixed-smallness}, proved in
Lemma~\ref{lem:packet-residual-estimate}, and
\eqref{eq:commutator-mixed-smallness} give
\eqref{eq:force-increment-smallness}. For these applications, the enlarged
cascade orders are supplied by Lemma~\ref{lem:mixed-parent-inputs} and
Lemmas~\ref{lem:cutoff-parent-flow}--\ref{lem:transported-child-phase}; the
final grouped-residual orders are supplied by
Corollary~\ref{cor:finite-packet-inputs}, while the commutator's uncut
derivatives come directly from
\eqref{eq:admissible-stage-mixed-bound}, as checked in the final subsection
of Appendix~\ref{sec:smooth-force}. This proves part~(ii).

Lemma~\ref{lem:activation-before-terminal-normalization} shows that the
activation cutoff equals one throughout \(I_{q+1}\). Oddness gives
\(X_{q+1}(0,t)=0\), and
\eqref{eq:origin-weight} gives
\begin{equation}
  \nabla\zeta_1(0,t)
  =a_{q+1}(t)\lambda_{q+1}^{\beta_{q+1}}
    \nabla X_{q+1}(0,t).
  \label{eq:principal-origin-gradient}
\end{equation}
Set
\begin{equation}
  A_{q+1}(t)=B_{q+1}(t)
  :=a_{q+1}(t)\lambda_{q+1}^{\beta_{q+1}}
    |\nabla X_{q+1}(0,t)|.
  \label{eq:terminal-principal-amplitudes}
\end{equation}
Writing
\(\nabla X_{q+1}(0,t)/|\nabla X_{q+1}(0,t)|
=e(\alpha_{q+1}(t))\), the leading
origin-jet identity gives
\begin{equation}
  D_xu^{[0]}[\zeta_1](0,t)
  =C_0B_{q+1}(t)\cos\alpha_{q+1}(t)
    \mathcal M(\alpha_{q+1}(t)).
  \label{eq:principal-origin-velocity-jet}
\end{equation}
The terminal normalization places both principal amplitudes in
\([\frac34\lambda_{q+1}^{\beta_{q+1}},
  \frac54\lambda_{q+1}^{\beta_{q+1}}]\) above a finite
threshold.

For \(j\geq2\), the correction estimate and one oscillatory-factor derivative
give
\[
  |\nabla\zeta_j(0,t)|
  +\|D_xu^{[M]}[\zeta_j](0,t)\|
  \leq C_\beta L^C\lambda^{(2-j)\beta}
  \leq C_\beta L^C.
\]
For the principal correction, every nonleading expansion term gains
\(\lambda^{-\beta}\):
\[
  \|D_x(u^{[M]}-u^{[0]})[\zeta_1](0,t)\|
  \leq C_\beta L^C\lambda^{-\beta}.
\]
For every \(1\leq j\leq M\), the grouped exact velocity remainder satisfies
\[
  \|D_x(u_{\mathbb T}-u^{[M]})[\zeta_j]\|_{C^0}
  \leq C_\beta L^C
    \lambda^{3-(j+2)\beta-M\beta}
  \leq C_\beta L^C\lambda^{-5}.
\]
Thus the sum of the higher-correction density jets and all nonleading
velocity jets is bounded by \(C_\beta L^C\). A final threshold makes this
at most the errors required in
\eqref{eq:admissible-gradient-geometry}--\eqref{eq:admissible-jacobian-geometry},
proving the complete origin geometry.

Finally, the \(j=1,2\), \(m=0\) cases of
\eqref{eq:admissible-stage-mixed-bound} at the preceding stage give
\[
  \|P_{q+1}\|_{C^1}
  +\|u_{\mathbb T}(P_{q+1})\|_{C^1}
  \leq C\lambda_q^2.
\]
Equation~\eqref{eq:deterministic-frequency-law} makes this fixed power at most
\(\lambda_{q+1}^{\beta_{q+1}/8}\) above a finite threshold. This is the
cutoff-parent bound in \eqref{eq:admissible-cutoff-parent-bound}.

Parent admissibility and the terminal-time separation give
\[
  \widetilde C\lambda_{q+1}^{-3\beta_{q+1}/4}
  \leq\frac12\widetilde C\lambda_q^{-3\beta_q/4}
  \leq\frac12.
\]
Thus \eqref{eq:admissible-terminal-window-domain} propagates to the child,
so its terminal window also lies in \([0,1]\).

Put \(J_*=\min\{K_q,k_{q+1}\}\). If
\(1\leq j+2m\leq J_*\), cutoff boundedness and
\eqref{eq:admissible-stage-mixed-bound} at level \(q\) give
\[
  \|S_\mu\partial_t^m\rho^{(q)}\|_{C^j}
  \leq2C_j\Lambda^{j+2m}.
\]
There are finitely many such pairs, so frequency separation makes this at
most \(\frac12\lambda^{j+2m}\) simultaneously for all of them. The
new-perturbation bound above supplies the other half. Hence
\eqref{eq:cumulative-order-propagation} follows from
\eqref{eq:full-stage-recursion}, proving part~(iii).  The scale selection,
angle law, activation-interval separation, derivative bounds, and origin
geometry verified
above establish every remaining clause of the admissible-family definition.
\end{proof}

\section{Global iteration and blow-up}
\label{sec:global-iteration}

Retain the fixed cutoff \(\chi\), spatial profile \(f\), and activation
profile \(W\) from Sections~\ref{sec:velocity-and-jets} and
\ref{sec:one-layer}. The initial base oscillation parameter will be chosen so
that every perturbation support is contained in the coordinate chart of
Proposition~\ref{prop:localized-periodic-velocity-expansion}.
The all-order mixed estimates used in the iteration are proved in
Appendix~\ref{sec:smooth-force}.

\subsection{Initialization and the recursive exponent choice}
\label{subsec:global-parameters}

Set
\begin{equation}
  \beta_0:=\frac18,
  \qquad
  k_0=K_0:=257=d(\beta_0),
  \qquad
  \alpha_0:=\frac\pi2+\lambda_0^{-1/64}.
  \label{eq:base-parameters}
\end{equation}
For sufficiently large \(\lambda_0\), define in the central chart
\begin{equation}
  \theta_0(x)
  :=\lambda_0^{-7/8}f(\lambda_0^{1/4}x)
    \sin\bigl(\lambda_0e(\alpha_0)\cdot x\bigr),
  \qquad
  P_0:=0,
  \qquad
  \rho^{(0)}:=\theta_0,
  \label{eq:periodic-base-density}
\end{equation}
and extend \(\theta_0\) periodically by zero. Put
\begin{equation}
  F^{(0)}
  :=u_{\mathbb T}(\theta_0)\cdot\nabla\theta_0.
  \label{eq:periodic-base-force}
\end{equation}

\begin{lemma}[Periodic base layer]
\label{lem:periodic-base-layer}
There is a finite threshold \(\lambda_*(f,\chi)\) such that, whenever
\(\lambda_0\geq\lambda_*(f,\chi)\), the fields in
\eqref{eq:periodic-base-density}--\eqref{eq:periodic-base-force} form an
admissible level-zero stage.
\end{lemma}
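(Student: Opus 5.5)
The plan is to verify, clause by clause, the level-zero part of Definition~\ref{def:admissible-layered-family} with \(q=0\): items (i)--(iv) for \(i=0\), since the adjacent-scale conditions are vacuous at \(q=0\). Throughout, \(\beta_0=1/8\), \(\lambda=\lambda_0\), and \(r_0=\lambda_0^{-1/4}L_0^4\).

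\textbf{Step 1: stage equation, parity, support.} This part is algebraic. Since \(P_0=0\) and \(\theta_0\) is time independent, \eqref{eq:admissible-stage-equation} reduces to the definition \eqref{eq:periodic-base-force}.

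Oddness follows because \(f\) is even and \(\sin\) is odd. The force \(F^{(0)}\) is then odd and mean zero by the parity remark after \eqref{eq:periodic-cutoff-commutator-definition}.

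For the support, \(\theta_0\) lives in \(B_{\lambda_0^{-1/4}}\subset B_{r_0}\). The inclusion \(B_{r_0}\Subset U_0\) holds once \(\lambda_0\) is large.

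Condition \eqref{eq:admissible-parameter-domain} holds with equality, since \(d(1/8)=257\). Condition \eqref{eq:admissible-terminal-window-domain} reads \(8\pi^2\lambda_0^{-3/32}\leq1\), which fixes one threshold. There is no activation time at level zero.

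\textbf{Step 2: derivative bounds.} Because \(\theta_0\) is static, only \(m=0\) matters, and Leibniz gives
\[
\|\theta_0\|_{C^j}\leq C_j\lambda_0^{j-7/8}.
\]
This is the sharp bound \eqref{eq:admissible-perturbation-sharp-bound} with exponent \(j-1+\beta_0\). The paper notes that the envelope \eqref{eq:sharp-constant-envelope} at \((1/8,1/8)\) is required to include this level-zero estimate, so no compatibility issue arises.

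The factor \(\lambda_0^{-7/8}\) absorbs \(C_j\) for \(j\leq257\) above a finite threshold. This gives \eqref{eq:admissible-perturbation-mixed-bound}, and hence \eqref{eq:admissible-stage-mixed-bound}, where the factor 2 is slack.

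\textbf{Step 3: origin geometry.} This is the step I expect to be the main obstacle, specifically the Jacobian clause.

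At \(x=0\), \(f(\lambda_0^{1/4}\cdot)=1\) near the origin, so exactly
\[
\nabla\theta_0(0)=\lambda_0^{1/8}e(\alpha_0).
\]
Set \(A_0=\lambda_0^{1/8}\) and \(E_0^\rho=0\). The angle law \eqref{eq:admissible-angle-law} holds trivially with constant \(\alpha_0\), since \(P_0=0\). The cutoff-parent bound \eqref{eq:admissible-cutoff-parent-bound} is \(0\leq\lambda_0^{1/64}\).

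For the velocity Jacobian, I would apply Proposition~\ref{prop:localized-periodic-velocity-expansion} and Corollary~\ref{cor:leading-periodic-response}. The phase is affine, \(X=e(\alpha_0)\cdot x\), so \(Y_\ell=0\). The parent is zero, so every hypothesis of Lemma~\ref{lem:transported-phase-estimates} holds trivially with \(D=\partial_t\). The profile is \(b=\lambda_0^{-7/8}f(\lambda_0^{1/4}x)\), which satisfies \eqref{eq:periodic-packet-coefficient-input} with \(\mathcal A=\lambda_0^{-7/8}\) and \(\lambda^{2j\beta}=\lambda_0^{j/4}\).

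From \eqref{eq:periodic-packet-origin-jet}, the leading part is
\[
D_xu^{[0]}[\theta_0](0)=C_0\lambda_0^{1/8}\cos\alpha_0\,\mathcal M(\alpha_0),
\]
using \(-e^\perp e^{\mathsf T}=\mathcal M\). Set \(B_0=\lambda_0^{1/8}\).

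The higher strata \(n\geq1\) contribute \(O(L^C\lambda_0^{1/8-1/4})\) to the origin Jacobian. This follows from \eqref{eq:periodic-packet-stratum-bound} at \(j\leq1\), together with one derivative of the oscillation costing \(\lambda_0\).

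The exact remainder \((u_{\mathbb T}-u^{[M]})[\theta_0]\) is bounded in \(C^1\) by \eqref{eq:periodic-packet-leak}. Its exponent is
\[
-\tfrac78+4\cdot\tfrac78-64\cdot\tfrac18<0.
\]
Here \(M_0=64\), and \(K=1\) requires \(N_{\rm prof}=67\leq257\).

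The delicate point is confirming that the \(n=1\) stratum really carries the gain \(\lambda_0^{-2\beta}\) at the origin. One could worry that derivatives of \(f(\lambda_0^{1/4}\cdot)\) cost exactly \(\lambda_0^{2\beta}\) and cancel this gain. This is harmless, because all such terms are \(\ll\lambda_0^{1/64}\) anyway: \(|(E^u_0)_{ab}|\leq CL^C\lambda_0^{-1/8}\leq\lambda_0^{1/64}\) after a final threshold.

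Taking \(\lambda_*(f,\chi)\) to be the maximum of the finitely many thresholds above completes the argument.
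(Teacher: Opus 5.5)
Your proposal is correct and follows the paper's proof essentially step for step. It uses the same witnesses \(A_0=B_0=\lambda_0^{1/8}\), \(E_0^\rho=0\), constant angle, and the same application of Proposition~\ref{prop:localized-periodic-velocity-expansion} with \(P=0\) and linear phase, giving Jacobian errors of order \(\lambda_0^{-1/8}\) and \(\lambda_0^{-43/8}\); your use of \eqref{eq:periodic-packet-leak} already contains the smooth-kernel term \(\lambda_0^{-391/8}\) that the paper lists separately. One correction to your commentary: your fallback reason would not rescue a lost \(n=1\) gain, since without it that term would be of size \(\lambda_0^{1/8}\gg\lambda_0^{1/64}\), but the gain is not lost — with \(Y_\ell=0\) the \(n\)-th stratum is \(\nu^{-n}\) times an \(n\)-th derivative of \(b\), gaining \(\lambda_0^{-3/4}\) per order, and \eqref{eq:periodic-packet-stratum-bound} already accounts for differentiating \(f(\lambda_0^{1/4}\cdot)\).
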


\begin{proof}
Increase \(\lambda_*(f,\chi)\), if necessary, so that
\(\lambda_*(f,\chi)\geq e\) and, whenever
\(\lambda_0\geq\lambda_*(f,\chi)\),
\[
  \widetilde C\lambda_0^{-3/32}\leq1.
\]
Since \(\beta_0=1/8\), this is
\eqref{eq:admissible-terminal-window-domain} and gives
\(I_0\subset[0,1]\).
The envelope in \eqref{eq:periodic-base-density} is supported strictly
inside the coordinate chart. Hence its extension by zero is smooth even
though the local phase covector need not belong to the integer lattice. The
density is odd and therefore has zero mean. Its support lies in
\(B_{\lambda_0^{-1/4}}\), and
\[
  \|\theta_0\|_{C^j}
  \leq C_{f,j}\lambda_0^{j-7/8}
  \leq\lambda_0^j
  \qquad(0\leq j\leq257),
\]
above a finite threshold. The pair is stationary and satisfies the
level-zero equation by \eqref{eq:periodic-base-force}.
Moreover, \(F^{(0)}\) is odd because
\(u_{\mathbb T}(\theta_0)\) is odd and \(\nabla\theta_0\) is even.

Since \(f=1\) near the origin,
\[
  \nabla\theta_0(0)=\lambda_0^{1/8}e(\alpha_0).
\]
On \(I_0\), define the witnesses in the admissible origin geometry by
\begin{align*}
  A_0(t)&=B_0(t)=\lambda_0^{1/8},\\
  \alpha_0(t)&\equiv\frac\pi2+\lambda_0^{-1/64},\\
  E_0^\rho(t)&=0,\\
  E_0^u(t)&=
    D_xu_{\mathbb T}(\theta_0)(0)
    -C_0\lambda_0^{1/8}\cos\alpha_0\,\mathcal M(\alpha_0).
\end{align*}
For this application, set
\(P=0\), \(v:=u_{\mathbb T}(P)\), and \(\mathcal G:=\nabla P\), so
\(v=\mathcal G=0\).
Apply Proposition~\ref{prop:localized-periodic-velocity-expansion} with
this \(P\), the linear phase
\(e(\alpha_0)\cdot x\), and \(\beta_0=1/8\); its hypotheses hold with
\(v=\mathcal G=0\), exactly linear phase on the amplitude support,
\(\operatorname{supp}f(\lambda_0^{1/4}\,\cdot)\Subset B_{r_0}\), and
\(\nu_1=\lambda_0\geq e\), by
Lemma~\ref{lem:transported-phase-estimates} and
Corollary~\ref{cor:oscillatory-product-estimates}.
Its leading Euclidean term gives
the shear matrix in
\eqref{eq:admissible-jacobian-geometry}. The smooth periodic-kernel
contribution is absorbed into the grouped remainder after exactly
\(M_0\) integrations by parts. The nonleading expansion terms, grouped
remainder, and smooth-kernel term contribute entrywise to the Jacobian error
at most
\[
  C_f\lambda_0^{-1/8},\qquad
  C_f\lambda_0^{-43/8},\qquad
  C_f\lambda_0^{-391/8},
\]
respectively.
Thus, above a finite threshold, each of the
four components of \(E_0^u\) has absolute value at most
\(\lambda_0^{1/64}\). Profile scaling
gives the support condition and the sharp perturbation estimates, while
stationarity
gives every positive-time mixed bound. The preceding bounds verify the
remaining level-zero clauses of
Definition~\ref{def:admissible-layered-family}; in particular, \(P_0=0\)
gives the required previous-stage \(C^1\) estimates and the constant
reference-angle law.
\end{proof}

The exponent \(\beta_q\) controls the size of the \(q\)-th perturbation and
the number of derivative estimates required at that scale. The integer
\(k_q\) is the largest prescribed weighted mixed-derivative order for that
perturbation; \(K_q\) is the largest weighted mixed-derivative order verified
for the complete stage.
At a fixed exponent we raise \(k_q\) as the base oscillation parameters grow.
Once \(K_q\) is large enough to authorize the smaller exponent and the
corresponding finite threshold has been reached, we replace \(\beta_q\) by
\(\beta_q/2\).

Fix a finite threshold function
\[
  \lambda_{\mathrm{th},\mathbb T}(\beta,\beta',k;f,\chi,W),
  \qquad
  0<\frac\beta2\leq\beta'\leq\beta\leq\frac18,
  \qquad
  k\geq d(\beta'),
\]
which is nondecreasing in \(k\) and dominates every finite requirement in
Proposition~\ref{prop:one-step}. With parent exponent \(\beta\) and child
exponent \(\beta'\), it dominates
\[
  \begin{gathered}
    \Lambda_{\rm sep}(\beta),\quad
    \Lambda_{\rm geom}(\beta,\beta'),\quad
    \Lambda_{\rm flow}(\beta,\beta'),\quad
    \Lambda_{\perp}(\beta,\beta'),\\
    \Lambda_{\rm gain}(\beta,\beta'),\quad
    \Lambda_{\rm seed}(\beta,\beta'),\quad
    \Lambda_{\rm pkt}(\beta,\beta'),\quad
    \Lambda_{\rm cas}(\beta,\beta'),\\
    \Lambda_{\rm res}(\beta,\beta'),\quad
    \Lambda_{\rm com}(\beta,\beta'),\quad
    \Lambda_{\rm activation}(\beta,\beta'),
  \end{gathered}
\]
whenever the named threshold is used, together with the finite scale,
support, profile, and mixed-order absorptions for \(j+2m\leq k\). Each
depends only on \((\beta,\beta',k;f,\chi,W)\). The arbitrary fixed-order
sharp estimates have order-dependent constants but impose no additional
threshold.
Increase \(\lambda_0\), if necessary, so that
\[
  \lambda_0\geq
  \max\left\{
    \lambda_*(f,\chi),
    \lambda_{\mathrm{th},\mathbb T}
      (\beta_0,\beta_0,d(\beta_0);f,\chi,W)
  \right\}.
\]

Suppose that level \(q\) has been constructed and write
\(\beta=\beta_q\). If
\begin{equation}
  K_q\geq d(\beta/2)
  \quad\text{and}\quad
  \lambda_q\geq
  \max\left\{
    \lambda_{\mathrm{th},\mathbb T}
      (\beta,\beta/2,d(\beta/2);f,\chi,W),
    \lambda_{\mathrm{th},\mathbb T}
      (\beta/2,\beta/2,d(\beta/2);f,\chi,W)
  \right\},
  \label{eq:halving-condition}
\end{equation}
choose
\begin{equation}
  \beta_{q+1}:=\frac{\beta_q}{2},
  \qquad
  k_{q+1}:=d(\beta_{q+1}).
  \label{eq:halving-choice}
\end{equation}
Otherwise choose
\begin{equation}
  \beta_{q+1}:=\beta_q,
  \qquad
  k_{q+1}:=
  \max\left\{
    k\in\mathbb N:
    d(\beta_q)\leq k\leq q+d(\beta_q),\
    \lambda_q\geq
      \lambda_{\mathrm{th},\mathbb T}
        (\beta_q,\beta_q,k;f,\chi,W)
  \right\}.
  \label{eq:retention-condition}
\end{equation}
The admissible set in \eqref{eq:retention-condition} is nonempty. At
\(q=0\), the candidate \(d(\beta_0)\) is authorized by the base threshold.
If level \(q\) follows a halving, the second threshold in
\eqref{eq:halving-condition} authorizes \(d(\beta_q)\). If it follows a
retention step, the previously chosen \(k_q\) remains admissible: it lies
below the enlarged cap, while
\[
  \lambda_q\geq\lambda_{q-1}
  \geq\lambda_{\mathrm{th},\mathbb T}
    (\beta_q,\beta_q,k_q;f,\chi,W).
\]
Once
\(\beta_{q+1}\) has been fixed, define \(\lambda_{q+1}\) by
\eqref{eq:deterministic-frequency-law}, apply
Proposition~\ref{prop:one-step}, and define
\(K_{q+1}\) by \eqref{eq:maximal-cumulative-mixed-order}.

\begin{lemma}[Continuation]
\label{lem:global-continuation}
Let the parameter and mixed-derivative-order sequences be generated from
\eqref{eq:base-parameters} by \eqref{eq:halving-condition}--
\eqref{eq:retention-condition}, \eqref{eq:deterministic-frequency-law}, and
\eqref{eq:maximal-cumulative-mixed-order}. The recursion continues for every
\(q\).
On every infinite consecutive set of indices on which \(\beta_q\) is
constant, one has \(k_q\to\infty\).
\end{lemma}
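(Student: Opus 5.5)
We argue by induction on \(q\), carrying the hypothesis that an admissible family through level \(q\) has been constructed with \(\lambda_q\geq\lambda_{\mathrm{th},\mathbb T}(\beta_{q-1},\beta_q,k_q;f,\chi,W)\) (for \(q=0\), the base threshold). The base case is Lemma~\ref{lem:periodic-base-layer} together with the choice of \(\lambda_0\).

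For the inductive step we check that the hypotheses of Proposition~\ref{prop:one-step} hold for the chosen \((\beta_{q+1},k_{q+1})\). We need \(\beta_{q+1}\in[\beta_q/2,\beta_q]\), \(k_{q+1}\geq d(\beta_{q+1})\), \(K_q\geq d(\beta_{q+1})\), and \(\lambda_q\geq\lambda_{\mathrm{th},\mathbb T}(\beta_q,\beta_{q+1},k_{q+1})\).

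In the halving case all four requirements are built into \eqref{eq:halving-condition}--\eqref{eq:halving-choice}. The first threshold in \eqref{eq:halving-condition} is exactly the required one with \(k_{q+1}=d(\beta_q/2)\).

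In the retention case, nonemptiness of the admissible set (argued after \eqref{eq:retention-condition}) gives \(k_{q+1}\geq d(\beta_q)\) and the threshold bound. For \(K_q\geq d(\beta_q)\) we use \eqref{eq:cumulative-order-propagation} from the previous step, or \(K_0=257\) when \(q=0\). Proposition~\ref{prop:one-step} then extends admissibility through level \(q+1\). The new frequency satisfies \(\lambda_{q+1}\geq4\lambda_q\) by \eqref{eq:admissible-frequency-separation}, and \(K_{q+1}\geq d(\beta_{q+1})\) by \eqref{eq:cumulative-order-propagation}, so the induction hypothesis propagates.

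For the second claim, suppose \(\beta_q=\beta\) for all \(q\geq q_0\). Then \(\lambda_q\to\infty\) monotonically, since \(\lambda_{q+1}\geq4\lambda_q\). Fix any \(k\geq d(\beta)\). Because \(\lambda_{\mathrm{th},\mathbb T}(\beta,\beta,k)\) is a finite number, eventually \(\lambda_q\geq\lambda_{\mathrm{th},\mathbb T}(\beta,\beta,k)\). Since also \(q+d(\beta)\geq k\) for large \(q\), the maximum in \eqref{eq:retention-condition} satisfies \(k_{q+1}\geq k\) for all large \(q\). As \(k\) is arbitrary, \(k_q\to\infty\).

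The main point to watch is that the retention steps genuinely use \eqref{eq:retention-condition}. This holds because on this set halving never occurs, so the choice \eqref{eq:retention-condition} is made at every index. The second claim needs nothing about \(K_q\) beyond what the first part already gives.
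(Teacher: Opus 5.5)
Your proposal is correct and follows essentially the paper's own argument. Halving steps are authorized by \eqref{eq:halving-condition}, retention steps by the nonemptiness observation after \eqref{eq:retention-condition}, and $k_q\to\infty$ follows from $\lambda_q\to\infty$, the finiteness of each $\lambda_{\mathrm{th},\mathbb T}(\bar\beta,\bar\beta,k)$, and the growing cap $q+d(\bar\beta)$. Your explicit check of the four hypotheses of Proposition~\ref{prop:one-step} just spells out what the paper compresses into the word ``authorizes.''
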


\begin{proof}
At a halving step, condition \eqref{eq:halving-condition} authorizes both that step
and the first retained step at the new exponent. The nonemptiness verified
immediately after \eqref{eq:retention-condition} handles every retention
step. Thus the recursion cannot stop.

Fix \(\bar\beta\) and a derivative order \(k\geq d(\bar\beta)\). The number
\(\lambda_{\mathrm{th},\mathbb T}
(\bar\beta,\bar\beta,k;f,\chi,W)\) is finite. By
\eqref{eq:admissible-frequency-separation},
\(\lambda_{q+1}\geq4\lambda_q\), so \(\lambda_q\to\infty\); the cap
\(q+d(\bar\beta)\) also tends to infinity. Consequently \(k_q\geq k\) at all
sufficiently late indices of a constant-\(\bar\beta\) block.
\end{proof}

\subsection{Iterated cutoff formulas and parameter growth}

The density formula \eqref{eq:iterated-density-formula} was derived directly from
the finite-stage invariant.  The same cutoff induction gives, for every
\(Q\geq1\),
\begin{equation}
  F^{(Q)}
  =S_{\mu_1}F^{(0)}
   +\sum_{i=1}^{Q-1}S_{\mu_{i+1}}G_i
   +G_Q.
  \label{eq:iterated-force-formula}
\end{equation}
Indeed, \(\mu_j\geq2\mu_i\) whenever \(j>i\), so
\[
  S_{\mu_j}S_{\mu_i}=S_{\mu_i}.
\]
Thus every perturbation and every force increment is truncated exactly
once and is then fixed. In particular,
\begin{equation*}
  \operatorname{supp}\widehat{S_{\mu_{i+1}}\theta_i}
  \subset B_{2\mu_{i+1}},
  \qquad
  \operatorname{supp}\widehat{S_{\mu_{i+1}}G_i}
  \subset B_{2\mu_{i+1}}.
\end{equation*}
Moreover, \eqref{eq:commutator-bandwidth} and
\(\mu_{i+1}\geq4\mu_i\) imply
\begin{equation*}
  S_{\mu_{i+1}}
    \mathcal C_{\mu_i}(\rho^{(i-1)})
  =\mathcal C_{\mu_i}(\rho^{(i-1)}).
\end{equation*}

\begin{lemma}[Decay of the scale exponent]
\label{lem:parameter-growth}
For the parameter and mixed-derivative-order sequences generated by
\eqref{eq:halving-condition}--\eqref{eq:retention-condition},
\eqref{eq:deterministic-frequency-law}, and
\eqref{eq:maximal-cumulative-mixed-order}, one has
\begin{equation}
  \beta_q\downarrow0,
  \qquad
  x_q:=\beta_q\log\lambda_q\longrightarrow\infty.
  \label{eq:beta-decay}
\end{equation}
After increasing the base threshold,
\begin{equation}
  x_{q+1}\geq64x_q,
  \qquad
  L_q^4\leq\lambda_q^{\beta_q},
  \qquad
  r_{q+1}\leq\lambda_q^{-8}.
  \label{eq:effective-scale-growth}
\end{equation}
Consequently \(r_q\to0\), \(\underline t_q\uparrow1\), and the intervals
\([\underline t_q,\underline t_{q+1}]\) cover a terminal neighborhood of
one.
\end{lemma}

\begin{proof}
If \(\beta_q\not\to0\), then, because each step either retains \(\beta_q\)
or replaces it by \(\beta_q/2\), only finitely many halvings occur. Hence
\(\beta_q=\bar\beta>0\) for every sufficiently large \(q\).
Since \(\beta_{q+1}=\beta_q\) at every sufficiently late stage, the halving
criterion fails there. Its frequency condition already holds, so its first
condition fails and \(K_q<d(\bar\beta/2)\), hence
\[
  K_q\leq
  D_*:=\left\lceil\frac{16}{\bar\beta^2}\right\rceil,
\]
at all sufficiently late stages. Lemma~\ref{lem:global-continuation} gives
\(k_q>D_*+1\). Put \(n=K_q+1\). By maximality in
\eqref{eq:maximal-cumulative-mixed-order}, there are integers \(j,m\geq0\) with
\(j+2m=n\), and a time \(\tau_q\), such that
\begin{equation}
  \|\partial_t^m\rho^{(q)}(\tau_q)\|_{C^j}
  >2\lambda_q^n.
  \label{eq:maximal-order-failure}
\end{equation}

Choose \(q_0\) so that \(k_i>D_*+1\) for every \(i\geq q_0\). Let
\(C_{\rm head}\) bound the fixed contribution of indices \(i<q_0\), uniformly
over the finitely many possible values of \(n\). The cutoff
operators commute with \(\partial_t\) and have a uniform \(C^j\)-operator
norm for each \(j\leq D_*+1\). Hence \eqref{eq:iterated-density-formula} and the
perturbation mixed estimates
\eqref{eq:admissible-perturbation-mixed-bound} give
\[
  \|\partial_t^m\rho^{(q)}(\tau_q)\|_{C^j}
  \leq C_{\rm head}
    +C_{D_*}\sum_{i=q_0}^{q-1}\lambda_i^n+\lambda_q^n.
\]
On a constant-\(\bar\beta\) block, the deterministic frequency law implies
\(\lambda_q/\lambda_{q-1}\to\infty\). Since \(n\) ranges over the finite
set \(\{1,\ldots,D_*+1\}\),
\[
  C_{\rm head}
  +C_{D_*}\sum_{i=q_0}^{q-1}\lambda_i^n
  =o(\lambda_q^n),
\]
uniformly in the possible \(n\). This contradicts
\eqref{eq:maximal-order-failure}. Therefore \(\beta_q\downarrow0\).

The selected frequency separation gives
\[
  \lambda_{q+1}^{\beta_{q+1}/8}\geq4\lambda_q.
\]
Taking logarithms and using \(\beta_q\leq1/8\) yields
\[
  x_{q+1}\geq8\log\lambda_q
  =\frac8{\beta_q}x_q
  \geq64x_q.
\]
Equation~\eqref{eq:deterministic-frequency-law} also gives
\[
  L_{q+1}
  =\frac{\beta_{q+1}}{192\pi}
    \exp(x_q/8),
  \qquad
  \log L_{q+1}\leq\frac{x_q}{8}.
\]
After arranging the base case,
\(4\log L_{q+1}\leq x_{q+1}\), which proves
\(L_{q+1}^4\leq\lambda_{q+1}^{\beta_{q+1}}\). Therefore
\[
  r_{q+1}
  =\lambda_{q+1}^{-2\beta_{q+1}}L_{q+1}^4
  \leq\lambda_{q+1}^{-\beta_{q+1}}
  \leq\lambda_q^{-8}.
\]
The remaining conclusions follow from \eqref{eq:terminal-window} and the
neighboring time-scale separation.
\end{proof}

\subsection{The limiting density and force}
\label{subsec:limit-solution}

Define
\begin{equation}
  \rho
  :=S_{\mu_1}\theta_0
    +\sum_{i=1}^{\infty}S_{\mu_{i+1}}\theta_i,
  \qquad
  F
  :=S_{\mu_1}F^{(0)}
    +\sum_{i=1}^{\infty}S_{\mu_{i+1}}G_i.
  \label{eq:limiting-series}
\end{equation}

\begin{proposition}[Global limit]
\label{prop:global-limit}
For the admissible stages produced by the recursive exponent choice, let
\(\rho,F\) be
defined by \eqref{eq:limiting-series}. Then
\(F\in C^\infty([0,1]\times\mathbb T^2)\). For every \(T<1\), the sequences
\((\rho^{(q)})_q\) and \((F^{(q)})_q\) are eventually constant in the stage
index on \([0,T]\), the limiting density is smooth there, and
\begin{equation}
  \partial_t\rho
  +u_{\mathbb T}(\rho)\cdot\nabla\rho
  =F
  \quad\text{on }[0,T]\times\mathbb T^2.
  \label{eq:global-limit-equation}
\end{equation}
The initial datum is
\begin{equation}
  \rho_{\rm in}
  =\rho(0)
  =S_{\mu_1}\theta_0.
  \label{eq:limiting-initial-datum}
\end{equation}
The function \(\rho_{\rm in}\) is smooth, odd, and has zero spatial mean.
For every \(t\in[0,1]\), the function \(F(t,\cdot)\) is odd and has zero
spatial mean.
There is one function
\[
  \rho_*\in\bigcap_{0\leq\eta<1}C^\eta(\mathbb T^2)
\]
such that, for every \(0\leq\eta<1\),
\begin{equation}
  \rho(t)\longrightarrow\rho_*
  \quad\text{in }C^\eta(\mathbb T^2)
  \quad\text{as }t\uparrow1.
  \label{eq:endpoint-convergence}
\end{equation}
\end{proposition}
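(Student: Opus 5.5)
The proof proceeds in four steps: stabilization on compact time intervals, joint smoothness of \(F\), parity and mean, and endpoint H\"older convergence.

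\emph{Stabilization.} Fix \(T<1\). By Lemma~\ref{lem:parameter-growth}, \(\underline t_q\uparrow1\), and each activation time satisfies \(s_i\in I_{i-1}\). Hence \(s_i\geq\underline t_{i-1}>T\) for all \(i\geq i_0(T)\). By Definition~\ref{def:admissible-layered-family}(iii), \(\theta_i\) vanishes on \([0,s_i]\). The same holds for \(E_i^{\rm pert}\), by the infinite-order vanishing stated after \eqref{eq:exact-packet-residual}.

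The commutator needs separate treatment. For \(i>i_0\), on \([0,T]\) one has \(\rho^{(i-1)}=\rho^{(i_0-1)}\) up to terms that vanish there. The Fourier support of \(\rho^{(i_0-1)}\) lies in \(B_{2\mu_{i_0}}\), using \eqref{eq:iterated-density-formula} together with the compact support of \(\theta_{i_0-1}\)'s truncation. Strictly, \(\theta_{i_0-1}\) itself is not band-limited. So I would run the argument with stages beyond \(i_0+1\): for \(i\geq i_0+2\), on \([0,T]\),
\[
\rho^{(i-1)}=S_{\mu_1}\theta_0+\sum_{l<i-1}S_{\mu_{l+1}}\theta_l ,
\]
which has Fourier support in \(B_{2\mu_{i-1}}\). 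Since \(\mu_i\geq4\mu_{i-1}\), \eqref{eq:bandlimited-commutator-zero} gives \(\mathcal C_{\mu_i}(\rho^{(i-1)})=0\) there. Thus \(G_i=0\) on \([0,T]\) for large \(i\). Formulas \eqref{eq:iterated-density-formula} and \eqref{eq:iterated-force-formula} then show that \(\rho^{(q)}\) and \(F^{(q)}\) are eventually constant on \([0,T]\) and coincide with the series \eqref{eq:limiting-series}. Equation \eqref{eq:global-limit-equation} follows from \eqref{eq:admissible-stage-equation}, and smoothness is inherited from the finite stage. At \(t=0\), every \(\theta_i\) with \(i\geq1\) vanishes, which gives \eqref{eq:limiting-initial-datum}.

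\emph{Smoothness of \(F\).} Fix \((m,k)\). Since \(\beta_q\downarrow0\), eventually \(\beta_q\leq1/40\) and \(m+k\leq m_q\). Then \eqref{eq:force-increment-smallness} and the uniform \(C^k\) boundedness of \(S_\mu\) give
\[
\|\partial_t^mS_{\mu_{q+1}}G_q\|_{C^k}\leq C_k\lambda_q^{-1/4},
\]
which is summable because \(\lambda_{q+1}\geq4\lambda_q\). The finitely many earlier terms are smooth on \([0,1]\times\mathbb T^2\) by admissibility. So the series converges in every \(C^m_tC^k_x\), hence in \(C^\infty([0,1]\times\mathbb T^2)\).

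\emph{Parity and mean.} Oddness holds term by term and passes to the limits. Odd functions have zero mean.

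\emph{Endpoint convergence.} I would set \(\rho_*:=S_{\mu_1}\theta_0+\sum_i S_{\mu_{i+1}}\theta_i(1)\). Interpolating \eqref{eq:new-perturbation-size}, or equivalently \eqref{eq:admissible-perturbation-sharp-bound} with \(m=0\), gives
\[
\|\theta_i\|_{C^\eta}\lesssim L_i^{N}\lambda_i^{\eta-1+\beta_i}
\]
uniformly in time. Once \(\beta_i<(1-\eta)/2\), this is at most \(\lambda_i^{-(1-\eta)/3}\), which is summable. Since \(\beta_i\to0\), every \(\eta<1\) is eventually covered. For the cutoffs on \(C^\eta\), I would use the uniform bound on \(C^0\) and \(C^1\) and interpolate, or use a Littlewood--Paley bound from the appendix.

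Then, for \(t\in[0,1]\) and any \(N\),
\[
\|\rho(t)-\rho_*\|_{C^\eta}\leq\sum_{i\leq N}\|\theta_i(t)-\theta_i(1)\|_{C^\eta}\cdot C+2C\sum_{i>N}\sup_\tau\|\theta_i(\tau)\|_{C^\eta}.
\]
The tail is small by the summability above. Each of the finitely many head terms tends to zero by continuity of the smooth \(\theta_i\) on \([0,1]\).

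\emph{Main obstacle.} The delicate point is the \(C^\eta\) summability near \(\eta\to1\). The constants \(C_{0,0}(\beta_i)\) and exponents \(N_{0,0}(\beta_i)\) grow as \(\beta_i\to0\). My plan is to avoid that dependence entirely: interpolate between \(\|\theta_i\|_{C^0}\leq\lambda_i^{-1+2\beta_i}\) and \(\|\theta_i\|_{C^1}\leq\lambda_i^{2\beta_i}\) from \eqref{eq:new-perturbation-size}, which are constant-free. This gives \(\|\theta_i\|_{C^\eta}\lesssim\lambda_i^{\eta-1+2\beta_i}\) with an absolute constant, and the series is summable as soon as \(2\beta_i<(1-\eta)/2\).
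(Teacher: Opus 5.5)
Your proposal is correct and is essentially the paper's proof: the same constant-free interpolation between the two bounds in \eqref{eq:new-perturbation-size} for the $C^\eta$ convergence, the same use of $\beta_i\to0$ with \eqref{eq:force-increment-smallness} for $F\in C^\infty([0,1]\times\mathbb T^2)$, and the same band-limited commutator vanishing \eqref{eq:bandlimited-commutator-zero} for stabilization on $[0,T]$. The only difference is cosmetic: you read stabilization off the iterated formulas \eqref{eq:iterated-density-formula} and \eqref{eq:iterated-force-formula} once $\theta_i$ and $G_i$ vanish on $[0,T]$, whereas the paper inducts on the stage recursion using $S_{\mu_q}\rho^{(q-1)}=\rho^{(q-1)}$ and $S_{\mu_q}F^{(q-1)}=F^{(q-1)}$ there.
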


\begin{proof}
For \(\eta=0\), cutoff boundedness follows from
\eqref{eq:periodic-cutoff-boundedness}; for \(0<\eta<1\), it follows from
\eqref{eq:periodic-cutoff-holder-boundedness}. Interpolation
between the two sharp estimates in \eqref{eq:new-perturbation-size} gives
\begin{equation*}
  \|S_{\mu_{i+1}}\theta_i\|_{C^\eta}
  \leq C_\eta\lambda_i^{-1+2\beta_i+\eta}.
\end{equation*}
By \eqref{eq:beta-decay}, the exponent on the right is at most
\(-(1-\eta)/2\) for every sufficiently large \(i\). The first series in
\eqref{eq:limiting-series} therefore converges uniformly in \(C^\eta\),
including at \(t=1\), because
\(\lambda_{i+1}\geq4\lambda_i\) by
\eqref{eq:admissible-frequency-separation}. Set \(\rho_*:=\rho(1)\).
Uniform smallness of the series tails and continuity of every finite head
prove \eqref{eq:endpoint-convergence}; this definition of \(\rho_*\) is
independent of \(\eta\).

Fix \(m,k\geq0\). Since \(\beta_i\to0\), choose \(i_0=i_0(m,k)\) so that
for every \(i\geq i_0\),
\[
  \beta_i\leq\frac1{40},
  \qquad
  m+k\leq m_i:=\left\lfloor\frac1{4\beta_i}\right\rfloor-10.
\]
By admissibility and \eqref{eq:forcing-recursion}, the base term and the
finitely many summands with \(i<i_0\), including every term formed while
\(\beta_i>1/40\), are smooth and form a finite head of
\eqref{eq:limiting-series}. On the remaining tail, Proposition
\ref{prop:one-step} and \eqref{eq:force-increment-smallness} give
\[
  \|\partial_t^mG_i\|_{C^k}\leq\lambda_i^{-1/4},
\]
throughout this tail. The second series in
\eqref{eq:limiting-series} therefore converges in \(C_t^mC_x^k\): the
cutoffs are uniformly bounded on every fixed \(C^k\), commute with time
derivatives, and \(\lambda_{i+1}\geq4\lambda_i\) makes
\(\sum_i\lambda_i^{-1/4}\) finite.
Since \(m,k\) are arbitrary,
\[
  F\in C^\infty([0,1]\times\mathbb T^2).
\]

Fix \(T<1\). Choose \(q\) sufficiently large that every perturbation with
index at least \(q-1\) vanishes on \([0,T]\). Then
\(\rho^{(q-1)}\) is already Fourier truncated to \(B_{2\mu_{q-1}}\), and
\eqref{eq:periodic-product-bandwidth} puts its nonlinearity in
\(B_{4\mu_{q-1}}\). Consequently,
\begin{equation*}
  S_{\mu_q}\rho^{(q-1)}=\rho^{(q-1)},
  \qquad
  \mathcal C_{\mu_q}(\rho^{(q-1)})=0
  \quad\text{on }[0,T].
\end{equation*}
The perturbation residual \(E_q^{\rm pert}\) also vanishes there.

The newest untruncated term of \(F^{(q-1)}\) is then a cutoff commutator at
scale \(\mu_{q-1}\), and \eqref{eq:commutator-bandwidth} places it in
\(B_{4\mu_{q-1}}\). Every older force piece already has a lower Fourier
cutoff. Since \(\mu_q\geq4\mu_{q-1}\),
\begin{equation*}
  S_{\mu_q}F^{(q-1)}=F^{(q-1)}
  \quad\text{on }[0,T].
\end{equation*}
Equations \eqref{eq:full-stage-recursion} and
\eqref{eq:forcing-recursion} now show that
\[
  \rho^{(q)}=\rho^{(q-1)},
  \qquad
  F^{(q)}=F^{(q-1)}
  \quad\text{on }[0,T].
\]
These identities propagate by induction. Suppose \(n\geq q\) and
\(\rho^{(n)}=\rho^{(q-1)}\),
\(F^{(n)}=F^{(q-1)}\) on \([0,T]\). The common density and force have
Fourier supports in \(B_{2\mu_{q-1}}\) and \(B_{4\mu_{q-1}}\),
respectively. Since
\(\mu_{n+1}\geq\mu_q\geq4\mu_{q-1}\), the next cutoff fixes both,
\(\mathcal C_{\mu_{n+1}}(\rho^{(n)})=0\), and
\(\theta_{n+1}=E_{n+1}^{\rm pert}=0\) on \([0,T]\). The two stage
recursions therefore give the same identities at level \(n+1\). Thus both
sequences are eventually constant in the stage index on \([0,T]\).
The corresponding partial sums in \eqref{eq:limiting-series} are
\(\rho^{(Q)}-\theta_Q\) and \(F^{(Q)}-G_Q\). For sufficiently large \(Q\),
both \(\theta_Q\) and \(G_Q\) vanish on \([0,T]\), so the two series agree
there with the eventual stages. The finite-stage equation then gives
\eqref{eq:global-limit-equation} without passage through a nonlinear limit.

Every perturbation after the base layer vanishes at \(t=0\), which proves
\eqref{eq:limiting-initial-datum}. Each \(\theta_i\), \(F^{(0)}\), and
\(G_i\) is odd, and the even cutoffs preserve oddness. Uniform convergence
therefore preserves oddness in both limiting series; every odd periodic
scalar has zero spatial mean. The first cutoff also makes \(\rho_{\rm in}\)
smooth.

At time zero,
\(E_1^{\rm pert}=0\), and
\[
  S_{\mu_1}F^{(0)}
  +\mathcal C_{\mu_1}(\theta_0)
  =u_{\mathbb T}(S_{\mu_1}\theta_0)
   \cdot\nabla S_{\mu_1}\theta_0.
\]
The commutator is supported in \(B_{4\mu_1}\), so
\(S_{\mu_2}\mathcal C_{\mu_1}(\theta_0)
 =\mathcal C_{\mu_1}(\theta_0)\).
For \(i\geq2\), the density \(\rho^{(i-1)}(0)\) is Fourier truncated to
\(B_{2\mu_{i-1}}\), while \(\mu_i\geq4\mu_{i-1}\). Thus the Fourier truncated
commutator identity \eqref{eq:bandlimited-commutator-zero} gives
\(\mathcal C_{\mu_i}(\rho^{(i-1)})(0)=0\); moreover,
\(E_i^{\rm pert}(0)=0\). Hence \(G_i(0)=0\) for every \(i\geq2\). Every
perturbation with positive index and every force increment \(G_i\) with
\(i\geq2\) vanishes on a neighborhood of \(t=0\), so
\(\partial_t\rho(0)=0\) and
\[
  F(0)=u_{\mathbb T}(\rho_{\rm in})\cdot\nabla\rho_{\rm in}.
\]
Thus the limiting equation also holds at the initial time.
\end{proof}

\subsection{Density-gradient lower bound on terminal intervals}
\label{subsec:full-limit-blowup}

For
\[
  t\in[\underline t_q,\underline t_{q+1}],
\]
every perturbation above \(q+1\) vanishes.  The iterated cutoff formula
therefore gives
\begin{equation}
  \rho(t)
  =P_q(t)
   +S_{\mu_{q+1}}\theta_q(t)
   +S_{\mu_{q+2}}\theta_{q+1}(t).
  \label{eq:terminal-interval-decomposition}
\end{equation}
Set
\begin{equation}
  y_q(t):=2r_{q+1}e(\alpha_q(t)).
  \label{eq:terminal-evaluation-point}
\end{equation}
Since \(\operatorname{supp}\theta_{q+1}(t)\subset B_{r_{q+1}}\) and
\(r_{q+1}\leq\lambda_q^{-8}\), this point lies outside the support of
\(\theta_{q+1}\),
but remains close enough to the origin to retain the level-\(q\) geometry.
For the density estimate only the cutoff tail of \(\theta_{q+1}\) remains;
for the velocity estimate, \(u_{\mathbb T}(\theta_{q+1})\) is nonlocal and
must also be bounded.
All constants in the cutoff-tail estimates below are absolute after the
fixed cutoff is chosen; in particular, they are independent of \(q\),
\(\beta_q\), and \(\beta_{q+1}\).

\begin{proposition}[Density-gradient lower bound]
\label{prop:density-gradient-blowup}
For the limiting density \(\rho\) produced by the recursive exponent choice,
there is
\(q_*\in\mathbb N\) such that, for every \(q\geq q_*\) and every
\(t\in[\underline t_q,\underline t_{q+1}]\),
\begin{equation}
  \|\nabla\rho(t)\|_{L^\infty}
  \geq\frac18\lambda_q^{\beta_q}.
  \label{eq:density-terminal-lower-bound}
\end{equation}
Consequently,
\begin{equation}
  \lim_{t\uparrow1}\|\nabla\rho(t)\|_{L^\infty}=\infty.
  \label{eq:density-gradient-blowup}
\end{equation}
\end{proposition}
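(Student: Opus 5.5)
We evaluate \(\nabla\rho\) at the point \(y_q(t)\) from \eqref{eq:terminal-evaluation-point} and use the three-term decomposition \eqref{eq:terminal-interval-decomposition}. Fix \(t\in[\underline t_q,\underline t_{q+1}]\subset I_q\) and write
\[
  \nabla\rho(y_q)
  =\nabla P_q(y_q)+\nabla S_{\mu_{q+1}}\theta_q(y_q)
   +\nabla S_{\mu_{q+2}}\theta_{q+1}(y_q).
\]
The middle term will carry the lower bound. The other two will be shown to be negligible compared with \(\lambda_q^{\beta_q}\).

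\emph{Step 1: the two outer terms.} For the first term, \eqref{eq:admissible-cutoff-parent-bound} gives \(|\nabla P_q|\le\lambda_q^{\beta_q/8}\) on \(I_q\).

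For the third term, \(y_q\) lies at distance \(2r_{q+1}\) from the origin, so it is outside \(\operatorname{supp}\theta_{q+1}(t)\subset B_{r_{q+1}}\). Hence
\[
  \nabla S_{\mu_{q+2}}\theta_{q+1}(y_q)
  =-\nabla(I-S_{\mu_{q+2}})\theta_{q+1}(y_q).
\]
We bound this by the spectral tail estimate \eqref{eq:periodic-spectral-tail}, in the same way as \eqref{eq:newest-perturbation-cutoff-tail} at the next level. Combining \(\|\theta_{q+1}\|_{C^8}\le\lambda_{q+1}^8\) from \eqref{eq:new-perturbation-size} with \(\mu_{q+2}\ge\lambda_{q+1}^4\) gives a bound \(C\lambda_{q+1}^{-4}\) with \(C\) absolute.

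\emph{Step 2: the main term.} First we transfer the origin gradient of \(S_{\mu_{q+1}}\theta_q\) to \(y_q\). By \eqref{eq:cutoff-parent-gradient-geometry},
\[
  \nabla S_{\mu_{q+1}}\theta_q(0,t)=A_q e(\alpha_q)+\widetilde E^\rho_q,
  \qquad |\widetilde E^\rho_q|\le 4\lambda_q^{\beta_q/8}.
\]
Since \(\|\theta_q\|_{C^2}\le\lambda_q^2\) and the cutoff is bounded on \(C^2\), the mean-value theorem gives
\[
  |\nabla S_{\mu_{q+1}}\theta_q(y_q)-\nabla S_{\mu_{q+1}}\theta_q(0)|
  \le C\lambda_q^2\cdot 2r_{q+1}\le C\lambda_q^{-6},
\]
using \(r_{q+1}\le\lambda_q^{-8}\) from \eqref{eq:effective-scale-growth}. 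A cleaner option is to use \eqref{eq:newest-perturbation-cutoff-tail} and apply the mean-value step to \(\theta_q\) itself.

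Collecting the pieces,
\[
  |\nabla\rho(y_q)|
  \ge A_q-5\lambda_q^{\beta_q/8}-C\lambda_q^{-6}-C\lambda_{q+1}^{-4}
  \ge \tfrac12\lambda_q^{\beta_q}-6\lambda_q^{\beta_q/8}-C .
\]
Here we used \(A_q\ge\lambda_q^{\beta_q}/2\) from Definition~\ref{def:admissible-layered-family}. Since \(\beta_q\log\lambda_q\to\infty\) by Lemma~\ref{lem:parameter-growth}, \(\lambda_q^{7\beta_q/8}\to\infty\). So for \(q\ge q_*\) the right side exceeds \(\tfrac18\lambda_q^{\beta_q}\), which is \eqref{eq:density-terminal-lower-bound}. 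We choose \(q_*\) using only absolute constants and \(x_q\to\infty\), so the choice is uniform in the varying \(\beta_q\).

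\emph{Step 3: blow-up and the main obstacle.} Lemma~\ref{lem:parameter-growth} says the intervals \([\underline t_q,\underline t_{q+1}]\) cover a terminal neighborhood of \(1\), and \(\lambda_q^{\beta_q}=e^{x_q}\to\infty\). This gives \eqref{eq:density-gradient-blowup}.

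The expected obstacle is justifying \eqref{eq:terminal-interval-decomposition}, i.e.\ that \(\theta_i\) vanishes for \(i\ge q+2\) on this interval. This needs \(s_{q+2}\ge\underline t_{q+1}\), which follows from \(s_{q+2}\in I_{q+1}\). A second point to check is that every constant in Steps 1 and 2 is independent of \(\beta_q\). This holds because only the constant-free bounds \eqref{eq:admissible-stage-mixed-bound} and \eqref{eq:admissible-perturbation-mixed-bound}, together with the absolute cutoff-tail constants, are used.
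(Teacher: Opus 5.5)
Your proposal is correct and follows the paper's proof: evaluate at \(y_q(t)\), which lies outside the child support, use the three-term decomposition \eqref{eq:terminal-interval-decomposition}, and absorb the parent bound, the child cutoff tail, and the \(O(r_{q+1}\lambda_q^2)\) mean-value shift into \(\tfrac12\lambda_q^{\beta_q}\) using \(x_q\to\infty\). The only cosmetic difference is that you invoke the cutoff-parent geometry \eqref{eq:cutoff-parent-gradient-geometry} and apply the mean-value step to \(S_{\mu_{q+1}}\theta_q\), whereas the paper applies it to \(\theta_q\) itself in the direction \(e(\alpha_q)\) and adds the tail \eqref{eq:newest-perturbation-cutoff-tail} separately.
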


\begin{proof}
For large \(q\), the point \(y_q(t)\) lies in the fixed coordinate chart, and
the child \(\theta_{q+1}\) vanishes in a neighborhood of it.

Apply \eqref{eq:periodic-spectral-tail} with \(j=1\), \(s=3\), the
perturbation \(C^7\) estimate
\eqref{eq:admissible-perturbation-mixed-bound}, and
\(\mu_{q+2}\geq\lambda_{q+1}^4\). This gives
\begin{equation}
  \|\nabla(S_{\mu_{q+2}}-I)\theta_{q+1}\|_{L^\infty}
  \leq C\lambda_{q+1}^{-5}.
  \label{eq:child-density-tail}
\end{equation}
The cutoff-tail estimate \eqref{eq:newest-perturbation-cutoff-tail} gives
\begin{equation*}
  \|\nabla(S_{\mu_{q+1}}-I)\theta_q\|_{L^\infty}
  \leq C\lambda_q^{-4}.
\end{equation*}
By the mean-value theorem, \eqref{eq:terminal-evaluation-point}, the \(C^2\)
case of \eqref{eq:new-perturbation-size}, and
\eqref{eq:effective-scale-growth},
\begin{equation}
  |\partial_{e(\alpha_q(t))}\theta_q(y_q(t),t)
    -\partial_{e(\alpha_q(t))}\theta_q(0,t)|
  \leq2r_{q+1}\|\theta_q(t)\|_{C^2}
  \leq2\lambda_q^{-6}.
  \label{eq:newest-density-spatial-change}
\end{equation}
The cutoff-parent bound \eqref{eq:admissible-cutoff-parent-bound} gives
\[
  \|P_q(t)\|_{C^1}\leq\lambda_q^{\beta_q/8}.
\]
Combining \eqref{eq:terminal-interval-decomposition}, the three preceding error
estimates, and \eqref{eq:admissible-gradient-geometry}, we obtain
\[
  |\partial_{e(\alpha_q(t))}\rho(y_q(t),t)|
  \geq
  \frac12\lambda_q^{\beta_q}
  -C\lambda_q^{\beta_q/8}
  -C\lambda_q^{-4}
  -C\lambda_q^{-6}
  -C\lambda_{q+1}^{-5}.
\]
This is at least \(\lambda_q^{\beta_q}/8\) for all sufficiently large
\(q\), uniformly throughout the interval. Lemma~\ref{lem:parameter-growth}
proves
\eqref{eq:density-gradient-blowup}.
\end{proof}

\subsection{Velocity-gradient lower bound on terminal intervals}

\begin{proposition}[Velocity-gradient lower bound]
\label{prop:velocity-gradient-blowup}
For the limiting density \(\rho\) produced by the recursive exponent choice,
there are
\(c>0\) and \(q_*\in\mathbb N\) such that, for every \(q\geq q_*\) and every
\(t\in[\underline t_q,\underline t_{q+1}]\),
\begin{equation}
  \|D_xu_{\mathbb T}(\rho(t))\|_{L^\infty}
  \geq c\lambda_q^{7\beta_q/8}.
  \label{eq:velocity-terminal-lower-bound}
\end{equation}
Consequently,
\begin{equation}
  \lim_{t\uparrow1}
  \|D_xu_{\mathbb T}(\rho(t))\|_{L^\infty}
  =\infty.
  \label{eq:velocity-gradient-blowup}
\end{equation}
\end{proposition}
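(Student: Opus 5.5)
We mirror the proof of Proposition~\ref{prop:density-gradient-blowup}, using the decomposition \eqref{eq:terminal-interval-decomposition} on \([\underline t_q,\underline t_{q+1}]\):
\[
  D_xu_{\mathbb T}(\rho)
  =D_xu_{\mathbb T}(P_q)
   +D_xu_{\mathbb T}(S_{\mu_{q+1}}\theta_q)
   +D_xu_{\mathbb T}(S_{\mu_{q+2}}\theta_{q+1}).
\]
We evaluate at the origin, not at \(y_q(t)\). The term from \(P_q\) is bounded by \(\lambda_q^{\beta_q/8}\) by \eqref{eq:admissible-cutoff-parent-bound}. The second term is handled by \eqref{eq:cutoff-parent-jacobian-geometry}, applied at level \(q\). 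This formally uses the adjacent-layer setup, but only needs \eqref{eq:newest-perturbation-cutoff-tail}, which in turn needs \(\mu_{q+1}\geq\lambda_q^4\). It gives the principal matrix \(C_0B_q\cos\alpha_q\,\mathcal M(\alpha_q)\) with entrywise error at most \(4\lambda_q^{\beta_q/8}\). By \eqref{eq:parent-cosine-window}, \(|\cos\alpha_q|\geq\tfrac12\lambda_q^{-\beta_q/8}\), and \(B_q\geq\lambda_q^{\beta_q}/2\). Since \(\mathcal M(\alpha)\) has operator norm \(1\), the principal part has norm at least \(\tfrac{C_0}{4}\lambda_q^{7\beta_q/8}\). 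This dominates the \(O(\lambda_q^{\beta_q/8})\) errors because \(7\beta_q/8>\beta_q/8\) and \(\beta_q\log\lambda_q\to\infty\) by \eqref{eq:beta-decay}.

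The main obstacle is the child term \(D_xu_{\mathbb T}(S_{\mu_{q+2}}\theta_{q+1})(0,t)\). This term is not small: at the origin it carries the child's own principal geometry of size up to \(\lambda_{q+1}^{7\beta_{q+1}/8}\), which is much larger than the parent's. Evaluating away from \(\operatorname{supp}\theta_{q+1}\) does not help directly either, because the velocity is nonlocal. The plan is therefore to make the estimate choose the larger of the two scales rather than subtract one from the other.

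The strategy is a dichotomy on \(\|D_xu_{\mathbb T}(S_{\mu_{q+2}}\theta_{q+1})(t)\|_{L^\infty}\).
\begin{itemize}
\item If this norm is at most \(\tfrac{C_0}{16}\lambda_q^{7\beta_q/8}\), we evaluate at the origin. The triangle inequality then gives \(\|D_xu_{\mathbb T}(\rho)\|\geq\tfrac{C_0}{8}\lambda_q^{7\beta_q/8}-C\lambda_q^{\beta_q/8}\).
\item If it is larger, we avoid a further dichotomy and instead evaluate at a point \(y\) with \(|y|=2r_{q+1}\), as in \eqref{eq:terminal-evaluation-point}. There we bound the child's velocity gradient by a far-field estimate.
\end{itemize}

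For the far-field bound, split \(\theta_{q+1}\) via Lemma~\ref{lem:periodic-kernel-decomposition} into a singular kernel part and a smooth part. At distance \(\geq r_{q+1}\) from the support, the singular kernel is smooth. Integrating by parts along the transported phase, as in the proof of Proposition~\ref{prop:localized-periodic-velocity-expansion}, then gains arbitrary powers of \(\lambda_{q+1}^{-(1-2\beta_{q+1})}\). Each derivative of the kernel costs only \(r_{q+1}^{-1}\leq\lambda_{q+1}^{2\beta_{q+1}}\), using \(\|\theta_{q+1}\|_{C^0}\leq\lambda_{q+1}^{-1+2\beta_{q+1}}\) and the \(C^k\) bounds in \eqref{eq:new-perturbation-size}. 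A few integrations should yield \(|D_xu_{\mathbb T}(\theta_{q+1})(y)|\leq C\lambda_{q+1}^{-1}\). The cutoff tail \(D_xu_{\mathbb T}((S_{\mu_{q+2}}-I)\theta_{q+1})\) is bounded by \(C\lambda_{q+1}^{-5}\), from \eqref{eq:periodic-spectral-tail} and the Jacobian estimate of Lemma~\ref{lem:periodic-parity-schauder}.

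At \(y\), the parent term differs from its value at \(0\) by at most \(2r_{q+1}\|D^2_xu_{\mathbb T}(S_{\mu_{q+1}}\theta_q)\|\). Bernstein at frequency \(2\mu_{q+1}\) and the stage bound bound this by \(r_{q+1}\lambda_q^{3}\leq\lambda_q^{-5}\), using \eqref{eq:effective-scale-growth}. The same holds for \(P_q\). Hence the origin argument applies verbatim at \(y\), with the child contribution now \(O(\lambda_{q+1}^{-1})\).

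With this, the dichotomy is unnecessary: the estimate at \(y\) alone gives \eqref{eq:velocity-terminal-lower-bound} with \(c=C_0/8\) for \(q\geq q_*\). The limit \eqref{eq:velocity-gradient-blowup} then follows because the intervals \([\underline t_q,\underline t_{q+1}]\) cover a neighborhood of \(1\) by Lemma~\ref{lem:parameter-growth}, and \(\lambda_q^{7\beta_q/8}\to\infty\). The delicate step is the nonstationary-phase far-field bound for the nonlocal child velocity outside its support. If the transported-phase integration by parts proves awkward, the fallback is a purely size-based bound: integrate by parts once to move a derivative onto the kernel, then use the \(C^0\) smallness of \(\theta_{q+1}\) together with the kernel size \(r_{q+1}^{-3}\) times the area \(r_{q+1}^2\).
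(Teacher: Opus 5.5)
Your argument is correct and is essentially the paper's proof. You evaluate at a point at distance \(2r_{q+1}\) from the origin, transfer the level-\(q\) Jacobian geometry there by the mean-value theorem, absorb the \(P_q\) and cutoff-tail errors, and bound the nonlocal child contribution by a far-field kernel estimate; your ``fallback'' bound \(C\|\theta_{q+1}\|_{C^0}/r_{q+1}+C\|\theta_{q+1}\|_{L^1}\lesssim L_{q+1}^{-4}\lambda_{q+1}^{-1+4\beta_{q+1}}\) is exactly what the paper uses and already suffices, so the dichotomy and the nonstationary-phase refinement can be dropped.
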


\begin{proof}
Since
\[
  \mathcal M(\alpha)
  =-e^\perp(\alpha)e(\alpha)^{\mathsf T},
\]
its entry in the first row and second column is \(\sin^2\alpha\). The angle
bounds in
Lemma~\ref{lem:cutoff-parent-geometry}, the bound
\(B_q(t)\geq\frac12\lambda_q^{\beta_q}\) from
Definition~\ref{def:admissible-layered-family}, and
\eqref{eq:admissible-jacobian-geometry} give
\[
  C_0B_q|\cos\alpha_q|\sin^2\alpha_q
  \geq\frac{C_0}{4}(1-o(1))\lambda_q^{7\beta_q/8},
\]
uniformly in the terminal interval. Retaining fixed slack, we obtain
\begin{equation}
  |\partial_2u_{\mathbb T,1}(\theta_q)(0,t)|
  \geq
  \frac{C_0}{8\pi}\lambda_q^{7\beta_q/8}
  -\lambda_q^{\beta_q/8}.
  \label{eq:newest-velocity-entry}
\end{equation}
The mean-value theorem, \eqref{eq:periodic-jacobian-bound} applied to each
\(\partial_a\theta_q\), the \(C^3\) case of
\eqref{eq:new-perturbation-size}, and
\eqref{eq:effective-scale-growth} give
\begin{equation*}
  |D_xu_{\mathbb T}(\theta_q)(y_q(t),t)
       -D_xu_{\mathbb T}(\theta_q)(0,t)|
  \leq Cr_{q+1}\lambda_q^3
  \leq C\lambda_q^{-5}.
\end{equation*}
The cutoff tail \eqref{eq:newest-perturbation-cutoff-tail} contributes at most
\(C\lambda_q^{-4}\), while
\eqref{eq:admissible-cutoff-parent-bound} bounds the cutoff-parent contribution by
\(\lambda_q^{\beta_q/8}\).

Since \(\lvert y_q(t)\rvert=2r_{q+1}\) and
\(\operatorname{supp}\theta_{q+1}(t)\subset B_{r_{q+1}}\),
\[
  \operatorname{dist}_{\mathbb T^2}
  \bigl(y_q(t),\operatorname{supp}\theta_{q+1}(t)\bigr)
  \geq r_{q+1}
\]
for all sufficiently large \(q\). The differentiated
periodic kernel is the differentiated Euclidean kernel, bounded by
\(C|h|^{-3}\), plus a smooth function.  Integration over the child support
therefore gives
\[
  |D_xu_{\mathbb T}(\theta_{q+1})(y_q(t),t)|
  \leq C\frac{\|\theta_{q+1}(t)\|_{C^0}}{r_{q+1}}
       +C\|\theta_{q+1}(t)\|_{L^1}.
\]
The perturbation-size estimate \eqref{eq:new-perturbation-size} and the definition
of \(r_{q+1}\) now give
\begin{align*}
  |D_xu_{\mathbb T}(\theta_{q+1})(y_q(t),t)|
  &\leq
  C L_{q+1}^{-4}
    \lambda_{q+1}^{-1+4\beta_{q+1}}
    +C\|\theta_{q+1}(t)\|_{L^1}
  \\
  &\leq1,
\end{align*}
for all sufficiently large \(q\), because
\(\beta_{q+1}\leq1/8\). Applying
\eqref{eq:periodic-spectral-tail} with \(j=2\), \(s=3\), and the
perturbation \(C^8\) estimate
\eqref{eq:admissible-perturbation-mixed-bound}, followed by the periodic
Schauder estimate, gives
\begin{equation}
  \|D_xu_{\mathbb T}
    ((S_{\mu_{q+2}}-I)\theta_{q+1})\|_{L^\infty}
  \leq C\lambda_{q+1}^{-4}.
  \label{eq:child-velocity-tail}
\end{equation}

Combining \eqref{eq:terminal-interval-decomposition} with
\eqref{eq:newest-velocity-entry} and the preceding error estimates yields
\[
  \|D_xu_{\mathbb T}(\rho(t))\|_{L^\infty}
  \geq c\lambda_q^{7\beta_q/8},
\]
for every sufficiently large \(q\), uniformly throughout the interval. Since
the intervals cover a terminal neighborhood of one and
\(\lambda_q^{7\beta_q/8}\to\infty\), this proves
\eqref{eq:velocity-gradient-blowup}.
\end{proof}

\begin{proof}[Proof of Theorem~\ref{thm:main}]
Lemma~\ref{lem:periodic-base-layer}, Proposition~\ref{prop:one-step}, and
the recursive exponent choice construct admissible stages for
every \(q\). Proposition~\ref{prop:global-limit} gives an odd smooth datum,
an odd force in \(C^\infty([0,1]\times\mathbb T^2)\), a classical solution
on \([0,1)\), and convergence to an endpoint in every \(C^\eta\),
\(0\leq\eta<1\). Propositions~\ref{prop:density-gradient-blowup} and
\ref{prop:velocity-gradient-blowup} give the two gradient divergences
asserted in the theorem.
\end{proof}

\section{Acknowledgments}
\label{sec:acknowledgments}

L.A.\ would like to thank Anthropic for support, because the use of Claude sped our collaboration's understanding of the correct and incorrect ideas in the literature and the implementations of our own ideas decisively. LA would also like to thank Mehmet Demirtaş and Federico Pasqualotto for helpful conversations, and Ralph Furman, Eric Price, as well as very many Anthropic collaborators more generally, for work advancing scientific capabilities used by this collaboration.

 T.B.\ and M.P.C.\ were supported by the NSF grants DMS-2243205 and DMS-2244879, in addition to the Simons Foundation Mathematical and Physical Sciences collaborative grant `Wave Turbulence'. T.B.\ would like to thank Diego C\'ordoba and Luis
Mart\'inez-Zoroa for the numerous enlightening discussions regarding their groundbreaking work which itself formed the foundations for this paper.

M.P.C.\ was supported at New York University by a Junior Fellowship of the Simons Society of Fellows, funded by Simons Foundation International and administered by the Simons Foundation.

% levent: this will roughly be my boilerplate but i will also thank Dario alongside the others in the Euler / further NS papers for something like camaraderie:)

% levent: my hope is that we sprinkle some Codex / things making OpenAI people happy in here somewhere too

\appendix
\section{Mixed estimates and smoothness of the force}
\label{sec:smooth-force}

At the step from level \(q\) to level \(q+1\), use the one-step aliases
\[
  \begin{gathered}
    \beta=\beta_{q+1},\quad \lambda=\lambda_{q+1},\quad
    L=L_{q+1},\quad M=M_{q+1}=\lceil\beta^{-2}\rceil,
    \quad \mu=\mu_{q+1},\quad r=r_{q+1},\\
    P=P_{q+1},\quad v=v_q=u_{\mathbb T}(P),\quad
    \Phi=\Phi_q,\quad X=X_{q+1},\quad
    \alpha=\alpha_{q+1},\quad a=a_{q+1},\quad s=s_{q+1},\\
    \gamma=1-2\beta,\qquad D=\partial_t+v\cdot\nabla.
  \end{gathered}
\]
The estimates below distinguish derivatives of perturbation coefficients from
derivatives of the oscillatory factor. A coefficient derivative costs only
powers of \(L\) and \(\lambda^{2\beta}\) per spatial derivative, whereas an
ordinary time derivative of the oscillatory factor costs
\(\lambda^\gamma\).

We first derive all-order bounds for the cutoff parent and its normalized
coefficient propagator. We then verify the endpoint and derivative-range
details for the two-pass mixed-time induction stated in
Lemma~\ref{lem:density-first-mixed-closure}. The final subsection verifies
the two finite uncut-parent ranges authorized by
\(K_q\geq d(\beta)\): the final grouped velocity remainder and the cutoff
commutator.

\subsection{All-orders parent estimates}

\begin{proof}[Proof of Lemma~\ref{lem:mixed-parent-inputs}]
At time order zero, the constant-free stage bound gives
\[
  \|\rho^{(q)}\|_{C^2}\leq2\lambda_q^2.
\]
The transfer identity turns \(\lambda_q^2\) into a power of the child
logarithm \(L\), and the periodic Schauder estimate gives the corresponding
velocity bound. If \(q=0\), all positive time derivatives vanish.

Now assume \(q\geq1\) and fix \(1\leq p\leq T\). Differentiating
\eqref{eq:iterated-density-formula} and using commutation with the cutoffs
gives
\[
  \partial_t^p\rho^{(q)}
  =S_{\mu_1}\partial_t^p\theta_0
   +\sum_{i=1}^{q-1}
      S_{\mu_{i+1}}\partial_t^p\theta_i
   +\partial_t^p\theta_q.
\]
For every \(0\leq i\leq q\),
\eqref{eq:admissible-perturbation-sharp-bound} gives
\[
  \|\partial_t^p\theta_i\|_{C^2}
  \leq C_{2,p}(\beta_i)L_i^{N_{2,p}(\beta_i)}
  \lambda_i^{1+\beta_i+p(1-2\beta_i)}.
\]
Monotonicity gives \(\beta_i\in[\beta_q,1/8]\). The a priori envelope
\eqref{eq:sharp-constant-envelope} therefore bounds all constants and
logarithmic exponents in terms of \(\beta_q\) and \(p\). Moreover,
\[
  1+\beta_i+p(1-2\beta_i)\leq\frac98+p.
\]
Thus a single exponent may be used before summation. For fixed \(A>0\)
and \(N\geq0\), lacunarity gives
\[
  \sum_{i=0}^q L_i^N\lambda_i^A
  \leq C_{A,N}L_q^N\lambda_q^A.
\]
Apply this with \(A=9/8+p\). The transfer identity
\eqref{eq:frequency-transfer} converts the resulting parent-frequency power
into a power of the child logarithm \(L\), proving the density part of
\eqref{eq:mixed-parent-low-norms}. The periodic Schauder estimate gives the
velocity part.

Since \(P=S_\mu\rho^{(q)}\) and \(S_\mu\) commutes with ordinary time
derivatives, the Fourier-cutoff Bernstein inequality and the preceding
\(C^2\) bound give
\[
  \|\partial_t^pP\|_{C^{j+1}}
  +\|\partial_t^pv\|_{C^{j+1}}
  \leq C_{p,j}L^{C_{p,j}}\mu^j.
\]
This proves \eqref{eq:mixed-cutoff-velocity}--
\eqref{eq:mixed-cutoff-gradient}. Oddness gives \(v(0,t)=0\), so the
mean-value theorem proves \eqref{eq:mixed-cutoff-velocity-core}.

Finally, induct on \(p+j\), using, for
\(\nu\in\{1,2\}\),
\([D,\partial_{x_\nu}]=-(\partial_{x_\nu}v)\cdot\nabla\).
All factors are evaluated on \(B_r\). The ordinary cutoff bounds control
spatial derivatives, while the transport factor satisfies
\(\|v\|_{C^0(B_r)}\leq CL^Cr\); hence its extra derivative costs at most
\(r\mu\leq1\), up to a power of \(L\). This proves
\eqref{eq:mixed-cutoff-local-material-estimates}. Each positive material order
uses ordinary parent time derivatives no higher than that order. In addition,
\(D^pv\) is odd, so it vanishes at the origin; its local \(C^0\) bound follows
from its \(C^1\) bound and the mean-value theorem.
\end{proof}

The preceding lemma supplies arbitrary spatial orders from the Fourier
cutoff and the a priori low-order parent bound. The inequality
\(K_q\geq d(\beta)\) is used only for the finite uncut-parent derivative
ranges in the residual and cutoff-commutator estimates.

\subsection{The weighted coefficient propagator}

\begin{proof}[Proof of Lemma~\ref{lem:normalized-weighted-propagator}]
Put \(\widetilde c_P(x,t)=c_P(x,t)-c_P(0,t)\). Since \(P\) is odd,
\(\widetilde c_P\) is even and has zero constant and linear jets at the
origin. Lemma~\ref{lem:mixed-parent-inputs} and Taylor's theorem give, for
\(y\in B_{\rho_{\rm ch}}\),
\[
  |\partial_t^m\widetilde c_P(y,t)|
  \leq C_mL^{C_m}\mu^2|y|^2,
\]
whereas, globally,
\[
  \|\partial_t^m\widetilde c_P\|_{C^k(\mathbb T^2)}
  \leq C_{m,k}L^{C_{m,k}}\mu^k
  \qquad(k\geq1).
\]
Here \(c_P=m_0(t)\cdot\nabla P\). Differentiating the angle equation
\eqref{eq:new-angle-transport} and using Lemma~\ref{lem:mixed-parent-inputs}
controls every time derivative of \(m_0(t)\) by a power of \(L\); at
positive order \(m\), only parent time orders through \(m-1\) occur. This
proves the two displayed bounds.
The same estimates hold with material derivatives in place of ordinary
time derivatives. Indeed, material differentiation preserves evenness, and
the material bounds in Lemma~\ref{lem:mixed-parent-inputs} give the same
Taylor estimate at the origin. Let \(\widetilde\Phi(\tau;t,x)\) denote the
coordinate lift furnished by Lemma~\ref{lem:cutoff-parent-flow}, invoked
with \(J\geq\max\{S+T+1,3\}\), and use the same notation
\(\widetilde c_P\) for its periodic coordinate lift. For
\(x\in B_r\) and \(\tau\) between \(\sigma\) and \(t\),
\eqref{eq:actual-parent-buffered-flow} gives
\[
  |\widetilde\Phi(\tau;t,x)|\leq CL^2r.
\]
The fixed threshold \(\Lambda_{\rm flow}\) is chosen so that
\[
  \sup_{\substack{x\in B_r\\
        \tau\text{ between }\sigma\text{ and }t}}
  |\widetilde c_P(\widetilde\Phi(\tau;t,x),\tau)|
  \leq CL^C\mu^2r^2\leq1.
\]
For each fixed \(m\geq1\), every required ordinary or material derivative
satisfies
\[
  \sup_{\substack{x\in\Omega_t\\
        \tau\text{ between }\sigma\text{ and }t}}
  |\mathcal D^m\widetilde c_P(\widetilde\Phi(\tau;t,x),\tau)|
  \leq C_mL^{C_m}\mu^2r^2\leq C_mL^{C_m},
  \qquad \mathcal D\in\{\partial_t,D\}.
\]
Here \(\mu^2r^2=\lambda^{-3\beta}L^8\); no positive-order smallness is
used. Also
\(\mu^k\leq\lambda^{2k\beta}\). The characteristic formula is
\[
  \widetilde H_{\sigma,t}[\varphi(\sigma)](x)
  =\varphi(\Phi(\sigma;t,x),\sigma)
   \exp\left(-\int_\sigma^t
     \widetilde c_P(\Phi(\tau;t,x),\tau)\,d\tau\right).
\]
The tube estimate gives the zeroth-order exponential bound. On
\(\Omega_\tau\Subset B_r\), the positive-order \(C^k\)-bounds above for
\(\widetilde c_P\), together with
\eqref{eq:actual-parent-flow-derivatives}, control every spatial derivative
of the exponential.
Since \(\varphi(\sigma)\) is compactly supported in
\(\Omega_\sigma\Subset B_r\), its stated \(B_r\)-bounds control every
nonzero term after composition. Finally,
\(\Phi(t;\sigma,\Omega_\sigma)=\Omega_t\) carries the support.

For a datum depending on the lower-endpoint variable \(\sigma\), set
\(U_{\sigma,t}=\widetilde H_{\sigma,t}[\varphi(\sigma)]\). Write
\[
  D_\sigma:=\partial_\sigma+v(\cdot,\sigma)\cdot\nabla
\]
for the material derivative acting on the lower-endpoint datum, and set
\(\mathcal L_\sigma:=D_\sigma+\widetilde c_P(\cdot,\sigma)\). The exact
endpoint identities are
\[
  \partial_\sigma U_{\sigma,t}
  =\widetilde H_{\sigma,t}[\mathcal L_\sigma\varphi(\sigma)],
  \qquad
  D_tU_{\sigma,t}=-\widetilde c_P(t)U_{\sigma,t}.
\]
Iteration of the first identity gives
\[
  \partial_\sigma^nU_{\sigma,t}
  =\widetilde H_{\sigma,t}
    [\mathcal L_\sigma^n\varphi(\sigma)].
\]
Every datum factor in \(\mathcal L_\sigma^n\varphi\) is
\(D_\sigma^m\varphi\) with \(m\leq n\); every differentiated parent factor
has material order at most \(n-1\). Iteration of the second identity
expresses \(D_t^p\partial_\sigma^nU_{\sigma,t}\) as a finite sum of
\(\widetilde H_{\sigma,t}[\mathcal L_\sigma^n\varphi]\) multiplied by
products of
\(\widetilde c_P,D\widetilde c_P,\ldots,D^{p-1}\widetilde c_P\).
Thus an output \(C^j\) norm uses datum pairs only with \(m\leq n\) and
spatial order at most \(j\). The diagonal condition gives
\(m+j\leq n+j\leq p+n+j\leq S+T\). Every parent factor has ordinary time
order at most \(p+n-1\), and all needed spatial orders are at most
\(p+n+j\leq S+T\). The preceding coefficient, flow, and product bounds
therefore prove \eqref{eq:mixed-normalized-weight}. The case \(p=n=0\)
uses only the characteristic formula and order-zero parent bounds.
\end{proof}

\begin{proof}[Proof of Corollary~\ref{lem:mixed-weighted-propagator}]
To pass from material to ordinary derivatives, use
\(\partial_t=D-v\cdot\nabla\). On \(B_r\), a transport factor acting on a
coefficient costs
\[
  r\lambda^{2\beta}=L^4.
\]
Commutators with spatial derivatives are controlled by
\eqref{eq:mixed-cutoff-velocity}, proving
\eqref{eq:ordinary-coefficient-class}. If the derivative falls on the
oscillatory factor, its cost is
\[
  r\lambda=L^4\lambda^{1-2\beta}=L^4\lambda^\gamma.
\]
The phase and parameter bounds in
Lemma~\ref{lem:transported-phase-estimates} prove
\eqref{eq:ordinary-live-packet}.
\end{proof}

\begin{proof}[Proof of Lemma~\ref{lem:weighted-duhamel-estimate}]
The identity
\[
  H_{\sigma,t}[a(\sigma)\varphi(\sigma)]
  =a(t)\widetilde H_{\sigma,t}[\varphi(\sigma)]
\]
shows that \(g=aJ\), where
\[
  J(t)=\int_s^t
    \widetilde H_{\sigma,t}[\varphi(\sigma)]\,d\sigma.
\]
Equivalently,
\[
  (D+\widetilde c_P)J=\varphi,
  \qquad J(s)=0.
\]
For each \(0\leq k\leq S+T\), apply the fixed-endpoint case of
Lemma~\ref{lem:normalized-weighted-propagator} with output parameters
\((T,S)=(0,k)\). Its local datum dependence uses only the order-zero
\(C^k\) bound for the Duhamel source \(\varphi\). The resulting estimate
is uniform in \(\sigma\), and
integration over \([s,t]\), whose length is at most one, gives
\[
  \|J\|_{C^k(B_r)}
  \leq C_kL^{C_k}\mathcal A\lambda^{2k\beta}
  \qquad(0\leq k\leq S+T).
\]
For \(m\geq1\), applying \(D^{m-1}\) to the equation for \(J\) gives the
exact recurrence
\[
  D^mJ
  =D^{m-1}\varphi
   -\sum_{\ell=0}^{m-1}\binom{m-1}{\ell}
      (D^\ell\widetilde c_P)D^{m-1-\ell}J.
\]
Induction in \(m\), using the Duhamel-source hypotheses and the parent bounds,
yields
\[
  \|D^mJ\|_{C^k(B_r)}
  \leq C_{m,k}L^{C_{m,k}}\mathcal A\lambda^{2k\beta}
  \qquad(m\leq T,\ m+k\leq S+T).
\]
Indeed, the Duhamel source term at this pair is
\(D^{m-1}\varphi\) in \(C^k\), so its time--space index satisfies
\((m-1)+k\leq S+T-1\).

To pass to ordinary time derivatives, use \(\partial_t=D-v\cdot\nabla\).
An order-\(p\) ordinary derivative in \(C^j\) uses \(D^mJ\) through spatial
order \(j+p-m\), \(0\leq m\leq p\). The term \(m=0\) uses the
undifferentiated Duhamel source \(\varphi\) through order \(j+p\); for
\(m\geq1\), the recurrence uses the Duhamel-source derivative pair
\((m-1,j+p-m)\). Each extra spatial
derivative is paired with a transport coefficient.
Equations~\eqref{eq:mixed-cutoff-velocity} and
\eqref{eq:mixed-cutoff-velocity-core}, together with
\[
  r\lambda^{2\beta}=L^4,
\]
show that these conversions cost only powers of \(L\). This proves the
material and ordinary bounds for \(J\). Finally, the amplitude equation
gives \(\lvert\partial_t^ma\rvert\leq C_mL^{C_m}a\); the product rule
proves \eqref{eq:weighted-duhamel-mixed-bound}. Characteristic transport
and \eqref{eq:duhamel-uniform-pulled-back-support} give
\[
  \operatorname{supp}g(t)
  \subset\Phi(t;s,K_\varphi)\Subset\Omega_t,
\]
which is the support assertion.
\end{proof}

For requested output orders \(T,K\), the cutoff-parent estimates supply
every spatial derivative through
\[
  N_{\mathrm{prof}}=K+M+T+2,
  \qquad
  N_{\mathrm{phase}}=N_{\mathrm{prof}}+1.
\]
The additional phase derivative is used when a differentiated phase term is
formed; it is supplied by the cutoff before
Proposition~\ref{prop:localized-periodic-velocity-expansion} is invoked.

\subsection{Technical proof of the density-first mixed induction}

\begin{proof}[Proof of Lemma~\ref{lem:density-first-mixed-closure}]
The normalized seed in \eqref{eq:normalized-seed-extension} has fixed
pulled-back compact support, and
Lemma~\ref{lem:normalized-weighted-propagator} uses only its value at
\(\sigma=s\). Equations~\eqref{eq:periodic-packet-anisotropic} and
\eqref{eq:periodic-packet-background}, followed by
\eqref{eq:duhamel-correction}, give the order-zero alternating chain
displayed after Lemma~\ref{lem:density-first-mixed-closure}. We verify the
Duhamel datum ranges and derivative endpoints needed at positive time order.

Fix the current step \(q\to q+1\), let \(p\geq1\), and suppose that its
coefficient and structured-defect estimates are known at every time order
below \(p\). For an order-\(p\) output-time derivative with the propagator's
lower endpoint fixed at \(s\), the propagator uses datum time order zero and
parent time orders at most \(p-1\), by
Lemma~\ref{lem:normalized-weighted-propagator}. The flow, phase, and angle
have the same lower-parent dependence by their evolution equations in
Lemmas~\ref{lem:cutoff-parent-flow} and
\ref{lem:transported-child-phase}.

For \(1\leq j<M\), let \(h_j\) be the corresponding coefficient of
\(\mathcal Q_j\), and write the coefficient of the \((j+1)\)-st correction
as
\[
  g_{j+1}(t)=-\int_s^tH_{\sigma,t}[h_j(\sigma)]\,d\sigma,
  \qquad
  \varphi_j=\frac{h_j}{a}.
\]
Fix an output pair \((p,k)\) with
\(k+p\leq\mathcal K_{j+1}+T\). The order-zero assertion
\(\mathbf R_{0,j}\) supplies \(\varphi_j\) through spatial order \(k+p\).
For \(1\leq m\leq p-1\), the already proved assertion
\(\mathbf R_{m,j}\), together with the derivative bounds for \(a^{-1}\),
supplies
\[
  \|D^m\varphi_j\|_{C^{k'}}
  \leq C_{m,k',j}L^{C_{m,k',j}}
    \lambda^{-1+(1-j)\beta+2k'\beta}
  \qquad(m+k'\leq k+p-1).
\]
These are exactly the Duhamel-source hypotheses of
Lemma~\ref{lem:weighted-duhamel-estimate} with \((T,S)=(p,k)\). Thus
\(\mathbf C_{p,j+1}\) uses only \(\mathbf R_{0,j}\) at spatial order
\(k+p\) and \(\mathbf R_{m,j}\) at spatial order \(k+p-1-m\),
\(1\leq m\leq p-1\). In the case \(p=1\), the positive-time
Duhamel-source range is empty. Hence every order-\(p\) correction coefficient
in the current child is obtained without an order-\(p\) structured defect.

In the defect pass, Lemma~\ref{lem:mixed-parent-inputs} supplies the
same-order cutoff-parent inputs from the prescribed low-order sharp family
and the Fourier cutoff. The finite expansion then estimates the
common-phase and background interactions by
\eqref{eq:periodic-packet-anisotropic}--
\eqref{eq:periodic-packet-background}, keeps the exact velocity remainder
grouped as in \eqref{eq:periodic-packet-leak}, and applies
Lemma~\ref{lem:mixed-cutoff-commutator}. This proves every
\(\mathbf R_{p,j}\) and the order-\(p\) force-increment estimate for this
step. No same-order defect is returned to the density pass, so induction in
\(p\) closes.

The descending derivative orders are consistent because
\(\mathcal K_j=\mathcal K_{j+1}+M+T+2\). For
\(k+p\leq\mathcal K_{j+1}+T\), the localized velocity expansion uses
coefficient order
\[
  k+M+p+2\leq\mathcal K_j
\]
and one additional phase derivative. Both are supplied by the parent and
phase hypotheses.
\end{proof}

Factor each correction coefficient as \(g=a\widetilde g\), as in the proof
of Lemma~\ref{lem:correction-estimates}. The normalized coefficient has the
constant size required by \eqref{eq:ordinary-live-packet}; restoring the
factor \(a\) gives \eqref{eq:mixed-perturbation-production} for every fixed
\(m,k\geq0\).
This is \eqref{eq:admissible-perturbation-sharp-bound}. In this estimate all
order-dependent factors remain in \(C_{k,m}(\beta)L^{N_{k,m}(\beta)}\);
the a priori envelope \eqref{eq:sharp-constant-envelope} was chosen to
dominate them, and none is absorbed into a frequency gain. The flow and
phase thresholds are independent of the requested derivative order. Thus
the sharp part of Proposition~\ref{prop:one-step} requires no enlargement of
its finite threshold for \((k,m)\). If
\(k+2m\leq k_{q+1}\), then
\[
  (k+2m)-\bigl(k+m(1-2\beta)-1+\beta\bigr)
  =m(1+2\beta)+1-\beta
  \geq\frac78.
\]
A finite threshold depending on \(k_{q+1}\) therefore absorbs the
logarithmic factor and gives
\eqref{eq:mixed-perturbation-finite-order-bound}.
The requested finite order \(k_{q+1}\) is arbitrary subject to
\(k_{q+1}\geq d(\beta)\): high spatial derivatives fall on the cutoff parent
and recursively generated perturbation coefficients, not directly on the
uncut complete parent \(\rho^{(q)}\).

\subsection{The finite force-derivative range}

Suppose
\[
  \beta\leq\frac1{40},
  \qquad
  m+k\leq
  m_{q+1}:=\left\lfloor\frac1{4\beta}\right\rfloor-10.
\]
For the differentiated perturbation residual, apply
Proposition~\ref{prop:localized-periodic-velocity-expansion} with \(T=m\) and
\(K=k+1\). The
required phase order is
\[
  N_{\mathrm{phase}}=M+m+k+4.
\]
Including the \(2m+2\) ordinary-to-material conversion margin in
Corollary~\ref{cor:finite-packet-inputs}, the total unsmoothed order is
\[
  N_{\mathrm{phase}}+2m+2=M+k+3m+6.
\]
Since \(m+k\leq(4\beta)^{-1}-10\),
\[
  M+k+3m+6
  \leq\left\lceil\beta^{-2}\right\rceil
      +\frac3{4\beta}-24
  <\left\lceil\frac4{\beta^2}\right\rceil+1
  =d(\beta).
\]
This count concerns only the grouped exact velocity remainder in
\eqref{eq:periodic-packet-leak}. Thus \(K_q\geq d(\beta)\) authorizes its
finite uncut-parent range. The other residual terms use the high spatial
orders of the cutoff parent from Lemma~\ref{lem:mixed-parent-inputs} and the
phase estimates from Lemmas~\ref{lem:cutoff-parent-flow} and
\ref{lem:transported-child-phase}. Their constants may depend on the
requested order, but their thresholds do not. The remaining uncut
\(C^2\) time derivatives in \eqref{eq:mixed-parent-low-norms} come from the
a priori sharp family.

The stage application in
Corollary~\ref{cor:stage-commutator-estimate} requires
mixed parent order at most
\[
  2k+4m+8+\left\lceil\frac2\beta\right\rceil
  \leq\frac3\beta-31
  <\frac4{\beta^2}.
\]
The required order is also strictly below \(d(\beta)\).
These derivatives are supplied by the constant-free stage bound
\eqref{eq:admissible-stage-mixed-bound} at level \(q\), already available
from admissibility before the child is constructed. Therefore
\eqref{eq:packet-residual-mixed-smallness} and
\eqref{eq:commutator-mixed-smallness} hold simultaneously and give
\begin{equation}
  \|\partial_t^mG_{q+1}\|_{C^k}
  \leq\lambda_{q+1}^{-1/4}
  \qquad(m+k\leq m_{q+1}).
  \label{eq:mixed-force-increment}
\end{equation}

The activation cutoff \(w\) vanishes to infinite order at \(s_{q+1}\).
Induction through the Duhamel
cascade shows that \(\theta_{q+1}\) and \(E_{q+1}^{\mathrm{pert}}\) vanish to
infinite order there. The term
\(\mathcal C_{\mu_{q+1}}(\rho^{(q)})\) contains no factor of the child
activation cutoff \(w\) and is already smooth. Whenever \(\theta_q=0\),
\[
  \rho^{(q)}=P_q=S_{\mu_q}\rho^{(q-1)},
\]
so \(\operatorname{supp}\widehat{\rho^{(q)}}\subset B_{2\mu_q}\).
The separation \eqref{eq:admissible-bandwidth-separation} gives
\(\mu_{q+1}\geq4\mu_q\), and
\eqref{eq:bandlimited-commutator-zero} makes the commutator zero.

Equations~\eqref{eq:mixed-perturbation-production},
\eqref{eq:mixed-perturbation-finite-order-bound}, and
\eqref{eq:mixed-force-increment} are the mixed-order conclusions asserted in
Proposition~\ref{prop:one-step}.  Their constants and logarithmic exponents
may depend on the fixed mixed order, but the all-order sharp estimate requires
no further frequency threshold. The infinite-order vanishing of
\(\theta_{q+1}\) and \(E_{q+1}^{\mathrm{pert}}\) at \(s_{q+1}\) follows
from the activation profile and the Duhamel cascade, while the exact residual
and cutoff-commutator
identities give the classical stage equation.

\section{Periodic multiplier, cutoff, and kernel estimates}

\begin{lemma}[Parity and periodic Schauder bounds]
\label{lem:periodic-parity-schauder}
Let \(\rho\in C^\infty(\mathbb T^2)\).  If
\(\rho(-x)=-\rho(x)\), then
\(u_{\mathbb T}(\rho)(-x)=-u_{\mathbb T}(\rho)(x)\).  In particular,
\[
  \partial^\gamma\rho(0)
  =\partial^\gamma u_{\mathbb T}(\rho)(0)=0
  \qquad\text{when }|\gamma|\text{ is even}.
\]
For every integer \(j\geq0\), every \(0<\eta<1\), and every
\(f\in C^{j,\eta}(\mathbb T^2)\),
\begin{equation}
  \|u_{\mathbb T}(f)\|_{C^{j,\eta}}
  \leq C_{j,\eta}\|f\|_{C^{j,\eta}}.
  \label{eq:periodic-schauder}
\end{equation}
In particular,
\begin{equation}
  \|D_xu_{\mathbb T}(f)\|_{L^\infty}
  \leq C\|f\|_{C^2}.
  \label{eq:periodic-jacobian-bound}
\end{equation}
\end{lemma}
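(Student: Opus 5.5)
The proof splits into three parts: parity, the Hölder–Schauder bound, and the Jacobian bound.

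\emph{Parity.} We argue on the Fourier side. If \(\rho\) is real and odd, then \(\widehat\rho(-k)=-\widehat\rho(k)\). The symbol in \eqref{eq:periodic-velocity-symbol} is even in \(k\), so each component of \(u_{\mathbb T}(\rho)\) also has odd Fourier coefficients. Hence \(u_{\mathbb T}(\rho)\) is odd. Differentiating an odd smooth function \(|\gamma|\) times gives a function of parity \((-1)^{|\gamma|+1}\). When \(|\gamma|\) is even this derivative is odd, so it vanishes at the origin. This applies to both \(\rho\) and \(u_{\mathbb T}(\rho)\).

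\emph{Schauder bound.} By \eqref{eq:periodic-velocity-from-V}, each component of \(u_{\mathbb T}\) is a composition \(V_{\mathbb T}\partial_a\). Equivalently, each component is a zero-order multiplier with symbol \(2\pi k_ak_b/|k|^2\) on \(k\neq0\), vanishing at \(k=0\). It is therefore enough to prove that these periodic Riesz-type multipliers are bounded on \(C^{j,\eta}(\mathbb T^2)\). Since they commute with derivatives, we may take \(j=0\); the \(j\)th derivative bound follows by applying the \(j=0\) bound to \(\partial^\gamma f\) with \(|\gamma|\le j\).

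For \(j=0\) I would transfer from the Euclidean case. Take the homogeneous symbol \(m_{ab}(\xi)=\xi_a\xi_b|\xi|^{-2}\) and split it as \(m_{ab}=\chi m_{ab}+(1-\chi)m_{ab}\), with \(\chi\) as in \eqref{eq:periodic-cutoff}. On the lattice, the low-frequency piece acts on only finitely many modes \(|k|\le2\), excluding \(k=0\). It is a finite-rank smoothing operator, bounded by \(C\|f\|_{L^\infty}\).

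The high-frequency symbol \((1-\chi)m_{ab}\) is smooth on \(\mathbb R^2\) and is a classical symbol of order zero. Its Euclidean kernel is a Calderón–Zygmund kernel: \(K(h)=\Omega(h/|h|)|h|^{-2}\) with \(\Omega\) of mean zero, plus a smooth kernel decaying rapidly at infinity. By Poisson summation, the periodic operator is convolution on \(\mathbb T^2\) with the periodization \(\sum_{n}K(h+2\pi n)\), understood as a principal value. On the fundamental domain this periodized kernel equals \(K\) plus a smooth function. This is the same decomposition recorded in Lemma~\ref{lem:periodic-kernel-decomposition}.

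The smooth part maps \(C^0\) into \(C^{1}\). The singular part satisfies the standard Calderón–Zygmund Hölder estimate: the mean-zero cancellation, together with the bounds \(|\nabla K|\le C|h|^{-3}\), gives \(C^{0,\eta}\to C^{0,\eta}\) boundedness for \(0<\eta<1\). The sup-norm part is controlled by writing \(\mathrm{p.v.}\!\int K(h)(f(x+h)-f(x))\,dh\) and splitting the integral at scale one. This proves \eqref{eq:periodic-schauder}.

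\emph{Jacobian bound.} This follows from \eqref{eq:periodic-schauder} with \(j=1\), any fixed \(\eta\in(0,1)\), and the inequality \(\|f\|_{C^{1,\eta}}\le C\|f\|_{C^2}\). Indeed,
\[
\|D_xu_{\mathbb T}(f)\|_{L^\infty}\le\|u_{\mathbb T}(f)\|_{C^{1,\eta}}\le C\|f\|_{C^{1,\eta}}\le C\|f\|_{C^2},
\]
which is \eqref{eq:periodic-jacobian-bound}.

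\emph{Main obstacle.} The only nontrivial step is the Hölder boundedness of the periodized singular kernel. I would prove it by the classical Calderón–Zygmund splitting. Fix \(x,y\) with \(\delta=|x-y|\). Near-field contributions, from \(|h|\le 2\delta\), are handled by cancellation together with \(|f(x+h)-f(x)|\le\|f\|_{C^\eta}|h|^\eta\). Far-field contributions are handled by the gradient bound on \(K\). The zero mode needs no special treatment, since it is removed on both sides.
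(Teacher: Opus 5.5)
Your argument is correct, and the parity and Jacobian steps are essentially the paper's. The paper phrases parity via the reflection \(\mathfrak Rf(x)=f(-x)\) commuting with the even multiplier. It obtains \eqref{eq:periodic-jacobian-bound} by applying the \(j=0\), \(\eta=1/2\) case to each \(\partial_af\).

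The Schauder step, however, takes a different route. The paper uses a Littlewood--Paley decomposition. On each dyadic annulus the rescaled symbol has a smooth compactly supported extension, and the periodized block kernels \(K_n\) are \(L^1\)-bounded uniformly in \(n\), live at scale \(2^{-n}\), and have integral zero. The first-difference identity \((K_n*f)(x)=\int K_n(y)\bigl(f(x-y)-f(x)\bigr)\,dy\) then gives blockwise bounds that sum to the H\"older estimate, and the finitely many low modes are treated separately. You instead work with the full physical-space kernel. You periodize the Euclidean kernel of \((1-\chi)m_{ab}\), split it into a local Calder\'on--Zygmund singularity plus a smooth remainder, and run the classical Privalov near/far-field argument.

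The dyadic route never needs the pointwise form of the kernel, principal values, or spherical cancellation, and it applies verbatim to any smooth degree-zero symbol. Your route is more hands-on and matches the kernel decomposition of Lemma~\ref{lem:periodic-kernel-decomposition}, but it must handle those details.

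In particular, your description \(K(h)=\Omega(h/|h|)|h|^{-2}\) with mean-zero \(\Omega\) is wrong for the diagonal symbol \(k_1^2/|k|^2\) in \(u_{\mathbb T,2}\). Its spherical average is \(1/2\), so its kernel also contains \(\tfrac12\delta_0\). Write \(-2\pi k_1^2/|k|^2=-\pi-\pi(k_1^2-k_2^2)/|k|^2\); the constant part acts as \(-\pi(f-\overline f)\), so the estimate is unaffected. Similarly, the far-field term in the Privalov argument needs the cancellation of \(K\) over off-center regions, not only the gradient bound.
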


\begin{proof}
Let \(\mathfrak Rf(x)=f(-x)\).  Evenness of this Fourier multiplier symbol gives
\(u_{\mathbb T}(\mathfrak Rf)=\mathfrak R u_{\mathbb T}(f)\), which proves
the parity assertion. On each periodic dyadic annulus, the rescaled Fourier
multiplier symbol has a smooth compactly supported extension. Let \(K_n\) denote its
periodized kernel. These kernels have uniformly bounded \(L^1\) norm and
integral zero, so
\[
  (K_n*f)(x)
  =\int_{\mathbb T^2}K_n(y)\bigl(f(x-y)-f(x)\bigr)\,dy.
\]
Applying this first-difference identity and treating the finitely many low
modes proves
\eqref{eq:periodic-schauder}.  Apply that estimate to each \(\partial_af\)
with H\"older exponent \(1/2\), and bound
\(\|\partial_af\|_{C^{0,1/2}}\) by \(C\|f\|_{C^2}\), to obtain
\eqref{eq:periodic-jacobian-bound}.
\end{proof}

\begin{lemma}[Cutoff tails and exact Fourier cutoffs]
\label{lem:periodic-cutoff-calculus}
Let \(\mu\geq1\) and \(f,g\in C^\infty(\mathbb T^2)\). For integers
\(j,s\geq0\),
\begin{align}
  \|S_\mu f\|_{C^j}
  &\leq C_j\|f\|_{C^j},
  \label{eq:periodic-cutoff-boundedness}\\
  \|(I-S_\mu)f\|_{C^j}
  &\leq C_{j,s}\mu^{-s}\|f\|_{C^{j+s+3}}.
  \label{eq:periodic-spectral-tail}
\end{align}
For every \(0<\eta<1\),
\begin{equation}
  \|S_\mu f\|_{C^\eta}
  \leq C_\eta\|f\|_{C^\eta}.
  \label{eq:periodic-cutoff-holder-boundedness}
\end{equation}
The cutoff preserves parity and the spatial mean and commutes with spatial
derivatives and \(u_{\mathbb T}\). On smooth time-dependent families, it
also commutes with \(\partial_t\). Moreover,
\begin{align}
  \operatorname{supp}\widehat{S_\mu f}
  &\subset\{k:|k|\leq2\mu\},
  \label{eq:periodic-cutoff-support}\\
  \nu\geq2\mu
  &\quad\Longrightarrow\quad S_\nu S_\mu=S_\mu.
  \label{eq:periodic-cutoff-nesting}
\end{align}
If
\(\operatorname{supp}\widehat f,\operatorname{supp}\widehat g
 \subset\{|k|\leq2\mu\}\), then
\begin{equation}
  \operatorname{supp}\widehat{
    u_{\mathbb T}(f)\cdot\nabla g}
  \subset\{|k|\leq4\mu\}.
  \label{eq:periodic-product-bandwidth}
\end{equation}
Consequently, when \(\nu\geq4\mu\),
\begin{equation}
  S_\nu\bigl(u_{\mathbb T}(f)\cdot\nabla g\bigr)
  =u_{\mathbb T}(f)\cdot\nabla g.
  \label{eq:periodic-product-frequency-cutoff}
\end{equation}
\end{lemma}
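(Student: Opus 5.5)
The statement is Lemma~\ref{lem:periodic-cutoff-calculus}, and I would prove everything by working directly with the Fourier multiplier \(\chi(k/\mu)\) and its periodized kernel.

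\textbf{Boundedness \eqref{eq:periodic-cutoff-boundedness}, \eqref{eq:periodic-cutoff-holder-boundedness}.} Write \(S_\mu f=K_\mu*f\), where \(K_\mu\) is the periodization of the Euclidean kernel \(\mu^2\check\chi(\mu\,\cdot)\), normalized to the \((2\pi)^{-2}\) convention. Poisson summation identifies the Fourier coefficients of \(K_\mu\) with \(\chi(k/\mu)\). Since \(\check\chi\) is Schwartz, the \(L^1(\mathbb T^2)\) norm of \(K_\mu\) is bounded by \(\|\check\chi\|_{L^1(\mathbb R^2)}\), uniformly in \(\mu\ge1\). Convolution commutes with derivatives and with translations. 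This gives \(C^j\) and \(C^\eta\) boundedness with constants independent of \(\mu\).

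\textbf{Algebraic properties.} These follow from the multiplier form.
\begin{itemize}
\item \(\chi\) is even and real, so parity and reality are preserved.
\item \(\chi(0)=1\), so the mean is preserved.
\item Commutation with \(\partial_x\) and \(u_{\mathbb T}\) holds because all are Fourier multipliers. Commutation with \(\partial_t\) holds by differentiating under the absolutely convergent convolution.
\end{itemize}

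\textbf{Supports and nesting.} The support bound \eqref{eq:periodic-cutoff-support} is immediate from \(\chi=0\) for \(|\xi|\ge2\). For the nesting \eqref{eq:periodic-cutoff-nesting}: if \(\nu\ge2\mu\) and \(|k|\le2\mu\), then \(|k/\nu|\le1\), so \(\chi(k/\nu)=1\).

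\textbf{Product bandwidth.} The coefficients of \(u_{\mathbb T}(f)\cdot\nabla g\) are discrete convolutions \(\sum_{k'+k''=k}\widehat{u(f)}(k')\cdot ik''\widehat g(k'')\). Each of \(k',k''\) lies in the ball of radius \(2\mu\), since \(u_{\mathbb T}\) and \(\nabla\) preserve Fourier support. Hence \(|k|\le4\mu\), which is \eqref{eq:periodic-product-bandwidth}. Then \eqref{eq:periodic-product-frequency-cutoff} follows as in the nesting argument.

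\textbf{Tail estimate \eqref{eq:periodic-spectral-tail}.} This is the only point needing care. I would use Fourier series directly. For \(|\gamma|\le j\),
\[
\|\partial^\gamma(I-S_\mu)f\|_{L^\infty}\le\sum_{|k|\ge\mu}|k|^{j}|\widehat f(k)|.
\]
Integrating by parts \(j+s+3\) times gives
\[
|\widehat f(k)|\le C|k|^{-(j+s+3)}\|f\|_{C^{j+s+3}}.
\]
The sum is then bounded by a constant times \(\sum_{|k|\ge\mu}|k|^{-s-3}\|f\|_{C^{j+s+3}}\). In two dimensions this lattice sum is at most \(C\mu^{-s-1}\le C\mu^{-s}\), since \(\mu\ge1\). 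This accounts for the loss of three derivatives: two for lattice summability and one of slack.

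I expect no real obstacle. The only mild point is the uniform-in-\(\mu\) \(L^1\) bound for the periodized kernel, which Poisson summation and the Schwartz decay of \(\check\chi\) settle.
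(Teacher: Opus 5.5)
Your proposal is correct and follows essentially the same route as the paper. You bound $S_\mu$ in $C^j$ and $C^\eta$ through the uniformly $L^1$-bounded periodized Schwartz kernel obtained by Poisson summation; you prove the tail estimate by a direct Fourier-series sum with $j+s+3$ integrations by parts, where your lattice count $\sum_{|k|\geq\mu}|k|^{-s-3}\leq C\mu^{-s-1}$ makes explicit why three extra derivatives suffice; and you handle the algebraic, nesting, and bandwidth claims by multiplier and support arguments. (The tail sum should strictly carry the harmless constant $\sup|1-\chi|$, since $\chi$ is not assumed to take values in $[0,1]$, but the paper's own display omits this factor too.)
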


\begin{proof}
Poisson summation represents the kernel of \(S_\mu\) as a periodization of
a Schwartz function at scale \(\mu^{-1}\).  Its \(L^1(\mathbb T^2)\) norm
is bounded uniformly in \(\mu\), which proves
\eqref{eq:periodic-cutoff-boundedness}.  The same kernel bound gives
\[
  |S_\mu f(x)-S_\mu f(y)|
  \leq C[f]_{C^\eta}|x-y|^\eta,
\]
and hence \eqref{eq:periodic-cutoff-holder-boundedness}.  For the tail,
Fourier inversion and
three integrations beyond the requested derivative order give
\[
 \sum_{|k|\geq\mu}|k|^j|\widehat f(k)|
 \leq C_{j,s}\mu^{-s}\|f\|_{C^{j+s+3}},
\]
and on the transition support \(\mu\leq|k|\leq2\mu\), we have the same estimate.
This proves \eqref{eq:periodic-spectral-tail}. Evenness gives parity
preservation, \(\chi(0)=1\) gives mean preservation, multiplier form gives
commutation with spatial derivatives, scalarity gives commutation with
\(u_{\mathbb T}\), and time independence gives commutation with \(\partial_t\).

The support assertion follows directly from \eqref{eq:periodic-cutoff}.  On
that support, \(|k|/\nu\leq1\) when \(\nu\geq2\mu\), proving exact nesting.
The multiplier \(u_{\mathbb T}\) does not enlarge Fourier support.
Convolution of the two Fourier supports then gives
\eqref{eq:periodic-product-bandwidth}, and the identity \(\chi=1\) on \(B_1\)
gives \eqref{eq:periodic-product-frequency-cutoff}.
\end{proof}

\begin{lemma}[Periodic kernel decomposition]
\label{lem:periodic-kernel-decomposition}
For \(f\in C^\infty(\mathbb T^2)\) supported in \(U_0\), one has on \(U\)
\[
  V_{\mathbb T}f=V_{\mathbb R}f+\mathcal H_0f,
\]
where \(V_{\mathbb R}\) is defined by \eqref{eq:euclidean-V}, and
\[
  \mathcal H_0f(x)=\int_{U_0}H_0(x,y)f(y)\,dy
  \qquad\text{with }H_0\in C^\infty(U\times U_0).
\]
There is also a kernel \(H_{\rm far}\), smooth on a neighborhood of
\((\mathbb T^2\setminus U)\times\overline{U_0}\), such that, for
\(x\notin U\),
\[
  V_{\mathbb T}f(x)
  =\int_{U_0}H_{\rm far}(x,y)f(y)\,dy.
\]
The same decomposition holds componentwise for
\(u_{\mathbb T}\) through \eqref{eq:periodic-velocity-from-V}.
\end{lemma}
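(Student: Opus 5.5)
The plan is to lift the translation-invariant multiplier to $\mathbb R^2$ and compare the periodized Euclidean kernel with its principal local term. I would start on the Fourier side. For $k\ne0$ the symbol of $V_{\mathbb T}$ is $2\pi\mathrm i k_1|k|^{-2}$. This is the restriction to $\mathbb Z^2\setminus\{0\}$ of the Euclidean symbol of $V_{\mathbb R}$ computed in the proof of Lemma~\ref{lem:affine-velocity-expansion}. Poisson summation should therefore represent $V_{\mathbb T}f$, for smooth mean-zero $f$, as convolution on $\mathbb T^2$ with a periodic kernel $K_{\mathbb T}$. I would construct $K_{\mathbb T}$ as a regularized periodization $\sum_{n\in\mathbb Z^2}\frac{(h+2\pi n)_1}{|h+2\pi n|^2}$, plus a constant adjustment.

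The main obstacle is that this lattice sum is not absolutely convergent, since the summand decays like $|n|^{-1}$. I would handle it in one of two ways.
\begin{enumerate}[label=\textup{(\alph*)}]
\item Sum symmetrically. By oddness the terms at $n$ and $-n$ pair to $O(|n|^{-3})$, so the sum converges to a function that is smooth away from $2\pi\mathbb Z^2$.
\item Cleaner: write $K_{\mathbb T}$ as the distribution with Fourier coefficients $(2\pi)^{-2}\cdot 2\pi\mathrm i k_1|k|^{-2}$. Then compare it, via a smooth cutoff $\eta$ equal to one near $0$, with $\eta(h)h_1|h|^{-2}$. The Fourier coefficients of the difference equal the symbol minus the Fourier transform of the cut-off Euclidean kernel. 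The cut-off kernel's transform is the homogeneous symbol minus the transform of $(1-\eta)h_1|h|^{-2}$. That last transform is $O(|\xi|^{-\infty})$ away from the origin, because $(1-\eta)h_1|h|^{-2}$ is a symbol of order $-1$ vanishing near zero.
\end{enumerate}
Either way I would obtain $K_{\mathbb T}(h)=\eta(h)\frac{h_1}{|h|^2}+K_{\rm sm}(h)$ on $(-\pi,\pi)^2$, with $K_{\rm sm}\in C^\infty(\mathbb T^2\setminus\{0\})$ and in fact smooth at $0$ too. The zero mode is harmless: $\widehat{V_{\mathbb T}f}(0)=0$, and at worst it changes $K_{\rm sm}$ by a constant.

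Next I would localize. Take $x\in U$ and $f$ supported in $U_0$. Choose $\eta$ supported where $|h|$ is at most the distance needed so that $x+h$ ranges over $U_0$ without wrapping; this uses $U_0\Subset U\Subset(-\pi,\pi)^2$, shrinking $\eta$'s support if needed. Then $V_{\mathbb T}f(x)=\int \eta(h)\frac{h_1}{|h|^2}f(x+h)\,dh+\int K_{\rm sm}(h)f(x+h)\,dh$. Subtracting from $V_{\mathbb R}f$ gives the remainder $-\int(1-\eta(h))\frac{h_1}{|h|^2}f(x+h)\,dh$. Its integrand vanishes near $h=0$, so it is a smooth kernel in $(x,y)$ with $y=x+h\in U_0$.

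Collecting terms defines $H_0(x,y)=K_{\rm sm}(y-x)-(1-\eta(y-x))\frac{(y-x)_1}{|y-x|^2}$. Here $y-x$ is read in the chart, which is legitimate because $U-U_0$ lies in a fixed ball where lifts are unambiguous. This $H_0$ is smooth on $U\times U_0$. For the far kernel, take $x\notin U$ and $y\in\overline{U_0}$. Then $x-y$ stays uniformly away from $2\pi\mathbb Z^2$, so $H_{\rm far}(x,y)=K_{\mathbb T}(y-x)$ is smooth on a neighborhood of $(\mathbb T^2\setminus U)\times\overline{U_0}$.

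Finally, I would extend to the velocity. By \eqref{eq:periodic-velocity-from-V}, the velocity components are $\mp V_{\mathbb T}$ applied to $\partial_{2}f$ and $\partial_1 f$. These derivatives remain supported in $U_0$, so the decomposition applies componentwise. If one prefers kernels acting on $f$ itself, integrate by parts in $y$ inside the smooth parts; no boundary terms arise because $f$ has compact support.
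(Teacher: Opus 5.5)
Your argument is correct if you go through route (b), but it takes a different route from the paper.

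\textbf{The paper's route.} The paper works in physical space. It writes the mean-zero Green function of $-\Delta$ on $\mathbb T^2$ near the diagonal as $-\frac1{2\pi}\log|x-y|$ plus a function $h$, which is smooth by interior elliptic regularity because subtracting the logarithm removes the delta mass. It then differentiates, using $V_{\mathbb T}=2\pi\partial_1(-\Delta)^{-1}$, to obtain the local kernel $-\frac{x_1-y_1}{|x-y|^2}+\widetilde H(x,y)$. The cutoff near the diagonal, the far kernel, and the integration by parts for $u_{\mathbb T}$ then go exactly as in your last two paragraphs.

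\textbf{Your route.} You compare symbols instead. The periodic multiplier is the restriction of the Euclidean one to $\mathbb Z^2\setminus\{0\}$. The only discrepancy with the Fourier coefficients of the truncated kernel $\eta(h)h_1|h|^{-2}$ is the transform of $(1-\eta)h_1|h|^{-2}$, which decays rapidly away from the origin. So $K_{\mathrm{sm}}$ has rapidly decaying coefficients and is smooth on all of $\mathbb T^2$.

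\textbf{Comparison.} Your argument is self-contained Fourier analysis. It makes the exact match of the singular part with $V_{\mathbb R}$, including the constant, automatic, and it applies verbatim to any homogeneous symbol. The paper's argument is shorter because it takes the local structure from the Green function and elliptic regularity.

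\textbf{Two corrections.}
\begin{enumerate}
\item The justification in route (a) is wrong. For $g(z)=z_1|z|^{-2}$, oddness gives $g(h+2\pi n)+g(h-2\pi n)=2\,Dg(2\pi n)h+O(|n|^{-4})$. So the paired terms are only $O(|n|^{-2})$, not $O(|n|^{-3})$, and this is not summable over $\mathbb Z^2$. Convergence needs the extra fact that $\sum Dg(2\pi n)$ vanishes over each square shell, by coordinate swap and reflection symmetry. Simply use (b).
\item The coefficients $(2\pi)^{-2}\cdot 2\pi\mathrm i k_1|k|^{-2}$ you state are those of the convolution kernel in $\int K_{\mathbb T}(x-y)f(y)\,dy$, whose singular part is $-\eta(z)z_1|z|^{-2}$. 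In the form $\int K_{\mathbb T}(h)f(x+h)\,dh$ that you use afterwards (and in (a)), the coefficients change sign. This is only a reflection convention and does not affect the argument.
\end{enumerate}
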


\begin{proof}
Let \(\mathcal G_{\mathbb T}\) be the mean-zero Green function of
\(-\Delta\) on \(\mathbb T^2\).  In a coordinate neighborhood of the
diagonal,
\[
  \mathcal G_{\mathbb T}(x-y)
  =-\frac1{2\pi}\log|x-y|+h(x,y).
\]
Indeed, subtracting the displayed logarithm cancels the delta mass in the
Green equation; the difference solves a Poisson equation with smooth
right-hand side, and interior elliptic regularity makes \(h\) smooth.
Since \(V_{\mathbb T}=2\pi\partial_1(-\Delta)^{-1}\), its local convolution
kernel with respect to ordinary Lebesgue measure is
\[
  -\frac{x_1-y_1}{|x-y|^2}+\widetilde H(x,y).
\]
After the substitution \(h=y-x\), the singular term is exactly
\eqref{eq:euclidean-V}.  Insert a cutoff equal to one near the diagonal.
The difference between the cutoff singular kernel and the full Euclidean
kernel is smooth on \(U\times U_0\).  Since
\(\overline{U_0}\Subset U\), the periodic kernel is smooth on a neighborhood
of \((\mathbb T^2\setminus U)\times\overline{U_0}\), which gives
\(H_{\rm far}\).
Finally, since \(\operatorname{supp}f\Subset U_0\), integration by parts in
\(y\) gives
\[
  \bigl(-\mathcal H_0(\partial_2f),\mathcal H_0(\partial_1f)\bigr)
  =\int_{U_0}
    \bigl(\partial_{y_2}H_0(x,y),-\partial_{y_1}H_0(x,y)\bigr)f(y)\,dy.
\]
The same identity with \(H_{\rm far}\) proves the componentwise assertion;
all resulting kernels are smooth.
\end{proof}


\begin{thebibliography}{99}

\bibitem{CordobaMartinezZoroa2025}
D.~C\'ordoba and L.~Mart\'inez-Zoroa,
\emph{Finite time singularities of smooth solutions for the 2D incompressible
porous media (IPM) equation with a smooth source},
arXiv:2410.22920v3 [math.AP], 2025.

\bibitem{CordobaGancedoOrive2007}
D.~C\'ordoba, F.~Gancedo, and R.~Orive,
\emph{Analytical behavior of two-dimensional incompressible flow in porous
media},
J. Math. Phys. \textbf{48} (2007), 065206.

\bibitem{KiselevYao2023}
A.~Kiselev and Y.~Yao,
\emph{Small scale formations in the incompressible porous media equation},
Arch. Ration. Mech. Anal. \textbf{247} (2023), Paper No.~1.

\bibitem{BianchiniCordobaMartinezZoroa2025}
R.~Bianchini, D.~C\'ordoba, and L.~Mart\'inez-Zoroa,
\emph{Non-existence and strong ill-posedness in \(H^2\) for the stable IPM
equation},
J. Funct. Anal. \textbf{289} (2025), no.~9, 111097.

\bibitem{ElgindiEuler2021}
T.~M. Elgindi,
\emph{Finite-time singularity formation for \(C^{1,\alpha}\) solutions to the
incompressible Euler equations on \(\mathbb R^3\)},
Ann. of Math. (2) \textbf{194} (2021), no.~3, 647--727.

\bibitem{BourgainLi2015}
J.~Bourgain and D.~Li,
\emph{Strong ill-posedness of the incompressible Euler equation in borderline
Sobolev spaces},
Invent. Math. \textbf{201} (2015), no.~1, 97--157.

\bibitem{CordobaMartinezZoroaZheng2024}
D.~C\'ordoba, L.~Mart\'inez-Zoroa, and F.~Zheng,
\emph{Finite time blow-up for the hypodissipative Navier--Stokes equations with
a force in \(L_t^1C_x^{1,\varepsilon}\cap L_t^\infty L_x^2\)},
arXiv:2407.06776v2 [math.AP], 2024.

\bibitem{CordobaDominguezLucasMartinezZoroa2026}
D.~C\'ordoba, \'O.~Dom\'inguez, J.~Lucas-Manch\'on, and
L.~Mart\'inez-Zoroa,
\emph{Blow-up at finite time for the generalized SQG equations in the Sobolev
well-posedness regime},
arXiv:2608.17192v1 [math.AP], 2026.

\bibitem{Elgindi2017}
T.~M. Elgindi,
\emph{On the asymptotic stability of stationary solutions of the inviscid
incompressible porous medium equation}, Arch. Ration. Mech. Anal.
\textbf{225} (2017), 573--599.

\bibitem{CastroCordobaLear2019}
A.~Castro, D.~C\'ordoba, and D.~Lear,
\emph{Global existence of quasi-stratified solutions for the confined
IPM equation}, Arch. Ration. Mech. Anal. \textbf{232} (2019), 437--471.

\bibitem{BianchiniCrinBaratPaicu2024}
R.~Bianchini, T.~Crin-Barat, and M.~Paicu,
\emph{Relaxation approximation and asymptotic stability of stratified
solutions to the IPM equation}, Arch. Ration. Mech. Anal. \textbf{248}
(2024), Article~2.

\bibitem{Park2025}
J.~Park,
\emph{Stability analysis of the incompressible porous media equation and
the Stokes transport system via energy structure}, Calc. Var. Partial
Differential Equations \textbf{64} (2025), Article~169.

\bibitem{CordobaFaracoGancedo2011}
D.~C\'ordoba, D.~Faraco, and F.~Gancedo,
\emph{Lack of uniqueness for weak solutions of the incompressible porous
media equation}, Arch. Ration. Mech. Anal. \textbf{200} (2011), 725--746.

\bibitem{Szekelyhidi2012}
L.~Sz\'ekelyhidi, Jr.,
\emph{Relaxation of the incompressible porous media equation}, Ann. Sci.
\'Ec. Norm. Sup\'er. \textbf{45} (2012), 491--509.

\bibitem{IsettVicol2015}
P.~Isett and V.~Vicol,
\emph{H\"older continuous solutions of active scalar equations},
Ann. PDE \textbf{1} (2015), Article~2.

\bibitem{ForsterSzekelyhidi2018}
C.~F\"orster and L.~Sz\'ekelyhidi, Jr.,
\emph{Piecewise constant subsolutions for the Muskat problem},
Comm. Math. Phys. \textbf{363} (2018), 1051--1080.

\bibitem{CastroCordobaFaraco2021}
A.~Castro, D.~C\'ordoba, and D.~Faraco,
\emph{Mixing solutions for the Muskat problem}, Invent. Math.
\textbf{226} (2021), 251--348.

\bibitem{CastroFaracoMengual2019}
A.~Castro, D.~Faraco, and F.~Mengual,
\emph{Degraded mixing solutions for the Muskat problem}, Calc. Var.
Partial Differential Equations \textbf{58} (2019), Article~58.

\bibitem{CastroFaracoMengual2022}
A.~Castro, D.~Faraco, and F.~Mengual,
\emph{Localized mixing zone for Muskat bubbles and turned interfaces},
Ann. PDE \textbf{8} (2022), Article~7.

\bibitem{Mengual2022}
F.~Mengual,
\emph{$H$-principle for the 2-dimensional incompressible porous media
equation with viscosity jump}, Anal. PDE \textbf{15} (2022), 429--476.

\bibitem{CastroFaracoGebhard2025}
A.~Castro, D.~Faraco, and B.~Gebhard,
\emph{Entropy solutions to the macroscopic incompressible porous media
equation}, Anal. PDE \textbf{18} (2025), 2241--2292.

\bibitem{CastroCordobaGancedoOrive2009}
A.~Castro, D.~C\'ordoba, F.~Gancedo, and R.~Orive,
\emph{Incompressible flow in porous media with fractional diffusion},
Nonlinearity \textbf{22} (2009), 1791--1815.

\bibitem{CollotPrangeTan2026}
C.~Collot, C.~Prange, and J.~Tan,
\emph{Stable self-similar singularity formation for infinite energy
solutions of the incompressible porous medium equations}, Comm. Math.
Phys. \textbf{407} (2026), Article~150.

\bibitem{KiselevSarsam2025}
A.~Kiselev and N.~A. Sarsam,
\emph{Finite time blow-up in a 1D model of the incompressible porous media
equation}, Nonlinearity \textbf{38} (2025), 055018.

\bibitem{Dembski2025}
K.~H. Dembski,
\emph{Singularity formation in the incompressible porous medium equation
without boundary mass}, arXiv:2511.01827, 2025.

\bibitem{ElgindiJeong2019}
T.~M. Elgindi and I.-J. Jeong,
\emph{Finite-time singularity formation for strong solutions to the
axi-symmetric 3D Euler equations}, Ann. PDE \textbf{5} (2019), Article~16.

\bibitem{ElgindiGhoulMasmoudi2021}
T.~M. Elgindi, T.-E. Ghoul, and N.~Masmoudi,
\emph{On the stability of self-similar blow-up for $C^{1,\alpha}$
solutions to the incompressible Euler equations on $\mathbb R^3$},
Cambridge J. Math. \textbf{9} (2021), 1035--1075.

\bibitem{LuoHouPNAS2014}
G.~Luo and T.~Y. Hou,
\emph{Potentially singular solutions of the 3D axisymmetric Euler
equations}, Proc. Natl. Acad. Sci. USA \textbf{111} (2014), 12968--12973.

\bibitem{LuoHouMMS2014}
G.~Luo and T.~Y. Hou,
\emph{Toward the finite-time blowup of the 3D axisymmetric Euler equations:
A numerical investigation}, Multiscale Model. Simul. \textbf{12} (2014),
1722--1776.

\bibitem{ChenHou2021}
J.~Chen and T.~Y. Hou,
\emph{Finite time blowup of 2D Boussinesq and 3D Euler equations with
$C^{1,\alpha}$ velocity and boundary}, Comm. Math. Phys. \textbf{383}
(2021), 1559--1667.

\bibitem{ChenHou2024}
J.~Chen and T.~Y. Hou,
\emph{On stability and instability of $C^{1,\alpha}$ singular solutions
to the 3D Euler and 2D Boussinesq equations}, Comm. Math. Phys.
\textbf{405} (2024), Article~94.

\bibitem{CordobaMartinezZoroaZheng2025}
D.~C\'ordoba, L.~Mart\'inez-Zoroa, and F.~Zheng,
\emph{Finite time singularities to the 3D incompressible Euler equations
for solutions in $C^\infty(\mathbb R^3\setminus\{0\})\cap
C^{1,\alpha}\cap L^2$}, Ann. PDE \textbf{11} (2025), Article~19.

\bibitem{ElgindiPasqualotto2023}
T.~M. Elgindi and F.~Pasqualotto,
\emph{From instability to singularity formation in incompressible fluids},
arXiv:2310.19780, 2023.

\bibitem{Chen2024Smoothness}
J.~Chen,
\emph{Remarks on the smoothness of the $C^{1,\alpha}$ asymptotically
self-similar singularity in the 3D Euler and 2D Boussinesq equations},
Nonlinearity \textbf{37} (2024), 065018.

\bibitem{CordobaMartinezZoroa2023}
D.~C\'ordoba and L.~Mart\'inez-Zoroa,
\emph{Blow-up for the incompressible 3D-Euler equations with uniform
$C^{1,1/2-\varepsilon}\cap L^2$ force}, arXiv:2309.08495, 2023.

\bibitem{Shkoller2026}
S.~Shkoller,
\emph{Incompressible Euler blowup at the $C^{1,1/3}$ threshold},
arXiv:2603.10945v3, 2026.

\bibitem{Chen2026PartI}
J.~Chen,
\emph{Asymptotically self-similar blowup for 3D incompressible Euler with
$C^{1,1/3-}$ velocity I: $C^\infty$ 1D limiting profiles},
arXiv:2605.15149, 2026.

\bibitem{Chen2026PartII}
J.~Chen,
\emph{Asymptotically self-similar blowup for 3D incompressible Euler with
$C^{1,1/3-}$ velocity II: 3D profiles, blowup, and limiting behavior},
arXiv:2605.15130v2, 2026.

\bibitem{BuckmasterDeLellisSzekelyhidiVicol2019}
T.~Buckmaster, C.~De~Lellis, L.~Sz\'ekelyhidi, Jr., and V.~Vicol,
\emph{Onsager's conjecture for admissible weak solutions}, Comm. Pure
Appl. Math. \textbf{72} (2019), 229--274.

\bibitem{WangLaiGomezSerranoBuckmaster2023}
Y.~Wang, C.-Y. Lai, J.~G\'omez-Serrano, and T.~Buckmaster,
\emph{Asymptotic self-similar blow-up profile for three-dimensional
axisymmetric Euler equations using neural networks}, Phys. Rev. Lett.
\textbf{130} (2023), 244002.

\bibitem{WangEtAl2025Discovery}
Y.~Wang et al.,
\emph{Discovery of unstable singularities}, arXiv:2509.14185, 2025.

\bibitem{WangLegerLaiBuckmaster2025}
Y.~Wang, T.~L\'eger, C.-Y. Lai, and T.~Buckmaster,
\emph{Resolving sharp gradients of unstable singularities to machine
precision via neural networks}, arXiv:2511.22819, 2025.

\end{thebibliography}
\end{document}